\newtheorem{theorem}{Theorem}[section]
\newtheorem{corollary}[theorem]{Corollary}
\newtheorem{conjecture}[theorem]{Conjecture}
\newtheorem{lemma}[theorem]{Lemma}
\theoremstyle{definition}
\newtheorem{definition}[theorem]{Definition}
\def\qed{\hfill\ifhmode\unskip\nobreak\fi\quad\ifmmode\Box\else\hfill$\Box$\fi\\}
\long\def\skipit#1{}
\def\VEC#1#2#3{#1_{#2},\ldots,#1_{#3}}
\def\SE#1#2#3{\sum_{#1=#2}^{#3}}
\def\CH#1#2{\binom{#1}{#2}}
\def\FR#1#2{\frac{#1}{#2}}
\def\FL#1{\left\lfloor{#1}\right\rfloor} 
\def\CL#1{\left\lceil{#1}\right\rceil}   
\def\C#1{\left\vert #1\right\vert}
\def\la{\langle}
\def\ra{\rangle}
\def\esub{\subseteq}
\def\Sp#1{S^{#1}}
\def\nosub{\not\subseteq}
\def\NN{{\mathbb N}} 
\def\cC{{\mathcal C}}
\def\cD{{\mathcal D}}
\def\cF{{\mathcal F}}
\def\hC{{\hat C}}
\def\hL{{\hat L}}
\def\hq{{\hat q}}
\def\hR{{\hat R}}
\def\hX{{\hat X}}
\def\hx{{\hat x}}
\def\c{11}
\begin{document}

\title{Trees with at least $6\ell+\c$ vertices
are $\ell$-reconstructible}

\author{
Alexandr V. Kostochka\thanks{University of Illinois,
Urbana IL, and Sobolev Institute of Mathematics, Novosibirsk,
Russia: \texttt{kostochk@math.uiuc.edu}.  Supported by NSF
grant DMS-2153507 and NSF RTG grant DMS-1937241.}\,,
Mina Nahvi\thanks{University of Illinois,
Urbana IL: \texttt{mnahvi2@illinois.edu}.}\,,
Douglas B. West\thanks{Zhejiang Normal Univ., Jinhua, China
and Univ.\ of Illinois, Urbana IL:
\texttt{dwest@illinois.edu}.  Supported by National Natural Science Foundation
of China grants NSFC 11871439, 11971439, and U20A2068.}\,,
Dara Zirlin\thanks{University of Illinois at Urbana--Champaign, Urbana IL 61801:
\texttt{zirlin2@illinois.edu}.  Supported in part by Arnold O. Beckman Campus
Research Board Award RB20003 of the University of Illinois.}
}

\date{\today}
\maketitle

\baselineskip 16pt

\begin{abstract}
The {\it $(n-\ell)$-deck} of an $n$-vertex graph is the multiset of (unlabeled)
subgraphs obtained from it by deleting $\ell$ vertices.  An $n$-vertex graph is
{\it $\ell$-reconstructible} if it is determined by its $(n-\ell)$-deck,
meaning that no other graph has the same deck.  We prove that every tree with
at least $6\ell+\c$ vertices is $\ell$-reconstructible.
\end{abstract}

\section{Introduction}

The {\it $j$-deck} of a graph is the multiset of its $j$-vertex induced
subgraphs.  We write this as the $(n-\ell)$-deck when the graph has $n$
vertices and the focus is on deleting $\ell$ vertices.  An $n$-vertex graph is
{\it $\ell$-reconstructible} if it is determined by its $(n-\ell)$-deck.
Since every member of the $(j-1)$-deck arises $n-j+1$ times by deleting a
vertex from a member of the $j$-deck, the $j$-deck of a graph determines its
$(j-1)$-deck.  Therefore, a natural reconstruction problem is to find for each
graph the maximum $\ell$ such that it is $\ell$-reconstructible.  For this
problem, Manvel~\cite{M69,M74} extended the classical Reconstruction
Conjecture of Kelly~\cite{Kel1} and Ulam~\cite{U}.

\begin{conjecture}[{\rm Manvel~\cite{M69,M74}}]\label{manvel}
For $l\in\NN$, there exists a threshold $M_\ell$ such that every graph with
at least $M_\ell$ vertices is $\ell$-reconstructible.
\end{conjecture}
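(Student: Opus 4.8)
The plan is to recast the conjecture as a statement about subgraph counts and then argue that, once $n$ is large relative to $\ell$, those counts force the graph. The starting point is the generalized Kelly counting lemma. For any graph $F$ with $\C{V(F)}\le n-\ell$, a fixed copy of $F$ in an $n$-vertex graph $G$ survives in a card $G-S$ (over all $\ell$-sets $S$) exactly when $S$ avoids the $\C{V(F)}$ vertices of that copy, which happens for $\CH{n-\C{V(F)}}{\ell}$ choices of $S$. Summing over the deck therefore counts each copy the same number of times, so $s(F,G)=\CH{n-\C{V(F)}}{\ell}^{-1}\sum_{\C S=\ell}s(F,G-S)$, where $s(F,G)$ is the number of subgraphs of $G$ isomorphic to $F$ and the right-hand side is computable from the $(n-\ell)$-deck. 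Hence the deck determines $s(F,G)$ for every $F$ on at most $n-\ell$ vertices, and it suffices to show that for $n\ge M_\ell$ these counts determine $G$ up to isomorphism.

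First I would harvest the elementary invariants these counts pin down: the number of edges, the full degree sequence (via counts of stars), and, more generally, the number of copies of every fixed small subgraph. The strategy is to treat this ever-finer list of reconstructible local statistics as raw material and then reassemble it into a global isomorphism. For $\ell=1$ this is exactly Kelly's program for the Reconstruction Conjecture; for larger $\ell$ the same counts remain available, but each card records less adjacency, so the reassembly is correspondingly more delicate.

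Because a single argument covering all graphs appears out of reach, I would organize the proof by graph class, establishing the threshold $M_\ell$ separately for structured families and aiming for a common mechanism. Trees are the natural first case and are exactly what this paper settles: one shows that any graph sharing a tree's $(n-\ell)$-deck is itself that tree, giving $M_\ell\le 6\ell+\c$ within that regime. From there the program would extend to forests, then to graphs of bounded degree or bounded treewidth, where the local statistics above nearly determine the global structure, and finally to dense graphs, where large cards overlap so heavily that an interpolation or counting argument can recover $G$.

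The hard part, and the reason the conjecture remains open, is the general \emph{gluing} step: ruling out a pair of non-isomorphic $n$-vertex graphs $G$ and $H$ with $s(F,G)=s(F,H)$ for every $F$ on at most $n-\ell$ vertices. Even at $\ell=1$ this is the Reconstruction Conjecture of Kelly and Ulam, unresolved for decades, so no technique is known that handles arbitrary graphs. I therefore expect the realistic contribution to be the class-by-class advances described above, with the tree theorem of this paper as the anchoring case, rather than a single uniform proof; a complete resolution almost certainly requires an idea reaching beyond subgraph counting.
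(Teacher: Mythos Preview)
The statement you were asked to prove is Conjecture~\ref{manvel}, and the paper does not prove it; it is stated precisely as an open conjecture (the classical Reconstruction Conjecture is the case $\ell=1$, already unresolved). There is therefore no proof in the paper to compare your proposal against.

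Your write-up is not a proof either, and to your credit you say so explicitly: after the (correct) Kelly counting identity and the (reasonable) class-by-class program, you identify the ``gluing step'' as the obstruction and note that even $\ell=1$ is open. That diagnosis is accurate, but it means what you have submitted is a research outline, not a proof proposal. The Kelly counting lemma gives you $s(F,G)$ for all $F$ with at most $n-\ell$ vertices, but showing that these counts determine $G$ for all graphs is exactly the content of the conjecture; nothing in your outline bridges that gap, and the tree result of this paper, while a genuine advance, does not transfer to general graphs by any mechanism you describe. If the assignment was to prove the statement, the honest answer is that no proof is known; if it was to compare with the paper, the comparison is vacuous because the paper offers none.
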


\noindent
Manvel named this ``Kelly's Conjecture'' in honor of the final sentence in
Kelly~\cite{Kel2}, which suggested that one can study reconstruction from
the $(n-2)$-deck.  Manvel noted that Kelly may have expected the statement to
be false.

The classical Reconstruction Conjecture is $M_1=3$.  Lacking a proof of
Conjecture~\ref{manvel} for any fixed $\ell$, we study threshold numbers of
vertices for $\ell$-reconstructibility of graphs in special classes.  The
survey by Kostochka and West~\cite{KW} describes prior such results.  Here our
aim is to reduce the threshold number of vertices to guarantee
$\ell$-reconstructibility of trees.

Reconstruction arguments for special families have two parts, named (when
$\ell=1$) by Bondy and Hemminger~\cite{BH}.  When the $(n-\ell)$-deck
guarantees that all reconstructions or no reconstructions lie in the specified
family, the family is {\it $\ell$-recognizable}.  Separately, using the
knowledge that every reconstruction from the deck is in the family, one
determines that only one graph in the family has that deck; this makes the
family {\it weakly $\ell$-reconstructible}.  Together, the two steps make
graphs in the family $\ell$-reconstructible.
 
N\'ydl~\cite{N90} conjectured that trees with at least $2\ell+1$ vertices are
weakly $\ell$-reconstructible, having presented in~\cite{N81} two trees with
$2\ell$ vertices having the same $\ell$-deck, to make the conjecture sharp.
The two trees arise from a path with $2\ell-1$ vertices by adding one leaf,
adjacent either to the central vertex of the path or to one of its neighbors.
Kostochka and West~\cite{KW} used the results of Spinoza and West~\cite{SW} to
give a short proof of N\'ydl's result.

When $\ell=2$, the disjoint union of a $4$-cycle and an isolated vertex has the
same deck as these two trees, so $5$-vertex trees are not a $2$-recognizable
family.  However, Kostochka, Nahvi, West, and Zirlin~\cite{KNWZa} proved that
$n$-vertex acyclic graphs are $\ell$-recognizable when $n\ge2\ell+1$, except
for $(n,\ell)=(5,2)$.  As noted earlier, the $(n-\ell)$-deck yields the
$k$-deck whenever $k<n-\ell$, so the $(n-\ell)$-deck also yields the $2$-deck,
which fixes the number of edges.  An $n$-vertex graph is a tree if and only if
it is acyclic and has $n-1$ edges, so the family of $n$-vertex trees is
$\ell$-recognizable when $n\ge2\ell+1$ (except for $(n,\ell)=(5,2)$).

This suggests modifying N\'ydl's conjecture to say that trees with at least
$2\ell+1$ vertices are $\ell$-reconstructible (modifying to $2\ell+2$ when
$\ell=2$).  Indeed, Kelly~\cite{Kel2} proved that trees with at least three
vertices are $1$-reconstructible, and Giles~\cite{Gil} proved that trees with
at least six vertices are $2$-reconstructible.  Hunter~\cite{Hu} proved that
caterpillars with at least $(2+o(1))\ell$ vertices are $\ell$-reconstructible.
Groenland, Johnston, Scott, and Tan~\cite{GJST} found one counterexample to
N\'ydl's conjecture for $\ell=6$: two specific trees with $13$ vertices having
the same $7$-deck.  However, they proved that a threshold number of vertices
does exist for trees:

\begin{theorem}[{\rm\cite{GJST}}]
When $j\ge{\FR89n+\FR49\sqrt{8n+5}+1}$, every $n$-vertex tree is determined by
its $j$-deck.  Thus $n$-vertex trees are $\ell$-reconstructible when
$n\ge 9\ell+24\sqrt{2\ell}+o(\sqrt\ell)$.
\end{theorem}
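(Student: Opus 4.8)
The plan is to translate the deck into subtree counts and then reconstruct the tree from those counts. First I would invoke the standard Kelly-type counting identity: for any graph $F$ with $|V(F)|\le j$, the number of induced copies of $F$ in an $n$-vertex graph $G$ is recoverable from the $j$-deck, because summing the induced-$F$ counts over all cards gives $\binom{n-|V(F)|}{j-|V(F)|}$ times the number of induced copies of $F$ in $G$. In a tree $T$, a vertex set $S$ induces a connected subgraph exactly when $T[S]$ is a subtree, so the induced copies of a tree $F$ are precisely the subtrees of $T$ isomorphic to $F$. Hence the $j$-deck determines $N(T,F)$, the number of subtrees of $T$ isomorphic to $F$, for every tree $F$ with at most $j$ vertices, and the theorem reduces to the structural claim that, when $j\ge\frac89 n+\frac49\sqrt{8n+5}+1$, the profile $F\mapsto N(T,F)$ over trees $F$ of order at most $j$ determines $T$ among all $n$-vertex trees.

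From this profile I would first read off the coarse parameters — the degree sequence (by counting stars $K_{1,r}$), the number $L$ of leaves, and the number of copies of each short path — and then recover the fine structure. It is convenient to note that the $j$-vertex subtrees of $T$ are exactly its connected cards: deleting the $\ell=n-j$ vertices outside a given $j$-vertex subtree leaves that subtree, and conversely every connected card on $j$ vertices is such a subtree. Thus we know the whole multiset of $j$-vertex subtrees as isomorphism types, and the task is to determine how the $\ell$ deleted vertices re-attach. I would encode $T$ by its topological tree (obtained by suppressing degree-$2$ vertices) together with the lengths of the suppressed paths; since $\sum_{\deg v\ge 3}(\deg v-2)=L-2$, there are at most $L-2$ branch vertices, so the complexity of the topological tree is controlled by the number of leaves.

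The reconstruction splits according to the diameter. If a longest path is short enough to fit inside the visible window of $j$ vertices together with the branches hanging from it, then the topological tree and all path-lengths can be assembled by a peeling argument: one repeatedly identifies, from the subtree counts, a pendant branch of extremal size, records it, and reduces to a smaller tree. When the diameter is large, say close to $n$, only about $n-\diam(T)$ vertices lie off a longest path, so $T$ is nearly a path carrying boundedly many attachments — exactly the regime of the N\'ydl and GJST near-counterexamples, where two non-isomorphic trees agree on all small subtree counts and differ only in the placement of a branch along a long spine.

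I expect this last regime to be the main obstacle. For a long spine carrying one or two branches, the number of subtrees of a prescribed shape that span a chosen interval of the spine is a quadratic function of the branch position, and separating two trees that differ only in that position amounts to showing that such quadratics cannot agree across the entire range of usable subtrees, namely those of order at most $j$. Balancing the spine length against the size of the largest usable subtree is what forces the stated threshold: one is led to a triangular-number constraint of the shape $\binom{m+1}{2}\lesssim n$, whose solution $m\approx\frac{-1+\sqrt{8n+1}}{2}$ explains the $\sqrt{8n+5}$ term, while optimizing the proportion of deletable vertices against the spine length yields the coefficient $\frac89$ (equivalently the bound $n\ge 9\ell+O(\sqrt n)$). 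Making this estimate uniform over all nearly-path-like trees, rather than only paths with a single displaced leaf, and verifying that the two cases of the dichotomy overlap at the stated threshold, is where the genuine difficulty lies.
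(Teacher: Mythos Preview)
This theorem is not proved in the paper; it is quoted verbatim from Groenland, Johnston, Scott, and Tan~\cite{GJST} and used only as background motivation for the paper's own, different, bound $n\ge 6\ell+11$. Consequently there is no ``paper's own proof'' to compare your proposal against.

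As for the proposal itself, it is not a proof but an outline that explicitly stops short of the main content. You correctly set up the Kelly counting identity and the reduction to subtree profiles, and your dichotomy on diameter is a reasonable organizing principle. However, you yourself identify the large-diameter, near-path regime as ``the main obstacle'' and say that making the quadratic-position argument uniform over all nearly-path-like trees ``is where the genuine difficulty lies.'' That is precisely the substance of the GJST argument, and you have not supplied it: you have only offered a heuristic for why the constants $\tfrac89$ and $\sqrt{8n+5}$ are plausible, not a derivation. In particular, the claim that two trees differing only in branch placement are distinguished by some subtree count of order at most $j$ requires an explicit construction of a distinguishing subtree shape and a careful case analysis over the possible configurations of branches along the spine; the ``quadratic in branch position'' picture you sketch handles at most the single-pendant case and does not by itself yield the stated threshold. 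If you intend to prove the theorem rather than summarize its flavor, you must actually carry out that analysis.
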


For $\ell=3$, this theorem applies when $n\ge194$.
Using reconstruction of rooted trees, Kostochka, Nahvi, West, and
Zirlin~\cite{KNWZ3} gave a lengthy proof of the threshold $n\ge25$ when
$\ell=3$.  Our aim in this paper is to lower the general threshold for
$\ell$-reconstructibility of trees by proving the following theorem,
which brings the threshold for $\ell=3$ down to $n\ge28$.

\begin{theorem}
When $n\ge6\ell+\c$, all $n$-vertex trees are $\ell$-reconstructible.
\end{theorem}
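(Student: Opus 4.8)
The plan is to follow the standard two–phase strategy described in the introduction: since $n\ge 6\ell+\c\ge 2\ell+1$ (and we are safely away from the exceptional case $(n,\ell)=(5,2)$), the family of $n$-vertex trees is already known to be $\ell$-recognizable, so it suffices to prove that $n$-vertex trees are \emph{weakly} $\ell$-reconstructible: if two $n$-vertex trees $T$ and $T'$ have the same $(n-\ell)$-deck, then $T\cong T'$. Throughout we may freely use the $k$-deck for every $k\le n-\ell$, and in particular all structural parameters that are determined by small decks — the number of edges, the degree sequence, the number of leaves, the diameter, and (via counting subgraphs of a fixed isomorphism type and applying Kelly's lemma) the counts of short paths, of vertices at each eccentricity, and so on. The first concrete step is to assemble this ``toolbox'' of deck-invariants for trees and, crucially, to pin down a long path in $T$: because $n\ge 6\ell+\c$, the diameter is comparatively large, and I would show that the $(n-\ell)$-deck determines the diameter $d$ and determines, up to isomorphism, a longest path $P$ together with the rooted subtrees hanging off $P$ near its two ends. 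The point of choosing a longest path is that the two end-segments of length roughly $\ell$ survive in many cards (any card obtained by deleting $\ell$ vertices from the ``middle'' of $P$ contains both full end-segments), so their structure is read off directly.

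The core of the argument is a ``peeling'' or pruning induction on the long path. Root $T$ at an interior vertex of a longest path and consider the subtree $T^{-}$ obtained by deleting a carefully chosen end-branch $B$ of bounded size, where $\C{V(B)}\le\ell$, chosen so that $T^{-}$ is still a tree on at least $n-\ell$ vertices whose own diameter is almost as large. I would argue that $T^{-}$, as a rooted tree, appears in the deck as a recognizable card (or as a subtree of recognizable cards), and that its $(n-\ell-|B|)$-deck-equivalent data is determined by the $(n-\ell)$-deck of $T$; then by induction $T^{-}$ is reconstructed, and what remains is to locate the attachment point of $B$ and recover $B$ itself. To do the latter I would use a counting argument in the spirit of Kelly's lemma: the multiset of ``large'' cards — those obtained by deleting $\ell$ vertices all lying inside a single small end-branch — is in bijection (with controllable multiplicities, using $n\ge 6\ell+\c$ to guarantee the branch is far enough from everything else that no ambiguity arises) with the ways of pruning $\le\ell$ vertices from one end of $T$, and comparing these multisets for $T$ and $T'$ forces $B$, and its attachment, to agree. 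The inequality $n\ge 6\ell+\c$ is exactly what is needed to make ``near one end'' and ``near the other end'' and ``in the middle'' genuinely disjoint regions, each of length more than $2\ell$, so that deletions in one region cannot be confused with deletions in another and the relevant binomial multiplicities are nonzero and separated.

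The step I expect to be the main obstacle is the bookkeeping that makes the pruning induction actually close: one must show that the specific rooted subtree $T^{-}$ (or enough of it to recover $T$) is \emph{determined} by the $(n-\ell)$-deck of $T$, not merely \emph{present} in it, and that the relevant ``large cards'' can be distinguished from cards produced by deletions elsewhere in $T$. This is where the constant $\c$ gets fixed: the arguments that identify the end-structure, that guarantee a valid branch $B$ of size at most $\ell$ to peel, and that separate the deletion regions each cost an additive constant, and $\c$ is the sum of these. Concretely, I would (i) prove a lemma reconstructing the ``end data'' (the last $\approx 2\ell$ vertices of a longest path and the small subtrees hanging there) from cards that avoid that end; (ii) prove a pruning lemma: from the $(n-\ell)$-deck of an $n$-vertex tree with $n\ge 6\ell+\c$ one can reconstruct the $(n-\ell)$-deck of a suitable $(n-|B|)$-vertex subtree $T^{-}$ with $|B|\le\ell$ and $n-|B|\ge 6\ell+\c-\ell\ge 2\ell+1$ still large enough to recognize it as a tree; (iii) apply induction to get $T^{-}$; (iv) a ``reattachment'' lemma using large-card multiplicities to recover $B$ and its attachment vertex uniquely. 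Base cases (small $n$, or trees of small diameter such as spiders and double-brooms, which must be handled separately because a longest path need not be long there) are dispatched directly, using the facts that $\ell$-reconstructibility is already known for caterpillars with $\ge(2+o(1))\ell$ vertices and for all trees when $\ell\le 3$.
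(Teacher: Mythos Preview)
Your plan diverges substantially from the paper's argument and, as written, has a genuine gap: the small-diameter case is not handled. You assert that ``because $n\ge 6\ell+\c$, the diameter is comparatively large,'' but this is false --- a tree on $n$ vertices can have diameter as small as $2$ (a star). Your proposed remedy, relegating ``trees of small diameter such as spiders and double-brooms'' to base cases covered by known results on caterpillars and $\ell\le 3$, does not suffice: trees of small diameter are far more varied than spiders and double-brooms, the caterpillar result applies only to caterpillars, and the $\ell\le 3$ result is irrelevant for general $\ell$. The paper in fact devotes all of Section~6 to the regime $r<n-3\ell$ (where $r=\mathrm{diam}(T)+1$), and this is the longest and most intricate part of the proof, with its own subcases (sparse cards of degree $3$, long-legged cards, paddles, etc.) and repeated use of the Exclusion Argument.

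A second problem is that your pruning induction does not close as stated. You want to peel off a branch $B$ with $|B|\le\ell$, obtain the appropriate deck of $T^{-}=T-B$, and apply the induction hypothesis to $T^{-}$. If the induction is on $n$ for fixed $\ell$, you need $n-|B|\ge 6\ell+\c$, which fails at the threshold. If instead you mean to $(\ell-|B|)$-reconstruct $T^{-}$ from its $(n-\ell)$-deck and induct on $\ell$, you must first extract the $(n-\ell)$-deck of $T^{-}$ from that of $T$ --- i.e., identify (with multiplicity) exactly those $(n-\ell)$-vertex subgraphs of $T$ that avoid $B$. Since cards are unlabeled, this requires already knowing $B$ and its attachment well enough to count, which is precisely the hard step you flag as ``the main obstacle'' but do not resolve. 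The paper avoids all of this: there is no induction on $n$ or $\ell$. Instead it reconstructs $T$ directly by a case analysis (high vs.\ low diameter; spi-center present or not; sparse card present or not), driven by two tools developed in Section~2 --- determining the multiset of maximal $j$-vines and $j$-evines for $j\le k$ (Corollary~\ref{countk}), and the Exclusion Argument (Lemma~\ref{count}) for recovering offshoots once the larger ones are known.
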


We close this introduction by describing the structure of the paper.
Our proof is constructive; that is, we show how to obtain the unique
$n$-vertex tree corresponding to a given $(n-\ell)$-deck $\cD$.
As noted earlier, the family of $n$-vertex trees is 
$\ell$-recognizable when $n\ge2\ell+1$~\cite{KNWZa}, so we may assume that all
reconstructions from $\cD$ are trees.

We divide the proof into various cases depending on whether the diameter
of the tree is high or low and on whether various special structures are
present in the tree.  It is important to note that we must show each case
is recognizable from the deck before we can make use of the hypotheses of
that case in reconstructing the tree.

In Section 2, we develop tools used in various cases.  We first show that
several important parameters of $T$ are $\ell$-reconstructible, including the
diameter (which we write as $r+1$), a number $k$ that is within $1$ of the
minimum radius among connected cards, and the number of vertices in $T$ from
which three edge-dispoint paths of length $\ell+1$ can be grown.  We call such
vertices ``spi-centers'', because a union of edge-disjoint paths with a common
endpoint is often called a {\it spider}.  We also introduce a counting process
called an Exclusion Argument, which is a technique used repeatedly in later
sections to obtain various subtrees in $T$.


Sections 3--5 discuss the ``high-diameter'' case, defined by $r\ge n-3\ell$.
In this case, $k\ge\ell+1$, and the cards show all connected subgraphs of $T$
with diameter ``not too large''.  Section 3 completes the case where $T$ has a
spi-center, and Sections 4--5 consider the high-diameter trees without
spi-centers.  In addition to spi-centers, we consider whether the deck has
a {\it sparse card}, which is a card containing an $r$-vertex path along
which the card has only one branch vertex.  In Section 4, we find the maximal
subtrees of diameter $2k-2$ that contain the end portions of every $r$-vertex
path in $T$.  In Section 5, we finish reconstructing $T$ by assembling these
two subtrees properly and finding the rest of the tree.



In Section 6, we consider the ``small-diameter'' case, meaning $r< n-3\ell$.
Subcases consider whether $T$ has a sparse card or not, whether the branch
vertex along the $r$-vertex path in a sparse card has degree $3$ or higher,
and whether $T$ has various other cards in which there is a long path from
a leaf to the nearest branch vertex.  Note that the existence of various kinds
of cards is immediately recognizable from the deck.


\section{Vines and Diameter}\label{tools}

In studying $\ell$-reconstructibility of $n$-vertex trees, we use different
methods for trees with large diameter and trees with small diameter.  Along
the way, we introduce various structures in trees whose presence or absence
is determined by the $(n-\ell)$-deck, and we use their occurrence or
non-occurrence in resolving various cases.

\begin{definition}\label{diam}
In a graph $G$, the {\it distance} between two vertices is the minimum length
of a path containing them.  The {\it eccentricity} of a vertex in a graph $G$
is the maximum of the distances from it to other vertices.  A {\it center} of
$G$ is a vertex of minimum eccentricity, and the minimum eccentricity is called
the {\it radius} of $G$.  The {\it diameter} of $G$ is the maximum eccentricity,
which is the maximum distance between vertices.
\end{definition}

It is an elementary exercise that a tree has one center or two adjacent centers,
when the diameter is even or odd, respectively.  The deck of an $n$-vertex tree
has no cards that are paths precisely when the tree has no paths with $n-\ell$
vertices.  We then know the diameter, because we all paths appear in the cards.
Later in this section we will determine the diameter from the deck also when it
is larger.  We will need to count subtrees with various diameters, a technique
we used in~\cite{KNWZa} in the more general situation of $n$-vertex graphs
whose $(n-\ell)$-decks have no cards containing cycles.

\begin{definition}\label{kdef}
A {\it $j$-vine} is a tree with diameter $2j$.
A {\it $j$-evine} is a tree with diameter $2j+1$.
A {\it $j$-center} or {\it $j$-central edge} is the central vertex or edge
in a $j$-vine or $j$-evine, respectively.
\end{definition}

\begin{lemma}\label{vinemax}
In a tree $T$, every $j$-vine or $j$-evine $H$ lies in a unique maximal
$j$-vine or $j$-evine, respectively.
\end{lemma}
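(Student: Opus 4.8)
The plan is to identify, for every $j$-vine (resp.\ $j$-evine) $H'$ of $T$ that contains $H$, precisely which vertex (resp.\ edge) of $T$ is its center, and then to show that all such $H'$ are confined to a single ``ball'' around that center, which is itself a $j$-vine (resp.\ $j$-evine).

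First I would recall the elementary fact that the center of a tree --- one vertex when the diameter is even, an edge joining two adjacent vertices when the diameter is odd --- lies at the midpoint of every longest path. Suppose $H$ is a $j$-vine with center $c$, and let $H'$ be any $j$-vine with $H\subseteq H'\subseteq T$. Choose a diametral path $P$ of $H$; it has $2j$ edges and passes through $c$ as its midpoint. Since $P$ is a path in $H'$ and $\diam(H')=2j$, the path $P$ is diametral in $H'$ as well, so the center of $H'$ is the midpoint of $P$, namely $c$. The same reasoning handles evines: if $H$ is a $j$-evine with central edge $uv$, a diametral path of $H$ has $2j+1$ edges and contains $uv$ as its central edge, and it remains diametral in any $j$-evine $H'$ with $H\subseteq H'\subseteq T$, so $uv$ is the central edge of $H'$ too.

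Next I would exhibit the maximal object explicitly. In the vine case, let $B$ be the subgraph of $T$ induced by $\{v\in V(T)\st d_T(v,c)\le j\}$; because distances are monotone along the unique path in $T$ from $v$ to $c$, this vertex set induces a subtree, and any two of its vertices lie at distance at most $2j$, so $\diam(B)\le 2j$. Since $H\subseteq B$, in fact $\diam(B)=2j$, so $B$ is a $j$-vine containing $H$. In the evine case, deleting $uv$ splits $T$ into the component $T_u$ containing $u$ and the component $T_v$ containing $v$; let $B$ be induced by $\{w\in V(T_u)\st d_T(w,u)\le j\}\cup\{w\in V(T_v)\st d_T(w,v)\le j\}$. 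This is again a subtree (two subtrees joined across the edge $uv$), every path in it has at most $j+1+j$ edges, and $H\subseteq B$, so $B$ is a $j$-evine containing $H$.

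Maximality and uniqueness then follow at once. If $H'$ is any $j$-vine (resp.\ $j$-evine) with $H\subseteq H'\subseteq T$, then by the first step the center of $H'$ is $c$ (resp.\ its central edge is $uv$); hence every vertex of $H'$ is within distance $j$ in $H'$, and so in $T$, of $c$ (resp.\ of an endpoint of $uv$). Therefore $V(H')\subseteq V(B)$ and $H'\subseteq B$. Thus $B$ is the unique maximal $j$-vine (resp.\ $j$-evine) of $T$ containing $H$. I do not expect a genuine obstacle here; the only points needing care are the standard claim that the center of a tree is the common midpoint of all its longest paths and, for evines, the modest bookkeeping on the two sides of the central edge.
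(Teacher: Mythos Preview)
Your proposal is correct and follows essentially the same approach as the paper's proof: identify the ball of radius $j$ around the center (or central edge) of $H$ as the unique maximal $j$-vine (or $j$-evine), and observe that any $j$-vine or $j$-evine containing $H$ must share its center and hence be contained in this ball. You have simply supplied the details the paper leaves implicit, in particular the argument via a diametral path of $H$ that the center is preserved.
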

\begin{proof}
The unique maximal such graph containing $H$ is the subgraph induced by the set
of all vertices within distance $j$ of its central vertex or central edge.
This is a $j$-vine or $j$-evine, respectively.  Any other such subgraph
containing it would have to have the same center or central edge, but then it
cannot have any additional vertices.
\end{proof}

For a family $\cF$ of graphs, an {\it $\cF$-subgraph} of a graph $G$ is an
induced subgraph of $G$ belonging to $\cF$.  For $F\in \cF$, let $m(F,G)$ be
the number of copies of $F$ that are maximal $\cF$-subgraphs in $G$.  The case
$\ell=1$ of the next lemma is due to Greenwell and Hemminger~\cite{GH}.
Similar statements for general $\ell$ appear for example in~\cite{GJST}.  We
include a proof for completeness; it is slightly simpler than proofs in the
literature involving inclusion chains.

\begin{lemma}[\cite{KNWZa}]\label{counting}
Let $\cF$ be a family of graphs such that every subgraph of $G$ belonging to
$\cF$ lies in a unique maximal subgraph of $G$ belonging to $\cF$.  If for
every $F\in\cF$ with at least $n-\ell$ vertices the value of $m(F,G)$ is known
from the $(n-\ell)$-deck of $G$, then for all $F\in \cF$ the $(n-\ell)$-deck
$\cD$ determines $m(F,G)$.
\end{lemma}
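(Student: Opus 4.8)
The plan is to reconstruct the values $m(F,G)$ by downward induction on the number of vertices of $F$. The base case is handled by hypothesis: for every $F \in \cF$ with at least $n-\ell$ vertices, $m(F,G)$ is given. So suppose $F \in \cF$ has fewer than $n-\ell$ vertices, and assume inductively that $m(F',G)$ is known for every $F' \in \cF$ with more vertices than $F$. Since $F$ has at most $n-\ell-1$ vertices, $F$ itself is an induced subgraph of some cards, and more importantly the total number $s(F,\cD)$ of (labeled, or rather counted-with-multiplicity) induced copies of $F$ across the whole deck $\cD$ is determined by $\cD$. The point of the maximality hypothesis on $\cF$ is that every induced copy of $F$ in $G$, and likewise every induced copy of $F$ in any card, extends to a \emph{unique} maximal $\cF$-subgraph; this lets us organize all copies of $F$ according to which maximal $\cF$-subgraph they sit inside.

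The key step is a counting identity. For a graph $H$ on at most $n-\ell$ vertices, let $c(F,H)$ denote the number of induced copies of $F$ in $H$ that are maximal $\cF$-subgraphs of $H$ — but note that if $H$ is itself a maximal $\cF$-subgraph of $G$ with $H\cong F$, this is just counting $F$ once. First I would count, over the whole deck, the number of induced copies of $F$ in cards that are maximal $\cF$-subgraphs of their card. Each maximal $\cF$-subgraph $M$ of $G$ (with $M\cong F'$ for some $F'\in\cF$) contributes induced copies of $F$ to various cards; when $F' = F$, the copy of $F = M$ survives as a maximal $\cF$-subgraph in exactly those cards obtained by deleting $\ell$ vertices disjoint from $V(M)$, and we must be careful that $M$ could fail to be maximal in a card only if some vertex outside $M$ becomes… but in fact $M$ maximal in $G$ forces $M$ maximal in every induced subgraph of $G$ containing $M$. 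For $F' \neq F$ with $F'$ having more vertices, a maximal copy of $F'$ in $G$ contributes induced copies of $F$ that are maximal \emph{within cards} in a way that depends only on the isomorphism type $F'$ and the number $n-\ell$ and $n$, hence on a computable coefficient. This yields a linear equation in which the only unknown is $m(F,G)$, with all other terms either given by the deck or known by induction.

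Carrying this out, the count of maximal-$\cF$-copies of $F$ appearing in all cards equals $m(F,G)\cdot\binom{n - |V(F)|}{\ell}$ (choosing which $\ell$ of the vertices outside a given maximal copy of $F$ to delete), plus $\sum_{F'} m(F',G) \cdot a(F,F')$, where $F'$ ranges over members of $\cF$ with more vertices than $F$ and $a(F,F')$ is the number of ways an induced copy of $F$ can appear as a maximal $\cF$-subgraph after deleting $\ell$ vertices from $F'$ — a quantity depending only on $F$, $F'$, and $\ell$. On the other hand, the same count is determined directly from $\cD$: sum over cards $C$ the number of maximal $\cF$-subgraphs of $C$ isomorphic to $F$. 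Since $\binom{n-|V(F)|}{\ell} \geq 1$ (here we use $|V(F)| \le n-\ell$) and all other quantities in the equation are known, we solve for $m(F,G)$.

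The main obstacle is verifying the coefficient $a(F,F')$ is genuinely a function only of the isomorphism types and $\ell$ — i.e.\ that maximality of an induced copy of $F$ inside a card depends only on local data within the unique maximal $\cF$-subgraph of $G$ that the copy extends to, and not on the rest of $G$. This is exactly where the hypothesis ``every $\cF$-subgraph lies in a \emph{unique maximal} $\cF$-subgraph of $G$'' does the work: an induced copy of $F$ in a card $C$ is a maximal $\cF$-subgraph of $C$ if and only if it is not contained in any larger $\cF$-subgraph of $C$, and any such larger $\cF$-subgraph, together with our copy of $F$, would lie in a common maximal $\cF$-subgraph $M$ of $G$; thus the entire question of maximality-within-$C$ is decided inside $M \cap C$. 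So the contribution of each maximal subgraph $M \cong F'$ of $G$ to the deck-count is the number of $\ell$-subsets $S$ of $V(G)$ such that some induced copy of $F$ in $M \setminus S$ is maximal in $M \setminus S$; vertices of $S$ outside $M$ are irrelevant to maximality, so this splits as a sum over $j = |S \cap V(M)|$ of $\binom{n - |V(F')|}{\ell - j}$ times a purely-$F'$-intrinsic count, confirming that $a(F,F')$ has the claimed form.
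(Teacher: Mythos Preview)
Your argument is correct but takes a noticeably harder route than the paper does. Both proofs use downward induction on $|V(F)|$, and both exploit the hypothesis that every $\cF$-subgraph lies in a unique maximal one. The difference is in \emph{which} deck-computable quantity gets expanded as a linear combination of the $m(H,G)$.

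You count $\sum_{C\in\cD} m(F,C)$, the number of maximal $\cF$-copies of $F$ appearing across all cards, and then argue (correctly, via the ``maximality is decided inside $M$'' observation) that this equals $m(F,G)\binom{n-|V(F)|}{\ell}+\sum_{F'} a(F,F')\,m(F',G)$ for computable coefficients $a(F,F')$. The paper instead uses the total count $s(F,G)$ of \emph{all} induced copies of $F$ in $G$, which is determined by the deck since $|V(F)|<n-\ell$. Grouping these copies by the unique maximal $\cF$-subgraph containing them gives
\[
s(F,G)=\sum_{H\in\cF} s(F,H)\,m(H,G),
\]
and one solves for $m(F,G)$ (the coefficient $s(F,F)=1$).

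What the paper's approach buys is simplicity: the coefficients $s(F,H)$ are just subgraph counts, and the diagonal coefficient is $1$ rather than a binomial. It also sidesteps the need to verify that maximality in a card is determined locally within $M$, which is the step you correctly flag as the ``main obstacle''. Your approach, on the other hand, works directly at the level of cards and so makes slightly more transparent why the \emph{deck} (as opposed to $G$ itself) suffices, but this is a modest gain since the fact that $s(F,G)$ is deck-determined for small $F$ is standard. In short: your proof is sound, but you have reinvented a more intricate version of a one-line counting identity.
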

\begin{proof}
Let $t=\C{V(G)}-\C{V(F)}$; we use induction on $t$.  When $t\le\ell$, the value
$m(F,G)$ is given.  When $t>\ell$, group the induced subgraphs of $G$
isomorphic to $F$ according to the unique maximal $\cF$-subgraph of $G$
containing them (as an induced subgraph).  Counting all copies of $F$ then
yields
$$s(F,G)=\sum_{H\in{\cF}} s(F,H)m(H,G).$$
Since $\C{V(F)}<n-\ell$, we know $s(F,G)$ from the deck, and we know
$s(F,H)$ when $F$ and $H$ are known.  By the induction hypothesis, we know all
values of the form $m(H,G)$ when $F$ is an induced subgraph of $H$ except
$m(F,G)$.  Therefore, we can solve for $m(F,G)$.
\end{proof}

Next we establish consistent notation for our subsequent discussion.

\begin{definition}\label{defr}
Always $\cD$ denotes the $(n-\ell)$-deck of an $n$-vertex tree $T$;
we call $\cD$ simply the {\it deck} of $T$.  Also $r$ denotes always the
maximum number of vertices in a path in $T$.
A {\it connected subcard} or simply {\it subcard} of $T$ is a connected
subgraph of $T$ with at most $n-\ell$ vertices; it appears in a connected card.
We use ``csc'' for ``connected subcard''.
\end{definition}

\begin{definition}\label{defk}
Fix $k$ to be the largest integer $j$ such that $T$ contains a
$j$-evine and every $j$-evine in $T$ has fewer than $n-\ell$ vertices.
{\bf This fixes $k$ for the remainder of the paper.}
\end{definition}

\begin{lemma}\label{kprop}
The value of $k$ is determined by the deck of $T$.
\end{lemma}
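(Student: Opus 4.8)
The plan is to identify $k$ with a quantity read directly from $\cD$. I would show that for every integer $j\ge 0$, the condition defining $k$ --- that $T$ contains a $j$-evine and every $j$-evine in $T$ has fewer than $n-\ell$ vertices --- is equivalent to the statement that \emph{some card of $\cD$ has an induced subgraph isomorphic to a path on $2j+2$ vertices, while no card of $\cD$ is itself a tree of diameter $2j+1$.} Granting this, $k$ is the largest $j$ for which the displayed deck statement holds: such a $j$ exists (for $j=0$, some card has an edge and, since $n$ is large, no card is a single edge), and it is bounded, since the deck statement forces $2j+2\le n-\ell$. Hence $k$ is determined by $\cD$. When $2j+2>n-\ell$ both sides of the equivalence fail automatically --- no card is large enough to contain a path on $2j+2$ vertices, and every $j$-evine then has more than $n-\ell$ vertices --- so it is enough to handle $j$ with $2j+2\le n-\ell$.

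The ingredients are elementary facts about subtrees of a tree: a connected induced subgraph of $T$ is a subtree in which distances agree with those in $T$; every subtree of $T$ on at most $n-\ell$ vertices occurs as an induced subgraph of some card (delete $\ell$ vertices lying outside it); and a tree has diameter $2j+1$ exactly when it is a $j$-evine, in which case it contains, as an induced subgraph, a path on $2j+2$ vertices (a diametral path), while conversely such a path is itself a $j$-evine. These give the easy direction at once. If $T$ has a $j$-evine and all of them have fewer than $n-\ell$ vertices, then a diametral path of one of them is a subtree on fewer than $n-\ell$ vertices and so appears in a card; and if some card $C$ were a tree of diameter $2j+1$, then $C$, being an induced subgraph of $T$ and hence a subtree with the same distances, would be a $j$-evine in $T$ on $n-\ell$ vertices, contradicting that every $j$-evine is smaller.

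The substantive direction is the converse. A card containing a path on $2j+2$ vertices already displays a $j$-evine in $T$, so the real task is this: assuming $T$ contains a $j$-evine with at least $n-\ell$ vertices, construct a card that is a tree of diameter $2j+1$, contradicting the deck statement. I would pass to the unique maximal $j$-evine $M$ containing the given one (Lemma~\ref{vinemax}). Then $M$ is an induced subgraph of $T$, has at least $n-\ell$ vertices, and equals the set of vertices of $T$ within distance $j$ of its central edge $uv$. Fix two vertices of $M$ at distance $2j+1$ and let $Q$ be the path between them, with $2j+2$ vertices. Now build a subtree $C$ of $M$ by starting from $Q$ and repeatedly adjoining a vertex of $M$ adjacent to the current subtree, stopping once $\C{V(C)}=n-\ell$; this is possible because $2j+2\le n-\ell\le\C{V(M)}$. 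By construction $C$ is connected, contains $Q$, and lies inside $M$, so every vertex of $C$ is within distance $j$ of $uv$; hence $\diam(C)$ is at least $2j+1$ (it contains $Q$) and at most $2j+1$ (it lies in $M$), so $C$ is a $j$-evine. Since $\C{V(T)\setminus V(C)}=\ell$, deleting $V(T)\setminus V(C)$ from $T$ yields the card $C$, the desired contradiction.

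The main obstacle is exactly this construction. Deleting an arbitrary set of $\C{V(M)}-(n-\ell)$ vertices of $M$ may disconnect it, so the resulting card need not be a tree at all, let alone one of diameter $2j+1$; growing $C$ outward from a diametral path is what keeps it connected while simultaneously controlling its diameter. The only arithmetic input is the inequality $2j+2\le n-\ell$, which is exactly the range in which the equivalence is not vacuous and is guaranteed as soon as a card contains a path on $2j+2$ vertices. Everything else is routine bookkeeping with the facts about subtrees above and with Lemma~\ref{vinemax}.
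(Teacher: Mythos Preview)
Your proof is correct and follows essentially the same route as the paper's. Both identify the same deck-readable condition (existence of a $(2j+2)$-vertex path in some card, together with no connected card being a $j$-evine) and establish its equivalence with the defining property of $k$; the only cosmetic difference is that the paper trims a minimal too-large $j$-evine down to $n-\ell$ vertices, while you grow outward from a diametral path inside a maximal $j$-evine up to $n-\ell$ vertices. Your appeal to Lemma~\ref{vinemax} is harmless but unnecessary: any $j$-evine in $T$ with at least $n-\ell$ vertices already suffices for the growing argument, without passing to the maximal one.
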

\begin{proof}
Each edge forms a $0$-evine (which are the only $0$-evines), and every
$j$-evine with $j>0$ contains a smaller $(j-1)$-evine, so the value of $k$ is
well-defined.  It remains to compute $k$.

All subgraphs with at most $n-\ell$ vertices are visible from the deck.
The deck yields a largest value $j'$ such that some csc is a $j'$-evine and all
such cscs have fewer than $n-\ell$ vertices.  Since $k$ is an integer having
these properties, $j'\ge k$.  Since some csc is a $j'$-evine with fewer than
$n-\ell$ vertices, $2j'+2<n-\ell$.

By the choice of $j'$, no $j'$-evine in $T$ has exactly $n-\ell$ vertices,
so a smallest $j'$-evine with more than $n-\ell$ vertices can only be a
path, which would require $2j'+2>n-\ell$.  Thus no such $j'$-evine exists,
yielding $k\ge j'$.  Hence $j'=k$.
\end{proof}

\begin{lemma}\label{kvine}
In a reconstruction $T$ from $\cD$, there exist $k$-vines, and every $k$-vine
in $T$ has fewer than $n-\ell$ vertices.
\end{lemma}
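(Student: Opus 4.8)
The plan is to prove the two assertions separately; both are short, and the only structural input is Lemma~\ref{vinemax}. Existence of a $k$-vine is immediate: by Definition~\ref{defk}, $T$ contains a $k$-evine $E$, whose diameter is $2k+1$, so $E$ contains a path on $2k+2$ vertices; any sub-path on $2k+1$ vertices is an induced subgraph of $T$ (chords are impossible in a tree) and is a tree of diameter $2k$, hence a $k$-vine. (When $k=0$ a single vertex is a $0$-vine, and there is nothing to check.)

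For the size bound, suppose for contradiction that $T$ contains a $k$-vine with at least $n-\ell$ vertices; we may assume $k\ge1$, since for $k=0$ a $k$-vine has a single vertex and $2k+2<n-\ell$. By Lemma~\ref{vinemax} we may enlarge the $k$-vine to the maximal $k$-vine containing it, so we may take $H$ to be the subgraph of $T$ induced by the set of vertices within distance $k$ of a fixed center $c$, with $|V(H)|\ge n-\ell$. If $V(H)=V(T)$, then $T=H$ has diameter $2k$; but a $k$-evine in $T$ is a subtree of diameter $2k+1$, and distances within a subtree of a tree agree with those in the tree, so no subtree of $T$ has diameter exceeding $\diam(T)=2k$, a contradiction. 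Hence some vertex of $T$ lies at distance more than $k$ from $c$. Walking out from $c$ along a path to such a vertex, choose $w$ with $d(c,w)=k+1$, let $u$ be the neighbor of $w$ on the $c$--$w$ path (so $u\in V(H)$), and let $H'$ be the subgraph of $T$ induced by $V(H)\cup\{w\}$; this is a subtree of $T$.

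The heart of the argument is to check that $H'$ is a $k$-evine. Because $H$ is a ball of radius $k$ about $c$ and has diameter $2k$, it contains vertices $a,b$ with $d(a,c)=d(b,c)=k$ lying in distinct branches at $c$ (this is the only way such a ball can attain diameter $2k$). As $w$ lies in a single branch at $c$, one of $a,b$, say $a$, lies in a branch different from that of $w$, so $d(a,w)=d(a,c)+d(c,w)=2k+1$ and $\diam(H')\ge 2k+1$. Conversely, every $x\in V(H)$ satisfies $d(x,w)\le d(x,c)+d(c,w)\le 2k+1$, while pairs of vertices of $H$ are within distance $2k$; hence $\diam(H')=2k+1$, so $H'$ is a $k$-evine with $|V(H)|+1\ge n-\ell+1$ vertices. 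This contradicts Definition~\ref{defk}, which requires every $k$-evine in $T$ to have fewer than $n-\ell$ vertices, and the lemma follows.

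I expect the only point needing care to be the passage to the maximal $k$-vine at the start of the second part. Without it, a vertex of $T$ outside $H$ could still lie within distance $k$ of $c$, and adjoining such a vertex need not raise the diameter to $2k+1$; making $H$ an actual ball of radius $k$ is precisely what forces the existence of a vertex at distance exactly $k+1$ from $c$, and hence of the enlarged $k$-evine that produces the contradiction. Everything else is routine distance bookkeeping in a tree.
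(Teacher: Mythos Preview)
Your proof is correct and follows essentially the same approach as the paper: obtain a $k$-vine from the guaranteed $k$-evine, then for the size bound pass to a maximal (the paper says ``largest'') $k$-vine, locate a vertex at distance $k{+}1$ from its center, and observe that adjoining it yields a $k$-evine with at least $n-\ell$ vertices, contradicting Definition~\ref{defk}. You supply more detail than the paper does---in particular the verification that the augmented subtree has diameter exactly $2k+1$ and the explicit justification that some vertex lies outside the ball---but the argument is the same.
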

\begin{proof}
By the definition of $k$, there is a $k$-evine in $T$.  A $k$-evine
contains two $k$-vines whose centers form the central edge of the $k$-evine.

If some $k$-vine has at least $n-\ell$ vertices, then let $C$ with
center $z$ be a largest $k$-vine in $T$.  Since $T$ contains a $k$-evine, the
diameter of $T$ is at least $2k+1$, and hence the radius of $T$ is at least
$k+1$.  Thus $T$ has a vertex $x$ at distance $k+1$ from $z$.  Note that $x$ is
not contained in $C$, but $x$ has a neighbor $y$ in $C$ at distance $k$ from
$z$.  Adding $x$ and the edge $xy$ to $C$ creates a $k$-evine with more than
$n-\ell$ vertices, contradicting the definition of $k$.
\end{proof}

\begin{corollary}\label{countk}
For $j\le k$, the deck $\cD$ determines the numbers of maximal $j$-evines
and $j$-vines with each isomorphism type.  All reconstructions from $\cD$ have
the same numbers of $j$-centers and $j$-central edges.
\end{corollary}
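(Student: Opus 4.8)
The plan is to apply Lemma~\ref{counting} twice, once with $\cF$ taken to be the family of all $j$-evines for a fixed $j\le k$, and once with $\cF$ the family of all $j$-vines. By Lemma~\ref{vinemax}, each of these families has the property that every member contained in $T$ lies in a unique maximal member of that family inside $T$, so the hypothesis of Lemma~\ref{counting} on the family structure is satisfied. It then remains only to verify the ``base case'' hypothesis: that for every $F$ in the family with at least $n-\ell$ vertices, the value $m(F,T)$ is determined by $\cD$. Once this is done, Lemma~\ref{counting} gives $m(F,T)$ for all $F$ in the family, and in particular the number of maximal $j$-evines and $j$-vines of each isomorphism type is deck-determined.

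First I would dispose of the base case for $j$-vines. By Lemma~\ref{kvine}, in any reconstruction $T$ every $k$-vine has fewer than $n-\ell$ vertices; since a $j$-vine with $j<k$ has strictly fewer vertices along its longest path than a $k$-vine need not — wait, more carefully: a $j$-vine has diameter $2j\le 2k<2k+2\le$ (diameter bound forcing size) — the cleanest route is that for $j\le k$ every $j$-vine in $T$ has fewer than $n-\ell$ vertices. Indeed for $j=k$ this is Lemma~\ref{kvine}; for $j<k$, a $j$-vine $H$ has diameter $2j$, and if it had at least $n-\ell$ vertices one could, as in the proof of Lemma~\ref{kvine}, grow it (using that $T$ contains a $k$-evine, hence has radius at least $k+1>j+1$) into a larger $j'$-evine with more than $n-\ell$ vertices for some $j'\ge j$, eventually contradicting Definition~\ref{defk}; I would spell this monotone-growth argument out. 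Consequently there are \emph{no} $j$-vines with at least $n-\ell$ vertices, so the base-case hypothesis of Lemma~\ref{counting} holds vacuously, and the deck determines the number of maximal $j$-vines of each type.

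For $j$-evines the base case is the only place any real content enters. A $j$-evine with $j<k$ has fewer than $n-\ell$ vertices by the same growth argument (for $j=k$ this is exactly the condition in Definition~\ref{defk} that pins down $k$: every $k$-evine in $T$ has fewer than $n-\ell$ vertices). So again there are no $j$-evines with at least $n-\ell$ vertices when $j\le k$, and the base-case hypothesis of Lemma~\ref{counting} is vacuous. Thus Lemma~\ref{counting} applies and $\cD$ determines the number of maximal $j$-evines of each isomorphism type, for every $j\le k$. The main (and only) obstacle is making the ``no large $j$-vine/$j$-evine'' claim airtight for $j<k$; everything else is a direct citation.

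Finally, for the last sentence: the center of a maximal $j$-vine is a $j$-center and the central edge of a maximal $j$-evine is a $j$-central edge, and conversely every $j$-center (resp.\ $j$-central edge) of $T$ is the center (resp.\ central edge) of a unique maximal $j$-vine (resp.\ $j$-evine) — namely the one induced by the ball of radius $j$ about it, as in the proof of Lemma~\ref{vinemax}. Hence the number of $j$-centers equals $\sum_F m(F,T)$ over isomorphism types $F$ of $j$-vines, and similarly for $j$-central edges with $j$-evines. Since each $m(F,T)$ was just shown to be deck-determined, every reconstruction from $\cD$ has the same number of $j$-centers and the same number of $j$-central edges.
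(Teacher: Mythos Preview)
Your proposal is correct and follows essentially the same route as the paper: apply Lemma~\ref{counting} to the family of $j$-vines and to the family of $j$-evines (using Lemma~\ref{vinemax} for the unique-maximal-member hypothesis), verify the base case by showing $m(F,T)=0$ whenever $|V(F)|\ge n-\ell$, and then read off the count of $j$-centers and $j$-central edges from the bijection with maximal $j$-vines and $j$-evines. The only difference is expository: the paper dispatches the base case in one line by citing Definition~\ref{defk} and Lemmas~\ref{kprop} and~\ref{kvine}, whereas you (rightly) note that for $j<k$ a short monotone-growth argument is needed to get from ``no large $k$-evine/$k$-vine'' to ``no large $j$-evine/$j$-vine'', and you sketch it. That growth argument is exactly what underlies the paper's terse citation (and what makes $k$ well-defined in the proof of Lemma~\ref{kprop}), so your added detail is a clarification rather than a different method.
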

\begin{proof}
Fix $j$, and let $\cF$ be the family of $j$-vines or the family of $j$-evines.
By the definition of $k$ and Lemmas~\ref{kprop} and~\ref{kvine}, $m(F,T)=0$ for
all $F\in\cF$ having at least $n-\ell$ vertices.  Hence Lemma~\ref{counting}
applies to compute $m(F,T)$ for all $F\in\cF$.  By Lemma~\ref{vinemax}, there
is a one-to-one correspondence between the maximal $j$-vines and the
$j$-centers, and similarly for $j$-evines and $j$-central edges.
\end{proof}

Since $1$-vines are stars, setting $j=1$ in Corollary~\ref{countk} 
provides the degree list if $k\ge1$.  Groenland et al.~\cite{GJST} proved that
the degree list is $\ell$-reconstructible for all $n$-vertex graphs whenever
$n-\ell>\sqrt{2n\log(2n)}$.  Taylor~\cite{Tay} had shown that asymptotically
$n>\ell{\rm e}$ is enough.  For trees we obtain a simpler intermediate
threshold that suffices for our needs.

\begin{corollary}\label{degrees}
For $n\ge2\ell+3$, the degree list of any $n$-vertex tree is determined by
its $(n-\ell)$-deck.
\end{corollary}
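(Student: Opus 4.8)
The plan is to derive the degree list of $T$ from Corollary~\ref{countk} by showing that the hypothesis $n\ge 2\ell+3$ forces $k\ge 1$, so that the case $j=1$ of that corollary applies. Recall that a $1$-vine is a star $K_{1,d}$ with $d\ge 2$ (diameter exactly $2$), a maximal such star in $T$ being the closed neighborhood of a non-leaf vertex of degree $d$, and its center being that vertex. Thus, once we know $m(K_{1,d},T)$ for every $d$, we know exactly how many vertices of $T$ have each degree $d\ge 2$; the number of leaves is then recovered from $\sum_v \deg v = 2(n-1)$, which is itself known because the $2$-deck (hence the edge count) is determined by the $(n-\ell)$-deck. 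So the whole argument reduces to verifying $k\ge 1$.

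First I would handle the degenerate possibility that $T$ has diameter at most $2$, i.e.\ $T$ is a star $K_{1,n-1}$ (or a single edge). A star is trivially recognized from the deck — its cards are smaller stars and edgeless graphs — and its degree list is then immediate, so we may assume $\mathrm{diam}(T)\ge 3$. Now $T$ contains a $1$-evine, namely any path on four vertices, so the set of integers $j$ witnessing the definition of $k$ is nonempty provided also every $1$-evine (diameter-$3$ tree, i.e.\ ``double star'') in $T$ has fewer than $n-\ell$ vertices. A $1$-evine on $m$ vertices consists of two adjacent centers with the remaining $m-2$ vertices as leaves attached to one or the other; the largest $1$-evine in $T$ is obtained by taking an edge $uv$ realizing the maximum combined degree and adding all neighbors of $u$ and of $v$. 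I would bound the size of such a subgraph: it has at most $\deg u + \deg v$ vertices, and more usefully, since $T$ is a tree with diameter at least $3$, there is at least one vertex outside $N[u]\cup N[v]$ whenever the double star is not all of $T$. The key inequality to establish is that a largest $1$-evine has at most $n-\ell-1$ vertices, equivalently that no $1$-evine has exactly $n-\ell$ or more vertices — using the same device as in Lemma~\ref{kvine}: if a largest $1$-evine $C$ had at least $n-\ell$ vertices, then because $\mathrm{diam}(T)\ge 3$ the tree has a vertex at distance $2$ from the central edge, which extends $C$ to a larger $1$-evine, and iterating this the only obstruction is when $C$ is already a path; a path that is a $1$-evine has exactly $4$ vertices, forcing $n-\ell\le 4$, i.e.\ $n\le \ell+4$, which contradicts $n\ge 2\ell+3$ as soon as $\ell\ge 1$ (and the case $\ell=0$ is vacuous). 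Hence $k\ge 1$.

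With $k\ge 1$ secured, Corollary~\ref{countk} with $j=1$ gives the number of maximal $1$-vines of each isomorphism type, which is the number of vertices of each degree $d\ge 2$; combined with the known edge count this yields the full degree list, completing the proof. The main obstacle is purely bookkeeping in the second paragraph: making the extension argument from Lemma~\ref{kvine} apply cleanly to $1$-evines (so that the ``largest $1$-evine is a path'' corner case is correctly isolated) and checking that the resulting numerical bound $n-\ell\le 4$ is genuinely contradicted by $n\ge 2\ell+3$ across the small-$\ell$ boundary cases; everything else is a direct citation of the already-established machinery.
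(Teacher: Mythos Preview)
There is a genuine gap: your claim that $k\ge1$ whenever $n\ge 2\ell+3$ and $\mathrm{diam}(T)\ge3$ is false, and the extension argument you sketch does not repair it. A vertex at distance~$2$ from the central edge of a $1$-evine, when adjoined, produces a subtree of diameter at least~$4$, not a larger $1$-evine; so the ``extend to a larger $1$-evine'' step is simply wrong, not just a bookkeeping issue. Concretely, take $T$ to be a double star on $n$ vertices (two adjacent centers with all other vertices pendant). Then $T$ itself is a $1$-evine with $n\ge n-\ell$ vertices, so $j=1$ fails the defining condition; since $\mathrm{diam}(T)=3$ there is no $j$-evine with $j\ge2$, and hence $k=0$. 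More generally, any tree with an edge $uv$ satisfying $\deg u+\deg v\ge n-\ell$ has a $1$-evine that is too large, and such edges are easy to arrange while keeping the diameter arbitrarily large (a broom: a long path with many leaves at one end).

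The paper's proof avoids $k$ entirely: it applies Lemma~\ref{counting} directly to the family $\cF$ of stars, which requires knowing $m(F,T)$ for every star $F$ with at least $n-\ell$ vertices. When no such large star exists the hypothesis is vacuous. When one does exist, the paper argues from $n\ge 2\ell+3$ that at most one vertex can have degree $\ge n-\ell-1$, and its exact degree $d$ is read off from the number $\binom{d}{\,n-\ell-1\,}$ of star cards; this supplies the missing values of $m(F,T)$ and Lemma~\ref{counting} then gives the count of maximal stars of every size. Your route through Corollary~\ref{countk} cannot replace this, because that corollary's hypothesis $j\le k$ is exactly what fails in the high-degree case.
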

\begin{proof}
All $1$-vines are stars.  Each vertex with degree at least $2$ is the center of
exactly one maximal $1$-vine.  Suppose first that no star has at least $n-\ell$
vertices.  For $t\ge3$, by the counting argument (Lemma~\ref{counting}) the
deck determines the number of maximal $1$-vines having $t$ vertices.  This is
the number of vertices with degree $t-1$ in any reconstruction.

Now suppose that some star has at least $n-\ell$ vertices.  Since $n-\ell\ge4$,
we see in the deck that there are no $3$-cycles or $4$-cycles, so
two stars share at most a common leaf or an edge joining the centers.  Having
two stars with at least $n-\ell$ vertices thus requires $n\le 2\ell+2$.

Hence only one vertex has degree at least $n-\ell-1$.  Its degree is $d$ if and
only if exactly $\CH d{n-\ell-1}$ cards are stars.  Thus we have $m(S,T)$ for
any reconstruction $T$ and every star $S$ with at least $n-\ell$ vertices.
Again Lemma~\ref{counting} applies and we obtain the number of vertices with
degree $t-1$ whenever $t\ge3$.

Since every reconstruction is a tree, the remaining vertices have degree $1$.
\end{proof}

\vspace{-1pc}
\begin{lemma}\label{diam}
Every connected card in $\cD$ has diameter at least $2k+2$,
and some connected card has diameter at most $2k+3$.
\end{lemma}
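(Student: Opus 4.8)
The plan is to treat the two halves separately, both resting on the observation that since $T$ contains a $k$-evine, $T$ has a path on $2k+2$ vertices, so $\mathrm{diam}(T)\ge 2k+1$ and, crucially, $\mathrm{radius}(T)\ge k+1$ --- hence \emph{every} vertex of $T$ has eccentricity at least $k+1$ in $T$.

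For the lower bound (every connected card has diameter $\ge 2k+2$), I would first isolate the cheap observation: a connected card $C$ has exactly $n-\ell$ vertices and is a subtree of $T$, and if $\mathrm{diam}(C)=2k+1$ then $C$ is a $k$-evine with $n-\ell$ vertices, contradicting Definition~\ref{defk}. The real work is excluding $\mathrm{diam}(C)\le 2k$, and for this I would prove the following by downward induction on $j$: for every $j\le k$, every $j$-vine and every $j$-evine in $T$ has fewer than $n-\ell$ vertices. The base case $j=k$ is precisely Lemma~\ref{kvine} together with Definition~\ref{defk}. For the step with $j<k$ I would argue first about a $j$-evine $F$ with central edge $ab$: every vertex of $F$ lies within distance $j+1$ of $a$ and of $b$, so $F$ lies in each of the balls $B_{j+1}(a)$ and $B_{j+1}(b)$ (which induce subtrees of $T$); since $\mathrm{ecc}_T(a)\ge k+1>j+1$, the ball $B_{j+1}(a)$ contains a vertex $u$ at distance exactly $j+1$ from $a$ on a path to a farthest vertex of $T$, and a short case check --- whether or not $u$ lies in the component of $T-a$ containing $b$, using the ``$b$-side'' (resp.\ ``$a$-side'') tip of a diametral path of $F$ in the other case --- shows that $B_{j+1}(a)$ or $B_{j+1}(b)$ is a $(j+1)$-vine containing $F$; the inductive hypothesis then gives $|V(F)|<n-\ell$. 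The same ball trick handles a $j$-vine $H$ with center $z$: using a vertex at distance $j+1$ from $z$ and the fact that the two tips of $H$ lie in different components of $T-z$, one gets $\mathrm{diam}(B_{j+1}(z))\in\{2j+1,2j+2\}$, so $B_{j+1}(z)$ is a $j$-evine or a $(j+1)$-vine containing $H$, to which one applies the evine case just proved, or the inductive hypothesis. Granting this claim, a connected card with diameter at most $2k+1$ would be a $j$-vine or $j$-evine for some $j\le k$, hence have fewer than $n-\ell$ vertices --- impossible --- so every connected card has diameter at least $2k+2$.

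For the upper bound (some connected card has diameter $\le 2k+3$), I would note that $\mathrm{diam}(T)\ne 2k+1$ (else $T$ itself is a $k$-evine on $n$ vertices, contradicting Definition~\ref{defk}), so $\mathrm{diam}(T)\ge 2k+2$. If $\mathrm{diam}(T)=2k+2$, then every connected card is a subtree of $T$ and so has diameter at most $2k+2\le 2k+3$, and connected cards exist (delete $\ell$ leaves one at a time, valid since $n>\ell$). If $\mathrm{diam}(T)\ge 2k+3$, then $T$ contains a $(k+1)$-evine, so by the maximality in Definition~\ref{defk} some $(k+1)$-evine $E'$ has at least $n-\ell$ vertices; now delete from $T$ the at most $\ell$ vertices outside $E'$ --- always removing a current leaf lying outside $E'$, which exists as long as the current subtree properly contains $E'$ (otherwise $E'$, being connected, would contain the whole subtree) --- and then delete enough leaves of $E'$ to reach a subtree on $n-\ell$ vertices; this is a connected card contained in $E'$, with diameter at most $\mathrm{diam}(E')=2k+3$.

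The main obstacle is the induction in the first half. The naive approach --- taking a ball of radius $k$ about the center of the offending card and hoping it is a $k$-vine contradicting Lemma~\ref{kvine} --- fails, because such a ball can have diameter strictly smaller than $2k$ when $T$ is ``one-sided'' near that vertex; one is therefore forced to climb the ladder of vines and evines one rung at a time, and at each rung the delicate point is to certify that the enlarged ball attains the full diameter $2j+1$ or $2j+2$. That certification is exactly where the eccentricity bound $\mathrm{ecc}_T(\cdot)\ge k+1$ supplied by the $k$-evine enters, together with the small analysis of which component of $T-a$ (or $T-b$, or $T-z$) contains a farthest vertex.
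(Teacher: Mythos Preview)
Your argument is correct on both halves.

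For the upper bound you do essentially what the paper does: the paper takes any connected card, and if its diameter already exceeds $2k+2$ it concludes that $T$ contains a $(k+1)$-evine, invokes the maximality of $k$ to get a $(k+1)$-evine with at least $n-\ell$ vertices, and trims it (keeping a $(2k+4)$-vertex path) down to an $(n-\ell)$-vertex card of diameter $2k+3$. Your case split on $\mathrm{diam}(T)$ is a cosmetic variant of the same construction.

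For the lower bound the paper writes a single sentence: a connected card of diameter at most $2k+1$ ``would be a $j$-evine or $j$-vine with $j\le k$ having $n-\ell$ vertices, contradicting the definition of $k$ or Lemma~\ref{kvine}.'' Read literally, the definition of $k$ and Lemma~\ref{kvine} only handle the case $j=k$; the cases $j<k$ need the monotonicity you prove by downward induction. Your ladder-climbing is exactly the natural justification: enlarge a $j$-(e)vine by one well-chosen vertex at distance $j+1$ from its center to obtain a $j$-evine or a $(j+1)$-vine, and iterate up to level $k$. This is the same mechanism used in the proof of Lemma~\ref{kvine} (add one vertex at distance $k+1$ from the center of a maximal $k$-vine to get a $k$-evine), so what you have written is not a genuinely different route but rather the paper's intended argument made explicit. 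In fact you can streamline your step a bit: instead of analysing the full balls $B_{j+1}(a)$, $B_{j+1}(b)$, $B_{j+1}(z)$, it suffices to take the \emph{maximal} $j$-(e)vine with the given center and append a single neighbouring vertex outside it, exactly as in the proof of Lemma~\ref{kvine}; the diameter computation is then a one-line check. Your observation that the naive ``jump straight to a $k$-ball'' argument fails is accurate and is precisely why the step-by-step climb is needed.
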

\begin{proof}
A connected card with diameter at most $2k+1$ would be a $j$-evine or $j$-vine
with $j\le k$ having $n-\ell$ vertices, contradicting the definition of $k$
or Lemma~\ref{kvine}.

For the second claim, let $C$ be a connected card.  If $C$ has diameter at
least $2k+3$, then $C$ contains a path with $2k+4$ vertices.  Hence
$n-\ell\ge 2k+4$ and $T$ contains a $(k+1)$-evine.  By the definition of $k$,
some $(k+1)$-evine has at least $n-\ell$ vertices.  Since $n-\ell\ge 2k+4$,
we can iteratively delete leaves of such a $(k+1)$-evine outside a path with
$2k+4$ vertices to trim the subtree to $n-\ell$ vertices.  We thus obtain a
card that is a $(k+1)$-evine and has diameter $2k+3$.  Hence some card has
diameter at most $2k+3$.
\end{proof}

\vspace{-.5pc}
In a tree $T$, the number $r$ of vertices in a longest path is the diameter
plus $1$.

\begin{lemma}\label{largek}
$(r-3)/2\ge k\geq(r-\ell-4)/2$.  In particular, if $r\ge 3\ell+6$,
then $k\ge\ell+1$.
\end{lemma}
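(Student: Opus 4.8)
The plan is to prove the two bounds separately by comparing the diameter $r-1$ of $T$ with the diameter $2k+1$ of a largest admissible evine. For the upper bound, note that by the definition of $k$ the tree $T$ contains a $k$-evine, so $\diam(T)\ge 2k+1$ and hence $r\ge 2k+2$. To improve this to $r\ge 2k+3$, suppose $r=2k+2$; then $T$ itself has diameter $2k+1$, so $T$ is a $k$-evine with $n$ vertices, and since $n\ge n-\ell$ this contradicts the requirement in the definition of $k$ that every $k$-evine have fewer than $n-\ell$ vertices. Hence $r\ge 2k+3$, i.e.\ $k\le(r-3)/2$.

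For the lower bound, the main point is a structural observation: a $j$-evine in $T$ can be large only if its diameter is close to $\diam(T)$. Let $P$ be a path in $T$ with $r$ vertices, and let $H$ be any $j$-evine in $T$. Since $T$ is a tree and $H$ is an induced subtree, $P\cap H$ is a subpath of $P$ contained in $H$, so it has at most $2j+2$ vertices; therefore at least $r-2j-2$ vertices of $P$ lie outside $V(H)$, giving $\C{V(T)}-\C{V(H)}\ge r-2j-2$. Consequently, if $\C{V(H)}\ge n-\ell$ then $r-2j-2\le\ell$, that is, $j\ge(r-\ell-2)/2$. Taking the contrapositive, whenever $j<(r-\ell-2)/2$ every $j$-evine in $T$ has fewer than $n-\ell$ vertices; and any such $j$ satisfies $j\le(r-2)/2$, so $T$ also contains a $j$-evine (a subpath of $P$ with $2j+2$ vertices). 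Hence every integer $j$ with $j<(r-\ell-2)/2$ satisfies both conditions in the definition of $k$, so $k$ is at least the largest such integer.

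Checking the two parities of $r-\ell$ shows that this largest integer is at least $(r-\ell-4)/2$, so $k\ge(r-\ell-4)/2$; and if $r\ge 3\ell+6$ then $(r-\ell-4)/2\ge\ell+1$, whence $k\ge\ell+1$ since $k$ is an integer. The argument is short, and I do not expect a genuine obstacle; the only point requiring care is the structural claim about $P\cap H$ in the lower bound (and after that the parity bookkeeping for the final floor estimate is routine).
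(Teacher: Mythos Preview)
Your proof is correct, and it is more direct than the paper's. The paper derives both bounds from the preceding Lemma~\ref{diam}, which says that every connected card has diameter at least $2k+2$ and some connected card has diameter at most $2k+3$; then the upper bound follows since a card of diameter $2k+2$ contains a path with $2k+3$ vertices, and the lower bound follows since a card $C$ of diameter at most $2k+3$ meets $P$ in at most $2k+4$ vertices while only $\ell$ vertices of $T$ lie outside $C$, giving $r\le 2k+4+\ell$. You bypass Lemma~\ref{diam} entirely and work straight from the definition of $k$: for the upper bound you observe that $r=2k+2$ would force $T$ itself to be a $k$-evine with $n$ vertices, and for the lower bound you bound $|V(H)|$ for any $j$-evine $H$ via $|V(P)\setminus V(H)|\ge r-2j-2$, so any $j<(r-\ell-2)/2$ witnesses the defining property of $k$. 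Your argument is self-contained and avoids the detour through cards; the paper's route has the advantage that Lemma~\ref{diam} is of independent use elsewhere, so once it is in hand the present lemma is immediate. The one remark worth making is that the phrase ``since $k$ is an integer'' in your last line is unnecessary: when $r\ge 3\ell+6$ the bound $(r-\ell-4)/2\ge\ell+1$ already gives $k\ge\ell+1$ directly.
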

\begin{proof}
Lemma~\ref{diam} guarantees a connected card with diameter at least $2k+2$.
It contains a path with at least $2k+3$ vertices, so $r\ge 2k+3$.

Let $P$ be a longest path in $T$.  By Lemma~\ref{diam}, some connected card $C$
has diameter at most $2k+3$.  Since $T$ has no cycle and $C$ has no path with
more than $2k+4$ vertices, at most $2k+4$ vertices of $P$ appear in $C$.  In
addition, $T$ has only $\ell$ vertices outside $C$.  Hence $r\le 2k+4+\ell$,
which yields the claimed lower bound on $k$.

When $r\ge3\ell+6$, the lower bound on $k$ simplifies to $k\ge\ell+1$.
\end{proof}

\vspace{-1pc}
When $r<n-\ell$, the value of $r$ is the maximum number of vertices in a path
contained in a card.  Next we show that the deck also determines $r$ when
$r\ge n-\ell$.

\begin{lemma}\label{longp}
If $n\ge 4\ell+8$ and $r\ge n-\ell$, then $k\ge\ell+2$, all paths in $T$
with more than $n-r$ vertices intersect any longest path $P$, all $k$-centers
and $(k-1)$-centers in $T$ lie on $P$, and the value of $r$ is determined by
the deck.  
\end{lemma}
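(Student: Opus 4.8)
The plan is to establish the four assertions in the order listed, since each is used in proving the next. The bound $k\ge\ell+2$ is immediate from Lemma~\ref{largek}: the hypotheses give $r\ge n-\ell\ge3\ell+8$, and the inequality $k\ge(r-\ell-4)/2$ then yields $k\ge\ell+2$. The statement about long paths is a counting triviality: a longest path $P$ has $r$ vertices, so only $n-r$ vertices of $T$ lie off $P$, and a path with more than $n-r$ vertices cannot avoid $V(P)$.

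For the claim that every $k$-center and $(k-1)$-center lies on $P$, I would argue uniformly for $j\in\{k-1,k\}$. A $j$-center $z$ is the midpoint of a path $Q$ in $T$ with $2j+1$ vertices, and $2j+1\ge2k-1\ge2\ell+3$ by the first part. In the tree $T$ the intersection of $Q$ with $P$ is a subpath $R$; since at most $n-r\le\ell$ vertices of $Q$ lie off $P$, $R$ has at least $2j+1-\ell\ge\ell+3$ vertices, so $R$ is nonempty. Writing $Q$ as $Q_1RQ_2$, each of the tails $Q_1,Q_2$ meets $P$ in exactly one vertex (an endpoint of $R$), so together they have at most $\ell$ edges, hence each has length at most $\ell$, which is less than $j$. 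Since $z$ is at distance $j$ from both ends of $Q$, it cannot lie in a tail, so $z\in R\subseteq V(P)$.

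It remains to show the deck determines $r$, which is the crux. First, the case $r\ge n-\ell$ is itself deck-recognizable: it holds exactly when some card is a path with $n-\ell$ vertices (if $r\ge n-\ell$, trim a longest path of $T$ to $n-\ell$ vertices by deleting all off-path vertices and then path endpoints; if $r<n-\ell$, then $T$ has no path with $n-\ell$ vertices at all). So assume $r\ge n-\ell$ and fix a longest path $P=v_1\cdots v_r$. By the part just proved, every $k$-center lies on $P$, so it is some $v_i$. The structural fact to use is that, because $P$ is longest, any branch hanging at $v_j$ (a component of $T-v_j$ avoiding $v_{j-1}$ and $v_{j+1}$) has depth at most $\min(j-1,r-j)$, since otherwise one could splice a deepest branch path onto $P$ and obtain a longer path. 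Consequently, for an internal vertex $v_i$ of $P$ the component of $T-v_i$ toward $v_1$ reaches distance exactly $i-1$, the component toward $v_r$ reaches distance exactly $r-i$, and every other component reaches distance at most $\min(i-1,r-i)$; hence two components of $T-v_i$ each reach distance $\ge k$ from $v_i$ (equivalently $v_i$ is a $k$-center) precisely when $\min(i-1,r-i)\ge k$, i.e.\ when $k+1\le i\le r-k$. The endpoints $v_1,v_r$ are leaves of $T$ and are not $k$-centers; conversely each $v_i$ with $k+1\le i\le r-k$ really is a $k$-center, witnessed by the subpath $v_{i-k}\cdots v_{i+k}$, whose ball of radius $k$ in $T$ is a $k$-vine centered at $v_i$. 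Since $r\ge2k+3$ by Lemma~\ref{largek}, the range $k+1\le i\le r-k$ contains exactly $r-2k$ integers, so $T$ has exactly $r-2k$ $k$-centers. As Corollary~\ref{countk} shows the deck determines the number of $k$-centers and Lemma~\ref{kprop} shows it determines $k$, the deck determines $r=(\text{number of }k\text{-centers in }T)+2k$.

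The first two assertions and the path-splicing bound on branch depths are routine. The step I expect to require the most care, and would call the main obstacle, is verifying that the second-largest ``directional depth'' from an internal vertex of a longest path is exactly $\min(i-1,r-i)$, with no contribution from side branches: this is where the longest-path property is invoked twice (to cap side branches both toward $v_1$ and toward $v_r$), and it is precisely what makes the $k$-center count equal $r-2k$ with no error term, so that $r$ can be read off exactly from the deck.
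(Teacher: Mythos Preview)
Your proof is correct and follows essentially the same approach as the paper: bound $k$ via Lemma~\ref{largek}, use the trivial counting for long paths meeting $P$, show $(k{-}1)$- and $k$-centers lie on $P$, and then recover $r$ as $2k$ plus the number of $k$-centers via Corollary~\ref{countk}. The only noteworthy difference is in the third step: where the paper argues in one line that a vertex off $P$ has only one path toward $P$ (so cannot support two disjoint length-$j$ paths), you instead analyze the intersection $R=Q\cap P$ of a $(2j{+}1)$-path with $P$ and bound the tails; both arguments are valid and of comparable length.
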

\begin{proof}
With $r\ge n-\ell$ and $n\ge4\ell+8$, Lemma~\ref{largek} yields
$k\ge(n-2\ell-4)/2\ge \ell+2$.  All paths with more than $n-r$ vertices
intersect $P$, since only $n-r$ vertices exist outside $P$.

Since $\ell\ge n-r$, all paths with at least $\ell+1$ vertices intersect $P$.
Since also $k\ge\ell+2$, all paths with at least $k-1$ vertices intersect $P$.
From a vertex outside $P$, only one path leads to $P$; hence all $k$-centers
and $(k-1)$-centers lie on $P$.

Since $T$ has a $k$-vine, $r\ge 2k+1$, so $k$-centers (and $(k-1)$-centers) do
exist on $P$.  When $2j+1\le r$, all the vertices of $P$ are $j$-centers except
the $j$ vertices closest to each end.  By Corollary~\ref{countk}, the deck
determines the number $s$ of $k$-centers.  Hence $r=s+2k$.
\end{proof}

\vspace{-1pc}
When $k\ge\ell+1$, it follows from Corollary~\ref{countk} that we know the
number of $(\ell+1)$-centers in $T$.  Furthermore, we know the multiset of
maximal $(\ell+1)$-vines in $T$.  Among these, there is a particular type
of $(\ell+1)$-vine that figures heavily in our analysis.

\begin{definition}
A {\it branch vertex} in a tree is a vertex having degree at least $3$.
A {\it leg} in a non-path tree is a path from a leaf to the nearest branch
vertex.  A {\it spider} is a tree with at most one branch vertex; its
{\it degree} is the degree of the branch vertex.  The spider $S_{\VEC j1d}$
with $1+\SE i1d j_i$ vertices is the union of legs with lengths $\VEC j1d$
having a common endpoint.  A {\it spi-center} is an $(\ell+1)$-center that is
the branch vertex in a copy of $S_{\ell+1,\ell+1,\ell+1}$ in $T$.  We will
often discuss $3$-legged spiders whose three legs have the same length.  For
this special situation with legs of length $j$ we write $\Sp{j}$; that is,
$\Sp{\ell+1}=S_{\ell+1,\ell+1,\ell+1}$.
\end{definition}

\begin{corollary}\label{spi}
If $k\ge\ell+1$, then the deck determines the number of spi-centers in $T$.
\end{corollary}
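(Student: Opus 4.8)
The plan is to express the number of spi-centers purely in terms of data already recovered in Corollary~\ref{countk}. Since $\ell+1\le k$, that corollary gives us, for every isomorphism type $F$ of $(\ell+1)$-vine, the number $m(F,T)$ of maximal $(\ell+1)$-vines in $T$ isomorphic to $F$, together with a bijection between the maximal $(\ell+1)$-vines of $T$ and the $(\ell+1)$-centers of $T$. So it will suffice to decide, for each such type $F$, whether the center of a maximal $(\ell+1)$-vine of type $F$ is a spi-center, and then sum the corresponding values $m(F,T)$.

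The key step is to show that being a spi-center is a condition that can be read off a single maximal $(\ell+1)$-vine. I would argue as follows: suppose $v$ is an $(\ell+1)$-center, and let $M$ be the unique maximal $(\ell+1)$-vine centered at $v$ (Lemma~\ref{vinemax}); by the proof of that lemma, $M$ is the subtree of $T$ induced by the vertices within distance $\ell+1$ of $v$. Every vertex of a copy of $\Sp{\ell+1}$ whose branch vertex is $v$ lies within distance $\ell+1$ of $v$, so any such copy is contained in $M$; conversely, since a connected subgraph of a tree is an induced subgraph, any copy of $\Sp{\ell+1}$ inside $M$ with branch vertex $v$ is a copy of $\Sp{\ell+1}$ in $T$ whose branch vertex is the $(\ell+1)$-center $v$. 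Hence $v$ is a spi-center if and only if $M$ contains a copy of $\Sp{\ell+1}$ whose branch vertex is the center of $M$. Since an $(\ell+1)$-vine has even diameter and therefore a unique center, this last property depends only on the isomorphism type of $M$.

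With that established, I would finish by calling an $(\ell+1)$-vine $F$ a spi-type when the center of $F$ is the branch vertex of a copy of $\Sp{\ell+1}$ in $F$, and observing that the number of spi-centers in $T$ then equals $\sum_{F}m(F,T)$ over all spi-type isomorphism types $F$ of $(\ell+1)$-vines, each term being known from $\cD$ by Corollary~\ref{countk}. The only point that is not pure bookkeeping — and the one I would be most careful about — is the containment claim that a copy of $\Sp{\ell+1}$ rooted at $v$ must lie inside the ball of radius $\ell+1$ about $v$, hence inside the already-reconstructed maximal $(\ell+1)$-vine about $v$; this is exactly what converts a seemingly global property of $T$ into a local test on the recovered multiset of maximal $(\ell+1)$-vines.
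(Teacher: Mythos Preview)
Your argument is correct and follows essentially the same route as the paper: use Corollary~\ref{countk} to recover the multiset of maximal $(\ell+1)$-vines, and count those whose center is the branch vertex of a copy of $\Sp{\ell+1}$. The paper states this bijection in a single sentence (``spi-centers correspond bijectively to the maximal $(\ell+1)$-vines that contain $\Sp{\ell+1}$''), while you spell out why containment of $\Sp{\ell+1}$ with branch vertex at $v$ is equivalent to containment in the radius-$(\ell+1)$ ball about $v$; that extra care is fine and the two formulations agree, since any copy of $\Sp{\ell+1}$ inside a diameter-$2(\ell+1)$ tree must have its branch vertex at the center.
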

\begin{proof}
When $j\le k$, by Corollary~\ref{countk} we know all the maximal $j$-vines.
The spi-centers correspond bijectively to the maximal $(\ell+1)$-vines that
contain $\Sp{\ell+1}$.
\end{proof}

\vspace{-1.5pc}
\begin{lemma}\label{spider}
Suppose $n\ge4\ell+1$ and $r<n-\ell$.  If $T$ has a card $C$ that is a
$3$-legged spider, then $T$ has no spi-center other than the branch vertex
of $C$.
\end{lemma}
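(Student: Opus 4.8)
The plan is to show that any spi-center in $T$ must coincide with the branch vertex of $C$, by a counting/space argument. Write $C$ as a $3$-legged spider; since $C$ is a card it has exactly $n-\ell$ vertices, and its branch vertex $v$ has three legs whose lengths sum to $n-\ell-1$. First I would argue that, because $r<n-\ell$, the card $C$ is \emph{not} a path, so $C$ genuinely has a branch vertex $v$; moreover at least one leg of $C$ is long. Indeed, since a $3$-legged spider with three legs has at most $2a+1$ vertices when its longest leg has $a$ edges (taking the two longest legs), and $C$ has $n-\ell\ge 3\ell+1$ vertices, the longest leg of $C$ has at least $(n-\ell-1)/2$ edges. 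In fact I will want the two longest legs of $C$ to each have length at least $\ell+1$: if the shortest leg has length $b$, then the other two legs together have $n-\ell-1-b$ edges, so the middle leg has length at least $(n-\ell-1-b)/2 \ge (n-2\ell-1)/2\ge \ell$ — and pushing the bound $n\ge 4\ell+1$ a bit, at least two legs of $C$ have length $\ge\ell+1$. (The precise threshold computation is routine; the point is that $C$ already contains a copy of $\Sp{\ell+1}$ centered at $v$, so $v$ is itself a spi-center of $T$.)

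Next, suppose for contradiction that $T$ has a spi-center $u\ne v$; so $u$ is an $(\ell+1)$-center that is the branch vertex of some copy of $\Sp{\ell+1}$ in $T$, call it $S_u$, consisting of three edge-disjoint legs of length $\ell+1$ from $u$. The card $C$ is obtained from $T$ by deleting some set $X$ of $\ell$ vertices. The key step is to locate $u$ relative to $C$ and derive a contradiction from the scarcity of deleted vertices. Since $T$ is a tree and $C = T - X$ is a $3$-legged spider with branch vertex $v$, every branch vertex of $T$ other than $v$ must either be destroyed (lie in $X$) or have its degree reduced to $2$ or $1$ in $C$ (so all but at most two of its incident subtrees in $T$ are entirely deleted). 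I would split into cases according to whether $u\in X$ or $u\in V(C)$. If $u\in V(C)$, then $u$ lies on one of the three legs of $C$ (it is not $v$), hence $u$ has degree $\le 2$ in $C$; so at least one of the three legs of $S_u$ — and therefore at least $\ell+1$ of its vertices, minus possibly $u$ — lies in $X$, already using up nearly all of $X$, and then a second divergence (either the third leg of $S_u$, or the long legs of $C$ on the far side of $v$) cannot also be accommodated. If $u\in X$, then the three legs of $S_u$, minus $u$ and minus $X$, survive as three subpaths in $C$; tracing where they attach, $C$ would have to contain, near the image of $u$'s neighborhood, a structure that is not a subpath of a $3$-legged spider unless at least two of these three branches are mostly deleted — again costing more than $\ell$ vertices, given that the legs have length $\ell+1$. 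In every case the total number of vertices that must lie in $X$ exceeds $\ell$, a contradiction.

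The main obstacle I anticipate is the bookkeeping in the case $u\in V(C)$ where $u$ lies on a long leg of $C$ \emph{far from $v$}: here $u$ has degree $2$ in $C$, so two of $S_u$'s three legs might partly run along that leg of $C$ toward and away from $u$, and only the \emph{third} leg of $S_u$ is forced into $X$; I must then argue that this third leg, having $\ell+1$ edges, forces $\ell+1$ or $\ell$ vertices into $X$, leaving no room for the other discrepancies (for instance, the two long legs of $C$ past $v$ must still be present in $T$, and they together with $S_u$ and $v$ cannot all coexist in a tree without a further branch vertex somewhere along the way that $X$ must also kill). Making this "no room left" argument tight is where the hypothesis $n\ge 4\ell+1$ (equivalently, the legs of $C$ being long enough that $v$ is genuinely a spi-center and the spider $C$ is "thick" in the right places) will be used; I expect the clean way to run it is to observe that $u$ and $v$, being two distinct $(\ell+1)$-centers each carrying a $\Sp{\ell+1}$, would witness in $T$ a subtree on at least $2(\ell+1)+ \mathrm{dist}(u,v)+1 > 2\ell+ (n-\ell) - \ell = n$ vertices once we add in the surviving long legs of $C$ — impossible.
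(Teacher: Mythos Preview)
Your core idea is the same as the paper's, but you have buried a two-line proof under several paragraphs of unnecessary work, and in doing so you have both inserted a false claim and failed to notice that your own argument already terminates.

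First, the detour in your opening paragraph is both unneeded and wrong. The lemma does \emph{not} assert that the branch vertex $v$ of $C$ is a spi-center; it only asserts that no \emph{other} vertex is. So there is no need to show two legs of $C$ have length at least $\ell+1$, and in fact that claim is false: with $n=4\ell+1$ and $r=n-\ell-1$ one can have legs of lengths $3\ell-2,1,1$. Your inequality ``the middle leg has length at least $(n-\ell-1-b)/2$'' goes the wrong way (from $a\ge b$ you get $b\le (a+b)/2$, not $b\ge$).

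Second, in your main argument you already have the contradiction and do not see it. In the case $u\in V(C)$ with $u\ne v$, you correctly observe that $\deg_C(u)\le 2$, so at least one edge of $S_u$ at $u$ leaves $C$; since $C$ is a connected subtree of the tree $T$, the \emph{entire} leg through that edge (all $\ell+1$ vertices other than $u$) lies outside $C$. But $|X|=\ell$. That is the contradiction --- not ``nearly all of $X$'', but strictly more than all of $X$. The same observation handles $u\notin V(C)$ instantly: at most one of the three legs can head toward $C$, so at least one (in fact two) lies entirely outside $C$. There is no ``main obstacle''; the bookkeeping you anticipate in your third paragraph never arises. The paper's proof is literally: ``A copy of $\Sp{\ell+1}$ with branch vertex other than $z$ would have a leg completely outside $C$. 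Since $T$ has only $\ell$ vertices outside $C$, this cannot occur.''
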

\begin{proof}
A copy of $\Sp{\ell+1}$ with branch vertex other than $z$ would have a leg
completely outside $C$.  Since $T$ has only $\ell$ vertices outside $C$, this
cannot occur.
\end{proof}
%
%

\begin{definition}
In a tree $T$, an {\it offshoot} from a vertex set $S$ (or from the subgraph
induced by $S$) is a component of $T-S$, rooted at its vertex having a neighbor
in $S$.  The {\it length} of an offshoot is the maximum number of vertices in a
path in it that begins at its root.
\end{definition}

\begin{lemma}\label{central}
Let $P$ be a longest path in $T$, with vertices $\VEC v1r$.  If $v_j$ with
$j\le (r+1)/2$ is a spi-center and no vertex between $v_j$ and $v_{r+1-j}$ is a
spi-center, then all longest paths have $\VEC vj{r+1-j}$ as their $r-2j+2$
central vertices.
\end{lemma}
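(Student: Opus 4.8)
The plan is to prove the lemma in two stages: first, that every longest path $Q$ in $T$ contains the entire subpath from $v_j$ to $v_{r+1-j}$; second, that this subpath necessarily occupies the $r-2j+2$ middle positions of $Q$. I will use repeatedly these distance facts for $P=\VEC v1r$: if $u$ is a neighbor of $v_m$ with $u\notin\{v_{m-1},v_{m+1}\}$, then any path leaving $v_m$ through $u$ has at most $\min\{m-1,r-m\}$ edges (otherwise concatenate it with whichever of the arms $v_m v_{m-1}\cdots v_1$, $v_m v_{m+1}\cdots v_r$ is vertex-disjoint from it, contradicting maximality of $P$), while any path from $v_m$ toward $v_{m-1}$ (resp.\ $v_{m+1}$) has at most $m-1$ (resp.\ $r-m$) edges. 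A first consequence: the three legs of the copy of $\Sp{\ell+1}$ at $v_j$ enter three distinct components of $T-v_j$, at most two of which contain $v_{j-1}$ or $v_{j+1}$, so some leg leaves $v_j$ through a neighbor $u\ne v_{j+1}$; that leg has $\ell+1$ edges, but through $u$ at most $\min\{j-1,r-j\}=j-1$ edges are available (here $j\le(r+1)/2$), so $j\ge\ell+2$. Also, $v_j$ being an $(\ell+1)$-center forces $\diam(T)\ge2\ell+2$, i.e.\ $r\ge2\ell+3$. If $j=(r+1)/2$ then $v_j$ is the unique center of $T$ and the claim reduces to the classical fact that every longest path meets the center, so assume henceforth $j<(r+1)/2$.

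For the first stage, fix a longest path $Q$. Being paths in the tree $T$, $P$ and $Q$ share a vertex set that induces a subpath $v_a v_{a+1}\cdots v_b$ of $P$, and it contains the center(s) of $T$ (both $P$ and $Q$ do); in particular $a\le\lceil r/2\rceil$ and $b\ge\lfloor r/2\rfloor+1$. Suppose $a>j$. Then $a-1\ge j\ge\ell+2$, $v_{a-1}\notin V(Q)$, and I claim $v_a$ must be a spi-center — impossible, since $v_a$ then lies strictly between $v_j$ and $v_{r+1-j}$ (indeed $j<a\le\lceil r/2\rceil<r+1-j$). To see the claim: $v_a$ is not an endpoint of $Q$, since a path from $v_a$ not using the edge to $v_{a-1}$ has at most $r-a<r-1$ edges; thus $Q$ enters $v_a$ through two of its neighbors, neither being $v_{a-1}$. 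If $v_{a+1}\in V(Q)$ (i.e.\ $a<b$), one of the two is $v_{a+1}$, through which $Q$ has at most $r-a$ edges, so through the other neighbor $u\notin\{v_{a-1},v_{a+1}\}$ it has at least $(r-1)-(r-a)=a-1$ edges; as at most $\min\{a-1,r-a\}=a-1$ edges are available through $u$, equality holds. If $v_{a+1}\notin V(Q)$ — forcing $b=a$, hence $r$ odd and $a=(r+1)/2$ — then both neighbors $Q$ uses avoid $v_{a-1}$ and $v_{a+1}$, and the same count gives $(r-1)/2$ edges along each. Either way $v_a$ reaches a path of length $\ge\ell+1$ through three of its neighbors: $v_{a-1}$ (length $a-1\ge\ell+1$), $v_{a+1}$ (length $r-a\ge(r-1)/2\ge\ell+1$), and $u$ (or two such non-$P$ neighbors). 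These three paths lie in distinct components of $T-v_a$, so their union is a copy of $\Sp{\ell+1}$ with branch vertex $v_a$; and since $v_a$ has two directions of reach $\ge\ell+1$, the ball of radius $\ell+1$ about $v_a$ is an $(\ell+1)$-vine centered at $v_a$, making $v_a$ an $(\ell+1)$-center. So $v_a$ is a spi-center — contradiction. Hence $a\le j$. The mirror-image argument at $v_b$ (which uses only $j\ge\ell+2$, not any property of $v_{r+1-j}$) gives $b\ge r+1-j$. Thus $Q$ contains all of $v_j,\dots,v_{r+1-j}$, necessarily as a subpath.

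For the second stage, let the subpath $v_j v_{j+1}\cdots v_{r+1-j}$ occupy positions $t,t+1,\dots,t+(r+1-2j)$ of $Q$. The $t-1$ vertices of $Q$ preceding it form, together with $v_j$, a path leaving $v_j$ through a neighbor other than $v_{j+1}$, hence with at most $j-1$ edges; so $t-1\le j-1$, i.e.\ $t\le j$. Symmetrically, the $2j-1-t$ vertices of $Q$ following it form, with $v_{r+1-j}$, a path leaving $v_{r+1-j}$ through a neighbor other than $v_{r-j}$, hence with at most $j-1$ edges; so $2j-1-t\le j-1$, i.e.\ $t\ge j$. Therefore $t=j$, so $v_j,\dots,v_{r+1-j}$ occupy positions $j$ through $r+1-j$ of $Q$ — exactly the $r-2j+2$ central vertices.

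The crux is the first stage. A purely local attempt — trying to force each $v_i$ onto $Q$ by concatenating a hypothetical path through $v_i$ with half of $P$ — stops producing contradictions once $r$ is large compared with $\ell$; the key is instead to look at the endpoint $v_a$ of the overlap $Q\cap P$ and show it is pinned down as a spi-center, which ``no spi-center strictly between $v_j$ and $v_{r+1-j}$'' forbids. One must also take a little care with the parity of $r$ and with the degenerate overlaps ($Q\cap P$ being a single center, or the central edge), as indicated above.
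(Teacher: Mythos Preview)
Your proof is correct and follows essentially the same approach as the paper: both identify the vertex where a second longest path $Q$ diverges from $P$ and show that this divergence point must be a spi-center, contradicting the hypothesis. Your version is considerably more explicit---you carefully handle the degenerate case $a=b$, verify that the three legs at $v_a$ each have length at least $\ell+1$, and spell out the ``Stage~2'' position argument---whereas the paper compresses all of this into two sentences and leaves the positional claim implicit.
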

\begin{proof}
Since $v_j$ is a spi-center, $T$ has an offshoot from $P$ at $v_j$ with length
at least $\ell+1$.  Since $P$ is a longest path, $\ell+1\le j-1$.

All longest paths in a tree have the same central vertex (or vertex pair).
If a longest path $P'$ diverges from $P$ at some vertex $w$ between $v_j$ and
$v_{r+1-j}$, then $P'$ has at least $j$ vertices outside $P$.  Now $w$ is
a spi-center, contradicting the hypothesis.
\end{proof}

We will often consider offshoots from a longest path $P$.
The union of a vertex $z$ on $P$ together with the offshoots from $P$ at
$z$ is a rooted tree with $z$ as root.  The next lemma describes an
{\it exclusion argument} that we use in various situations to
obtain offshoots at a vertex $z$ of a tree being reconstructed.

\begin{lemma}[The Exclusion Argument]\label{count}
For a rooted tree $R$ with root $z$, the {\em $z$-offshoots} are the
components of the graph $R-z$, rooted at the neighbors of $z$.  If the largest
$z$-offshoots of $R$ are known (with multiplicities), then the complete list of
$z$-offshoots is determined by the multiset $\cal M$ of all rooted subtrees
obtained from individual $z$-offshoots.
\end{lemma}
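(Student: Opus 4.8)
The plan is to recover the multiset of $z$-offshoots one rooted-isomorphism type at a time, processing the types in order of decreasing number of vertices, in the spirit of the counting argument in Lemma~\ref{counting} but applied to the rooted subtrees of a single tree rather than to the induced subgraphs of $G$. First I would fix notation: for a rooted tree $S$, let $m_S$ be the number of $z$-offshoots of $R$ isomorphic to $S$ as rooted trees, and let $a_S$ be the number of members of $\mathcal{M}$ isomorphic to $S$. The numbers $a_S$ are exactly what we are handed, together with the list of the largest offshoots. For rooted trees $S$ and $T$, let $c(S,T)$ be the number of subsets of $V(T)$ that contain the root of $T$ and induce a subtree isomorphic to $S$ as a rooted tree; this depends only on the pair $(S,T)$, so it is known once $S$ and $T$ are known.

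The driving identity is that, since $\mathcal{M}$ is precisely the multiset union over all $z$-offshoots $O$ of $R$ of the rooted subtrees of $O$ that contain the root of $O$, we have
$$a_S=\sum_T c(S,T)\,m_T$$
for every rooted tree $S$, the sum running over rooted-tree isomorphism types $T$. Two elementary facts make this system triangular: $c(S,T)=0$ whenever $|V(S)|>|V(T)|$, and $c(S,S)=1$, since the only $|V(S)|$-vertex subset of $V(S)$ is $V(S)$ itself. Hence for a fixed $S$ on $s$ vertices the identity becomes $a_S=m_S+\sum_{|V(T)|>s}c(S,T)\,m_T$, the only type with at most $s$ vertices contributing being $T=S$.

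I would then argue by downward induction on $|V(S)|$. The base case is supplied by the hypothesis that the largest $z$-offshoots are known with multiplicities; equivalently, for $S$ of maximum size there is no larger type $T$, so $a_S=m_S$. For the inductive step, suppose $m_T$ is known for every $T$ with more vertices than $S$. Then in $a_S=m_S+\sum_{|V(T)|>s}c(S,T)\,m_T$ the left side is read off from $\mathcal{M}$ and every summand on the right is known --- the coefficients $c(S,T)$ because $S$ and $T$ are known, and the multiplicities $m_T$ by the induction hypothesis --- so we solve for $m_S$. Carrying this down to the one-vertex type produces the complete multiset of $z$-offshoots.

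There is no serious obstacle here; the content is just the bookkeeping behind Lemma~\ref{counting} transported to a rooted setting. The two points that need care are conventions. First, ``rooted subtree'' must mean a subtree containing the root of the offshoot it comes from, so that each member of $\mathcal{M}$ is attributable to exactly one offshoot and the offshoots themselves occur in $\mathcal{M}$, which seeds (or cross-checks) the base case. Second, isomorphism must be taken in the rooted sense throughout, so that $c(S,S)=1$ and the linear system is genuinely triangular. I would state both conventions explicitly before beginning the count.
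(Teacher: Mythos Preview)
Your proof is correct and follows essentially the same approach as the paper: both argue by downward induction on the number of vertices, computing the multiplicity of each rooted type $S$ as an offshoot by taking its count in $\mathcal{M}$ and subtracting the contributions from the already-determined larger offshoots. Your version is more explicitly formalized (the notation $a_S,m_S,c(S,T)$ and the triangular identity), while the paper phrases the same subtraction informally, but the content is identical.
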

\begin{proof}
The $z$-offshoots are determined in nonincreasing order of number of vertices.
The largest $z$-offshoots are given initially.  Having determined all
$z$-offshoots with more than $p$ vertices, let $C$ be a rooted tree with $p$
vertices in $\cal M$.  Since we know all $z$-offshoots of $R$ larger than $C$,
we know how many copies of $C$ in $\cal M$ arise from larger $z$-offshoots by
deleting some number of leaves.  The number of copies of $C$ as a $z$-offshoot
is obtained from the number of copies of $C$ in $\cal M$ by excluding those
that arise from larger $z$-offshoots.
\end{proof}

%
%

\section{Sparse Cards and $3$-Legged Spiders}

As in Section~\ref{tools}, we keep $k$ defined as in Definition~\ref{defk} and
$r$ as the maximum number of vertices in a path in the $n$-vertex tree $T$ that
we want to show is determined by its $(n-\ell)$-deck $\cD$.
In this section we show that trees containing $\Sp{\ell+1}$ (that is,
having a spi-center) are $\ell$-reconstructible when their diameter and 
number of vertices are not small.  The presence of $\Sp{\ell+1}$
guarantees $r\ge2\ell+3$, but the stronger lower bound $r\ge3\ell+6$
will guarantee $k\ge\ell+1$ (by Lemma~\ref{largek}).  The lower bound
$r\ge3\ell+6$ is implied by $n\ge6\ell+6$ and $r\ge n-3\ell$.  We first
define additional terminology for structures used in the proofs.

\begin{definition}\label{long}
An {\it $r$-path} is an $r$-vertex path; we use this term only for longest
paths in trees whose longest paths have $r$ vertices.  In the $(n-\ell)$ deck
of such a tree, a {\it long card} is a connected card containing an $r$-path.
Similarly, a {\it long csc} or {\it long subtree} is a csc or subtree
containing an $r$-path.

When $r<n-\ell$, a {\it sparse card} is a long card such that some $r$-path 
in the card contains exactly one branch vertex of the card.  That branch vertex
is the {\it primary vertex} of the card.  The {\it degree} of a sparse card $C$
is the degree in $C$ of its primary vertex.  Note that an $r$-path omits
at least one leg in any copy of $\Sp{\ell+1}$, so $\Sp{\ell+1}\esub T$ implies
$r<n-\ell$.
\end{definition}

\begin{lemma}\label{onelctr}
If $n\ge6\ell+6$ and $r\ge n-3\ell$, and $T$ contains a sparse card
and exactly one spi-center, then $T$ is $\ell$-reconstructible.
\end{lemma}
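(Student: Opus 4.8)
The plan is to recognize the case, identify a deck‑determined neighborhood of the unique spi‑center, pin down the shape of $T$ that the sparse card forces, and then reconstruct $T$ piece by piece.

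\emph{Recognition and setup.} Since $T$ has a spi‑center we have $\Sp{\ell+1}\esub T$, so $r<n-\ell$ by the remark in Definition~\ref{long}; hence $r$ equals the largest number of vertices in a path appearing in $\cD$. From $n\ge6\ell+6$ and $r\ge n-3\ell$ we get $r\ge3\ell+6$, so $k\ge\ell+1$ by Lemma~\ref{largek}. Thus Corollary~\ref{spi} applies and $\cD$ determines the number of spi‑centers; as the presence of a sparse card is also visible in $\cD$, the hypotheses of the lemma are recognizable. By Corollary~\ref{countk} with $j=\ell+1$ we know the multiset of maximal $(\ell+1)$‑vines of $T$; since there is a unique spi‑center $z$, exactly one of them contains $\Sp{\ell+1}$, and that one is $M$, the subtree of $T$ induced by the ball of radius $\ell+1$ about $z$. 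So $M$, rooted at $z$, is known.

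\emph{Coarse structure.} Let $C=T-S$ with $|S|=\ell$ be a sparse card, let $P=\VEC v1r$ be an $r$-path in $C$, and let $w$ be the primary vertex of $C$ on $P$. Each leg of a copy of $\Sp{\ell+1}$ contains $\ell+1$ vertices other than its center, while $C$ omits only $\ell$ vertices; a short budget count then shows that $w$ is the vertex of $P$ nearest $z$ (so $z=w$ or $z\notin P$) and that $\deg_C(v_i)\le2$ for every $v_i\ne w$ on $P$, so $S$ contains every vertex of every offshoot of $P$ at a vertex other than $w$. Hence those offshoots contain at most $\ell$ vertices in all; combined with the facts that $P$ is longest (bounding offshoot lengths near the ends of $P$) and that a second long offshoot of $P$ would yield a second spi‑center (bounding them in the middle), each such offshoot has length at most $\ell$. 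Therefore $T$ is the path $P$ together with a single subtree $B$ rooted at $w$ with $z\in V(B)$ and $|B|\le3\ell+1$, plus offshoots of $P$ at vertices other than $w$ totalling at most $\ell$ vertices. A further estimate gives $d_T(z,w)\le\ell-2$, so $w\in V(M)$; and since $|B|\le3\ell+1\le n-\ell$, the subtree $B$ is a csc.

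\emph{Reconstruction.} First I recover the two arms of $P$ (the subpaths from the ends of $P$ to $w$ with the short offshoots along them) and the position of each short offshoot; since every offshoot involved has length at most $\ell$, these arm subtrees and their short offshoots occur inside cards, and one identifies them from the cards by a maximality argument together with Lemma~\ref{counting} to settle multiplicities. Next I recover $B$: rooting $T$ at $z$, the largest $z$-offshoot is the one reaching $w$, which --- by what was just recovered and by the known $M$ --- is determined, so the Exclusion Argument (Lemma~\ref{count}) yields the complete list of $z$-offshoots and hence all of $B$; comparing $M$ against the arms then fixes the index $i_w$ of $w$ on $P$. Finally I assemble $T$ from $P$, the rooted blob $B$ attached at $v_{i_w}$, and each short offshoot attached at its determined vertex; since every constituent and every attachment point is forced by $\cD$, this is the unique reconstruction.

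I expect the main obstacle to be the first two steps of the reconstruction: one must prove that the arm subtrees, the short offshoots, and the $z$-offshoots are genuinely determined by $\cD$ --- not merely their numbers of vertices --- which requires careful bookkeeping with the counting arguments of Lemmas~\ref{counting} and~\ref{count} and their multiplicities, and one must rule out spurious reassemblies coming from automorphisms of $M$ or of the spine. Several boundary configurations also need separate attention: $z=w$ versus $z\notin P$, $w$ close to an end of $P$, $|B|$ only slightly above $\ell$, and offshoots of length exactly $\ell$.
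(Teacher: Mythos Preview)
Your recognition and coarse-structure paragraphs are largely sound, but one structural claim is off: the case $z\notin P$ cannot occur.  If the spi-center $z$ were off $P$, let $w$ be the vertex of $P$ closest to $z$; then the two portions of $P$ from $w$ are each at least as long as any leg of $\Sp{\ell+1}$ through $z$, so $w$ itself is a spi-center, contradicting uniqueness.  Thus $z$ is the primary vertex of the sparse card, and in particular $d_T(z,w)=0$, which makes your later phrase ``the largest $z$-offshoot is the one reaching $w$'' meaningless.  The actual $z$-offshoots are $T_1$, $T_2$, and the offshoots from $P$ at $z$; the largest is $T_2$.

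The more serious gap is that your ``Reconstruction'' paragraph is an outline, not an argument.  Recovering the two arms (with their short offshoots, in the correct positions along $P$) is precisely the hard part of this lemma, and you dispose of it with ``a maximality argument together with Lemma~\ref{counting}'' without saying what family you count or how you break the left/right ambiguity.  The paper spends most of its proof here: it introduces a largest long csc $C_2$ in which both $v_j$ and $v_{r+1-j}$ have degree $2$, proves a Claim that knowing $T_1$ and $T_2$ suffices to finish (via the unique maximal $(\ell+1)$-vine at $z$ and a carefully chosen family of cscs containing $\Sp{\ell+1}$), and then runs a case analysis on whether $j<(r+1)/2$ and whether $C_2$ is symmetric under reversal, each case requiring a different auxiliary csc ($C_3$, $C_4$, $C_5$) to pin down which of $v_j,v_{r+1-j}$ is $z$ and to extract the missing offshoots.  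None of this machinery is present in your sketch, and your final paragraph explicitly concedes that these steps are unproved.

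Your invocation of the Exclusion Argument is also incomplete: Lemma~\ref{count} requires as input the multiset $\mathcal{M}$ of all rooted subtrees of individual $z$-offshoots, and you never explain how to read this multiset off the deck.  Knowing $M$ (the maximal $(\ell+1)$-vine at $z$) gives you only the truncation of each $z$-offshoot to depth $\ell+1$, not all rooted subtrees of the full offshoots, so Lemma~\ref{count} does not apply as stated.
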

\begin{proof}
As noted before Definition~\ref{long}, the hypotheses imply $r\ge3\ell+6$,
which implies $k\ge\ell+1$.  By Lemma~\ref{kprop}, we know $k$, and then
Corollary~\ref{spi} gives us the number of spi-centers.  By Lemma~\ref{longp},
we also know $r$.  We see all cards in the deck, so we see that $T$ has a
sparse card.  Thus we recognize that $T$ has the specified properties.  As
noted in Definition~\ref{long}, $\Sp{\ell+1}\esub T$ implies $r<n-\ell$.

Let $C_1$ be a sparse card with $r$-path $P$ and primary vertex $z$.  We claim
that this vertex $z$ is the unique spi-center in $T$.  Otherwise, let
$w$ be the vertex on $P$ closest to the spi-center.  Since $P$ is a longest
path, the two portions of $P$ leaving $w$ are as long as any leg of
$\Sp{\ell+1}$ that is not on $P$, which makes $w$ a spi-center.  However, no
vertex of $P$ other than $z$ can be a spi-center, since $C_1$ being a sparse
card means that at most $\ell$ vertices lie in offshoots from $P$ at vertices
other than $z$.  Thus the unique spi-center is $z$ (see Figure~\ref{c1c2fig}).

Since we see $z$ in $C_1$, we may specify $j$ with $j\le{(r+1)/2}$ by saying
that the distance of $z$ from the closest endpoint of $P$ is $j-1$.  Since
$r\ge n-3\ell$, there are at most $2\ell$ vertices in offshoots from $P$ at
$z$.  Thus no copy of $\Sp{\ell+1}$ has two legs outside $P$.  This implies
$j-1\ge \ell+1$.

\begin{figure}[h]
\begin{center}
\includegraphics[scale=0.5]{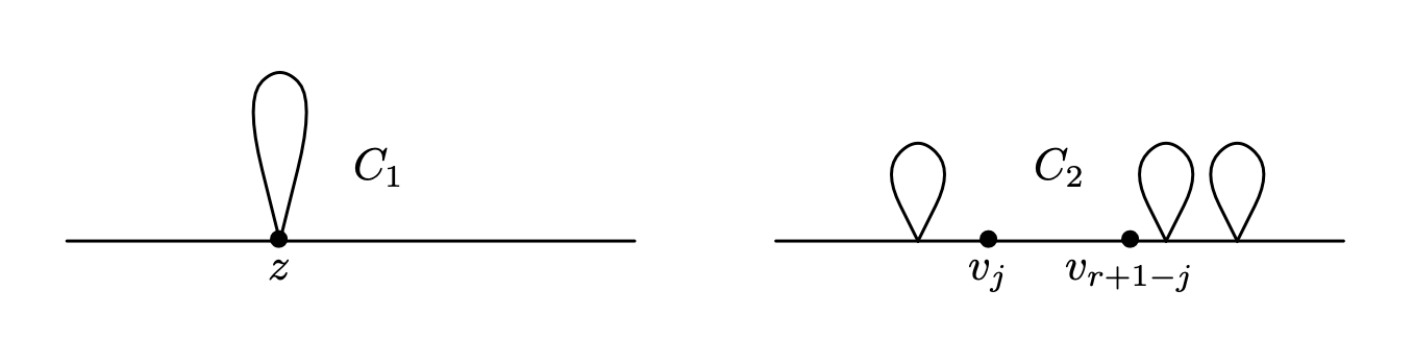}
\caption{Sparse card $C_1$ and large csc $C_2$}\label{c1c2fig}
\end{center}
\end{figure}
Let $C_2$ be a largest long csc among those having an $r$-path $P'$ with
vertices $\VEC v1r$ such that $d_{C_2}(v_j)=d_{C_2}(v_{r+1-j})=2$.  (see
Figure~\ref{c1c2fig}).  By Lemma~\ref{central}, every $r$-path in $T$ has
$\VEC vj{r+1-j}$ as its central portion, so $z\in\{v_j,v_{r+1-j}\}$.

Since $z$ is a spi-center, we omit at least $\ell+1$ vertices
in the offshoots from $P'$ at $z$, so $C_2$ has fewer than $n-\ell$ vertices.
By its maximality, $C_2$ contains in full all offshoots from $P'$ at vertices
other than $v_j$ and $v_{r+1-j}$.

Let $T_1$ and $T_2$ be the components of $T-z$ containing the neighbors of
$z$ on $P'$, with $T_1$ containing $j-1$ vertices from $P'$.  If $j<(r+1)/2$,
then let $z$ be the vertex of $\{v_j,v_{r+1-j}\}$ other than $z$; otherwise,
let $z'=z$.  Let $W$ and $W'$ be the unions of the offshoots from $P'$ in $T$
at $z$ and at $z'$, respectively.  Thus $W'\esub T_2$ if $j<(r+1)/2$, but 
$W'=W$ if $j=(r+1)/2$, in which case $T_1$ and $T_2$ are labeled arbitrarily.

\smallskip
{\bf Claim:}
{\it If $T_1$ and $T_2$ are known,
then $T$ can be reconstructed from the deck.}
Since $T_2$ contains $W'$ when $j<(r+1)/2$, and $W'=W$ when $j=(r+1)/2$, it
remains only to determine $W$.  We know $\C{V(W)}$, since we know the rest of
the tree.  If $\C{V(W)}=\ell+1$, then $W$ is just a path and we know it, so we
may assume $\C{V(W)}>\ell+1$.

By Lemma~\ref{kvine}, every $k$-vine in $T$ has fewer than $n-\ell$ vertices
and hence appears as a csc.  Since $\ell+1\le k$, this also holds for every
$(\ell+1)$-vine.  Since $T$ has only one spi-center, there is a unique largest
$(\ell+1)$-vine containing $\Sp{\ell+1}$; it has center $z$.  In this
$(\ell+1)$-vine we see all vertices at distance $\ell+1$ from $z$ in $T$; let
$n'$ be the number of them.

Let $\cC$ be the family of cscs consisting of a path with $2\ell+3$ vertices
and offshoots from the center of the path, at least one of which has length at
least $\ell+1$.  Every such csc contains $\Sp{\ell+1}$, and the branch vertex
of the copy of $\Sp{\ell+1}$ must always be $z$.

Since $r\ge3\ell+3$, any csc in $\cC$ in which two path legs of length
$\ell+1$ come from $T_1$ and $T_2$ omits at least $\ell$ vertices of $P'$
and hence is contained in a card.  Some of these consist of $W$ plus
$2\ell+3$ vertices from $P'$; we will find such a csc.

Let $n_0$, $n_1$, and $n_2$ be the numbers of vertices at distance $\ell+1$
from $z$ in $W$, $T_1$, and $T_2$, respectively.  We know $n_1$ and $n_2$ from
$T_1$ and $T_2$, and we compute $n_0=n'-n_1-n_2$.  The number of cscs with
$2\ell+3+\C{V(W)}$ vertices having path legs in $W$ and $T_2$ is $n_0 n_2$
times the number of rooted subtrees of $T_1$ with $\C{V(W)}$ vertices, and we
know those subtrees.  We similarly eliminate the cscs having legs in $W$ and
$T_1$.  There are no members of $\cC$ in which the two path legs of length
$\ell+1$ come from $W$, since $P'$ has at least $n-3\ell$ vertices and this
would give $C_2$ more than $n-\ell$ vertices.  Since $\C{V(W)}>\ell+1$,
we have discarded cscs with $2\ell+3+\C{V(W)}$ vertices only once.  The
remaining $n_1n_2$ members of $\cC$ with $2\ell+3+\C{V(W)}$ vertices show us
$W$.  This completes the proof of the claim.

It therefore suffices to determine $T_1$ and $T_2$.

\smallskip
{\bf Case 1:} {\it $j<(r+1)/2$ and $C_2$ is asymmetric.}
By asymmetric, we mean that no automorphism of $C_2$ reverses the indexing of
$P'$.  That is, in $C_2$ the vertices $v_j$ and $v_{r+1-j}$ are distinguishable,
although we do not yet know which is $z$.

Recall that $C_2$ has fewer than $n-\ell$ vertices.
Consider augmentations of $C_2$ obtained by adding a path leaving one of
$\{v_j,v_{r+1-j}\}$.  Let $m$ be the maximum $i$ such that we can obtain cscs
by adding to $C_2$ a path of $i$ vertices from either $v_j$ or $v_{r+1-j}$.
Since $j-1\ge\ell+1$ and $z$ is the only spi-center in $T$, no offshoot from
$z'$ has length at least $\ell+1$, so $m\le \ell$.  Since $T$ has a path of
$\ell+1$ vertices grown from $P'$ at $z$, we can tell which of
$\{v_j,v_{r+1-j}\}$ is $z$ by considering trees obtained from $C_2$ by growing
a path of $m+1$ vertices from $v_j$ or $v_{r+1-j}$.  The one that exists as a
csc fixes $z$, since $C_2$ is asymmetric.  By reversing the indexing of $C_2$
along $P'$ if needed, we may now assume $z=v_j$.

Let $C_3$ be a largest subtree of $T$ containing $C_2$ in which $v_j$ has
degree $2$.  Since $C_3$ omits at least $\ell+1$ vertices in an offshoot at
$z$, we see all of $C_3$ in a card, and $C_3$ contains all offshoots from
$P'$ at vertices other than $z$ in full.  This determines $T_1$ and $T_2$, and
the Claim applies to complete the reconstruction of $T$.

\smallskip
{\bf Case 2:} {\it $j<(r+1)/2$ and $C_2$ is symmetric.}
By the symmetry of $C_2$ we may assume $z=v_j$, but we do not yet know the
offshoots at $z$ (that is, $W$) or at $z'$ (that is, $W'$).

\smallskip
{\bf Subcase 2a:} {\it $r+1-2j\le \ell+1$.}
Let $t=\max\{j,r-2j+1+\ell+1\}$.  Let $C_4$ be a largest subtree containing
$S_{\ell+1,\ell+1,t}$ plus offshoots from the spider at the vertex $w$ in the
long leg having distance $r+1-2j$ from the branch vertex.  Since
$\ell+1\le j-1$, we have $j+t\le r$, and hence $T$ contains
$S_{\ell+1,\ell+1,t}$ with the long leg of the spider along $P'$.
Thus $C_4$ exists.

Since $T$ has only one spi-center, the branch vertex of $S_{\ell+1,\ell+1,t}$
in $C_4$ is $z$.  Since $t>j-1$ and $P'$ is a longest path, the long leg of the
spider must extend along $P'$ from $z$.  Again since there is only one
spi-center, the leg cannot depart $P'$ between $z$ and $z'$, so $w=z'$.  Since
the leg extends at least $\ell+1$ vertices past $z'$ and there are at most
$\ell$ vertices in offshoots from $P'$ not at $z$, the leg must continue along
$P'$ past $z'$, and the offshoots from the spider at $w$ must lie in $W'$.

Since $W'$ has at most $\ell$ vertices, and $r+1-2j\le\ell+1$, we have
$t\le \max\{j,2\ell+2\}$.  Thus $C_4$ has at most $\max\{j+3\ell+3,5\ell+5$
vertices.  If $j\le 2\ell$, then since $n\ge 6\ell+5$, the tree $C_4$ fits in a
card, and we see it in the deck.  If $j>2\ell$, then with $T_1$ containing
from $C_4$ at most a short leg of the spider, $C_4$ omits at least $\ell$
vertices from $T_1$ and again fits in a card.  By its maximality, $C_4$ shows
us all of $W'$.  Now we know $T_1$ and $T_2$, and the Claim applies to complete
the reconstruction of $T$.

\smallskip
{\bf Subcase 2b:} {\it $r+1-2j\ge \ell+2$.}
Let $C_5$ be a largest subtree containing $S_{\ell+1,j-1,j}$ such that the end
of the leg of length $j$ in the spider is a leaf of $C_5$.  Since $\ell+1\le
j-1$ and $T$ has only one spi-center, the branch vertex of the required spider
in $C_5$ must be $z$.  All paths of length at least $j$ from $z$ lie in $T_2$.
Since $r-j\ge \ell+j+1$, the subtree $C_5$ omits more than $\ell$ vertices from
the end of $P'$ and hence appears in a card.  Whether the long leg of the spider
in $C_5$ extends into $W'$ or not, by its maximality $C_5$ shows us $T_1$ and
all offshoots in $W$ (it is possible that $W$ has length $j-1$, making $T_1$
and $W$ confusable, but we know them both).

From $C_2$, we know all of $T_2$ except $W'$.  Knowing $W$, we know $\C{V(W)}$.
Since we also know $C_2$, we know how many vertices remain for $W'$; let $m$
denote this value.  Now consider subtrees of $T$ containing an $r$-path
$\VEC u1r$ such that $u_j$ has degree $2$, the offshoots from $u_{r+1-j}$ total
$m$ vertices, and the offshoots from other vertices along the path total
$n-m-\C{V(W)}$ vertices.  Since such subtrees with $u_j=v_j$ omit at least a
path of $\ell+1$ vertices from $W$, they fit in cards, so we see them.  Since
we know $W$, we know all such subtrees in which $u_{r+1-j}=z$.  Discarding them
leaves a csc that shows us $W'$, completing the reconstruction.

{\bf Case 3:} {\it $j=(r+1)/2$.}
Here the primary vertex $z$ of $C_1$ is the central vertex of $P'$.  The csc
$C_2$ shows us all of $T$ except the offshoots at $z$; in particular, we know
$T_1$ and $T_2$.  The Claim applies to complete the reconstruction of $T$.
\end{proof}

\begin{lemma}\label{twolctr}
If $n\ge6\ell+6$ and $r\ge n-3\ell$, and $T$ contains both $\Sp{\ell+1}$ and a
sparse card, then $T$ is $\ell$-reconstructible.
\end{lemma}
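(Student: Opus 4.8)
The plan is to reduce Lemma~\ref{twolctr} to the previous lemma by handling the only case it does not cover, namely when $T$ contains $\Sp{\ell+1}$ and a sparse card but has \emph{more than one} spi-center. First I would check recognizability exactly as in Lemma~\ref{onelctr}: the hypotheses $n\ge6\ell+6$ and $r\ge n-3\ell$ give $r\ge3\ell+6$, hence $k\ge\ell+1$ by Lemma~\ref{largek}; then Lemma~\ref{kprop} gives $k$, Corollary~\ref{spi} gives the number of spi-centers, Lemma~\ref{longp} gives $r$, and the deck displays whether a sparse card is present. So the case split on the number of spi-centers is itself recognizable. If $T$ has exactly one spi-center, Lemma~\ref{onelctr} finishes. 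Thus I may assume $T$ has at least two spi-centers together with a sparse card $C_1$.

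The heart of the argument is that this combination is extremely restrictive. Let $C_1$ be a sparse card with $r$-path $P$ and primary vertex $z$. In a sparse card at most $\ell$ vertices lie in offshoots from $P$ at vertices other than the primary vertex, so by the argument in Lemma~\ref{onelctr} the only vertex of $P$ that can be a spi-center is $z$ itself (the vertex of $P$ nearest any spi-center would itself be a spi-center, since the two arms of the longest path $P$ are at least as long as any off-$P$ leg). Hence every spi-center other than $z$ lies \emph{off} $P$, reached from $P$ through some offshoot. But any spi-center off $P$ carries three edge-disjoint paths of length $\ell+1$, at least two of which must leave $P$; combined with the $\le\ell$ vertices available in non-primary offshoots of a sparse card, this forces all the "extra" spi-centers and their spiders to sit inside the offshoots at $z$, and since $r\ge n-3\ell$ there are at most $2\ell$ vertices there in total. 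One then argues that $2\ell$ vertices cannot support two vertex-disjoint-enough copies of $\Sp{\ell+1}$ unless the spi-centers are clustered along a short path emanating from $z$; in fact I expect the conclusion to be that there are exactly two spi-centers, $z$ and one neighbor-type vertex, with a very rigid local structure (essentially a short "double spider"). The key counting input is Lemma~\ref{central} applied to each spi-center, pinning down the central portion of every $r$-path, plus the Exclusion Argument (Lemma~\ref{count}) to read off the offshoots at $z$ once the rest of the tree is known.

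Concretely, I would proceed as in Lemma~\ref{onelctr}: use Lemma~\ref{central} at the spi-center of $P$ closest to an end to identify the central portion $\VEC vj{r+1-j}$ of all $r$-paths, take $C_2$ a largest long csc with $d_{C_2}(v_j)=d_{C_2}(v_{r+1-j})=2$, and set $T_1,T_2$ to be the two sides of $T-z$. The same Claim as before reduces everything to determining $T_1$ and $T_2$: once these are known, knowing $n$ and the maximal $(\ell+1)$-vines lets us count vertices at distance $\ell+1$ from $z$ in each piece and reassemble $W$ via the $\cC$-family counting. To get $T_1$ and $T_2$, I would enlarge $C_2$ by growing spiders of the form $S_{\ell+1,\ell+1,t}$ or $S_{\ell+1,j-1,j}$ rooted at $v_j$ or $v_{r+1-j}$ exactly as in Cases 1–3 of Lemma~\ref{onelctr}; the presence of a second spi-center only makes the off-$P$ structure at $z$ richer, and since $r\ge n-3\ell$ these enlarged cscs still omit enough of $P'$ (or enough of a short leg on the $T_1$ side) to fit in a card. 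The case analysis mirrors the previous lemma: $j<(r+1)/2$ with $C_2$ asymmetric, $j<(r+1)/2$ with $C_2$ symmetric (further split on whether $r+1-2j\le\ell+1$), and $j=(r+1)/2$.

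The main obstacle will be the symmetric subcase $j<(r+1)/2$ with $r+1-2j\le\ell+1$: there the two sides look alike from $C_2$, the "extra" spiders may straddle or live on either side of $z$, and one must show that the auxiliary card $C_4$ (a largest subtree containing $S_{\ell+1,\ell+1,t}$ plus offshoots at the far vertex $w$) still forces $w=z'$ and still fits inside a card even when the off-$P$ structure at $z$ is as large as $2\ell$ vertices. I expect this to need a careful bookkeeping argument showing $t\le\max\{j,2\ell+2\}$ still holds and that $C_4$ has at most $\max\{j+3\ell+3,\,5\ell+5\}$ vertices, so that $n\ge6\ell+6$ (hence $n\ge5\ell+5$) guarantees it appears in the deck — essentially the same inequality as in Lemma~\ref{onelctr}, but one must double-check that having several spi-centers does not push the relevant offshoot totals past $\ell$ on the $T_1$ side. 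Everything else is a faithful adaptation of Lemma~\ref{onelctr}.
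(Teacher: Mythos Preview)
Your reduction to Lemma~\ref{onelctr} for the one-spi-center case is fine, and you correctly observe that any additional spi-center must lie in an offshoot at $z$ (with $j-1\ge\ell+2$) and that a counting argument forces exactly two spi-centers.  However, the core of your plan---to mirror the Claim and Cases~1--3 of Lemma~\ref{onelctr}---does not go through, because each of those pieces uses the uniqueness of the spi-center in an essential way, not merely for bookkeeping.  In the Claim, uniqueness is what guarantees that the unique maximal $(\ell+1)$-vine containing $\Sp{\ell+1}$ is centered at $z$, and that every member of the family $\cC$ has its branch vertex at $z$; with a second spi-center $x$ there are two such maximal vines and members of $\cC$ can be centered at $x$, so the $n_0n_1n_2$ counting collapses.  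Likewise, in Case~2a the identification ``the branch vertex of $S_{\ell+1,\ell+1,t}$ in $C_4$ is $z$'' and in Case~2b the identification ``the branch vertex of the required spider in $C_5$ must be $z$'' both fail once $x$ is available as an alternative spi-center.  Your final paragraph treats this as a size issue (``does not push the relevant offshoot totals past $\ell$''), but the obstruction is positional: the auxiliary spiders no longer have a forced root.

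The paper's proof takes a genuinely different route that exploits the second spi-center $x$ as a \emph{new anchor} rather than an obstacle.  After showing $s=2$ and reading off the distance $p\le\ell-2$ between $z$ and $x$ from the visible union of the two spiders (at most $5\ell+3$ vertices), it looks for a largest subtree containing $S_{\ell+1,\ell+1,\,p+r-j}$ whose branch vertex is some $x'$ and in which the vertex $z'$ at distance $p$ along the long leg has degree $2$.  The length $p+r-j$ is too long to start at $z$, so $x'=x$, and then the long leg is forced through $z$; since $z$ has degree $2$ this csc omits at least $j-1\ge\ell+2$ vertices of $P$ and hence fits in a card, yielding $T_2$ and the offshoot $Q$ containing $x$ in one stroke.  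A second auxiliary spider $S_{\ell+1,j-1,r-j}$ (with no offshoots on the short legs) then recovers the remaining components of $T-z$, with a short exclusion argument when $Q$ has length exactly $j-1$.  There is no analogue of the Claim and no symmetric/asymmetric case split on $C_2$.
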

\begin{proof}
The hypotheses of this lemma are the same as in Lemma~\ref{onelctr}, except
for dropping the restriction that $T$ has only one spi-center.  Hence all
steps of Lemma~\ref{onelctr} until we first use the restriction on spi-centers
remain valid.  In particular, we have the sparse card $C_1$ containing
$r$-vertex path $P$ with primary branch vertex $z$, recognize $k\ge\ell+1$, and
see $\Sp{\ell+1}$ in $T$.  Again, $z$ is a spi-center, and any other spi-center
is in $V(C_1)-V(P)$.  Again, we define $j$ with $j\le(r+1)/2$ by $z=v_j$ with
$P$ indexed as $\VEC v1r$.  By Lemma~\ref{central}, all $r$-vertex paths have
the same central vertices $\VEC vj{r+1-j}$.

Lemma~\ref{onelctr} handles the case when $T$ has only one spi-center.  Since
by Corollary~\ref{spi} we know the number of spi-centers, we recognize that we
are not in that case.  Hence we may assume that $C_1$ has at least one vertex
in an offshoot from $P$ at $z$ that is a spi-center.  Let $x$ be one such
vertex.  Because $P$ is a longest path and some path from $z$ extends along a
leg of $\Sp{\ell+1}$ beyond $x$, we have $j-1\ge \ell+2$.

\smallskip
{\bf Step 1:} {\it $z$ and $x$ are the only spi-centers, and we know the
distance between them.}
Consider the process where we begin with $P$ and iteratively complete a copy of
$\Sp{\ell+1}$ at a new spi-center of $T$.  Each time we do this, we raise the
degree of the new
spi-center $v$ from $2$ to $3$ and hence must add $\ell+1$ vertices in a path
emanating from $v$.  Hence if $T$ has $s$ spi-centers, then $T$ has at least
$r+s(\ell+1)$ vertices.  With $r\ge n-3\ell$, we thus have
$n\ge n-(3-s)\ell+s$, so $s\le 2$.

Now consider this computation more closely.  Let $p$ be the length of the path
from $z$ to $x$.  When adding $\ell+1$ vertices off $P$ for the copy of
$\Sp{\ell+1}$ centered at $z$, we may treat them as beyond $x$, after the $p$
vertices in $T$ on the path from $z$ to $x$.  Thus the union of $P$ and two
copies of $\Sp{\ell+1}$ centered at $z$ and $x$ has $n-\ell+2+p$ vertices.  We
conclude $p\le \ell-2$.

The union $X$ of copies of $\Sp{\ell+1}$ at $z$ and $x$ consists of four legs
of length $\ell+1$ plus $p+1$ vertices in the path joining $z$ and $x$.
Since $p\le \ell-2$, we have $\C{V(X)}\le 5\ell+3$.  Since $n\ge 6\ell+3$,
we see $X$ in a card, and thus we know $p$.

\smallskip
{\bf Step 2:}
{\it Reconstructing $T_2$ and the component of $T-z$ containing $x$, which we
call $Q$.}
Let $C'$ be a largest subtree of $T$ containing $S_{\ell+1,\ell+1,p+r-j}$
with branch vertex $x'$ such that the vertex $z'$ at distance $p$ from $x'$
along the long branch of the spider has degree $2$ in $C'$.  There is such a
subtree with $x'=x$ and $z'=z$.  Indeed, since $x'$ is a spi-center, we must
have $x'\in\{x,z\}$; since no path of length more than $r-j$ starts from $z$,
we have $x'=x$.  Furthermore, a path of length $p+r-j$ from $x$ must pass 
through $z$ to avoid having a path longer than $P$, so $z'=z$.

Since $z$ has degree $2$ in $C'$, at least $j-1$ vertices of $P$ are missing
from $C'$.  Since $j-1\ge\ell+2$, we know that $C'$ fits in a card.  The
components of $C'-z$ are $T_2$ and the offshoot from $z$ that contains $x$,
which we call $Q$.  We have not yet determined other offshoots from $z$,
such as those contributing to the sparse card $C_1$ if $z$ has degree more
than $3$ in $C_1$, but we now know $T_2$ and $Q$.

Note that if $j=(r+1)/2$, then in $C'$ we see the larger of $T_1$ and $T_2$,
which we call $T_2$, but if they have the same size then there are two choices
for $C'$ and we obtain both.  There cannot be another offshoot of length
$(r-1)/2$ from $z$ besides $T_1$, $T_2$, and $Q$, because $r\ge n-3\ell$
implies $3(r-1)/2+2\ell+4>n$.

\smallskip
{\bf Step 3:}
{\it Reconstructing the components of $T-z$ other than $T_2$ and $Q$.} 
Note that $Q$ has at least $2\ell+3$ vertices.  Since $r\ge n-3\ell$, any
offshoot from $P$ other than $Q$ has fewer than $\ell$ vertices and hence also
length less than $j-1$.  Let $C''$ be a largest long subtree containing
$S_{\ell+1,j-1,r-j}$ that has no offshoots from the legs of leg $\ell+1$
and $j-1$.  In such a subtree, the branch vertex of the given spider must
be $z$, and more than $\ell$ vertices are missing from $Q\cup T_1$.
Hence $C''$ shows us all of $T$ outside $Q\cup T_1$.

Since we also know $Q$, it remains only to determine $T_1$.  If $Q$ has length
less than $j-1$, then every $r$-path lies in $T_1\cup T_2$.  Enlarge $C''$
to a largest subtree containing $S_{\ell+1,j-1,r-j}$ that has no offshoot
from the leg of length $\ell+1$.  The leg of length $j-1$ must lie in $T_1$.
Again more than $\ell$ vertices are missing from $Q$ and the subtree fits
in a card, showing us $T_1$ (if $j-1=r-j$, then we already knew $T_1$).

The remaining case is that $Q$ has length $j-1$, making it unclear whether
the leg of length $j-1$ in the spider in $C''$ lies in $Q$ or in $T_1$.
Nevertheless, $C''$ shows us all components of $T-z$ other than $Q$ and $T_1$,
and we already know $Q$ from $C'$.  Hence we know $\C{V(T_1)}$.  We also know
the number of $r$-paths, so knowing $Q$ and $T_2$ we can compute the number
of peripheral vertices in $T_1$.

Now let $\cC$ be the family of subtrees containing a vertex $z'$ of degree $2$,
such that emerging from $z'$ are one path of length $r-j$ and one offshoot $Y$
of length $j-1$ that need not be a path.  Note that $z'$ may be $z$ or
$v_{r+1-j}$.  Since we know $Q$, $T_2$, and the numbers of peripheral vertices
in $Q$, $T_2$, and $T_1$, we know all the members of $\cC$ in which $Y$ comes
from $Q$ or $T_2$.  After excluding all of these members of $\cC$, a largest
remaining member of $\cC$ shows us $T_1$.
\end{proof}

\begin{lemma}\label{spidone}
If $n>6\ell+6$ and $r\ge n-3\ell$, and $T$ contains $\Sp{\ell+1}$
but no sparse card, then $T$ is $\ell$-reconstructible.
\end{lemma}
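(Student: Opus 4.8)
The plan is to follow the architecture of Lemmas~\ref{onelctr} and~\ref{twolctr}, replacing the sparse card used there (to locate the spi-center and the index $j$) by a structural analysis forced by the \emph{absence} of a sparse card. First I would check recognizability: since $n>6\ell+6$ and $r\ge n-3\ell$ give $r>3\ell+6$, Lemma~\ref{largek} yields $k\ge\ell+1$, so Lemma~\ref{kprop} gives us $k$; since $\Sp{\ell+1}\esub T$ forces $r<n-\ell$, the value of $r$ is simply the largest number of vertices on a path appearing in a card; Corollary~\ref{spi} gives the number of spi-centers (at least $1$); and, seeing every card, we recognize that $T$ has a copy of $\Sp{\ell+1}$ and has no sparse card.

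Next I would extract the structural consequences. Fix an $r$-path $P=v_1\cdots v_r$. (a) \emph{$P$ meets at least two branch vertices of $T$}: if it met only one branch vertex $z$, then, since $r<n-\ell$ leaves more than $\ell$ vertices in offshoots from $P$, iteratively deleting $\ell$ leaves from those offshoots produces a connected card in which $P$ is still an $r$-path and $z$ is still its only branch vertex, a sparse card, contrary to hypothesis. (b) \emph{$T$ has at most two spi-centers}: building up copies of $\Sp{\ell+1}$ one new spi-center at a time, as in Step~1 of Lemma~\ref{twolctr}, forces $n\ge r+s(\ell+1)$ when $T$ has $s$ spi-centers, whence $s(\ell+1)\le n-r\le3\ell$ and $s\le2$; moreover the $P$-vertex nearest any spi-center is itself a spi-center (as $P$ is longest), so there is a spi-center on $P$. (c) Orienting $P$ so that a nearest-to-an-end spi-center on $P$ is $v_j$ with $j\le(r+1)/2$, we get $j\ge\ell+2$ as in Lemma~\ref{onelctr}, and every non-spi-center branch vertex on $P$ has all its offshoots of depth at most $\ell$; via Lemma~\ref{central} (and the reasoning in its proof) the positions of the spi-center(s) along every $r$-path are determined up to reflecting $P$.

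With the structure in hand I would split into the one-spi-center and two-spi-center cases and, within each, mirror the subcase analysis of Lemmas~\ref{onelctr} and~\ref{twolctr}. In every subcase the scheme is: (i) use a second branch vertex $y$ on $P$ — whose existence is precisely what the no-sparse-card hypothesis buys — to break the symmetry between the two ends of $P$ and identify which distinguished position is the spi-center $z$: if $y$ lies in the common central block it is visible in every long csc and fixes the orientation at once, and otherwise one compares maximal path-augmentations of a largest long csc at its two distinguished endpoints, since only the spi-center end admits an offshoot path of length $\ell+1$; (ii) read off the components $T_1,T_2$ of $T-z$ along $P$, and in the two-spi-center case also the component $Q$ containing the second spi-center (as in Lemma~\ref{twolctr}), from a largest long csc or a suitable large spider-containing csc that omits at least $\ell+1$ vertices of an offshoot at the spi-center and hence appears as a card; (iii) recover the remaining offshoot bundle $W$ at $z$ by the Exclusion-Argument reconstruction inside Lemma~\ref{onelctr} (its internal Claim), counting the cscs in the family $\cC$ of ``$(2\ell+3)$-path plus offshoot'' and discarding those whose two long legs come from the pieces already known.

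I expect the main obstacle to be step (i): locating $v_j$ and $j$ from the deck \emph{without} a sparse card, in the awkward configurations — when a largest long csc is symmetric along its $r$-path, and, in the two-spi-center case, when the second branch vertex could itself be a spi-center or lie in a variable end of some $r$-path, so that even telling apart the two sub-configurations (both spi-centers on $P$ versus one on $P$ with the other in an offshoot at it) is delicate. These are the analogues of Subcases~2a--2b and Case~3 in Lemma~\ref{onelctr} and of Steps~1--3 in Lemma~\ref{twolctr}, and handling them should require carefully chosen auxiliary cscs — spiders such as $S_{\ell+1,\ell+1,t}$ or $S_{\ell+1,j-1,j}$ with controlled offshoots — whose numbers of vertices must be verified against $n>6\ell+6$; I expect the strict inequality to be needed in one of those size estimates. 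Once $z$, $j$, and the decomposition of $T-z$ are known, assembling $W$ is routine via the Exclusion Argument.
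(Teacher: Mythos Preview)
Your proposal misses the central idea. You read ``no sparse card'' as asserting the existence of a second branch vertex on $P$, and then try to use that vertex to break symmetry. The paper reads the hypothesis the opposite way: if $C_1$ is a \emph{maximal} subtree consisting of an $r$-path together with all offshoots at a single branch vertex (taken to be a spi-center closest to the center), then ``no sparse card'' says precisely that $C_1$ has \emph{fewer} than $n-\ell$ vertices. Hence $C_1$ is visible as a csc, and from it one reads off directly the position $j$ and all the offshoots at $z$, including the unique offshoot $Q$ of length at least $\ell+1$ (uniqueness because the offshoots at $z$ in $C_1$ total fewer than $2\ell$ vertices). This single observation resolves your step~(i) outright and makes the second branch vertex irrelevant.

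Because $j$ and $Q$ come for free, the paper's case split is not along the number of spi-centers as you propose, but along whether $Q$ has length $j-1$ or less, and within that along whether the largest long csc $C_2$ with degree~$2$ at positions $j$ and $r{+}1{-}j$ is symmetric. When the length of $Q$ is less than $j-1$ the argument is a size comparison between the offshoot bundles $W_1$ at $z$ and $W_2$ at $v_{r+1-j}$; when $Q$ has length $j-1$ one exploits that $Q$ can stand in for $T_1$ in $C_2$, and the finish uses counts of spiders $S_{j-1,j-1,r-j}$ and $S_{j-1,\ell+1,r-j}$ (not $S_{\ell+1,\ell+1,t}$ or $S_{\ell+1,j-1,j}$ as you anticipate). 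Your plan to mirror Lemmas~\ref{onelctr}--\ref{twolctr} would force you to relocate $j$ by ad hoc symmetry-breaking via an uncontrolled second branch vertex $y$; since $y$ may have arbitrarily small offshoots that vanish in cards, there is no guarantee it is visible where you need it, and you yourself flag this as the main obstacle without resolving it.
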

\begin{proof}
By Lemma~\ref{longp}, we know $r$ from the deck.  As noted in
Lemma~\ref{largek}, $r\ge3\ell+6$ yields $k\ge\ell+1$.  By Corollary~\ref{spi},
we then know the number of spi-centers in $T$, so we know $\Sp{\ell+1}\esub T$
(and hence $r<n-\ell$) and recognize from the deck that $T$ has no sparse card.

If some spi-center is outside an $r$-path, then it forces at least $2\ell+3$
vertices in one offshoot from the $r$-path, yielding a sparse card when
$r\ge n-3\ell$.  With sparse cards forbidden and $\Sp{\ell+1}$ present, we
have a spi-center on an $r$-path.  Among subtrees having an $r$-path $P$ with a
spi-center closest to the center and no other branch vertex on $P$, choose
$C_1$ to be one having a longest offshoot from $P$ and, subject to this, having
the most vertices.  Let $z$ be the spi-center in $C_1$.  There may be more than
one largest such card; fix one such as $C_1$.

With $V(P)$ indexed as $\VEC v1r$, define $j$ with $j\le(r+1)/2$ by
$z\in\{v_j,v_{r+1-j}\}$.  We have chosen $C_1$ so that $\{\VEC v{j+1}{r-j}\}$
contains no spi-center, so all $r$-vertex paths contain $\VEC vj{r+1-j}$, by
Lemma~\ref{central}.  Since $T$ has no sparse card, $C_1$ has fewer than
$n-\ell$ vertices, and in $C_1$ we see all offshoots from $P$ at $z$ in $T$.
Since $C_1$ has fewer than $n-\ell$ vertices and $r\ge n-3\ell$, there is
exactly one offshoot $Q$ from $P$ at $z$ that has length at least $\ell+1$.
Also let $T_1$ and $T_2$ be the components of $T-z$ containing $v_{j-1}$
and $v_{j+1}$, respectively.  In $C_1$ we see in full all offshoots from $z$ in
$T$ other than $T_1$ and $T_2$; from $T_1$ and $T_2$ we see only what is in
the $r$-path $P$.

Let $C_2$ be a largest subtree containing an $r$-path $P'$ with vertices
$\VEC u1r$ on which the vertices $u_j$ and $u_{r+1-j}$ have degree $2$.  We
have noted that all $r$-paths have the same $r-2j+1$ central vertices.
Thus $u_j\in\{v_j,v_{r+1-j}\}$.  Since $z$ is a spi-center and has degree $2$
in $C_2$, the omitted offshoots at $z$ contain at least $\ell+1$ vertices.
Thus $C_2$ fits in a card, and we see $C_2$ as a largest such csc.  Since
we see also the offshoots from $P'$, in $C_2$ we may assume $P'=P$, and in $C_2$
we see all offshoots from $P$ except at $v_j$ and $v_{r+1-j}$.

\medskip
{\bf Case 1:} {\it The length of $Q$ is less than $j-1$.}
If $j=(r+1)/2$, then $C_1$ tells us the offshoots from $C_2$ at $v_j$, and we
have reconstructed $T$.  Hence we may assume $j<(r+1)/2$.  We know the
offshoots from $P$ at $z$, but we do not yet know whether $z$ is $v_j$ or
$v_{r+1-j}$.  Let $y$ be the vertex of $\{v_j,v_{r+1-j}\}$ other than $z$.

Let $W_1$ be the union of the offshoots from $P$ at $z$, let $W_2$ be the union
of the offshoots at $y$, and let $m_i=\C{V(W_i)}$.  We already know $W_1$ and
$m_1$.  We also know $m_2$, since we know $r$ and all offshoots from $P$
outside $W_2$.  We want to find $W_2$ and decide which of $\{W_1,W_2\}$ is
attached to $v_j$. 

Suppose first that $m_2\ge m_1$.  Note that $y$ may or may not be a spi-center.
Let $C_3$ be a largest subtree containing an $r$-vertex path
on which one of the vertices in positions $j$ and $r+1-j$ has degree $2$.
Since $C_3$ omits $m_1$ vertices, $C_3$ has fewer than $n-\ell$ vertices,
and we see $C_3$ in a card as a largest such csc.  If $m_2>m_1$, then we
see $W_2$ in position along $P$, together with all offshoots from $P$ except
those in $W_1$.  We attach $W_1$ to the vertex in $\{v_j,v_{r+1-j}\}$ having
degree $2$ in $C_3$ to complete the reconstruction of $T$.

If $m_2=m_1$, then there are two choices for $C_3$, one showing $W_2$ and
the other showing $W_1$, each occurring in the right position among the
offshoots from $P$.  We may have $W_1=W_2$, and even the two choices
for $C_3$ may be the same, but in all cases we complete the reconstruction.

Now suppose $m_2<m_1$.  In this case let $C_3$ be a largest subtree containing
an $r$-vertex path $\VEC u1r$ such that $u_j$ or $u_{r+1-j}$ has degree $2$,
and such that offshoots with a total of $m_2+1$ vertices are grown from the
other of $\{u_j,u_{r+1-j}\}$.  Since $W_1\cup W_2$ has $m_1+m_2$ vertices,
$C_3$ omits $m_1-1$ vertices from $W_1\cup W_2$.  Since $m_1\ge\ell+1$, $C_3$
fits in a card, and we see it as a largest such csc.  Since $W_2$ has only
$m_2$ vertices, $C_3$ determines which of $\{v_j,v_{r+1-j}\}$ is $z$.

We now know all of $T$ except the offshoots from $P$ at $y$.  Let $\cC_4$ be
the family of largest subtrees containing an $r$-vertex path $\la \VEC u1r\ra$
such that one of $u_j$ and $u_{r+1-j}$ has degree $2$ and the other has
offshoots from the path with a total of $m_2$ vertices.  Such subtrees omit
$m_1$ vertices and hence are visible as cscs.  They may arise from $T$ by
deleting $m_1-m_2$ vertices from $W_1$ and all of $W_2$, or by deleting
all of $W_1$.  Since we know $W_1$ and where in $C_2$ it is attached, we know
all the members of $\cC_4$ that arise in the first way.  The remaining member
of $\cC_4$ shows us $W_2$, completing the reconstruction of $T$.

\medskip
{\bf Case 2:} {\it $Q$ has length $j-1$.}
In $C_1$ we see $Q$.  Since $Q$ has length $j-1$ and $C_1$ was chosen to make
$Q$ largest among such offshoots, there is a choice for $C_2$ such that $Q$ is
a component of $C_2-\{v_j,v_{r+1-j}$ containing an endpoint of $P$.  Possibly
we can choose $C_2$ so that both such components are $Q$.  We choose $C_2$
to maximize the number of such occurrences of $Q$.

Let $W_1$ be the union of the offshoots in $T$ from $C_2$ at $z$; we do not yet
know $W_1$.  Since $T$ has no sparse card, $W_1$ contains exactly one offshoot
from $C_2$ with length at least $\ell+1$; call it $Q'$.  Since $C_1$ had only
$Q$, $T_1$, and $T_2$ as offshoots from $z$ with length at least $\ell+1$, the
choice of $C_2$ in fact implies $Q'=T_1$.  Let $W_2$ be the union of the
offshoots from $C_2$ at the vertex in $\{v_j,v_{r+1-j}\}$ other than $z$, and
let $m_i=\C{V(W_i)}$.

Since $z$ is a spi-center, $m_1\ge m'\ge\ell+1$.  We may assume that the path
is indexed so that $z=v_j$ if $C_2$ is symmetric or if $C_2$ is not symmetric
and shows only one copy of $Q$ giving a candidate for $z$.

{\bf Subcase 2a:}
{\it $C_2$ is not symmetric (under reversal of $\VEC v1r$).}
In $C_2$ we see at least one copy of $Q$, rooted at $v_j$ or $v_{r+1-j}$.
Let $C_3$ be a largest csc containing $C_2$ in which a root of $Q$ has degree
$2$.  Since $C_2$ is not symmetric, the vertices $v_j$ and $v_{r+1-j}$ are
distinguished by what we see along $C_2$, whether or not they are both roots of
copies of $Q$.  Hence in addition to $C_3$ we can obtain a largest csc $C'_3$
containing $C_2$ where the added vertices are in offshoots grown from the other
of these two vertices.  Since $m_1\ge\ell+1$, there are fewer than
$n-\ell$ vertices in $C_3'$, so it gives us $W_2$ in full.  If also $C_3$ has
fewer than $n-\ell$ vertices, then we obtain both $W_1$ and $W_2$ with their
roots along $C_2$.  This completes the reconstruction, since $C_2$ shows us
the rest of $T_2$ by showing the offshoots at the vertices other than
$v_{r+1-j}$.

Hence we may assume that $C_3$ is a card, which requires $m_2\le\ell$,
and we know $W_2$.  We may assume the indexing so that $z=v_j$.  From what we
see of $T_2$ in $C$, plus $W_2$, we know $T_2$.  Subtracting $\C{V(T_2)}$ and
$\C{V(Q)}$ from $n-1$ tells us $m_1$.  Subtracting the sizes of the small
offshoots at $z$ (in $C_1$) from $m_1$ now gives us $m'$, which is $\C{V(T_1)}$.

To find $T_1$, let $\cC_4$ be the family of subtrees consisting of an $r$-vertex
path plus one offshoot of length $j-1$ from the $j$th vertex, such that the
offshoot has $m'$ vertices.  Since $m'\le \C{V(Q)}$, some of these arise by
deleting vertices from $Q$; others show $Q'$.  Since we know $Q$, we know the
number of ways to find in $Q$ an offshoot of length $j-1$ with $m'$ vertices.
If we can determine the number of vertices of $Q'$ at distance $j-1$ from $z$
and the number of vertices at distance $r-j$ from $z$ (we see the latter in
$C_3$), then we know all the members of $\cC_4$ to delete.  In the remaining
members we see $T_1$, completing the reconstruction of $T$.

Consider the subtree $C_5$ consisting of an $r$-vertex path $\VEC u1r$ plus one
offshoot isomorphic to $Q$ at $u_j$.  Since $T$ has no sparse card, $C_5$ has
fewer than $n-\ell$ vertices, and we can count its appearances in $T$.  By the
asymmetry of $C_2$, we have $u_j=z$.  Since we know $W_2$, we know how many
vertices can serve as $u_r$.  If $Q'\ne Q$, then the number of vertices that
can serve as $u_1$ is the number of vertices of $Q'$ at distance $j-1$ from $z$.
We can thus determine this quantity from the number of copies of $C_5$.

If $Q'=Q$, then we had more than one choice for our original subtree $C_1$,
and they were identical, so we knew $Q'$ all along.

{\bf Subcase 2b:} {\it $C_2$ is symmetric (under reversal of $\VEC v1r$).}
In this case, we cannot distinguish $v_j$ and $v_{r+1-j}$ in $C_2$.  We still
consider the family $\cC_3$ of subtrees containing $C_2$ in which $v_j$ or
$v_{r+1-j}$ has degree $2$.  If no csc in $\cC_3$ is a card, then let
$C_3$ be a largest such csc among those having an offshoot of length $j-1$ from
$C_2$.  We may assume that the offshoot is at $v_j$, showing us $W_1$.  Since
$W_1$ tells us $m_1$ and we know $C_2$, we now also know $m_2$.  If $m_2>m_1$,
then a largest member of $\cC_3$ shows us $W_2$.  If $m_2\le m_1$, then we know
all the members of $\cC_3$ having $m_2$ vertices outside $C_2$ that arise by
deleting $m_1-m_2$ vertices from $W_1$ in $C_3$.  The remaining member of
$\cC_3$ with $m_2$ vertices outside $C_2$ shows us $W_2$.

Hence we may assume that $\cC_3$ contains a card, so $m_2\le\ell$.  Since $W_1$
has at least $\ell+1$ vertices, the offshoots from $C_2$ in cscs that are cards
in $\cC_3$ are attached at $v_j$.

We first consider the case $j<(r+1)/2$, postponing $j=(r+1)/2$.  Let the
components of $T-z$ containing $v_{j-1}$ and $v_{j+1}$ be $T_1$ and $T_2$,
respectively.  Since $z$ lies in all longest paths as the $j$th vertex from one
end, and $m_2\le\ell$, each copy of $S_{j-1,\ell+1,r-j}$ in $T$ has branch
vertex $z$ and one leaf in each of $Q$, $T_1$, and $T_2$.  Since $z$ is a
spi-center, $\ell+1\le j-1$, so the same conclusion holds for $S_{j-1,j-1,r-j}$.

Thus the number of copies of $S_{j-1,j-1,r-j}$ in $T$ is $t_Qt_1t_2$, where
these factors are the numbers of vertices at distance $j-1$ from $z$ in $Q$, at
distance $j-1$ from $z$ in $T_1$, and at distance $r-j$ from $z$ in $T_2$,
respectively.  Since $T$ has no sparse cards, these spiders fit into cards, so
we know the number of them.  Knowing $Q$ from $C_1$, we know $t_Q$.  Since
$C_2$ is symmetric and $m_2\le \ell$, we have $t_2=t_Q$.  Hence we can compute
$t_1$.

As noted, the same properties hold for the smaller spider $S_{j-1,\ell+1,r-j}$.
Again we can count them in the deck, and the number of them is computed as
$t_Qs_1t_2+s_Qt_1t_2$, where $s_Q$ and $s_1$ are the numbers of vertices
at distance $\ell+1$ from $z$ in $Q$ and in $T_1$, respectively.  Since we 
know $Q$, we know all these numbers except $s_1$, so now we also know $s_1$.

Now let $\cC$ be the family of cscs containing $S_{j-1,\ell+1,r-j}$ such that
the leg of length $r-j$ and at least one leg of length $\ell+1$ have no
offshoots ($j-1=\ell+1$ is possible).  Again, the branch vertex of the spider
must be $z$.  Largest cscs in $\cC$ may or may not be cards, but in either case
we know all members of $\cC$ that arise by choosing the leg of length $\ell+1$
without offshoots from $T_1$ and the leg of length $j-1$ with offshoots from
$Q$, since we know $s_1$ and $Q$.  A largest csc among the remaining members
of $\cC$ contains a leg of length $\ell+1$ from $Q$ and shows all of $T_1$.

Now we know all of $T$ except $W_2$, so we know $m_2$.  We return to the family
$\cC_3$.  Since we know $T_1$, we know which members of $\cC_3$ among those
having $m_2$ vertices in addition to $C_2$ are obtained from $C_2$ by adding
$m_2$ vertices from $T_1$.  After eliminating them, a remaining member of
$\cC_3$ shows us $W_2$.

Finally, we have the case $j=(r+1)/2$ with $\cC_3$ containing a card.
There are three offshoots from $z$ with length $j-1$: two copies of $Q$ and
one of $T_1$, and we know $Q$ and the shorter offshoots.  The copies
of $\Sp{j-1}$ have branch vertex at $z$ and leaves in $T_1$ and the two copies
of $Q$.  We count them and know $t_Q$, so we obtain $t_1$ (as defined above).
We can make use of copies of $S_{j-1,\ell+1,r-j}$ and cscs containing them
as above to obtain $T_1$.
\end{proof}

Lemmas~\ref{onelctr}--\ref{spidone} cover all cases with
$n\ge6\ell+6$ and $r\ge n-3\ell$ in which $\Sp{\ell+1}\esub T$,
so we have proved that such trees are $\ell$-reconstructible

\section{The Vines at the Ends of $P$}

In the previous section we proved the $\ell$-reconstructibility of $n$-vertex
trees $T$ with $n\ge6\ell+6$ under the conditions $r\ge n-3\ell$ and
$\Sp{\ell+1}\esub T$.  We next consider trees not containing $\Sp{\ell+1}$.
Thus forbidding $\Sp{\ell+1}$ will also forbid $\Sp{k-1}$ when $k\ge\ell+2$.
We obtain $k\ge\ell+2$ using the computation in Lemma~\ref{largek} and the
fact that $k$ is an integer as soon as $r\ge 3\ell+7$, so here we raise the
threshold for $n$ to $n\ge6\ell+7$.

\begin{lemma}\label{t1t2}
When $\Sp{k-1}\nosub T$, in every reconstruction $T$ all longest paths have the
same $r-2k+2$ central vertices, which are all the $(k-1)$-centers in $T$.  The
central $r-2k$ vertices are all the $k$-centers in $T$.  Thus the two vertices
at distance $k-1$ from the ends of every longest path are $(k-1)$-centers but
not $k$-centers.  From the deck, we can determine the two maximal $(k-1)$-vines
$U_1$ and $U_2$ centered at these two vertices.
\end{lemma}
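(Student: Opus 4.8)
The plan is to prove all the claims in sequence, first establishing the structural facts about longest paths and then deducing the deck-reconstructibility of $U_1$ and $U_2$ from the counting machinery in Section~\ref{tools}. I would begin with the central-portion claim. Let $P$ be a longest path, with vertices $\VEC v1r$, and suppose some longest path $P'$ diverges from $P$ at a vertex $w$ lying strictly between $v_k$ and $v_{r+1-k}$. Then $P'$ contributes at least $k$ vertices outside $P$ in a path emanating from $w$, while the two portions of $P$ leaving $w$ each have at least $k$ edges (since $w$ is at distance at least $k$ from both ends of $P$). Taking the two longer of these three branches shows $w$ is the branch vertex of a copy of $\Sp{k-1}$ (three legs of length $\ge k-1$), contradicting $\Sp{k-1}\nosub T$. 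Hence every longest path agrees with $P$ on the central $r-2k+2$ vertices $\VEC vk{r+1-k}$. The same argument, applied to a vertex $w$ strictly between $v_{k-1}$ and $v_{r+2-k}$, would exhibit a copy of $\Sp{k-2}$, which is weaker and not forbidden, so the bound $r-2k+2$ is the right one here.

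Next I would identify these central vertices as exactly the $(k-1)$-centers. When $2j+1\le r$, a vertex on $P$ is a $j$-center precisely when both portions of $P$ leaving it have at least $j$ edges, i.e.\ when it is one of the central $r-2j$ vertices of $P$ — provided no \emph{longer} path can reach it from outside in a way that would increase its eccentricity within the induced vine, but since $P$ is a longest path, distances from any $v_i$ along offshoots are bounded by the distance to the nearer end of $P$. So along $P$ the $(k-1)$-centers are exactly $\VEC v{k}{r+1-k}$ (the $r-2k+2$ central vertices), and the $k$-centers are exactly $\VEC v{k+1}{r-k}$ (the central $r-2k$ vertices). Combined with the first paragraph, every $(k-1)$-center lies on every longest path, and no $(k-1)$-center lies outside the common central portion; likewise for $k$-centers. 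In particular $v_k$ and $v_{r+1-k}$ are $(k-1)$-centers but not $k$-centers, which is the stated conclusion about the two vertices at distance $k-1$ from the ends.

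Finally, for the deck-reconstructibility: by Corollary~\ref{countk}, since $k-1\le k$, the deck determines the multiset of maximal $(k-1)$-vines in $T$ (with isomorphism types), and by Lemma~\ref{vinemax} these correspond bijectively to the $(k-1)$-centers. We have just shown there are exactly $r-2k+2$ such centers, namely $\VEC vk{r+1-k}$, all on $P$. The two \emph{extreme} ones, $v_k$ and $v_{r+1-k}$, are distinguished among the $(k-1)$-centers as precisely those that are not $k$-centers (equivalently, the $(k-1)$-centers not lying in the central $r-2k$-vertex block); since the deck determines both families of centers by Corollary~\ref{countk}, it singles out these two, and hence their maximal $(k-1)$-vines $U_1$ and $U_2$ as specific members of the known multiset. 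I expect the main obstacle to be the bookkeeping in the middle step: carefully justifying that ``$j$-center along a longest path'' coincides with ``central $r-2j$ vertices of the path'' requires checking that offshoots never create a vertex of smaller eccentricity off $P$ and never inflate the eccentricity of an on-$P$ vertex beyond what $P$ itself forces — both follow from $P$ being longest, but need to be stated cleanly. Everything after that is a direct appeal to Corollary~\ref{countk} and Lemma~\ref{vinemax}.
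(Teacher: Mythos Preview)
Your overall strategy matches the paper's, but there is one genuine gap. In the second paragraph you establish which vertices \emph{on $P$} are $(k-1)$-centers and $k$-centers, and then assert ``Combined with the first paragraph, every $(k-1)$-center lies on every longest path.'' The first paragraph, however, only shows that all longest paths share the same central $r-2k+2$ vertices; it says nothing about vertices of $T$ that lie on \emph{no} longest path. So you have not ruled out a $(k-1)$-center $v\notin V(P)$. This is exactly the case the paper handles separately: if $v\notin V(P)$ is a $(k-1)$-center, then one of the two edge-disjoint length-$(k-1)$ paths from $v$ avoids the edge from $v$ toward $P$, so the offshoot from $P$ at the nearest vertex $v_i$ has length at least $k$. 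Since $P$ is longest, this forces $\min(i-1,r-i)\ge k$, and then the three edge-disjoint paths from $v_i$ (toward $v_1$, toward $v_r$, into the offshoot) each have length at least $k\ge k-1$, giving $\Sp{k-1}\subseteq T$, a contradiction. The same argument with $k$ in place of $k-1$ rules out off-$P$ $k$-centers. Once you insert this, your identification of the $(k-1)$-centers and $k$-centers as exactly the claimed central blocks of $P$ is complete.

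A smaller point: your last paragraph has the right idea but skips the mechanism. Corollary~\ref{countk} gives you only the \emph{multisets} of isomorphism types of maximal $(k-1)$-vines and maximal $k$-vines, not a labeling by center. To ``single out'' $U_1$ and $U_2$ you must say how: each maximal $k$-vine, truncated by deleting its vertices at distance $k$ from the center, yields the unique maximal $(k-1)$-vine with the same center; removing this multiset of truncations from the multiset of all maximal $(k-1)$-vines leaves exactly $U_1$ and $U_2$. The paper makes this elimination explicit. Also, the phrase ``taking the two longer of these three branches'' in your first paragraph is confusing---you need all three branches (each of length $\ge k\ge k-1$) to exhibit $\Sp{k-1}$.
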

\begin{proof}
A $(j-1)$-center $v$ outside the central $r-2j+2$ vertices of a longest path $P$
yields either a longer path than $P$ (if $v\in V(P)$) or a copy of $\Sp{j}$
(if $v\notin V(P)$).  Hence $\Sp{j}\nosub T$ implies that $T$ has no
$(j-1)$-center outside the central $r-2j+2$ vertices of any longest path.
Those central vertices are indeed $(j-1)$-centers, so all longest paths have
the same central $r-2j+2$ vertices.

Considering $j=k$ and $j=k-1$, we find that the $k$-centers are precisely the
vertices of $P$ except for the last $k$ vertices on each end, and the
$(k-1)$-centers are the vertices of $P$ except for the last $k-1$ vertices on
each end.  In particular, the vertices at distance $k-1$ from the ends of $P$
are $(k-1)$-centers but not $k$-centers.

By Corollary~\ref{countk}, we know all the maximal $k$-vines and all the
maximal $(k-1)$-vines.  Each maximal $k$-vine has a unique center and contains
exactly one maximal $(k-1)$-vine having the same center.  Hence we can
eliminate the maximal $(k-1)$-vines contained in $k$-vines from the list
of all maximal $(k-1)$-vines to leave only the two maximal $(k-1)$-vines whose
centers have distance $k-1$ from the ends of $P$.  These two $(k-1)$-vines
$U_1$ and $U_2$ contain the opposite ends of $P$.
\end{proof}

\vspace{-1pc}

Reconstructing $T$ requires assembling $U_1$ and $U_2$ and any part of $T$
omitted by them.  Let $x_i$ be the center of $U_i$.  Since $\Sp{k-1}\nosub T$,
exactly two edges incident to $x_i$ in $U_i$ start paths of length $k-1$ in
$U_i$.  Our next task is to determine which of these two edges incident to
$x_i$ in $U_i$ starts the path to $x_{3-i}$.  By ``orienting'' $U_i$, we mean
determining which of these two edges in $U_i$ lies along the $x_1,x_2$-path in
$P$ (here $P$ is any longest path).  To facilitate this task we
introduce definitions and notation for various objects in the tree.

\begin{definition}\label{trunketc}
{\it Structure of $U_i$.}  See Figure~\ref{tifig}.
The edge of $U_i$ incident to $x_i$ that belongs to the $x_1,x_2$-path in
$P$ is the {\it trunk edge} of $U_i$.  The trunk edge of $U_i$ is the central
edge in a maximal $(k-1)$-evine.  The other edge of $P$ incident to $x_i$
in $U_i$ is not the central edge of a $(k-1)$-evine in $T$.

As noted above, $\Sp{k-1}\nosub T$ implies that exactly two components of
$U_i-x_i$ contain $(k-1)$-vertex paths beginning at the neighbor of $x_i$.
Let $A_i$ and $B_i$ respectively be the subtrees of $U_i$ rooted at $x_i$ that
are obtained by deleting the vertex sets of those two components.  Both $A_i$
and $B_i$ contain all components of $U_i-x_i$ having no $(k-1)$-vertex path
beginning at the neighbor of $x_i$.  Together with $x_i$, these comprise
$A_i\cap B_i$, and we call this rooted subtree $W_i$.
%
\end{definition}

\begin{figure}[h]
\begin{center}
\includegraphics[scale=0.5]{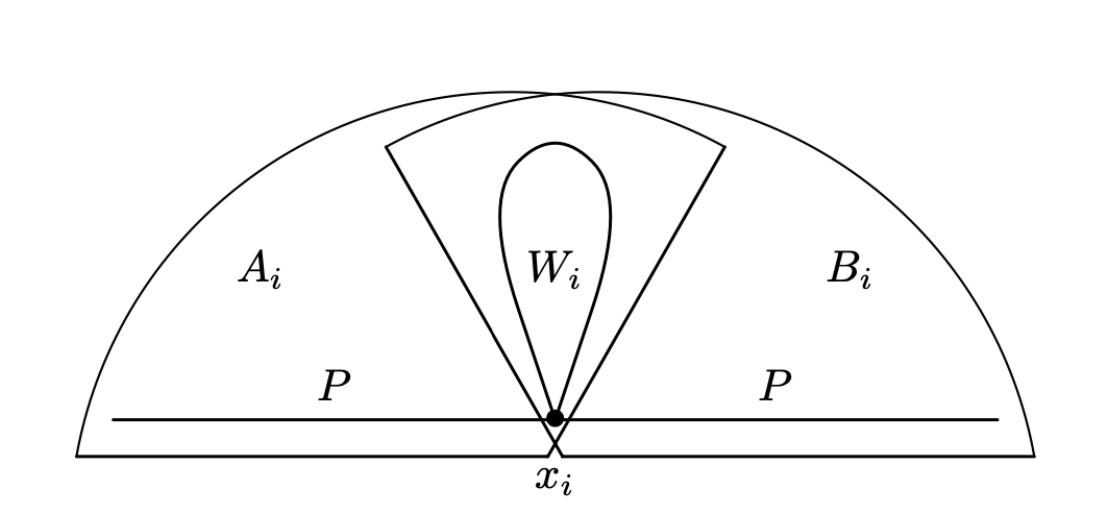}
\caption{$U_i$}\label{tifig}
\end{center}
\end{figure}

\vspace{-1pc}

\begin{definition}\label{Sj}
{\it $(k-1)$-evines containing $U_1$ and $U_2$.}
By Corollary~\ref{countk}, we can determine the number of maximal
$(k-1)$-evines and maximal $k$-evines in $T$ with each isomorphism type, and
each maximal $k$-evine contains exactly one maximal $(k-1)$-evine with the same
central edge.  Therefore, just as we determined $U_1$ and $U_2$ in
Lemma~\ref{t1t2}, we can also determine the maximal $(k-1)$-evines $S_1$ and
$S_2$ whose central edges are the trunk edges in $U_1$ and $U_2$.
From the list of maximal $k$-evines, we obtain a list of $(k-1)$-evines by
deleting the vertices at distance $k$ from the central edge.  The members of
the list of maximal $(k-1)$-evines that are not generated in this process
(paying attention to multiplicity) are $S_1$ and $S_2$.  However, we do not
yet know which of $\{S_1,S_2\}$ contains which of $\{U_1,U_2\}$.

For $j\in\{1,2\}$, let $y_jz_j$ be the central edge of $S_j$.  Let $C_j$ and
$D_j$ be the components of $S_j-y_jz_j$, rooted at $y_j$ and $z_j$,
respectively.  Since $\Sp{k-1}\nosub T$, exactly two
components of $S_j-\{y_j,z_j\}$ contain paths with $k-1$ vertices starting from
the neighbor of $\{y_j,z_j\}$.  One is in $C_j$ and one in $D_j$; these are
the {\it major pieces} of $C_j$ and $D_j$; see Figure~\ref{sjfig}.

\begin{figure}[h]
\begin{center}
\includegraphics[scale=0.45]{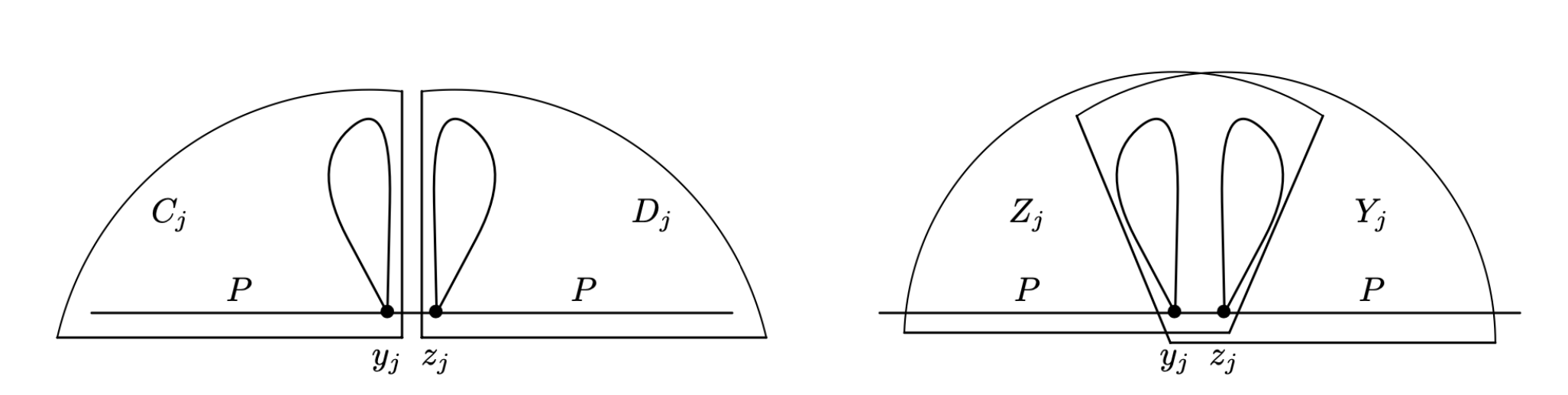}
\caption{Rooted trees in $S_j$}\label{sjfig}
\end{center}
\vspace{-1pc}
\end{figure}

The $(k-1)$-evine $S_j$ contains two maximal $(k-1)$-vines, centered at
$y_j$ and $z_j$.  Let $Y_j$ be the tree rooted at $y_j$ obtained by deleting
the major piece of $C_j$ from the maximal $(k-1)$-vine centered at $y_j$.
Let $Z_j$ be the tree rooted at $z_j$ obtained by deleting the major piece of
$D_j$ from the maximal $(k-1)$-vine centered at $z_j$.  Thus the two maximal
$(k-1)$-vines contained in $S_j$ are $C_j\cup Y_j$ (centered at $y_j$) and
$D_j\cup Z_j$ (centered at $z_j$).
Offshoots from $P$ at $y_j$ having length less than $k-1$ lie in both
$C_j$ and $Y_j$, and similarly for $z_j$ and $D_j\cap Z_j$.

The leaves at maximum distance from the root in a rooted subtree $R$ are
the {\it $R$-extremal vertices} or simply the {\it extremal vertices} when
the rooted subtree is clear from context.  We use this language particularly
with these rooted subtrees where the maximum distance from the root is $k-1$.
For example, we can observe that $C_j\cup Y_j$ contains all of $S_j$ except the
extremal vertices of $D_j$, and $D_j\cup Z_j$ contains all of $S_j$ except the
extremal vertices of $C_j$.
\end{definition}

We may speak of two rooted subtrees of $T$ (such as among those we have defined
above) as being ``the same''; this means that those rooted trees
are isomorphic.  In particular, we next develop a tool that obtains some
isomorphisms from others.

In Lemma~\ref{isom}, $S_j$ denotes any $(k-1)$-evine (not necessarily in $T$)
whose branches are $C_j$ and $D_j$ as in Definition~\ref{Sj}, with subtrees
$Y_j$ and $Z_j$ also as in Definition~\ref{Sj}.  In one of the applications of
Lemma~\ref{isom} where we consider the $(k-1)$-evines containing $U_1$ and
$U_2$ in $T$, the subtrees in the lemma will indeed be the subtrees of $T$
having these names in Definition~\ref{Sj}.

\begin{lemma}\label{isom}
Let $S_1$ and $S_2$ be $(k-1)$-evines not containing $\Sp{k-1}$, with rooted
subtrees $(C_1,Z_1,Y_1,D_1)$ and $(C_2,Z_2,Y_2,D_2)$ as in Definition~\ref{Sj}.
If $C_1\cong Y_2$, $Z_1\cong D_2$, $Y_1\cong C_2$, and $D_1\cong Z_2$,
then $C_1\cong Y_1$ and $D_1\cong Z_1$.
\end{lemma}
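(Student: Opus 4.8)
The plan is to translate the four hypothesized isomorphisms into isomorphisms of the ``major pieces'' of $C_i$ and $D_i$, and then close a short recursion on truncation depth. For a rooted tree $R$ and an integer $d\ge 0$, let $R^{\langle d\rangle}$ be the rooted tree obtained by deleting all vertices at distance more than $d$ from the root, and let $\mathcal B(R)$ be the multiset of subtrees rooted at the children of the root (so $R$ is recovered from $\mathcal B(R)$). I will use repeatedly that $(R^{\langle a\rangle})^{\langle b\rangle}=R^{\langle\min\{a,b\}\rangle}$, that truncation distributes over $\mathcal B$, and the elementary cancellation fact that $\{X\}\sqcup\mathcal M=\{X'\}\sqcup\mathcal M$ (as multisets of isomorphism types of rooted trees) forces $X\cong X'$. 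We may assume $k\ge 3$: when $k\le 2$, the condition $\Sp{k-1}\nosub S_i$ forces each $S_i$ to be a path, and the conclusion is immediate.

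First I would record the local structure of a single $(k-1)$-evine $S$ with central edge $yz$ and components $C$ (rooted at $y$) and $D$ (rooted at $z$), assuming $\Sp{k-1}\nosub S$. Since $S$ has diameter $2k-1$ with central edge $yz$, the tree $C$ has depth exactly $k-1$ from $y$; since $\Sp{k-1}\nosub S$, exactly one branch of $C$ at $y$ reaches distance $k-1$ from $y$ --- the major piece $M_C$, of depth $k-2$ --- and every other branch of $C$ has depth at most $k-3$, and likewise for $D$ and $M_D$. Let $W$ be $C$ with $M_C$ deleted, so $\mathcal B(C)=\mathcal B(W)\sqcup\{M_C\}$, and let $W'$ be $D$ with $M_D$ deleted. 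Unwinding Definition~\ref{Sj}: the maximal $(k-1)$-vine at $y$ is $S$ with the extremal vertices of $D$ removed, so its branch multiset is $\mathcal B(W)\sqcup\{M_C,\,D^{\langle k-2\rangle}\}$; deleting $M_C$ gives $\mathcal B(Y)=\mathcal B(W)\sqcup\{D^{\langle k-2\rangle}\}$, and symmetrically $\mathcal B(Z)=\mathcal B(W')\sqcup\{C^{\langle k-2\rangle}\}$; moreover $\mathcal B(D^{\langle k-2\rangle})=\mathcal B(W')\sqcup\{M_D^{\langle k-3\rangle}\}$ and $\mathcal B(C^{\langle k-2\rangle})=\mathcal B(W)\sqcup\{M_C^{\langle k-3\rangle}\}$. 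The cancellation fact immediately yields the \emph{local reduction} $C\cong Y\iff M_C\cong D^{\langle k-2\rangle}$ and $D\cong Z\iff M_D\cong C^{\langle k-2\rangle}$. Furthermore, since $M_C$ and $D^{\langle k-2\rangle}$ each have depth exactly $k-2$ while the other branches of $C$ and of $Y$ have depth at most $k-3$, any isomorphism from such a $C$ to the $Y$ of another $(k-1)$-evine must identify $M_C$ with the corresponding truncated branch and then, cancelling, identify the $W$-parts.

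Applying this to $S_1$ and $S_2$: matching deepest branches and $W$-parts in the four hypotheses gives $W_1\cong W_2$, $W_1'\cong W_2'$, $M_{C_1}\cong D_2^{\langle k-2\rangle}$, $M_{C_2}\cong D_1^{\langle k-2\rangle}$, $M_{D_1}\cong C_2^{\langle k-2\rangle}$, and $M_{D_2}\cong C_1^{\langle k-2\rangle}$, where $W_i,W_i',M_{C_i},M_{D_i}$ denote the pieces of $S_i$. Substituting the formulas for $C_i^{\langle k-2\rangle}$ and $D_i^{\langle k-2\rangle}$ and using $W_1\cong W_2$ and $W_1'\cong W_2'$ produces the ``cross identities'' $\mathcal B(M_{C_1})=\mathcal B(W_1')\sqcup\{C_1^{\langle k-3\rangle}\}$, $\mathcal B(M_{C_2})=\mathcal B(W_1')\sqcup\{C_2^{\langle k-3\rangle}\}$, $\mathcal B(M_{D_1})=\mathcal B(W_1)\sqcup\{D_1^{\langle k-3\rangle}\}$, and $\mathcal B(M_{D_2})=\mathcal B(W_1)\sqcup\{D_2^{\langle k-3\rangle}\}$. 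Now the recursion: for $1\le d\le k-2$, truncating the first two cross identities at depth $d$ and cancelling the common part shows $M_{C_1}^{\langle d\rangle}\cong M_{C_2}^{\langle d\rangle}\iff C_1^{\langle d-1\rangle}\cong C_2^{\langle d-1\rangle}$, and for $d\ge 2$, truncating $\mathcal B(C_i)=\mathcal B(W_i)\sqcup\{M_{C_i}\}$ at depth $d-1$ and cancelling the common part (using $W_1\cong W_2$) shows $C_1^{\langle d-1\rangle}\cong C_2^{\langle d-1\rangle}\iff M_{C_1}^{\langle d-2\rangle}\cong M_{C_2}^{\langle d-2\rangle}$. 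Since $M_{C_1}^{\langle 0\rangle}\cong M_{C_2}^{\langle 0\rangle}$ and $C_1^{\langle 0\rangle}\cong C_2^{\langle 0\rangle}$ hold trivially, induction on $d$ gives $M_{C_1}^{\langle k-2\rangle}\cong M_{C_2}^{\langle k-2\rangle}$, i.e.\ $M_{C_1}\cong M_{C_2}$; the symmetric argument with the last two cross identities and $\mathcal B(D_i)=\mathcal B(W_i')\sqcup\{M_{D_i}\}$ gives $M_{D_1}\cong M_{D_2}$.

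Finally, $\mathcal B(C_i)=\mathcal B(W_i)\sqcup\{M_{C_i}\}$ with $W_1\cong W_2$ and $M_{C_1}\cong M_{C_2}$ gives $C_1\cong C_2$, hence $C_1^{\langle k-2\rangle}\cong C_2^{\langle k-2\rangle}$; similarly $D_1\cong D_2$ and $D_1^{\langle k-2\rangle}\cong D_2^{\langle k-2\rangle}$. Then $M_{C_1}\cong D_2^{\langle k-2\rangle}\cong D_1^{\langle k-2\rangle}$ and $M_{D_1}\cong C_2^{\langle k-2\rangle}\cong C_1^{\langle k-2\rangle}$, which by the local reduction are precisely $C_1\cong Y_1$ and $D_1\cong Z_1$, as desired. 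I expect the bulk of the work to be the combinatorial bookkeeping of the second paragraph --- verifying the branch-multiset formulas for $Y$, $Z$, $C^{\langle k-2\rangle}$, and $D^{\langle k-2\rangle}$ (in particular that deletion of extremal vertices and deletion of the major piece interact cleanly, and that $M_C$ and $D^{\langle k-2\rangle}$ really are the unique branches of depth $k-2$), together with the degenerate small-$k$ and small-$d$ cases where some truncation is a single vertex or empty. Once those formulas are in hand, the recursion in the third paragraph is immediate.
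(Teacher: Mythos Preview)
Your argument is correct.  The verification of the branch-multiset formulas for $Y$, $Z$, $C^{\langle k-2\rangle}$, $D^{\langle k-2\rangle}$ in your second paragraph is sound (in particular, $M_C$ and $D^{\langle k-2\rangle}$ really are the unique depth-$(k-2)$ branches, since $\Sp{k-1}\nosub S$ forces every other child subtree of $y$ or $z$ to have depth at most $k-3$), and the two-step recursion $M_{C_1}^{\langle d\rangle}\cong M_{C_2}^{\langle d\rangle}\Leftrightarrow C_1^{\langle d-1\rangle}\cong C_2^{\langle d-1\rangle}\Leftrightarrow M_{C_1}^{\langle d-2\rangle}\cong M_{C_2}^{\langle d-2\rangle}$ closes cleanly with both base cases $d=0,1$ trivially true.

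Your route differs from the paper's in an interesting way.  Both proofs are level-by-level inductions moving outward from the central edge, and both hinge on the observation that the ``side'' subtrees $W_1,W_1'$ repeat (alternately truncated) along the spine.  The paper works directly toward $C_1\cong Y_1$ and $D_1\cong Z_1$: it names the four isomorphism classes $A_1,B_1,A_2,B_2$ and walks along the unique long path in each, arguing pictorially that the offshoot at step $i$ in $A_1$ matches that in $B_1$ because each is an iterated truncation of $W_1$ or $W_2$.  You instead first prove the auxiliary (and formally stronger) statement $C_1\cong C_2$ and $D_1\cong D_2$ --- hence $S_1\cong S_2$ --- by the clean multiset-cancellation recursion, and then read off $C_1\cong Y_1$ and $D_1\cong Z_1$ via your ``local reduction''.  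Your packaging trades the paper's geometric picture for an algebraic bookkeeping that makes the induction hypothesis and the cancellation step completely explicit; the paper's version is shorter to state but leaves more to the reader at each level (the ``truncate and move to the other evine'' step).
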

\begin{proof}
As in Definition~\ref{Sj}, let the roots of $C_1,D_1,C_2,D_2$ be
$y_1,z_1,y_2,z_2$, respectively.  In each of these trees, the extremal
vertices are at distance $k-1$ from the root.  The given pairs are isomorphic
as rooted trees, so the isomorphisms match the roots.

Since $\Sp{k-1}$ does not appear, the common subtree $W_1$ in $C_1$ and $Y_1$
that is rooted at $y_1$ and contains no edge of a fixed longest path in $S_1$
has length less than $k-1$.  Since $C_1\cong Y_2$ and $Y_1\cong C_2$, also the
common subtree of $C_2$ and $Y_2$ rooted at $y_2$ is $W_1$.  The analogous
observation holds for $W_2$ rooted at both $z_1$ and $z_2$.  In
Figure~\ref{s1s2fig}, copies of $W_1$ are drawn rounded, while copies of $W_2$
are drawn as triangles.  We have discussed the copies of $W_1$ and $W_2$ as
offshoots from the central vertices; we will be deriving the copies at the
neighboring vertices.  To facilitate discussion, we write $A_1$ for the
isomorphic subtrees $C_1$ and $Y_2$, $B_1$ for $C_2$ and $Y_1$, $A_2$ for $D_2$
and $Z_1$, and $B_2$ for $D_1$ and $Z_2$, as shown in Figure~\ref{s1s2fig}.

\begin{figure}[h]
\begin{center}
\includegraphics[scale=0.44]{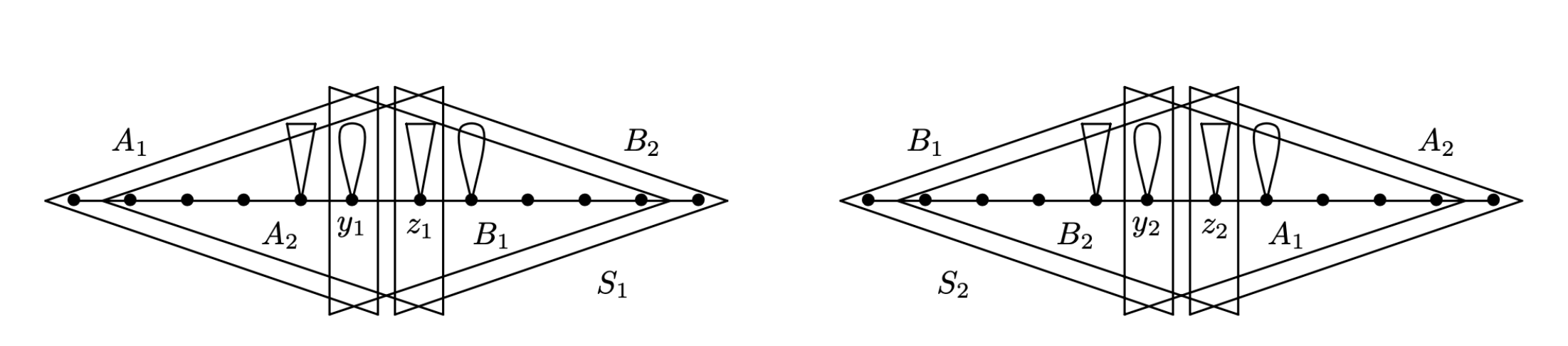}
\caption{$(k-1)$-evines in Lemma~\ref{isom}\label{s1s2fig}}
\end{center}
\end{figure}

Iteratively, moving away from the roots, we will follow paths in the subtrees
$A_1,A_2,B_1,B_2$ so that the offshoots from corresponding vertices in the
paths in $A_1$ and $B_1$ are isomorphic, and similarly for $A_2$ and $B_2$.
Since paths from the roots have finite length, this iteration leads to
$A_1\cong B_1$ and $A_2\cong B_2$, which yields $C_1\cong Y_1$ and
$D_1\cong Z_1$.

At the roots of $A_i$ and $B_i$, we have the common subtree $W_i$ containing
the offshoots of length less than $k-1$.  Consider the subtree $W_2$ rooted at
$z_2$ in $S_2$.  Not only is $z_2$ the root of $Z_2$, which is a copy of $B_2$,
but also $z_2$ is the child outside $W_1$ of the root $y_2$ of $Y_2$, which is
a copy of $A_1$.  Thus the copy of $A_1$ that appears as $C_1$ in $S_1$ must
also have a copy of $W_2$ rooted at the child of $y_1$ not in $W_1$.
Similarly, consider the copy of $W_2$ rooted at $z_1$ in $S_1$, appearing
in $A_2$ and $B_2$.  Since $z_1$ is the child outside $W_1$ of the root
$y_1$ of the copy of $B_1$ in $S_1$, the copy of $B_1$ in $S_2$ also has
$W_2$ rooted at the child of $y_2$ not in $W_1$.  We have thus obtained
isomophism between subtrees at the children of $y_1$ in $A_1$ and $B_1$.

Also, the copy of $A_1$ occurring as $Y_2$ has only one child of $z_2$
outside $W_2$.  Hence for the copy of $A_1$ occurring as $C_1$ in $S_1$,
also there is only one child outside $W_2$ for the vertex whose offshoots we
have found to be $W_2$, thereby leaving a unique path to follow (similarly
for $B_1$.

Analogous arguments starting from the copies of $W_1$ rooted at $y_2$ in $S_2$
and at $y_1$ in $S_1$ give us isomorphism between the subtrees at the child of
the root of $A_2$ in $S_2$ and at the child of the root of $B_2$ in $S_1$.

At the child of the root along the path, we have just found
a copy of $W_2$ in $A_1$ in $S_1$, a copy of $W_1$ in $B_2$ in $S_1$,
a copy of $W_2$ in $B_1$ in $S_2$, and a copy of $W_1$ in $A_2$ in $S_2$,
where these hosts are the ``outer'' subtrees together covering all of $S_1$ and
$S_2$ except the central edges.  Since the roots of these new copies of $W_1$
and $W_2$ are one step from the centers of $S_1$ and $S_2$ in these hosts,
the copies of $W_1$ and $W_2$ may contain vertices that are extremal in these
host subtrees.  These extremal vertices are deleted when we view these
offshoots as subtrees of the ``inner'' subtrees, moving from $A_1$ to $A_2$ in
$S_1$, from $B_2$ to $B_1$ in $S_1$, from $B_1$ to $B_2$ in $S_2$, and from
$A_2$ to $A_1$ in $S_2$.

Since these copies of $W_i$ are isomorphic and at the same distance from the
center, the truncations obtained by deleting vertices that are extremal in the
inner hosts are also isomorphic.  In comparing the inner subtrees after the
truncation ($A_2$ in $S_1$ with $B_2$ in $S_2$, and $B_1$ in $S_1$ with $A_1$
in $S_2$) we see the isomorphism at the next step moving away from the root. 
As we move along, the isomorphic subtrees we extract in $A_i$ and $B_i$ will be
iterated truncations of $W_1$ alternating with iterated truncations of $W_2$.

Having obtained isomorphism at a new level using the inner subtrees, we
view them and the unique leftover edge continuing the path as located in the
outer subtrees in the other $(k-1)$-evine.  There we must truncate farthest
vertices to obtain the next level in the inner subtrees.  At each step, the
same operations are occurring in $A_i$ and in $B_i$, so we obtain the
isomorphism level by level.
\end{proof}

In our applications of Lemma~\ref{isom} to $T$, the labels do indicate subtrees
as in Definitions~\ref{trunketc} and~\ref{Sj}.
Although we know the $(k-1)$-vines $U_1$ and $U_2$ that appear in $T$ but are
not contained in $k$-vines, and we know the $(k-1)$-evines $S_1$ and $S_2$
that contain them, we don't yet know which of $S_1$ and $S_2$ contains which of
$U_1$ and $U_2$.  Postponing that issue, we show next that if we do know that
$U_i$ appears in $S_j$, so that $y_jz_j$ is the trunk edge of $U_i$ with $x_i$
occurring as $y_j$ or $z_j$, then we will be able to determine which of $y_j$
and $z_j$ is $x_i$.  This is equivalent to determining which of $C_j\cup Y_j$
and $D_j\cup Z_j$ is that occurrence of $U_i$.  If $U_i$ occurs as
$C_j\cup Y_j$, then $y_j$ is $x_i$ and $C_j$ is $A_i$ or $B_i$.  If $U_i$
occurs as $D_j\cup Z_j$, then $z_j$ is $x_i$ and $D_j$ is $A_i$ or $B_i$.

When we write that two $4$-tuples of rooted trees are equal, we mean that
corresponding entries are isomorphic.  Similarly, writing that a rooted tree
is not in a set of rooted trees means that it is not isomorphic to any of the
members, and the size of a set of rooted trees is the number of isomorphism
classes in it.

\begin{lemma}\label{onetrunk}
Fixing $i,j\in\{1,2\}$, if $S_j$ is known to be the maximal $(k-1)$-evine whose
central edge is the trunk edge of $U_i$, then the edge of $U_i$ serving as the
trunk edge is recognizable.
\end{lemma}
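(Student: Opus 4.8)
The plan is to characterize the trunk edge by a depth condition and then read it off from $S_j$.

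First I would record the relevant geometry of $U_i$. By Lemma~\ref{t1t2}, $x_i$ lies on every longest path $P$, at distance $k-1$ from the nearer end of $P$. The two neighbors of $x_i$ on $P$ lie in distinct components of $U_i-x_i$, each containing a path on $k-1$ vertices starting at the neighbor of $x_i$; since $\Sp{k-1}\nosub T$, these are the only two such components. Call them $P_s$, through the neighbor $s$ of $x_i$ toward the near end of $P$, and $P_t$, through the neighbor $t$ toward $x_{3-i}$, so that $x_it$ is the trunk edge. The crucial fact is that $P_s$ has depth exactly $k-1$ from $x_i$: at least $k-1$ because it carries the portion of $P$ from $x_i$ to the near end, and at most $k-1$ because a vertex of $P_s$ at distance $\ge k$ from $x_i$ would combine with the portion of $P$ from $x_i$ toward $x_{3-i}$ (of length $r-k\ge k+2$, since $T$ contains a $k$-evine) into a path on more than $r$ vertices. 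By contrast $P_t$ has depth $r-k>k-1$ from $x_i$. In particular no path on $k$ vertices begins at $s$ and avoids $x_i$, so $x_is$ is not the central edge of any $(k-1)$-evine in $T$; this justifies the assertion in Definition~\ref{trunketc}.

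Next I would use $S_j$. Since $S_j\esub T$ contains every vertex within distance $k-1$ of $x_i$, the maximal $(k-1)$-vine of $S_j$ centered at $x_i$ is exactly $U_i$; by Definition~\ref{Sj} it equals $C_j\cup Y_j$ or $D_j\cup Z_j$, centered at $y_j$ or $z_j$ respectively, and in either case the trunk edge of $U_i$ must match the central edge $y_jz_j$ of $S_j$. Fix an isomorphism $\varphi$ from $U_i$ onto the appropriate maximal $(k-1)$-vine of $S_j$ (when the two maximal $(k-1)$-vines of $S_j$ are non-isomorphic this simultaneously tells us which of $y_j,z_j$ is $x_i$). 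Now examine $S_j$ itself, not merely this vine: the subtree hanging below $\varphi(s)$ still has depth $k-2$ there, since it equals the untruncated $P_s$, whereas the subtree hanging below $\varphi(t)$ has depth $k-1$, because $P_t$ reaches past distance $k-1$ from $x_i$. Hence $\varphi(t)$ is the unique neighbor of $\varphi(x_i)$ in $S_j$ whose hanging subtree is strictly deeper in $S_j$ than in $U_i$ --- equivalently, it is the endpoint of the central edge of $S_j$ other than $\varphi(x_i)$ --- and pulling back, $x_i\varphi^{-1}(\varphi(t))$ is the trunk edge of $U_i$, which we have thereby recognized from the known data $U_i$ and $S_j$.

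The one point requiring care, and the main obstacle, is that $U_i$ may itself admit an automorphism interchanging $s$ and $t$ --- precisely when the truncation of $P_t$ to depth $k-2$ from $t$ is isomorphic to $P_s$ --- so that $\varphi$ is not unique. I would resolve this by noting that the ambiguity is harmless: such an automorphism carries a reconstruction in which one of $\{s,t\}$ is treated as the trunk neighbor onto the reconstruction in which the other is, so both choices yield isomorphic trees. Equivalently, the conclusion is best phrased as recognizing the trunk edge up to the automorphisms of $U_i$, which is exactly what the subsequent ``orienting'' step needs.
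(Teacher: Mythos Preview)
Your depth observation is correct for the \emph{actual} embedding of $U_i$ into $S_j$ inside $T$: if $\iota$ is that embedding, then indeed the subtree of $S_j$ below $\iota(s)$ has depth $k-2$ while the one below $\iota(t)$ has depth $k-1$. The problem is that you then apply this to an \emph{abstract} isomorphism $\varphi$ chosen from $U_i$ onto ``the appropriate'' maximal $(k-1)$-vine of $S_j$, and the sentence ``the subtree hanging below $\varphi(s)$ still has depth $k-2$ there, since it equals the untruncated $P_s$'' already assumes $\varphi(s)$ sits where $s$ actually sits in $T$. That is circular: knowing where $s$ and $t$ land in $S_j$ is precisely what you are trying to determine.

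Concretely, the case your argument does not cover is when both maximal $(k-1)$-vines of $S_j$ are isomorphic to $U_i$ (so you cannot tell which of $y_j,z_j$ is $x_i$) \emph{and} no automorphism of $U_i$ swaps $s$ and $t$. In the paper's notation this is the configuration $C_j\cong A_i$, $D_j\cong B_i$, $Z_j\cong A_i$, $Y_j\cong B_i$. If you then (unknowingly) map $U_i$ onto the vine $D_j\cup Z_j$ via $\psi$, the forced matching of major pieces sends $s$ to $y_j$ and $t$ to the neighbor inside $D_j$, so your rule ``take $\psi^{-1}$ of the central-edge neighbor'' returns $s$, not $t$. Your final paragraph only addresses ambiguity coming from automorphisms of $U_i$; it does not address this ambiguity in the choice of vine. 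The paper closes exactly this gap by invoking Lemma~\ref{isom} (applied to $S_j$ and its label-reversal) to show that the configuration above forces $A_i\cong B_i$, hence $U_i$ is symmetric after all and either choice is acceptable. Without that step, the argument is incomplete.
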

\begin{proof}
By definition, $x_i$ is $y_j$ or $z_j$.  If $x_i=y_j$, then $U_i=C_j\cup Y_j$
and the trunk edge of $U_i$ is in $B_i$.  If $x_i=z_j$, then $U_i=D_j\cup Z_j$
and the trunk edge of $U_i$ is in $A_i$.  Hence we want to determine which of
$y_j$ and $z_j$ is $x_i$.  

One of $\{C_j,D_j\}$ must be $A_i$ or $B_i$.  By symmetry, we may label the
subtrees so that $C_j\cong A_i$.  If $D_j\not\cong B_i$, then $y_j=x_i$.  Hence
we may assume both $C_j\cong A_i$ and $D_j\cong B_i$.

If $x_i=z_j$, then $Z_j\cong A_i$, but if $x_i=y_j$, then $Y_j\cong B_i$.
Hence if $Z_j\not\cong A_i$ or $Y_j\not\cong B_i$, then we have located $x_i$
in $S_j$.  Hence we may assume both $Z_j\cong A_i$ and $Y_j\cong B_i$.

We claim that this requires $A_i\cong B_i$.  Instead of building an isomorphism
as in Lemma~\ref{isom}, we apply Lemma~\ref{isom} by using another copy of
$S_j$.  Let $S'$ be the $(k-1)$-evine obtained by reversing the labeling of
$S_j$, making subtrees $C',Z',Y',D'$ in $S'$ isomorphic to $D_j,Y_j,Z_j,C_j$,
respectively.  Thus
$(C_j,Z_j,Y_j,D_j)=(A_i,A_i,B_i,B_i)$ and $(C',Z',Y',D')=(B_i,B_i,A_i,A_i)$.
With $S_j$ playing the role of $S_1$ and $S'$ playing the role of
$S_2$ in Lemma~\ref{isom}, we have the isomorphic pairs of subtrees
$(C_j,Y')$, $(Z_j,D')$, $(Y_j,C')$, and $(D_j,Z')$ needed to apply
Lemma~\ref{isom}.

We conclude that $C_j\cong Y_j$ or $D_j\cong Z_j$: that is, $A_i\cong B_i$.
Thus in this case $U_i$ is symmetric, and we may choose either edge incident to
$x_i$ along $P$ as the trunk edge of $U_i$.
\end{proof}

\begin{lemma}\label{easycase}
If some member of $\{C_1,C_2,D_1,D_2\}$ does not belong to
$\{A_1,A_2,B_1,B_2\}$ (as a rooted tree), then the trunk edges of $U_1$ and 
$U_2$ are recognizable.
\end{lemma}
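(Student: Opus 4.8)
The plan is to use the hypothesis to determine which of $S_1,S_2$ contains which of $U_1,U_2$, after which Lemma~\ref{onetrunk} recognizes each trunk edge in turn. By Lemma~\ref{t1t2} and Definition~\ref{Sj} we already know the rooted trees $U_1,U_2$ (hence the unordered pairs $\{A_1,B_1\}$ and $\{A_2,B_2\}$) and the rooted trees $C_j,D_j,Y_j,Z_j$ for $j\in\{1,2\}$; only the matching between the $S$'s and the $U$'s is missing.

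First I would record a dictionary relating these subtrees. Since $r\ge2k+3$ by Lemma~\ref{largek}, the centers satisfy $d(x_1,x_2)=r-2k+1\ge4$, so the two trunk edges are distinct edges and $S_1\ne S_2$; moreover $S_j$, being induced by the vertices within distance $k-1$ of its central edge, contains $U_i$ but not $U_{3-i}$ (a vertex at the end of $U_{3-i}$ away from $U_i$ lies at distance $r-k-1>k-1$ from that edge), where $U_i$ is the maximal $(k-1)$-vine at the endpoint $x_i$ of the central edge of $S_j$. Unwinding Definitions~\ref{trunketc} and~\ref{Sj}, and using that $x_i$ lies at distance exactly $k-1$ from an end of every longest path (so the end-segment of $P$ is one of the two major branches of $U_i-x_i$ and lies wholly inside $S_j$), I would check the following: if the copy of $U_i$ in $S_j$ sits at the $y_j$-end, then $\{C_j,Y_j\}=\{A_i,B_i\}$, with $C_j$ the branch of $U_i$ containing the end-segment of $P$ and $Y_j$ the branch containing the trunk edge; symmetrically, if it sits at the $z_j$-end, then $\{D_j,Z_j\}=\{A_i,B_i\}$, with $D_j$ the branch containing the end-segment. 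In particular, for each $j$ the branch among $C_j,D_j$ that meets the end of $P$ belongs to $\{A_1,A_2,B_1,B_2\}$.

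Next I would feed the hypothesis through this dictionary. After relabeling $S_1\leftrightarrow S_2$ and, inside $S_1$, swapping the symmetric labels $(y_1,z_1)$, $(C_1,D_1)$, $(Y_1,Z_1)$ if necessary (which only reverses the orientation chosen arbitrarily in Definition~\ref{Sj}), we may assume the member of $\{C_1,C_2,D_1,D_2\}$ promised by the hypothesis to avoid $\{A_1,A_2,B_1,B_2\}$ is $C_1$. Then the copy of $U_i$ inside $S_1$ cannot sit at the $y_1$-end, since that would make $C_1$ one of its two branches; so it sits at the $z_1$-end, and its branch pair is $\{D_1,Z_1\}$. Now compare the known pair $\{D_1,Z_1\}$ with $\{A_1,B_1\}$ and with $\{A_2,B_2\}$: it equals at least one of them, namely the branch pair of whichever of $U_1,U_2$ actually occurs in $S_1$. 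If it equals exactly one, that determines $i$, hence the whole matching; if it equals both, then $\{A_1,B_1\}=\{A_2,B_2\}$, so $U_1\cong U_2$ and the labels of $U_1,U_2$ are interchangeable, so we just set $U_1\esub S_1$. Either way we now know that $S_1$ is the maximal $(k-1)$-evine whose central edge is the trunk edge of a specific $U_i$, and hence that $S_2$ plays that role for $U_{3-i}$. Applying Lemma~\ref{onetrunk} to $(U_i,S_1)$ and then to $(U_{3-i},S_2)$ recognizes both trunk edges.

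I expect the main obstacle to be the dictionary step, specifically verifying that the end-meeting branch of the $U_i$ contained in $S_j$ is exactly $C_j$ or $D_j$, and not a proper truncation of it; this forces a careful comparison of the radii in Definitions~\ref{trunketc} and~\ref{Sj} with the fact that $x_i$ is at distance exactly $k-1$ from an end of $P$. Granted the dictionary, the deduction from the hypothesis is short, the only point requiring care being the degenerate case $U_1\cong U_2$, where ``recognizable'' must be read as ``determined up to the inconsequential relabeling of $U_1$ and $U_2$.''
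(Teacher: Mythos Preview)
Your argument is correct and follows the same overall strategy as the paper: from $C_1\notin\{A_1,A_2,B_1,B_2\}$ deduce that the copy of $U_i$ in $S_1$ is centered at $z_1$, so $\{D_1,Z_1\}=\{A_i,B_i\}$, then pin down $i$ and invoke Lemma~\ref{onetrunk} twice. The one genuine difference is organizational: you compare the unordered pair $\{D_1,Z_1\}$ directly against $\{A_1,B_1\}$ and $\{A_2,B_2\}$, which collapses into a single dichotomy the case analysis the paper performs on how many of $A_1,B_1,A_2,B_2$ are isomorphic to $D_1$ alone. Your packaging is slightly cleaner; the paper's version has the minor advantage that in some branches it reads off the trunk edge of one $U_i$ directly (knowing $D_1\cong A_1$ already says the trunk lies in $B_1$) rather than appealing to Lemma~\ref{onetrunk} for both. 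The preliminary paragraph about $d(x_1,x_2)\ge4$ and $U_{3-i}\nosub S_j$ is correct but not needed for the argument, since the pairing of $S_j$ with its $U_i$ is already fixed by the definition of $S_j$ as the maximal $(k-1)$-evine on the trunk edge of $U_i$.
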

\begin{proof}
By symmetry, we may assume $C_1\notin\{A_1,A_2,B_1,B_2\}$.
Thus $y_1\notin\{x_1,x_2\}$, so $z_1\in\{x_1,x_2\}$.  That is,
$D_1\cup Z_1\in\{U_1,U_2\}$, so
$D_1\in\{A_1,A_2,B_1,B_2\}$.  The rooted trees $A_1,A_2,B_1,B_2$ need not
be distinct; $D_1$ may be isomorphic to more than one of them.

If $D_1$ is isomorphic to exactly one of them, which by symmetry we may assume
is $A_1$, then $z_1=x_1$ and we know that the trunk edge in $U_1$ lies in
$B_1$, not $A_1$.  Also, since $D_1$ is not isomorphic to $A_2$ or $B_2$,
we have $U_2\esub S_2$, and Lemma~\ref{onetrunk} completes the proof.

Hence we may assume that $D_1$ is isomorphic to more than one of
$\{A_1,A_2,B_1,B_2\}$.  By symmetry, we again assume $D_1=A_1$.
If $D_1$ is not isomorphic to $A_2$ or $B_2$, then $D_1= A_1\cong B_1$ and it
does not matter which candidate is the trunk edge of $U_1$.  Since neither
$C_1$ nor $D_1$ is in $\{A_2,B_2\}$, we have $U_1\esub S_1$ and $U_2\esub S_2$
as actual subgraphs of $T$, and Lemma~\ref{onetrunk} on $U_2$ and $S_2$
completes the proof.

In the remaining case, by symmetry, $D_1=A_1\cong A_2$.
If also $B_1\cong B_2$, then $U_1\cong U_2$.  Now it does not matter which
$U_i$ we view as lying in which $S_j$ in $T$, so we can assign them arbitrarily
and use Lemma~\ref{onetrunk} to complete the proof.

Hence we may assume $D_1=A_1\cong A_2$ and $B_1\not\cong B_2$.  Again
$z_1\in\{x_1,x_2\}$, so $D_1\cup Z_1$ is $U_1$ or $U_2$.  Since
$B_1\not\cong B_2$, now $Z_1$ is isomorphic to just one of $B_1$ or $B_2$; let
it be $B_j$.  Now $z_1=x_j$, the trunk edge of $U_j$ is in $B_j$, and
$U_{3-j}\esub S_2$, so Lemma~\ref{onetrunk} completes the proof.
\end{proof}

In the remaining cases, $\{C_1,C_2,D_1,D_2\}\subseteq\{A_1,A_2,B_1,B_2\}$.
In light of Lemma~\ref{onetrunk}, our goal is to find which of $S_1$ and $S_2$
contains which of $U_1$ and $U_2$.  We break this into three lemmas, depending
on how many isomorphism classes of rooted trees comprise $\{A_1,A_2,B_1,B_2\}$.

\begin{lemma}\label{2pieces}
If $\C{\{A_1,A_2,B_1,B_2\}}\le 2$, then the trunk edges of $U_1$ and $U_2$
are recognizable.
\end{lemma}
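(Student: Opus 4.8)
The plan is to reduce everything to Lemma~\ref{onetrunk} via two observations. First, if $U_i$ is \emph{symmetric}, meaning $A_i\cong B_i$, then the two edges incident to $x_i$ in $U_i$ that begin $(k-1)$-vertex paths are exchanged by an automorphism of $U_i$, so declaring either one to be the trunk edge yields the same reassembled tree; thus the trunk edge of a symmetric $U_i$ is recognizable with no further work. Second, if $U_1\cong U_2$, then each of $S_1,S_2$ is the maximal $(k-1)$-evine whose central edge is the trunk edge of one of $U_1,U_2$; whichever of the two vines is paired with $S_1$, Lemma~\ref{onetrunk} applied to $S_1$ and that vine produces its trunk edge, and since the trunk edge is a feature of the isomorphism type of the vine this simultaneously orients $U_1$ and $U_2$. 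We need not know the pairing, since the output of Lemma~\ref{onetrunk} depends only on the isomorphism types of $C_j,D_j,Y_j,Z_j$ and of $A_i,B_i$, and here $\{A_1,B_1\}=\{A_2,B_2\}$.

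Now dispose of the easy sub-cases. If $\C{\{A_1,A_2,B_1,B_2\}}=1$, then $A_1\cong B_1$ and $A_2\cong B_2$, so both $U_i$ are symmetric and the first observation finishes. So assume there are exactly two isomorphism classes, represented by rooted trees $\alpha,\beta$ with $\alpha\not\cong\beta$. If each of $\{A_1,B_1\}$ and $\{A_2,B_2\}$ is monochromatic ($=\{\alpha,\alpha\}$ or $=\{\beta,\beta\}$), both $U_i$ are symmetric and we are done; if neither is monochromatic, then $\{A_1,B_1\}=\{\alpha,\beta\}=\{A_2,B_2\}$, so $U_1\cong U_2$ and the second observation finishes. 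The remaining case is that exactly one of the two pairs is monochromatic; by symmetry say $\{A_1,B_1\}=\{\gamma,\gamma\}$ with $\gamma\in\{\alpha,\beta\}$ and $\{A_2,B_2\}=\{\alpha,\beta\}$. Then $U_1$ is symmetric (done) and $U_1\not\cong U_2$, so only the trunk edge of $U_2$ remains.

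\textbf{The remaining case.} By Lemma~\ref{onetrunk} it suffices to decide which of $S_1,S_2$ is the maximal $(k-1)$-evine pairing with $U_2$. For each $j$, one endpoint of the central edge of $S_j$ is the center of a maximal $(k-1)$-vine not contained in any $k$-vine of $T$ --- this vine being $U_{\sigma(j)}$ for the true pairing $\sigma$ --- while the other endpoint is a $k$-center whose maximal $(k-1)$-vine lies in a $k$-vine. By Corollary~\ref{countk} we know all maximal $k$-vines and all maximal $(k-1)$-vines of $T$, so we recognize $U_1$ and $U_2$ as the only two maximal $(k-1)$-vines not contained in a $k$-vine, and the vine of $S_j$ centered at that first endpoint is $U_{\sigma(j)}$. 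If exactly one of $S_1,S_2$ contains a $(k-1)$-vine isomorphic to $U_2$, that one pairs with $U_2$ and Lemma~\ref{onetrunk} applies. Moreover the component of $S_j$ lying on the side of the center of $U_{\sigma(j)}$ equals the subtree of $U_{\sigma(j)}$ obtained by deleting the major branch that its trunk edge enters --- call it the non-trunk subtree of $U_{\sigma(j)}$ --- so this component is $B_1$ (of colour $\gamma$) when $S_j$ pairs with $U_1$, and is $B_2$ when $S_j$ pairs with $U_2$.

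\textbf{The main obstacle.} The difficulty is the sub-case where both $S_1$ and $S_2$ contain a $(k-1)$-vine isomorphic to $U_2$; then, writing $S$ for the evine pairing with $U_1$ and $S'$ for the one pairing with $U_2$, the outer $(k-1)$-vine of $S$ (the one centered at a $k$-center) is isomorphic to $U_2$. Here I would use the standing hypothesis of this block that $\{C_1,C_2,D_1,D_2\}\esub\{A_1,A_2,B_1,B_2\}$, since otherwise Lemma~\ref{easycase} already applies. Each $(k-1)$-evine $S_j$ is obtained from $C_j$ and $D_j$ by joining their roots by an edge, so $S_j$ is determined up to isomorphism by the unordered pair $\{C_j,D_j\}$, and now each of $C_j,D_j$ lies in $\{\alpha,\beta\}$; combined with the identification of the non-trunk subtrees above (forcing the $U_1$-side component of $S$ to have colour $\gamma$ and the $U_2$-side component of $S'$ to be $B_2\in\{\alpha,\beta\}$), a short enumeration of the colour patterns $(C_j,D_j)\in\{\alpha,\beta\}^2$ for $S$ and $S'$ leaves only two possibilities: either $S\cong S'$, in which case Lemma~\ref{onetrunk} applied to either $S_1$ or $S_2$ (with $U_2$) gives the same, correct, orientation of $U_2$; or $S$ and $S'$ are the non-isomorphic evines consisting of two copies of $\gamma$ joined by an edge and of a copy of $\gamma$ joined by an edge to a copy of the other colour, and then $S'$ is forced by matching its required $x_2$-side component $B_2$ (a known subtree of the known vine $U_2$) against the side-components of the two candidates. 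In all cases we invoke Lemma~\ref{onetrunk} on the correctly identified evine to recover the trunk edge of $U_2$. The crux I expect is precisely verifying that this colour-pattern enumeration, under $\C{\{A_1,A_2,B_1,B_2\}}=2$ and $\{C_1,C_2,D_1,D_2\}\esub\{A_1,A_2,B_1,B_2\}$, never leaves two genuinely indistinguishable candidates for the evine pairing with $U_2$; the companion statements for $\C{\{A_1,A_2,B_1,B_2\}}=3$ and $4$ will demand the analogous but heavier bookkeeping.
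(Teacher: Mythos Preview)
Your overall strategy matches the paper's: dispose of symmetric $U_i$ and of $U_1\cong U_2$ directly, then in the remaining case (say $A_1\cong B_1\cong\gamma$ and $\{A_2,B_2\}=\{\alpha,\beta\}$ with $\alpha\not\cong\beta$) try to decide which of $S_1,S_2$ pairs with $U_2$ so that Lemma~\ref{onetrunk} can be invoked. The gap is in your ``main obstacle'' resolution. You propose to identify $S'$ (the evine paired with $U_2$) by ``matching its required $x_2$-side component $B_2$ against the side-components of the two candidates.'' But the $x_2$-side component of $S'$ is, by definition, the \emph{non-trunk} subtree of $U_2$---and whether that subtree is $A_2$ or $B_2$ is exactly what ``recognizing the trunk edge of $U_2$'' means. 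You do not know it; using it to pick $S'$ is circular.

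Concretely, take the configuration the paper isolates at the end of its proof: $(C_1,Z_1,Y_1,D_1)=(\alpha,\alpha,\beta,\alpha)$ and $(C_2,Z_2,Y_2,D_2)=(\alpha,\alpha,\alpha,\beta)$, so $\{C_1,D_1\}=\{\alpha,\alpha\}$ and $\{C_2,D_2\}=\{\alpha,\beta\}$. Both $S_1$ and $S_2$ contain a $(k-1)$-vine isomorphic to $U_2$, so your first test does not decide. If you now apply Lemma~\ref{onetrunk} to $(U_2,S_1)$ you get ``trunk edge in $B_2=\beta$'', while applying it to $(U_2,S_2)$ gives ``trunk edge in $A_2=\alpha$''; the two pairings give contradictory orientations of $U_2$, and nothing in your colour-pattern enumeration distinguishes them. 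The paper does not disambiguate this case---it eliminates it. By relabelling $S_2$ in reverse and checking the four isomorphisms $C_1\cong Y'$, $Z_1\cong D'$, $Y_1\cong C'$, $D_1\cong Z'$, Lemma~\ref{isom} forces $C_1\cong Y_1$, i.e.\ $\alpha\cong\beta$, contradicting the case hypothesis. Your proposal never invokes Lemma~\ref{isom} (or any substitute), so it lacks the structural ingredient that rules out the genuinely ambiguous configuration.
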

\begin{proof}
If neither of $\{A_1,B_1\}$ is isomorphic to either of $\{A_2,B_2\}$, then
having at most two isomorphism classes requires $A_1\cong B_1$ and
$A_2\cong B_2$.  In this case, $U_1$ and $U_2$ are symmetric and in each
the choice of trunk edges from the two candidates does not matter.  This case
includes that of $A_1$, $A_2$, $B_1$, and $B_2$ being pairwise isomorphic.

In the remaining case, a member of $\{A_1,B_1\}$ is isomorphic to a member of
$\{A_2,B_2\}$.  By symmetry in the labeling, we may assume $A_1\cong A_2$ and
call this rooted tree $A$.  If $B_1\cong B_2$, then $U_1\cong U_2$, and we can
view $U_1$ and $U_2$ as contained in $S_1$ and $S_2$, respectively, applying
Lemma~\ref{onetrunk} to find the trunk edges.

Hence we may assume $B_1\not\cong B_2$.  Since the four subtrees lie in two
isomorphism classes, we may assume by symmetry in the indices that
$B_1\cong A_1\cong A_2\cong A$ and $B_2\not\cong A$.  Now $U_1$ is symmetric
and the choice of its trunk edge does not matter, so it remains to determine
the trunk edge of $U_2$.  If $B_2$ does not occur as any of the branches
$\{C_1,C_2,D_1,D_2\}$, then $B_2$ cannot contain the end of a longest path and
instead contains the trunk edge of $U_2$.

Let $B=B_2$.  In the remaining case, $B\in\{C_1,C_2,D_1,D_2\}$.  We have made
no comment yet distinguishing branches of $S_1$ or $S_2$, so by symmetry we may
assume $D_2\cong B$.  Now if $C_2\not\cong A$, then neither branch of $S_2$ is
$A_1$ or $B_1$, so $U_1\not\esub S_2$ in $T$.  This forces $U_2\esub S_2$, and
we apply Lemma~\ref{onetrunk}.  Hence we may assume $C_2\cong A$ and
$D_2\cong B$.  If $S_1\cong S_2$, then we can apply Lemma~\ref{onetrunk} with
$U_2\esub S_2$ or $U_2\esub S_1$.  Hence we may assume $S_1\not\cong S_2$.

Since $A_1\cong B_1\cong A\not\cong B\cong D_2$, we have $U_1\esub S_2$ only if
$Y_2\cong A$.  Therefore, if $Y_2\not\cong A$, then we know $U_1\nosub S_2$,
which forces $U_2\esub S_2$, and we apply Lemma~\ref{onetrunk}.

Hence we may assume $Y_2\cong A$ and $U_1\esub S_2$.  We are confused about how
to apply Lemma~\ref{onetrunk} only if $U_2$ occurs in both $S_1$ and $S_2$.
For $U_2\esub S_2$, we must have $Z_2\cong A_2\cong A$.  That is,
$C_2\cong Z_2\cong Y_2\cong A$ and $D_2\cong B$.  On the other hand, confusion
also requires having both $U_1$ and $U_2$ occur in $S_1$.  With
$S_1\not\cong S_2$, by symmetry in the labels we may assume
$C_1\cong Z_1\cong D_1\cong A$ and $Y_1\cong B$.  See Figure~\ref{S1S2}.

Now we modify Figure~\ref{S1S2} by relabeling to reverse $S_2$, viewing the
subtrees $(A,A,A,B)$ as $(D_2,Y_2,Z_2,C_2)$ instead of $(C_2,Z_2,Y_2,D_2)$.
With this relabeling, since $(C_1,Z_1,Y_1,D_1)=(A,A,B,A)$, we have 
$C_1\cong Y_2$, $Z_1\cong D_2$, $Y_1\cong C_2$, and $D_1\cong Z_2$.
By Lemma~\ref{isom}, we conclude $C_1\cong Y_1$ and $D_1\cong Z_1$.  In other
words, $A\cong B$, which is forbidden in this case.
\end{proof}
\begin{figure}[h]
\begin{center}
\includegraphics[scale=0.44]{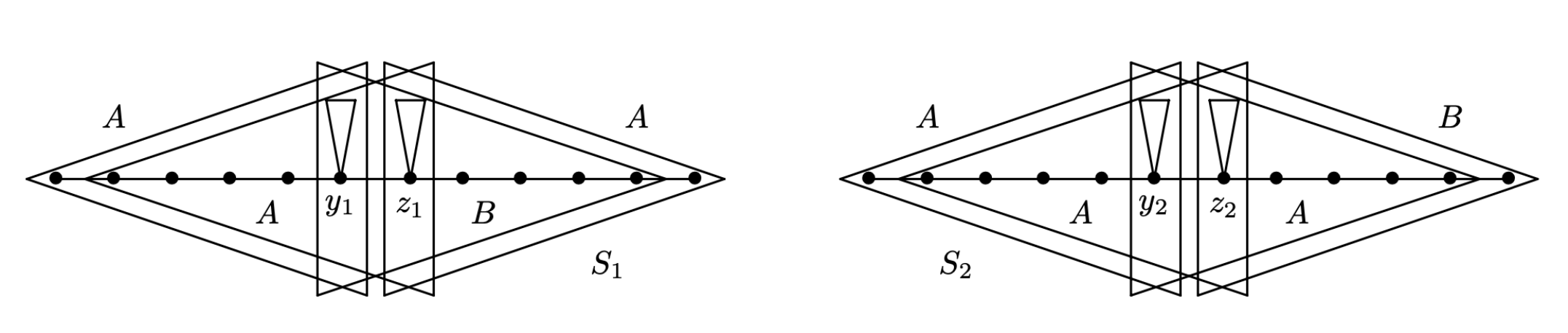}
\caption{$(k-1)$-evines in Lemma~\ref{2pieces}\label{S1S2}}
\end{center}
\end{figure}
\eject

\vspace{-.5pc}

\begin{lemma}\label{3pieces}
If $\C{\{A_1,A_2,B_1,B_2\}}= 3$, then the trunk edges of $U_1$ and $U_2$
are recognizable.
\end{lemma}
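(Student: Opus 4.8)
The plan is to reduce, as in Lemma~\ref{2pieces}, to identifying which of $S_1$ and $S_2$ has its central edge equal to the trunk edge of which of $U_1$ and $U_2$; once that correspondence is known, Lemma~\ref{onetrunk} recognizes both trunk edges. Since $U_i$ occurs as one of the two maximal $(k-1)$-vines of the evine $S_j$ on its trunk (namely the one centered at $x_i$), the unknown correspondence is a bijection, and it is forced unless each of $S_1,S_2$ has one maximal $(k-1)$-vine isomorphic to $U_1$ and the other isomorphic to $U_2$. I will call that the \emph{confused} situation; resolving it is the heart of the proof, and the tool is the one used at the end of Lemma~\ref{2pieces} and in Lemma~\ref{onetrunk}: feed $S_1$ together with $S_2$ (in the given labeling or with $y_2,z_2$ reversed), or one $S_j$ together with its own reversal, into Lemma~\ref{isom}.

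First I would normalize the hypothesis. Having exactly three isomorphism classes among $A_1,A_2,B_1,B_2$ means exactly two of these four rooted trees are isomorphic, and up to the symmetries $U_1\leftrightarrow U_2$ and $A_i\leftrightarrow B_i$ there are two cases: the coinciding pair is $\{A_1,B_1\}$, so $U_1$ is symmetric and its trunk edge may be chosen arbitrarily; or it is $\{A_1,A_2\}$, so one branch class is shared by $U_1$ and $U_2$ while neither vine is symmetric and $U_1\not\cong U_2$. In either case write $A$ for the repeated class and $B,C$ for the other two ($B=A_2$, $C=B_2$ in the first case; $B=B_1$, $C=B_2$ in the second), so that $A,B,C$ are pairwise non-isomorphic, each of $C_1,C_2,D_1,D_2$ is isomorphic to one of $A,B,C$ by the standing assumption of this block of lemmas, and the branch classes of $U_1,U_2$ are $\{A,A\}$ and $\{B,C\}$ in the first case and $\{A,B\}$ and $\{A,C\}$ in the second. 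The bookkeeping fact I will use throughout is that when $C_j\cup Y_j$ is the maximal $(k-1)$-vine of $S_j$ isomorphic to $U_i$, the pair $\{C_j,Y_j\}$ equals the pair of branch classes of $U_i$ (and likewise for $D_j\cup Z_j$ and $\{D_j,Z_j\}$); so fixing a candidate correspondence determines the isomorphism types of all of $C_j,D_j,Y_j,Z_j$.

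In the first case only $U_2$ needs attention. If the data determines the host of $U_2$ we finish by Lemma~\ref{onetrunk}; otherwise the confused situation forces $C_1,Y_1,C_2,Y_2$ all isomorphic to $A$ and $\{D_1,Z_1\}=\{D_2,Z_2\}=\{B,C\}$. If class $B$ occurs on matching sides of the two central edges, then $(C_1,D_1)\cong(C_2,D_2)$, hence $S_1\cong S_2$ and it does not matter which we declare to host $U_2$; if $B$ occurs on opposite sides, Lemma~\ref{isom} applies directly to $(S_1,S_2)$ and gives $D_1\cong Z_1$, i.e.\ $B\cong C$, contradicting distinctness of the three classes. In the second case both trunk edges must be found, but since $B$ is a branch class of $U_1$ only and $C$ of $U_2$ only, the correspondence is usually pinned down; the confused situation here has, after relabeling, $\{C_1,Y_1\}=\{A,B\}$, $\{D_1,Z_1\}=\{A,C\}$, and symmetrically $\{C_2,Y_2\}=\{A,C\}$, $\{D_2,Z_2\}=\{A,B\}$ (or the version with the central edge of $S_2$ reversed, handled in parallel).

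From there I would run through the assignments of $A,B,C$ to $C_j,D_j,Y_j,Z_j$ sub-case by sub-case, using the reversal symmetry of each $S_j$ to cut the count: whenever $(C_1,D_1)\cong(C_2,D_2)$ up to a swap we get $S_1\cong S_2$ and the confusion is harmless, and in every remaining sub-case feeding $S_1$ and the suitably reversed copy of $S_2$ into Lemma~\ref{isom} yields $C_1\cong Y_1$ or $D_1\cong Z_1$, hence $A\cong B$ or $A\cong C$ or $B\cong C$, a contradiction. I expect this last analysis — matching the orientations of $A,B,C$ inside $S_1$ and $S_2$ with the right choice of reversal so that the four hypotheses of Lemma~\ref{isom} line up, and separating the configurations that collapse to $S_1\cong S_2$ from those that are impossible — to be the main obstacle and the bulk of the write-up, even though each individual sub-case is closed by a single application of Lemma~\ref{isom}.
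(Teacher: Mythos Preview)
Your overall architecture matches the paper's: split on which two of $A_1,A_2,B_1,B_2$ coincide, reduce to the ``confused'' situation where both $U_1$ and $U_2$ appear as maximal $(k-1)$-vines in each of $S_1$ and $S_2$ with $S_1\not\cong S_2$, and then eliminate that situation. Case~1 ($A_1\cong B_1$) works exactly as you outline, and indeed Lemma~\ref{isom} closes it cleanly.

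The gap is in Case~2. Your claim that ``in every remaining sub-case feeding $S_1$ and the suitably reversed copy of $S_2$ into Lemma~\ref{isom} yields $C_1\cong Y_1$ or $D_1\cong Z_1$'' is false. Take the paper's normalization $(C_1,Z_1,Y_1,D_1)=(A,A,B,C)$ and consider the sub-case $(C_2,D_2)=(A,A)$, forcing $(Y_2,Z_2)=(B,C)$. You can check directly that none of the four reversal combinations of $(S_1,S_2)$ make the hypotheses $C_1\cong Y_2$, $Z_1\cong D_2$, $Y_1\cong C_2$, $D_1\cong Z_2$ of Lemma~\ref{isom} hold simultaneously (each attempt fails on at least one coordinate because $A,B,C$ are pairwise distinct). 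The same happens for $(C_2,D_2)=(B,C)$. These sub-cases are not dismissed by your ``$(C_1,D_1)\cong(C_2,D_2)$ up to swap'' test either.

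What the paper uses instead is a one-step structural observation that does not go through Lemma~\ref{isom}: the rooted tree $Y_j$ is obtained from $D_j$ by deleting its extremal vertices and adjoining, at a new root above, the small-offshoot tree at $y_j$; that small-offshoot tree is read off from $C_j$. In Case~2 the crucial extra fact is $W_1\cong W_2$ (both come from $A$), so all four small-offshoot trees at $y_1,z_1,y_2,z_2$ coincide whenever the corresponding outer piece is any of $A,B,C$. Thus, for instance, when $C_2=D_2=A$ the construction gives $Y_2\cong Z_2$, hence $B\cong C$; when $D_1=D_2=C$ it gives $Y_1\cong Y_2$, hence $B\cong A$. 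You need this ingredient (or an equivalent) to close those sub-cases; Lemma~\ref{isom} alone will not do it.
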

\begin{proof}
Again let $W_1=A_1\cap B_1$ and $W_2=A_2\cap B_2$.  By hypothesis,
two of $\{A_1,A_2,B_1,B_2\}$ are isomorphic.  By Lemma~\ref{onetrunk}, we can
recognize the trunk edges in $U_1$ and $U_2$ unless each of $U_1$ and $U_2$
appears in both $S_1$ and $S_2$ as a subgraph and $S_1\not\cong S_2$.

\smallskip
{\bf Case 1:} {\it $A_1\cong B_1$ or $A_2\cong B_2$, from the same $U_i$.}
By symmetry, we may assume $A_1\cong B_1\cong A$, so the choice of trunk edge
for $U_1$ does not matter.  As noted above, we must have $U_1$ in both
$S_1$ and $S_2$, so we have both $A\in\{C_1,D_1\}$ and $A\in\{C_2,D_2\}$.
However, since $A$ is neither $A_2$ nor $B_2$ and $U_2$ must also appear
in both $S_1$ and $S_2$, we may assume by symmetry in the labeling that
$C_1=C_2=A$ and that neither $D_1$ nor $D_2$ is $A$.

Since confusion requires $S_1\not\cong S_2$, we may assume $D_1=A_2$ and
$D_2=B_2$, with $U_2$ occurring as both $Z_1\cup D_1$ and $Z_2\cup D_2$.
In $S_1$, deleting the extremal vertices of $A$ (as $C_1$) and adding $W_2$
above the root yields $Z_1$ and hence $B_2$.
In $S_2$, deleting the extremal vertices of $A$ (as $C_2$) and adding $W_2$
above the root yields $Z_2$ and hence $A_2$.
Thus $A_2=B_2$, which contradicts the hypothesis of this case.

\smallskip
{\bf Case 2:} {\it $A_1$ or $B_1$ (from $U_1$) is isomorphic to $A_2$ or $B_2$
(from $U_2$).}  
By symmetry, we may assume $A_1\cong A_2\cong A$.  Here all of $\{A,B_1,B_2\}$
are different, but $W_1\cong W_2$ since each is obtained from $A$ in the same
way.

If $A\notin\{C_1,C_2,D_1,D_2\}$, then the ends of $U_1$ and $U_2$ occupied by
$A$ do not occur at an end of $S_1$ or $S_2$, so the trunk edges in $U_1$ and
$U_2$ both lie in $A$.  Hence we may assume $C_1\cong A$.  If
also $D_1\cong A$, then $U_1$ and $U_2$ both appearing in $S_1$ requires
$\{Y_1,Z_1\}=\{B_1,B_2\}$.  Also, since $C_1\cong A\cong D_1$, the rooted
subtrees at $y_1$ and $z_1$ outside the major pieces both equal $W_1$.  Hence
both $Y_1$ and $Z_1$ arise by deleting the extremal vertices of $A$ and adding
$W_1$ at the top.  This yields $Y_1\cong Z_1$ and $B_1\cong B_2$,
which contradicts the hypothesis.

Hence we may assume $D_1\not\cong A$.  By Lemma~\ref{easycase} and symmetry in
the indices, we may now assume $D_1\cong B_2$.  Since $U_1$ and $U_2$ must both
appear in $S_1$, we have $Y_1\cong B_1$ and $Z_1\cong A$; that is,
$U_1\cong C_1\cup Y_1$ and $U_2\cong Z_1\cup D_1$.

Now consider $S_2$.  By reversing the labeling of $S_2$ if needed, we may
assume $U_1=C_2\cup Y_2$ and $U_2=Z_2\cup D_2$.
Hence $C_2\in\{A,B_1\}$ and $D_2\in\{A,B_2\}$.  This leaves four cases.

{\bf(a)} $(C_2,D_2)=(A,B_2)$.  Here $S_2\cong S_1$, so
Lemma~\ref{onetrunk} applies with either assignment.

{\bf(b)} $(C_2,D_2)=(A,A)$.  Here the argument in Case 1 about deleting the
extremal vertices from $A$ and adding another copy of the root offshoots
above the root yields $Y_2\cong Z_2$ and hence $B_1\cong B_2$, a contradiction.

{\bf(c)} $(C_2,D_2)=(B_1,B_2)$.  In $S_1$, deleting the extremal vertices of
$B_2$ and adding $W_1$ above the root yields $B_1$.  In $S_2$, deleting the
extremal vertices of $B_2$ and adding $W_1$ above the root yields $A$.
Hence $B_1\cong A$, a contradiction.

{\bf(d)} $(C_2,D_2)=(B_1,A)$.  Now $(C_1,Z_1,Y_1,D_1)=(A,A,B_1,B_2)$, and also
$(C_2,Z_2,Y_2,D_2)=(B_1,B_2,A,A)$.  In particular, we have isomorphic
pairs $(C_1,Y_2)$, $(Z_1,D_2)$, $(Y_1,C_2)$, and $(D_1,Z_2)$.  By
Lemma~\ref{isom}, $A\in\{B_1,B_2\}$, again a contradiction.
\end{proof}

\begin{lemma}\label{4pieces}
If $\C{\{A_1,A_2,B_1,B_2\}}= 4$, then the trunk edges of $U_1$ and $U_2$
are recognizable.
\end{lemma}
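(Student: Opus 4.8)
The plan is to argue as in Lemmas~\ref{easycase}--\ref{3pieces}: by Lemma~\ref{onetrunk} it suffices to decide which of $S_1$ and $S_2$ has its central edge equal to the trunk edge of $U_1$ (and hence which has the trunk edge of $U_2$). A $(k-1)$-vine not containing $\Sp{k-1}$ has exactly two branches of length $k-1$ at its center, and it is determined up to isomorphism by the unordered pair of these long branches, since the short offshoots at the center are recoverable from either long branch. Because $A_1,A_2,B_1,B_2$ are pairwise non-isomorphic, $\{A_1,B_1\}$ and $\{A_2,B_2\}$ are disjoint pairs of rooted trees, so a $(k-1)$-vine is isomorphic to $U_i$ precisely when its pair of long branches is $\{A_i,B_i\}$. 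Consequently the deck determines the pairing --- and Lemma~\ref{onetrunk} then finishes --- unless each of $S_1$ and $S_2$ has one vine with branch set $\{A_1,B_1\}$ and one vine with branch set $\{A_2,B_2\}$.

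So I would next assume $S_1$ has vines $\alpha_1,\beta_1$ and $S_2$ has vines $\alpha_2,\beta_2$, with $\alpha_1,\alpha_2$ of branch set $\{A_1,B_1\}$ and $\beta_1,\beta_2$ of branch set $\{A_2,B_2\}$; after relabeling the ends $y_j,z_j$ of the central edge of $S_j$ we may take $\alpha_j$ to be the vine centered at $y_j$, so that $C_j$ is the non-trunk branch of $\alpha_j$, $Y_j$ its trunk branch, $D_j$ the non-trunk branch of $\beta_j$, and $Z_j$ its trunk branch. Each $\alpha_j$ is determined up to isomorphism by naming its trunk branch (one of $A_1,B_1$), and similarly for $\beta_j$. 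If $\alpha_1,\alpha_2$ have the same trunk branch and $\beta_1,\beta_2$ have the same trunk branch, then both candidate pairings assign the same trunk edge to $U_1$ and to $U_2$, so these trunk edges are recognizable; it remains to rule out the possibility that $\alpha_1,\alpha_2$ differ in trunk branch or that $\beta_1,\beta_2$ do.

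For this I would use the relation recorded in Definition~\ref{Sj}: $Y_j$ is obtained from $D_j$ by deleting the extremal vertices of $D_j$ and hanging the result, together with the short offshoots at $y_j$, below a new root, and symmetrically $Z_j$ is built from $C_j$ and the short offshoots at $z_j$. In the present situation the short offshoots at $y_j$ form the rooted tree $A_1\cap B_1$ in both $S_1$ and $S_2$ (since $\alpha_1,\alpha_2$ have branch set $\{A_1,B_1\}$), and those at $z_j$ form $A_2\cap B_2$ in both. Hence $D_1\cong D_2$ forces $Y_1\cong Y_2$ --- i.e.\ the $\beta$'s agreeing forces the $\alpha$'s to agree --- and symmetrically the $\alpha$'s agreeing forces the $\beta$'s to agree. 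Thus either both pairs agree, which was already handled, or both pairs disagree. In the remaining case, name the branches so that $\alpha_1$ has non-trunk branch $B_1$ and trunk branch $A_1$; then $\alpha_2$ has these reversed, and likewise $\beta_1$ has non-trunk branch $B_2$ and trunk branch $A_2$ while $\beta_2$ has these reversed. Since $\alpha_j$ is the vine centered at $y_j$, this reads $(C_1,Z_1,Y_1,D_1)=(B_1,A_2,A_1,B_2)$ and $(C_2,Z_2,Y_2,D_2)=(A_1,B_2,B_1,A_2)$, whence $C_1\cong Y_2$, $Z_1\cong D_2$, $Y_1\cong C_2$, and $D_1\cong Z_2$. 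Lemma~\ref{isom} then gives $C_1\cong Y_1$, that is $A_1\cong B_1$, contradicting that the four branches are pairwise non-isomorphic. This contradiction completes the proof.

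I expect the main obstacle to be making the opening reductions airtight: verifying that a $(k-1)$-vine without $\Sp{k-1}$ really is determined by its pair of long branches, so that ``isomorphic to $U_i$'' is equivalent to ``branch set $\{A_i,B_i\}$'', and that the end-relabelings of $S_1$ and $S_2$ needed to put the final configuration into the normal form required by Lemma~\ref{isom} are always available. I would also check the degenerate case $r=2k+1$, in which $z_1=z_2$ and $\beta_1,\beta_2$ are a single vine read with the two orientations; there the $\beta$'s necessarily disagree (their long branches $A_2,B_2$ are non-isomorphic), so the argument proceeds straight to the Lemma~\ref{isom} step.
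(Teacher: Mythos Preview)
Your argument is correct and rests on the same two tools as the paper's proof: Lemma~\ref{onetrunk} together with the ``delete extremal vertices and attach the short offshoots'' relation that builds $Y_j$ from $D_j$ (and $Z_j$ from $C_j$), culminating in an application of Lemma~\ref{isom}. The organization differs slightly: the paper keeps the original labels $y_j,z_j$ and therefore splits into a case where $\{C_1,C_2,D_1,D_2\}$ misses some $A_i$ or $B_i$ (handled by the offshoot-transfer argument directly) and a case where it is a bijection (handled by Lemma~\ref{isom}); your upfront relabeling of $y_j,z_j$ so that the copy of $U_1$ always sits at $y_j$ absorbs the first case into the implication ``$\beta$'s agree $\Rightarrow$ $\alpha$'s agree'' and leaves only the Lemma~\ref{isom} configuration, which is a tidy unification. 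One remark on your closing worry: the case $r=2k+1$ cannot occur, since Lemma~\ref{largek} gives $r\ge 2k+3$, so $x_1$ and $x_2$ are at distance at least $4$ and the central edges of $S_1,S_2$ never share a vertex.
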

\begin{proof}
Here $A_1,A_2,B_1,B_2$ are distinct.  Again by Lemma~\ref{onetrunk} we have the
desired result unless $S_1\not\cong S_2$ and each of $U_1$ and $U_2$ appears in
both $S_1$ and $S_2$.  By Lemma~\ref{easycase}, we may assume
$\{C_1,C_2,D_1,D_2\}\esub\{A_1,A_2,B_1,B_2\}$.

\medskip
{\bf Case 1:} {\it Some member of $\{A_1,A_2,B_1,B_2\}$ is not in
$\{C_1,C_2,D_1,D_2\}$.}
By symmetry, we may assume $A_1\notin\{C_1,C_2,D_1,D_2\}$.  In order to have
$U_1$ appear in both $S_1$ and $S_2$, we must have $B_1$ as an end-subtree in
both $S_1$ and $S_2$.  By symmetry, we may assume $D_1\cong D_2\cong B_1$.
Now, since $U_2$ appears in both $S_1$ and $S_2$ and $S_1\not\cong S_2$, we may
assume $C_1\cong A_2$ and $C_2\cong B_2$.  To complete the copies of $U_2$ in
$S_1$ and $S_2$, we must have $Y_1\cong B_2$ and $Y_2\cong A_2$.

At this point, the subtree $W_2$ that is $A_2\cap B_2$ rooted at $x_2$ appears
both in $S_1$ rooted at $y_1$ and in $S_2$ rooted at $y_2$.  These are the
portions of $Y_1$ and $Y_2$ that do not lie in $D_1$ and $D_2$, respectively.
Since $D_1\cong D_2\cong B_1$, deleting the extremal vertices of $B_1$ and
placing $W_2$ atop the root of $B_1$ yields $B_2$ by looking at $Y_1$ in $S_1$
and $A_2$ by looking at $Y_2$ in $S_2$.  Hence $A_2\cong B_2$, contradicting
the hypothesis.

%

\medskip
{\bf Case 2:} {\it All of $\{A_1,B_1,A_2,B_2\}$ appear in
$\{C_1,C_2,D_1,D_2\}$}, and hence there is a bijection via isomorphisms from
one set of subtrees to the other.  Recall that $U_i=A_i\cup B_i$.

Since each of $U_1$ and $U_2$ appears in both $S_1$ and $S_2$, we may assume by
symmetry that $C_1\cong A_1$, and hence $Y_1\cong B_1$ with $W_1$ rooted at
$y_1$.  Reversing the labeling of $U_2$ if necessary, this leaves
$D_1\cong B_2$ and $Z_1\cong A_2$, so $(C_1,Z_1,Y_1,D_1)=(A_1,A_2,B_1,B_2)$.
Since $U_1$ and $U_2$ must also appear in $S_2$, with
$\{C_2,D_2\}=\{A_2,B_1\}$ by the hypothesis of this case, we may reverse the
labeling of $S_2$ if necessary to obtain $(C_2,Z_2,Y_2,D_2)=(B_1,B_2,A_1,A_2)$
(see Figure~\ref{s1s2fig}).  By Lemma~\ref{isom}, we obtain $A_1\cong A_2$
or $B_1\cong B_2$, contradicting the hypothesis.
\end{proof}


\begin{corollary}\label{trunks}
The trunk edges of $U_1$ and $U_2$ are recognizable.
\end{corollary}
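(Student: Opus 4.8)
The plan is to assemble the case analysis carried out in Lemmas~\ref{easycase}--\ref{4pieces} into a single dichotomy. By Lemma~\ref{t1t2} the deck determines the $(k-1)$-vines $U_1$ and $U_2$, and by Definition~\ref{Sj} it determines the maximal $(k-1)$-evines $S_1$ and $S_2$ whose central edges are the trunk edges of $U_1$ and $U_2$ (though not yet which $S_j$ contains which $U_i$). Hence the rooted subtrees $A_1,A_2,B_1,B_2$ extracted from $U_1,U_2$ as in Definition~\ref{trunketc} and the rooted subtrees $C_1,C_2,D_1,D_2,Y_1,Y_2,Z_1,Z_2$ extracted from $S_1,S_2$ as in Definition~\ref{Sj} are all known up to isomorphism from the deck. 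In particular, every condition appearing in the cases below is decidable from the deck.

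First I would check whether some member of $\{C_1,C_2,D_1,D_2\}$ fails to be isomorphic to any member of $\{A_1,A_2,B_1,B_2\}$; if so, Lemma~\ref{easycase} recognizes both trunk edges and we are done. Otherwise $\{C_1,C_2,D_1,D_2\}\esub\{A_1,A_2,B_1,B_2\}$ as rooted trees, and the proof splits on the number $t=\C{\{A_1,A_2,B_1,B_2\}}$ of isomorphism classes among these four rooted subtrees. Since there are only four subtrees, $t\in\{1,2,3,4\}$: when $t\le2$ apply Lemma~\ref{2pieces}, when $t=3$ apply Lemma~\ref{3pieces}, and when $t=4$ apply Lemma~\ref{4pieces}. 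Each cited lemma recognizes the trunk edges of both $U_1$ and $U_2$, so in all cases we obtain the conclusion.

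There is no real obstacle left at this stage: the difficulty has all been absorbed into the preceding lemmas, whose common engine is the rigidity statement of Lemma~\ref{isom} --- that an isomorphism matching the four ``outer'' pieces of two $\Sp{k-1}$-free $(k-1)$-evines forces each of the ``inner'' pieces to be symmetric. The only thing to verify here is that the listed cases are exhaustive and, together with the Lemma~\ref{easycase} case, cover all possibilities, which is immediate since $A_1,A_2,B_1,B_2$ are just four rooted trees and $\{C_1,C_2,D_1,D_2\}$ either is contained in their set of isomorphism classes or it is not.
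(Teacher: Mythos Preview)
Your proof is correct and follows essentially the same approach as the paper: both observe that the subtrees $A_i,B_i,C_j,D_j$ are determined by the deck and then invoke Lemmas~\ref{easycase} through~\ref{4pieces} as an exhaustive case analysis. Your explicit split on $t=\C{\{A_1,A_2,B_1,B_2\}}$ is slightly more detailed than the paper's one-sentence summary, and your initial appeal to Lemma~\ref{easycase} is harmless but redundant, since Lemmas~\ref{2pieces}--\ref{4pieces} already partition all cases by the value of $t$ alone.
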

\begin{proof}
With $U_1,U_2,S_1,S_2$ and their subtrees $A_1,B_1,A_2,B_2$ and
$C_1,D_1,C_2,D_2$ known from the deck, Lemmas~\ref{easycase}
through~\ref{4pieces} treat an exhaustive set of cases
where arguments applying Lemmas~\ref{isom} and~\ref{onetrunk} are used
to recognize the trunk edges of $U_1$ and $U_2$,
thereby determining which of $\{A_1,B_1\}$ and which of $\{A_2,B_2\}$
contain the ends of a longest path $P$ in $T$.
\end{proof}

\section{Large Diameter and no Spi-center}

In this section we continue and complete the reconstruction of trees having
large diameter and no spi-center (no copy of $\Sp{\ell+1}$).  Henceforth we
assume $n\ge6\ell+11$.  With also $r\ge n-3\ell$, the usual computation from
Lemma~\ref{largek} yields $k\ge\ell+4$.  By Lemma~\ref{spi}, we can then count
the spi-centers in $T$ and thus recognize $\Sp{\ell+1}\nosub T$.

\begin{definition}
In the previous section we determined the two maximal $(k-1)$-vines $U_1$
and $U_2$ whose centers are distance $k-1$ from the endpoints of any longest
path in the $n$-vertex tree $T$ whose $(n-\ell)$-deck we are given.  Since
$k-1\ge\ell+1$ and $\Sp{\ell+1}\nosub T$, in both $U_1$ and $U_2$ only two
pieces have the full length $k-1$ (the ``major'' pieces).  Also, we have
successfully oriented each of these $(k-1)$-vines, meaning that we know which
is the ``outer'' major piece containing an endpoint of each longest path and
which is the ``inner'' major piece extending toward the other $(k-1)$-vine.

We maintain the notation $U_1$ and $U_2$ for those maximal $(k-1)$-vines, with
$x_i$ the center of $U_i$.  Since $\Sp{\ell+1}\nosub T$ and $k-1\ge \ell+1$,
every $r$-path in $T$ contains the path from $x_1$ to $x_2$; we use this
implicitly without mention throughout this section.  The distance between $x_1$
and $x_2$ is $r+1-2k$.  Since $k\ge(r-\ell-4)/2$ by Lemma~\ref{largek}, the
distance between $x_1$ and $x_2$ is at most $\ell+5$.

For $i\in\{1,2\}$, {\it inner extremal vertices} in $U_i$ are vertices at
distance $k-1$ from $x_i$ in the inner piece.  If there are multiple inner
extremal vertices in $U_i$, then let $x_i'$ be the vertex closest to $x_i$ in
the inner piece at which paths to inner extremal vertices in $U_i$ diverge.
If there is no such vertex, then let $x_i'$ be the unique extremal vertex in
the inner piece, having distance $k-1$ from $x_i$ along $P$.
\end{definition}

\begin{lemma}\label{offshoots}
For $i\in\{1,2\}$, all offshoots from $P$ in $T$ at vertices between $x_i$ and
$x_i'$ (not including $x_i'$) are seen in full in $U_i$.  Furthermore,
$x_1'$ and $x_2'$ are strictly between $x_1$ and $x_2$ on $P$, and we know
their order along $P$.
\end{lemma}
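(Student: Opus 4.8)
The plan is to prove the two assertions in turn, using throughout that $U_i$ — and hence its inner piece, its inner extremal vertices, and the vertex $x_i'$ — is already determined by the deck (Lemma~\ref{t1t2}), together with $\Sp{\ell+1}\nosub T$. First I would fix a longest path $P$ with $V(P)=\{\VEC v1r\}$ and look at the vertex $p^*$ of $P$ at distance $k-1$ from $x_i$ on the inner side; since $P$ continues past $x_i$ toward $x_{3-i}$ for $r-k\ge k+3$ more edges, $p^*$ exists, and being at distance $k-1$ from $x_i$ in the inner piece it is an inner extremal vertex of $U_i$. Consequently $x_i'$, which is either the unique inner extremal vertex or the common ancestor (in the tree rooted at $x_i$) of all of them, is an ancestor of $p^*$, so it lies on the $x_i$--$p^*$ stretch of $P$; in particular $x_i'\in V(P)$. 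Moreover $x_i'\ne x_i$, because every inner extremal vertex lies in the single component of $U_i-x_i$ meeting the inner neighbour of $x_i$, so their common ancestor is a proper descendant of $x_i$. I would then set $d_i$ equal to the distance from $x_i$ to $x_i'$, a number in $\{1,\dots,k-1\}$ that is visible inside $U_i$.

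For the containment statement I would argue by contradiction. Take $w\in V(P)$ strictly between $x_i$ and $x_i'$, at distance $d<d_i$ from $x_i$, and let $O$ be an offshoot from $P$ at $w$. If $O$ reached a vertex $v^*$ at distance $k-1$ from $x_i$, then, since the $x_i$--$v^*$ path runs along $P$ to $w$ and then leaves $P$ into $O$, the vertex $v^*$ would lie in the inner piece at distance exactly $k-1$ from $x_i$, hence would be an inner extremal vertex; its path from $x_i$ would diverge from the path to $p^*$ exactly at $w$, so the common ancestor of $v^*$ and $p^*$ would be $w$, forcing $x_i'$ to be an ancestor of $w$ and thus $d_i\le d$, contradicting $d<d_i$. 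Therefore every offshoot at $w$ stays within distance $k-1$ of $x_i$, so it sits as an induced subtree of $U_i$ and is seen there in full.

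For the second assertion the crux is that $x_i'$ comes strictly before $x_{3-i}$ on $P$, i.e.\ $d_i<r+1-2k$, the distance from $x_i$ to $x_{3-i}$. Since $d_i\le k-1$, this is automatic when $k-1<r+1-2k$, that is whenever $r\ge 3k-1$. In the remaining range $2k+3\le r\le 3k-2$ the vine $U_i$ overlaps $U_{3-i}$ and $x_{3-i}$ sits inside $U_i$, and I would show that the inner extremal vertices of $U_i$ cannot all route through $x_{3-i}$: that vertex lies at distance at least $\ell+1$ from both ends of $P$, so $\Sp{\ell+1}\nosub T$ makes all of its offshoots have length at most $\ell<k-1$, and combining this with the relations $r\ge 2k+3$, $r+1-2k\le\ell+5$ and $n\ge 6\ell+11$ one checks that the alternative would force some $k$-evine of $T$ to have at least $n-\ell$ vertices, contradicting the definition of $k$. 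Once both $x_1'$ and $x_2'$ are known to lie strictly between $x_1$ and $x_2$, their order along $P$ is arithmetic: $d_1$ is read off from $U_1$, $d_2$ from $U_2$, and the distance from $x_1$ to $x_2$ equals $r+1-2k$, so the distance from $x_1$ to $x_2'$ equals $(r+1-2k)-d_2$ and comparing it with $d_1$ determines which of $x_1'$, $x_2'$ comes first.

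I expect the hard part to be precisely this last step in the regime $r\le 3k-2$ (equivalently, $k$ near its maximum $(r-3)/2$): there the two $(k-1)$-vines interlock, $x_{3-i}$ lies inside $U_i$, and the inner-extremal structure of $U_i$ can a priori reach or overshoot $x_{3-i}$, so one must combine the no-spi-center hypothesis on the central stretch of $P$ with the size constraint that defines $k$ to rule this out. By contrast, the divergence argument behind the containment statement and the distance bookkeeping behind the ordering are routine.
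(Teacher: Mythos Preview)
Your argument for the first assertion (offshoots at vertices strictly between $x_i$ and $x_i'$ are seen in full in $U_i$) and for the bookkeeping part of the second assertion ($x_i'$ lies on $P$ on the $x_i$-side, we read off $d_i$ inside $U_i$, and compare with $r+1-2k$) is correct and is exactly the paper's approach, only spelled out in more detail.

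The gap is in your treatment of the clause ``$x_1'$ and $x_2'$ are strictly between $x_1$ and $x_2$'' in the regime $r\le 3k-2$. You assert that if every inner extremal vertex of $U_i$ is reached through $x_{3-i}$, then ``one checks that the alternative would force some $k$-evine of $T$ to have at least $n-\ell$ vertices''. This is not substantiated, and there is no evident mechanism to produce such a large $k$-evine from that hypothesis: the assumption only says that offshoots from $P$ at the few vertices strictly between $x_i$ and $x_{3-i}$ are short (they do not reach distance $k-1$ from $x_i$), and that places no lower bound on the size of any $k$-evine. The ingredients you list ($r\ge 2k+3$, $r+1-2k\le\ell+5$, $n\ge 6\ell+11$, and $\Sp{\ell+1}\nosub T$) do not combine to give the claimed contradiction. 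Note that the paper's own proof does not supply a separate argument here either: it simply records that $x_i'$ lies on $P$ in the direction of $x_{3-i}$, that the distances $d_1,d_2,r+1-2k$ are known, and that this determines the relative order of $x_1,x_1',x_2',x_2$ along $P$; the subsequent case analysis (Lemmas~\ref{abab}--\ref{n-2l}) only needs this ordering information, not the literal ``strictly between'' phrasing. So your diagnosis that this is ``the hard part'' is an over-reading of the statement; the intended content is exactly what you already proved by the distance comparison, and your extra $k$-evine step should be dropped rather than repaired.
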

\begin{proof}
All vertices having distance at most $k-1$ from $x_i$ are seen in $U_i$.
An offshoot at a vertex between $x_i$ and $x_i'$ containing a vertex with
distance at least $k-1$ from $x_i$ would contradict the definition of $x_i'$.

For each $i$, the vertex $x_i'$ lies in the inner piece of $U_i$.  Since
we have determined which is the inner piece, we know $x_i'$ in $U_i$, and
the path from $x_i$ to $x_i'$ lies along $P$ in the direction toward $x_{3-i}$.
We also know the distance from $x_i$ to $x_i'$, and the distance from $x_1$ to
$x_2$ is $r+1-2k$.  Hence we know the order of $\{x_1,x_1',x_2',x_2\}$ along
$P$ and whether $x_1'=x_2'$.
\end{proof}

We consider cases depending on the order and distinctness of $x_1'$ and $x_2'$,
which we have just shown are recognizable.

\begin{lemma}\label{abab}
If $x_1,x_1',x_2',x_2$ occur along $P$ in the order $(x_1,x_2',x_1',x_2)$ with
$x_2'\ne x_1'$, then $T$ is $\ell$-reconstructible.
\end{lemma}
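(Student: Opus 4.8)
The plan is to show that, in this ``crossed'' configuration, every offshoot from a longest path $P$ of $T$ is seen in full inside $U_1$ or inside $U_2$; once that is established, reconstruction is immediate. Indeed, $P$ itself is already known (we know $r$, and $x_1,x_2$ lie at distance $k-1$ from the two ends of $P$, hence at distance $r+1-2k$ apart along $P$), and $U_1,U_2$ are oriented, so each displays the attachment point on $P$ of every offshoot it contains.

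First I would dispose of the offshoots outside the segment of $P$ from $x_1$ to $x_2$. At $x_1$ (and symmetrically at $x_2$), any offshoot from $P$ has length at most $\ell$: otherwise, together with the two legs of $P$ at $x_1$, each of length at least $k-1\ge\ell+1$, it would give a copy of $\Sp{\ell+1}$ with branch vertex $x_1$, and since $x_1$ is an $(\ell+1)$-center (being the center of the $(k-1)$-vine $U_1$ with $k-1\ge\ell+1$) this would make $x_1$ a spi-center, which is excluded in this section. As $\ell<k-1$, such an offshoot lies in the ball $B(x_1,k-1)=U_1$. For a vertex $w$ strictly between $x_1$ and the end of $P$ on $x_1$'s side, an offshoot at $w$ of length $t$ with $d(x_1,w)+t>k-1$ would, routed through $w$ and $x_1$ to the far end of $P$, produce a path on more than $r$ vertices; hence $d(x_1,w)+t\le k-1$ and the offshoot again lies in $U_1$. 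The symmetric statements hold beyond $x_2$.

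Next I would invoke Lemma~\ref{offshoots}: offshoots at the vertices strictly between $x_1$ and $x_1'$ are seen in full in $U_1$, offshoots at the vertices strictly between $x_2$ and $x_2'$ are seen in full in $U_2$, and both $x_1'$ and $x_2'$ lie strictly between $x_1$ and $x_2$. By hypothesis the order along $P$ is $(x_1,x_2',x_1',x_2)$ with $x_1'\ne x_2'$, i.e.\ $d(x_1,x_1')+d(x_2,x_2')>d(x_1,x_2)$. Consequently $x_2'$ lies strictly between $x_1$ and $x_1'$, so its offshoots are seen in $U_1$; $x_1'$ lies strictly between $x_2$ and $x_2'$, so its offshoots are seen in $U_2$; and every remaining vertex of $P$ between $x_1$ and $x_2$ is either strictly between $x_1$ and $x_1'$ or strictly between $x_1'$ and $x_2$, and in the latter case it is also strictly between $x_2$ and $x_2'$ by the given order. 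Thus every offshoot from $P$ is determined. Moreover, up to $x_1'$ the inner piece of $U_1$ is unbranched along $P$, and likewise from $x_1'$ on in the inner piece of $U_2$ (by the definition of $x_i'$), so the attachment positions along $P$ are unambiguous, the two readings agreeing on the overlap; reassembling $P$ with all its offshoots reconstructs $T$.

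I do not expect a genuine obstacle here: the entire content is the distance bookkeeping along $P$ showing that the two balls $U_1$ and $U_2$ jointly see every offshoot, and the only point requiring care is exactly the verification — furnished by the inequality $d(x_1,x_1')+d(x_2,x_2')>d(x_1,x_2)$ that defines the ``crossed'' order — that their ranges of visibility along $P$ overlap (or at least meet), leaving no vertex of $P$ with an unseen offshoot.
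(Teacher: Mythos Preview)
Your proposal is correct and follows essentially the same approach as the paper: both argue that in the crossed order $(x_1,x_2',x_1',x_2)$ the ranges of visibility of $U_1$ and $U_2$ along $P$ overlap, so every offshoot from $P$ is seen in full in one of them, and reassembly is immediate. Your version is simply more explicit where the paper is terse---you spell out why offshoots at $x_1,x_2$ and on the outer portions of $P$ lie inside $U_1,U_2$ (via the longest-path bound and the absence of $\Sp{\ell+1}$), whereas the paper folds this into ``we know the outer pieces of $U_1$ and $U_2$ and the offshoots at $x_1$ and $x_2$ with length less than $k-1$.''
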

\begin{proof}
We know the outer pieces of $U_1$ and $U_2$ and the offshoots at $x_1$ and
$x_2$ with length less than $k-1$.  Adding the centers $x_1$ and $x_2$ and a
path of length $r+1-2k$ joining them yields a subtree of $T$ containing an
$r$-path we call $P$.  It remains to determine the offshoots from $P$ along the
path joining $x_1$ and $x_2$.  Every offshoot at a vertex between $x_i$ and
$x_i'$ is seen in full in $U_i$, by Lemma~\ref{offshoots}.  With the vertices
in the specified order, all the needed offshoots are seen in full.
\end{proof}

\begin{lemma}\label{x_1'=x_2'}
If $x_1'=x_2'$, then $T$ is $\ell$-reconstructible.
\end{lemma}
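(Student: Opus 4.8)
The plan is to recognize this case, reconstruct all of $T$ except the offshoots from $P$ at the common vertex $x':=x_1'=x_2'$, and then recover those offshoots by an Exclusion Argument centered at $x'$. First, by Lemma~\ref{offshoots} the deck reveals $x_1'$ and $x_2'$ (within $U_1$ and $U_2$), their distances $d_1$ and $d_2$ from $x_1$ and $x_2$, their order along $P$, and whether they coincide; so we recognize the hypothesis $x_1'=x_2'=:x'$. Here $1\le d_i\le k-1$ and $d_1+d_2=r+1-2k$ is the distance between $x_1$ and $x_2$; we may assume $d_1\le d_2$.

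Next I would reconstruct $T_0$, the subtree of $T$ obtained by deleting all offshoots from $P$ at $x'$. Every vertex on the outer side of $x_i$ (the side of the trunk edge at $x_i$ containing $x_i$) lies within distance $k-1$ of $x_i$, since otherwise a path through that vertex, through $x_i$, and along $P$ to the far end would have more than $r$ vertices; hence $U_1$ and $U_2$ display in full everything of $T$ outside the open $x_1,x_2$-segment of $P$, including all short offshoots at $x_1$ and $x_2$. By Lemma~\ref{offshoots}, $U_1$ displays in full every offshoot at a vertex strictly between $x_1$ and $x'$, and $U_2$ every offshoot at a vertex strictly between $x_2$ and $x'$; since $d_1+d_2=r+1-2k$, these are all the interior vertices of $P$ except $x'$ itself. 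Gluing $U_1$ and $U_2$ along a path of $r+1-2k$ edges joining $x_1$ and $x_2$---using the oriented trunk edges (Corollary~\ref{trunks}) to know which edge at $x_i$ heads toward $x_{3-i}$ and the known distances $d_i$ to place the pieces---yields $T_0$. Thus $m:=\C{V(T)}-\C{V(T_0)}$, the number of vertices in offshoots at $x'$, is known; if $m=0$ we are done. Since $\Sp{\ell+1}\nosub T$ and both directions of $P$ from $x'$ have length at least $k-1\ge\ell+3$, no offshoot at $x'$ has length $\ell+1$ or more, so every offshoot at $x'$ has at most $m\le n-r\le 3\ell$ vertices and depth at most $\ell$ from $x'$.

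Finally I would recover the offshoots at $x'$ by the Exclusion Argument (Lemma~\ref{count}) applied to the rooted tree formed by $x'$ and its offshoots. For each rooted tree $R$ with $\C{V(R)}\le m$, let $G_R$ be the csc obtained from $T_0$ by deleting $\ell$ vertices from the outer $x_1$-portion of $P$ (which has $k-1\ge\ell+3$ vertices beyond $x_1$, so $x_1$ and at least three further vertices of $P$ survive) and planting $R$ at $x'$; since $\C{V(R)}\le m$, the graph $G_R$ has at most $n-\ell$ vertices, so its number of copies in $T$ is read from $\cD$. Because $G_R$ retains the full $(k-1)$-vine $U_2$, every copy of $G_R$ in $T$ aligns the backbone of $G_R$ with a longest path of $T$, so the planted $R$ maps to a rooted subtree of an offshoot either at $x'$ or---only when $d_1<d_2$---at the mirror vertex $v^*$ of $P$ at distance $d_1$ from $x_2$, which then lies strictly between $x'$ and $x_2$ and so has its offshoots inside $T_0$ and already known. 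Hence the number of copies of $G_R$ in $T$ equals a known coefficient times the number of rooted subtrees $\cong R$ in the offshoots at $x'$, plus a known term contributed by $v^*$; solving for each $R$ determines the multiset $\cal M$ of all rooted subtrees of the individual offshoots at $x'$, whose largest elements are the largest offshoots at $x'$. Lemma~\ref{count} then outputs the complete list of offshoots at $x'$, completing the reconstruction of $T$.

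The main obstacle is this last step: one must verify that the cscs $G_R$ are genuinely recognizable in the deck and fit inside a card, and must account precisely for the mirror vertex $v^*$ and for automorphisms of $G_R$; the subcase $d_1=d_2$, where $x'$ is the center of $P$ and there is no mirror vertex distinct from $x'$, is simpler. The reconstruction of $T_0$ beforehand is a routine assembly of $U_1$, $U_2$, and already-known distances, closely parallel to Lemma~\ref{abab}.
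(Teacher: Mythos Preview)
Your reduction is exactly right and matches the paper: recognize the case via Lemma~\ref{offshoots}, assemble $U_1$ and $U_2$ along the $x_1,x_2$-path to obtain $T_0$ (all of $T$ except the offshoots from $P$ at $x'$), and then recover those offshoots. The paper, however, finishes this last step far more simply than you do, and your proposed method for it is both unnecessarily heavy and not fully justified.

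The paper's key observation is that the offshoots at $x'$ have length at most $\ell<k-1$ (since $\Sp{\ell+1}\nosub T$ and $x'$ lies on every $r$-path), so they appear \emph{in full} in the maximal $k$-vine centered at $x'$. By Corollary~\ref{countk} we know the multiset of all maximal $k$-vines, and by Lemma~\ref{t1t2} all $k$-centers lie on $P$ strictly between $x_1$ and $x_2$. For each such $k$-vine, strip off the two major pieces (length~$k$) to get the rooted subtree of short offshoots at its center. Since you already know $T_0$, you know the short offshoots at every $k$-center \emph{other than} $x'$; excluding those rooted subtrees from the list (with multiplicity) leaves exactly the rooted subtree of offshoots at $x'$. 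Done---no auxiliary cscs $G_R$, no mirror vertex, no automorphism bookkeeping.

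Your $G_R$ approach, by contrast, asserts that ``the number of copies of $G_R$ in $T$ equals a known coefficient times the number of rooted subtrees $\cong R$ in the offshoots at $x'$, plus a known term from $v^*$''; this factorization is not established. The base of $G_R$ is $T_0$ minus $\ell$ vertices at one end, so an embedding of $G_R$ into $T$ has genuine freedom: those $\ell$ deleted vertices can be re-inserted elsewhere, automorphisms of the base may move the designated attachment point, and the planted $R$ could a priori land in offshoots at vertices other than $x'$ or $v^*$. You flag these as obstacles, but the paper's $k$-vine argument sidesteps them entirely.
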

\begin{proof}
By Lemma~\ref{offshoots}, we recognize this case and we know everything in $T$
except the offshoots from $P$ at the vertex called both $x_1'$ and $x_2'$.
Their length is less than $k-1$, since $k-1\ge\ell+1$ and
$\Sp{\ell+1}\nosub T$.  Hence they appear in full in the $k$-vine centered at
$x_1'$.

We know the maximal $k$-vines in $T$; the $k$-centers are on $P$ between $x_1$
and $x_2$.  Each has only two offshoots of length $k$ from the center; we take
its rooted subgraph obtained by deleting those two pieces.  Knowing all of $T$
except the offshoots from $x_1'$, we can exclude from this list of subgraphs
the ones that come from $k$-centers other than $x_1'$, with correct
multiplicity.  What remains is the rooted subgraph at $x_1'$, completing the
reconstruction of $T$.
\end{proof}

\begin{lemma}\label{n-2l}
If $r\ge n-2\ell$ and $n\ge6\ell+11$ and $x_1'\ne x_2'$, then
$x_1,x_1',x_2',x_2$ must occur in the order $(x_1,x_2',x_1',x_2)$ along $P$.
\end{lemma}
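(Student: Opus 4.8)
The plan is to derive a contradiction from assuming the reverse order $(x_1,x_1',x_2',x_2)$ along $P$ when $x_1'\ne x_2'$. In that reverse order, the segment of $P$ strictly between $x_1'$ and $x_2'$ consists of vertices that are ``far'' from both centers: a vertex $w$ on this segment has distance at least $k$ from $x_1$ (since $x_1'$ is reached after $k-1$ steps from $x_1$ and $w$ is beyond it) and likewise distance at least $k$ from $x_2$. The key structural fact I would exploit is that $x_1'$ (as the divergence point of paths to inner-extremal vertices of $U_1$) carries, together with the two inner-extremal branches of $U_1$ reaching distance $k-1$ from $x_1$, the makings of a long spider or a long $(k-1)$-evine whose center sits strictly inside the $x_1,x_2$-path — and similarly for $x_2'$. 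The forbidden structure $\Sp{\ell+1}\nosub T$ (equivalently, with $k-1\ge\ell+1$, no three edge-disjoint paths of length $k-1$ from a common vertex) plus the count on $k$-centers and $(k-1)$-centers from Lemma~\ref{t1t2} will be the source of the contradiction.

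**Main steps.** First I would pin down distances: since $k\ge(r-\ell-4)/2$ (Lemma~\ref{largek}) and here $r\ge n-2\ell$ with $n\ge6\ell+11$, the distance $r+1-2k$ between $x_1$ and $x_2$ is at most $\ell+4$, which is strictly less than $k-1$ (as $k\ge\ell+4$ gives $k-1\ge\ell+3$; one should check the arithmetic carefully here — this is where the precise constant $6\ell+11$ earns its keep). So $x_1'$, being at distance $k-1>r+1-2k$ from $x_1$ along the inner piece, \emph{cannot} reach as far as $x_2$, and in fact $x_1'$ lies strictly between $x_1$ and $x_2$; the same for $x_2'$. That much is already in Lemma~\ref{offshoots}. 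The new content is the \emph{order}. Suppose for contradiction the order is $(x_1,x_1',x_2',x_2)$. Consider the vertex $x_1'$: by definition paths to inner-extremal vertices of $U_1$ diverge here, so $x_1'$ has at least two offshoot-directions (within $U_1$, not counting the direction back toward $x_1$) each reaching distance $k-1$ from $x_1$, i.e. reaching length roughly $k-1-d(x_1,x_1')$ past $x_1'$. Combined with the direction along $P$ toward $x_2$ — which, in the assumed order, extends at least to $x_2$ and beyond to the outer extremal vertices of $U_2$, a path of length at least $d(x_1',x_2)+k-1 \ge (k-1)$ — the vertex $x_1'$ would be the center of a copy of $\Sp{m}$ for $m$ large enough to contain $\Sp{\ell+1}$, \emph{provided} the two inner branches of $U_1$ past $x_1'$ each have length at least $\ell+1$.

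**Closing the gap.** The delicate point is that the two branches past $x_1'$ might be short (length less than $\ell+1$) even though together with the stretch back to $x_1$ they reach distance $k-1$ from $x_1$; that happens when $x_1'$ is close to the inner extremal vertices, i.e. $d(x_1,x_1')$ is close to $k-1$. So I would split on $d(x_1,x_1')$ versus $d(x_2,x_2')$. If $d(x_1,x_1')\le k-1-(\ell+1)$, the branch-length argument above produces $\Sp{\ell+1}$ centered at $x_1'$, contradiction; symmetrically if $d(x_2,x_2')\le k-1-(\ell+1)$. In the remaining subcase both $x_1'$ and $x_2'$ are deep — $d(x_i,x_i')\ge k-\ell-1$ for $i=1,2$ — and then in the assumed order $(x_1,x_1',x_2',x_2)$ the distance $d(x_1,x_2)\ge d(x_1,x_1')\ge k-\ell-1$ would force $r+1-2k\ge k-\ell-1$, i.e. $r\ge 3k-\ell-1$; but combined with $k\ge(r-3)/2$ (the upper half of Lemma~\ref{largek}) this forces $r\le \ell+3k-1\le \ell+3(r-3)/2-1$, contradicting $r\ge n-2\ell\ge 4\ell+11$ after simplification. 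The main obstacle, as usual in this paper, is getting the constants to line up: I expect to need the full strength of $n\ge 6\ell+11$ (hence $k\ge\ell+4$) together with $r\ge n-2\ell$ precisely in this last arithmetic squeeze, and possibly a slightly more careful accounting of whether ``distance $k-1$ from $x_1$'' is measured to a vertex of $U_1$ strictly past $x_1'$ or to $x_1'$ itself when $x_1'$ is the unique inner extremal vertex (the degenerate branch of the definition of $x_i'$). In that degenerate case $x_1'$ has no divergence, so there is effectively only one inner branch, and the spider argument is replaced by the purely arithmetic argument of the last subcase.
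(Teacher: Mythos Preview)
Your strategy matches the paper's: assume the order $(x_1,x_1',x_2',x_2)$ and squeeze a contradiction out of $r\ge n-2\ell\ge4\ell+11$ together with Lemma~\ref{largek}. But the execution breaks down in the arithmetic.

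First, your case split is unnecessary. Your ``case 1'' (the off-$P$ branch at $x_1'$ has length $\ge\ell+1$) is vacuous: since $\Sp{\ell+1}\nosub T$ and both directions along $P$ from $x_1'$ have length at least $k-1\ge\ell+1$, the off-$P$ branch to an inner extremal vertex automatically has length at most $\ell$, so $d(x_i,x_i')\ge k-\ell-1$ for both $i$. This is exactly the paper's opening sentence, stated as ``the path from $x_i'$ to an extremal vertex in $U_i$ outside $P$ has length at most $\ell$.'' (Also note a small confusion in your setup: the ``two offshoot-directions'' at $x_1'$ already include the direction along $P$ toward $x_2$; there is only one off-$P$ branch, not two plus the along-$P$ direction.)

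The real gap is in your ``remaining subcase.'' You write $d(x_1,x_2)\ge d(x_1,x_1')\ge k-\ell-1$, using only one of the two available bounds; this yields only $r\ge3k-\ell-2$, which is too weak. In the assumed order $(x_1,x_1',x_2',x_2)$ with $x_1'\ne x_2'$ you should use both:
\[
d(x_1,x_2)\ \ge\ d(x_1,x_1')+1+d(x_2',x_2)\ \ge\ 2(k-\ell-1)+1,
\]
hence $r+1-2k\ge 2k-2\ell-1$, i.e.\ $r\ge4k-2\ell-2$. You then invoke ``$k\ge(r-3)/2$'', but Lemma~\ref{largek} gives $k\le(r-3)/2$; the inequality you actually need is the lower bound $k\ge(r-\ell-4)/2$. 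With the correct inputs, $4k\ge2r-2\ell-8$ yields $r\ge2r-4\ell-10$, so $r\le4\ell+10$, contradicting $r\ge4\ell+11$. The paper packages this same computation by building, for each $i$, the $(2k-1)$-vertex path from the outer end of $P$ through $x_i$ and $x_i'$ out to an off-$P$ inner extremal vertex; in the assumed order these two paths are disjoint and carry at most $2\ell$ vertices off $P$, forcing $r\ge4k-2-2\ell$.
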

\begin{proof}
For each $i$, the path from $x_i'$ to an extremal vertex in $U_i$ outside $P$
has length at most $\ell$, since $\Sp{\ell+1}\nosub T$.  Extending such a path
from the extremal vertex back through $x_i$ to the end of $P$ yields a path
with $2k-1$ vertices.  The resulting paths for $i\in\{1,2\}$ are disjoint when
the vertices are in the order $x_1,x_1',x_2',x_2$.  Since together these paths
have at most $2\ell$ vertices outside $P$, we have $r\ge 4k-2-2\ell$.
Lemma~\ref{largek} then yields $r\ge 2r-2\ell-8-2-2\ell$, which simplifies to
$r\le 4\ell+10$.  With $r\ge n-2\ell\ge 4\ell+11$, we have a contradiction.
\end{proof}

By Lemmas~\ref{offshoots}--\ref{n-2l}, we may assume in the
remainder of this section that $x_1,x_1',x_2',x_2$ are distinct and occur
in that order and that $n-3\ell\le r\le n-2\ell-1$,

\begin{lemma}\label{equidist}
If some longest path has vertices $y$ and $y'$ equidistant from the ends
(and distance at least $k$ from the ends) such that the offshoots from the path
at $y$ and $y'$ together have more than $\ell$ vertices, then $T$ is
$\ell$-reconstructible.
\end{lemma}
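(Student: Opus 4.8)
The plan is to reconstruct $T$ by locating the pair $\{y,y'\}$ on a longest path, reconstructing everything except the offshoots at $y$ and $y'$, and then splitting the remaining vertices correctly between the two positions. First I would note that the hypothesis can be recognized from the deck: by the work already done in this section we know $r$, the maximal $(k-1)$-vines $U_1,U_2$ with centers $x_1,x_2$, and (by Corollary~\ref{countk}) the multiset of maximal $j$-vines for all $j\le k$; we also know the total number of vertices in offshoots from $P$ between $x_1$ and $x_2$, namely $n-r$. Since $y$ and $y'$ are equidistant from the ends of $P$ and at distance at least $k$ from each end, both $y$ and $y'$ are $k$-centers, hence lie strictly between $x_1$ and $x_2$ on every longest path (using $\Sp{\ell+1}\nosub T$). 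The condition that the offshoots at $y$ and $y'$ together have more than $\ell$ vertices is visible because any csc realizing $P$ together with all offshoots at vertices other than $y,y'$ fits in a card (it omits more than $\ell$ vertices, those in the offshoots at $y$ and $y'$), so we see such a csc $C_0$ as a largest csc of its form; from $C_0$ we read off all of $T$ except the offshoots at $y$ and $y'$, and in particular we learn $n-r$ minus (vertices in offshoots at positions other than $y,y'$), which is the combined size $m$ of the offshoots at $y$ and $y'$.

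Next I would reconstruct each offshoot-union separately. Let $W$ and $W'$ be the unions of offshoots from $P$ at $y$ and at $y'$, with $\C{V(W)}+\C{V(W')}=m>\ell$. Since $k-1\ge\ell+1$ and $\Sp{\ell+1}\nosub T$, each of $W,W'$ has length less than $k-1$, so $W$ together with $y$ lies inside the maximal $k$-vine centered at $y$, and likewise for $W'$ and $y'$. We know the multiset of maximal $k$-vines; each has exactly two pieces of length $k$ (the ``major'' pieces along $P$), and deleting those two pieces from a maximal $k$-vine centered at a $k$-center $v$ leaves a rooted tree recording exactly the offshoot-union at $v$. Since we already know all of $T$ except the offshoots at $y$ and $y'$, we know the offshoot-unions at every other $k$-center, so by an exclusion argument (Lemma~\ref{count} applied at the level of these rooted subgraphs) we can delete from the list of ``offshoot-unions at $k$-centers'' all the contributions from $k$-centers other than $y$ and $y'$, with correct multiplicity. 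What remains is the multiset $\{W,W'\}$ as a pair of rooted trees, though we may not yet know which is attached at $y$ and which at $y'$.

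Finally I would settle the assignment of $W$ and $W'$ to the positions $y$ and $y'$. If $W\cong W'$, there is nothing to decide and $T$ is reconstructed. Otherwise, since $y$ and $y'$ are equidistant from the ends of $P$, the vertex $y$ is determined relative to $x_1$ and $y'$ relative to $x_2$ by their distances: by Lemma~\ref{offshoots} we know the orientation of $U_1$ and $U_2$ and the positions of $x_1',x_2'$ along $P$, and $y$ (resp. $y'$) is the unique vertex of $P$ at the prescribed distance from the endpoint through $x_1$ (resp. through $x_2$). The only genuine ambiguity is whether, in $C_0$, the two positions $y$ and $y'$ are distinguishable by the surrounding structure of $T$. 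If $C_0$ (equivalently $T$ minus the offshoots at $y,y'$) admits no automorphism swapping $y$ and $y'$, then we can form, for the not-yet-completed position $y$, a largest csc containing $C_0$ with extra vertices grown only at that position; since $\C{V(W)}\le m$ and at most one of $W,W'$ can have more than $\ell$ vertices is \emph{not} guaranteed, I instead proceed as follows: grow a largest csc $C_1\supseteq C_0$ in which only the position $y$ receives offshoots totalling $\min(\C{V(W)},\C{V(W')})+1$ extra vertices at that position if that is $\le\ell$, which fits in a card and distinguishes $W$ from $W'$ by the maximality; and in the symmetric case where $T$ minus those offshoots does admit the swap, the positions $y$ and $y'$ are interchangeable in $T$ as well, so \emph{either} assignment of $\{W,W'\}$ yields the same tree and the reconstruction is complete. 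The main obstacle I expect is precisely this last bookkeeping — handling the case where one of $W,W'$ is large (more than $\ell$ vertices, so it cannot be grown inside a card on top of $C_0$) while $C_0$ is not symmetric under swapping $y$ and $y'$: there one must instead use cscs realizing $P$ with the \emph{smaller} of $W,W'$ shown in full at its position and the larger one truncated, then exclude the contributions coming from truncations of the already-known candidate, leaving the correct assignment; carrying this out cleanly, and verifying all the relevant cscs fit into $(n-\ell)$-vertex cards under the standing bounds $n-3\ell\le r\le n-2\ell-1$ and $n\ge6\ell+11$, is the delicate part.
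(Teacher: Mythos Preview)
Your first two stages coincide with the paper: build a largest long csc $C_0$ with degree~$2$ at the two positions $\{y,y'\}$ (this fits in a card since the offshoots there exceed $\ell$ vertices), and then recover the unordered pair $\{W,W'\}$ by listing the maximal $k$-vines, stripping their two length-$k$ pieces, and excluding the contributions from all $k$-centers other than $y,y'$, which you know from $C_0$. This is exactly what the paper does.

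The gap is in your assignment step. Your test ``grow $\min(\C{V(W)},\C{V(W')})+1$ extra vertices at one labelled position of $C_0$'' need not fit in a card. The resulting subtree has $\C{V(C_0)}+\min+1=(n-m)+\min+1$ vertices, and for this to be at most $n-\ell$ you need $\max\ge\ell+1$. So the construction works precisely when one of $W,W'$ is large, which is the case you flag as the obstacle; it \emph{fails} when both $\C{V(W)},\C{V(W')}\le\ell$ (for instance $\C{V(W)}=\C{V(W')}=\CL{(\ell+1)/2}$), and your fallback sketch (``show the smaller in full, truncate the larger, exclude'') does not cover this either, since here neither is larger. You have the difficulty located in the wrong case, and the actual hard case is not handled.

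The paper resolves the assignment differently. Since $k-1\ge\ell+3$ and $d(x_1,x_2)\le\ell+5$, each $U_i$ reaches past the center of $P$; from $C_0$ one reads off $\alpha_i$, the number of offshoot-vertices between $x_i$ and the center. If $\C{V(W)}\ne\C{V(W')}$, only one assignment is consistent with the known value of $\alpha_1$. If $\C{V(W)}=\C{V(W')}$, the paper does \emph{not} try to enlarge $C_0$; instead it takes a largest long csc $C_2$ whose offshoots on one side of the center agree with the $U_1$-side of $C_0$ (plus whatever sits at the $y$-position), with nothing on the other side. This omits at least half of the $n-r\ge 2\ell+1$ offshoot-vertices, hence at least $\ell$, so $C_2$ fits in a card regardless of how $\C{V(W)}$ and $\C{V(W')}$ compare to $\ell$; since $C_0$ is asymmetric, $C_2$ reveals which of $W,W'$ sits on the $U_1$ side. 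The use of the standing bound $r\le n-2\ell-1$ here is exactly what your approach is missing.
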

\begin{proof}
First we observe that this case is recognizable.  We consider subtrees
containing an $r$-vertex path and offshoots from the path only at two vertices
that are equidistant (with distance at least $k$) from the ends.  If in some
such subtree the offshoots have $\ell+1$ vertices in total, then the subtree
fits in a card (since $r\le n-2\ell-1$), and we see it in a csc.


Since $\Sp{\ell+1}\nosub T$ and $k-1\ge\ell+1$, every longest path contains the
path from $x_1$ to $x_2$, and thus $y$ and $y'$ lie along this path and are
$k$-centers in $T$.

Let $C_1$ be a largest subtree containing an $r$-path such that the
vertices along it with the same distance from the center as $y$ and $y'$
(these in fact are $y$ and $y'$ themselves) have degree $2$.  Since the
offshoots at $y$ and $y'$ have at least $\ell+1$ vertices, $C_1$ fits into a
card, we see it, and it gives us in full all offshoots from the path other than
those at $y$ and $y'$.

The exclusion argument with maximal $k$-vines used in Lemma~\ref{x_1'=x_2'} now
tells us the offshoots at $y$ and the offshoots at $y'$ but not which subgraph
is attached to which vertex on $P$.  If $C_1$ is symmetric (under reversal of
the $r$-path), then it does not matter which is which, so we may assume that
$C_1$ is not symmetric.  Let $Q$ and $Q'$ denote the two sets of offshoots yet
to be assigned to $y$ or $y'$.  (If $y=y'$, at the center of an $r$-path with
$r$ being odd, then the exclusion argument for the maximal $k$-vines gives the
offshoots from the center, without needing to distinguish $Q$ from $Q'$; hence
we may assume $y\ne y'$.)

Since the distance between $x_1$ and $x_2$ is at most $\ell+5$, and in this
section we know $k-1\ge\ell+3$, both $U_1$ and $U_2$ contain the center of $P$.
Since we also know the distance between $x_i$ and the center, and $C_1$ shows
us which path in the inner piece of $U_i$ lies along $P$, from $C_1$ we can
compute the number $\alpha_i$ of vertices in $U_i$ that lie in offshoots from
$P$ at vertices between $x_i$ and the center.  Since $C_1$ in fact shows the
offshoots from $P$ at all vertices other than $y$ or $y'$, when
$\C{V(Q)}\ne\C{V(Q')}$ we can tell which of $Q$ and $Q'$ should be associated
with the member of $\{y,y'\}$ closer to $x_1$ to reach a total of $\alpha_1$
vertices on that side.

Hence we may assume $\C{V(Q)}=\C{V(Q')}$.  We may also assume
$\alpha_1\le\alpha_2$, by symmetry.  If $\alpha_1=\alpha_2$, then we may
assume by symmetry that $U_1$ and $U_2$ are indexed so that the outside
piece of $U_2$ is at least as big as the outside piece of $U_1$.  Now let
$C_2$ be a largest long subtree having offshoots on one side of the center 
that agree with $U_1$ (as seen in $C_1$), allowing also offshoots at the vertex
whose distance from the end agrees with $y$.  Since $C_2$ omits at least half
of the vertices outside $P$, and $n-r\ge 2\ell$, we have deleted enough so that
$C_2$ fits in a card.  We see $C_2$, and it tells us which of $Q$ and $Q'$ is
attached to the member of $\{y,y'\}$ in $U_1$ (this uses that $C_2$ is
asymmetric).

If $\alpha_1<\alpha_2$, then instead we define $C_2$ having an $r$-path
and offshoots at the member of $\{y,y'\}$ in $U_1$ but using only $\alpha_1+1$
of the vertices in offshoots from $P$ at vertices on the side of the center in
$U_2$.  This distinguishes $y$ and $y'$ in $C_2$, which is again small enough
to fit into a card and tell us which of $Q$ and $Q'$ is attached to $y$.
\end{proof}

\begin{theorem}\label{nospi}
If $r\ge n-3\ell$ and $n\ge 6\ell+11$, and $\Sp{\ell+1}\nosub T$,
then $T$ is $\ell$-reconstructible.
\end{theorem}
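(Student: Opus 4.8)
The plan is to combine the lemmas proved in this section with one further argument handling the single configuration they leave open.

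First we record what is recognizable. By Lemma~\ref{largek} the hypotheses give $k\ge\ell+4$, and by Corollary~\ref{spi} we read $\Sp{\ell+1}\nosub T$ from the deck; by Corollary~\ref{trunks} we know the two oriented maximal $(k-1)$-vines $U_1$ and $U_2$, hence the vertices $x_1,x_1',x_2',x_2$ and, by Lemma~\ref{offshoots}, their order along any longest path $P$ and whether $x_1'=x_2'$. If $x_1'=x_2'$ we are done by Lemma~\ref{x_1'=x_2'}; if the order is $(x_1,x_2',x_1',x_2)$ we are done by Lemma~\ref{abab}; and by Lemma~\ref{n-2l} the only surviving case is that $x_1,x_1',x_2',x_2$ are distinct, occur in that order, and $n-3\ell\le r\le n-2\ell-1$. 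Since a configuration as in Lemma~\ref{equidist} is visible in the deck, we may also assume that no two vertices of a longest path that are equidistant from its ends (at distance at least $k$) together carry more than $\ell$ vertices of offshoots.

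Index $P$ as $v_1,\dots,v_r$, so that $x_1=v_k$, $x_2=v_{r-k+1}$, $x_1'=v_m$, $x_2'=v_{m'}$ with $k<m\le m'<r-k+1$; by Lemma~\ref{t1t2} every longest path of $T$ traverses $v_k,\dots,v_{r-k+1}$ in this order. Let $O_i$ be the rooted union of the offshoots from $P$ at $v_i$. By Lemma~\ref{offshoots} and the definition of $U_1$ and $U_2$, the subtrees $O_1,\dots,O_{m-1}$ are read off from $U_1$ and $O_{m'+1},\dots,O_r$ from $U_2$, so the only unknown parts of $T$ are the central offshoots $O_m,\dots,O_{m'}$. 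Each vertex $v_i$ with $m\le i\le m'$ has distance at least $k-1\ge\ell+3$ from both ends of $P$, so an offshoot of length $\ell+1$ there would complete a copy of $\Sp{\ell+1}$ together with the two portions of $P$; hence each central $O_i$ has length at most $\ell$, in particular less than $k$, and therefore appears in full in the maximal $k$-vine centered at $v_i$. By Corollary~\ref{countk} all maximal $k$-vines are known; deleting from each its two offshoots of length $k$ (the two that follow a longest path) leaves exactly the offshoot subtree at its center, and discarding those already known at positions outside $[m,m']$ leaves the multiset $\{O_m,\dots,O_{m'}\}$.

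It remains to place the central offshoots in the correct order, and this is the heart of the proof. Write $c$ for the total number of vertices in $O_m,\dots,O_{m'}$, so the side offshoots contain $n-r-c$ vertices and $n-r\ge 2\ell+1$. If the known part of $T$ is symmetric under reversing $P$, any legal placement of the multiset yields the same tree; otherwise the orientation of a longest path is pinned by $U_1$. If $c\le\ell+1$, then the side offshoots contain at least $\ell$ vertices, and deleting $\ell$ of them yields a card that contains an $r$-path together with all of $O_m,\dots,O_{m'}$ in place, displaying every $O_i$. If $c>\ell+1$, we recover $O_m,O_{m+1},\dots$ from the $x_1'$ side and $O_{m'},O_{m'-1},\dots$ from the $x_2'$ side by the exclusion argument of Lemma~\ref{count}: having determined $O_m,\dots,O_{j-1}$, a card containing an $r$-path whose offshoots at positions $1,\dots,j-1$ agree with $O_1,\dots,O_{j-1}$ and displaying the most offshoot vertices at position $j$ shows $O_j$ in full there, provided the offshoots at positions beyond $j$ contain at least $\ell$ vertices to be deleted instead. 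The masses beyond the left front and before the right front together account for all $n-r\ge 2\ell+1$ off-path vertices, so the two fronts cannot both stall before they meet; once one or two central positions remain, the already-computed multiset $\{O_m,\dots,O_{m'}\}$ forces a lone remaining position, and if two adjacent positions $v_i,v_{i+1}$ remain with non-isomorphic offshoots then the smaller of $O_i$ and $O_{i+1}$ leaves at least $\ell$ offshoot vertices elsewhere, so deleting $\ell$ of those gives a card displaying it in full at a position we can locate relative to $x_1'$, breaking the tie. Reattaching all the $O_i$ recovers $T$.

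The step I expect to be the main obstacle is exactly this placement: one must verify carefully that the slack $n-r\ge 2\ell+1$ guaranteed by $r\le n-2\ell-1$ is precisely enough to exhibit the central offshoots — all at once when their total is small, and otherwise front-by-front from the two ends — so that no card budget runs short before the two peeling fronts meet, after which the multiset of central offshoots and at most one more isolating card settle the last one or two positions.
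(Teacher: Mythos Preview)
Your reductions are fine: you correctly reach the configuration $n-3\ell\le r\le n-2\ell-1$ with $x_1,x_1',x_2',x_2$ distinct and in that order, and the extraction of the multiset $\{O_m,\dots,O_{m'}\}$ from the maximal $k$-vines is sound (each central offshoot has length at most $\ell<k$, so it sits inside its $k$-vine intact).  The problem is the placement step, which is where the actual content of the proof lives, and your argument there does not go through as written.

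First, the symmetric clause ``any legal placement of the multiset yields the same tree'' is false as stated: with three or more central positions and a symmetric known part, distinct permutations of the multiset not related by reversal give non-isomorphic trees.  What actually happens is that the symmetric case cannot occur at all once Lemma~\ref{equidist} has been applied, but to see this you need the inequality $f(x_1')+f(x_2')\ge\ell+1$, which the paper derives from $4k-r-2\ge r-2\ell-10\ge \ell+1$; you never prove it, so you have not ruled the case out.  Second, in the asymmetric peel you repeatedly ask for ``a card whose offshoots at positions $1,\dots,j-1$ agree with $O_1,\dots,O_{j-1}$''.  But a card is just an unlabeled subtree with an $r$-path; to read off ``position $j$'' you must first orient that path, and the card need not display enough of the known side offshoots to do so.  Your sentence ``the orientation is pinned by $U_1$'' refers to the orientation of $T$, not of an arbitrary card in the deck.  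Finally, the endgame claim ``the smaller of $O_i$ and $O_{i+1}$ leaves at least $\ell$ offshoot vertices elsewhere'' has no justification; nothing you have assumed bounds $|O_i|+|O_{i+1}|$ from above relative to $n-r$.

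The paper avoids all of this by never attempting a generic peel.  After proving $f(x_1')+f(x_2')\ge\ell+1$, it introduces the mirror vertices $x_1''$ and $x_2''$ (the reflections of $x_1'$ and $x_2'$ across the center), reduces via Lemma~\ref{equidist} to $f(x_i')+f(x_i'')\le\ell$ for each $i$, and then finds a long csc with degree~$2$ at all four positions $x_1',x_1'',x_2',x_2''$ simultaneously (this csc exists precisely because $f(x_1')+f(x_2')\ge\ell+1$).  That csc exhibits every other offshoot in place, and the remaining four offshoot-sets are recovered in mirror pairs from cscs that carry offshoots only at $\{x_i',x_i''\}$; the pairing ambiguity is then broken using what is visible inside $U_2$ and a final size comparison $f(x_2')>f(x_2'')$.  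The orientation problem never arises because every constraint imposed on a csc is symmetric in the two ends of its $r$-path.
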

\begin{proof}
As we have noted, the remaining case is $n-3\ell\le r\le n-2\ell-1$ with the
vertices $x_1,x_1',x_2',x_2$ distinct and occurring in that order along $P$.
Let $d(u,u')$ denote the distance between vertices $u$ and $u'$.  Since we know
$U_i$, we know $d(x_i,x_i')$.  By symmetry, we may assume
$d(x_1,x_1')\le d(x_2,x_2')$.

For $v\in V(P)$, let $f(v)$ be the number of vertices in the offshoots from $P$
at $v$.  We next prove $f(x_1')+f(x_2')\ge\ell+1$.  There are $r-2k$ vertices
between $x_1$ and $x_2$ on $P$.  The offshoots at $x_1'$ and $x_2'$ contain
extremal vertices in $U_1$ and $U_2$.  The paths from $x_1$ and $x_2$ to
extremal vertices have length $k-1$.  Hence the offshoots at $x_1'$ and $x_2'$
together contain at least $2k-2-(r-2k)$ vertices, due to the order
$(x_1,x_1',x_2',x_2)$.  By
Lemma~\ref{largek} and the case $r\ge n-3\ell$ and $n\ge 6\ell+11$,
$$4k-r-2\ge 2r-2\ell-8-r-2\ge r-2\ell-10\ge n-5\ell-10\ge \ell+1.$$
This completes the proof that $f(x_1')+f(x_2')\ge\ell+1$.

\smallskip
{\bf Case 1:} {\it $d(x_1,x_1')=d(x_2,x_2')$.}
In this case $x_1'$ and $x_2'$ are equidistant from the center.  Since we have
proved $f(x_1')+f(x_2')\ge\ell+1$, Lemma~\ref{equidist} implies that $T$ is
$\ell$-reconstructible.

\smallskip
{\bf Case 2:} {\it $d(x_1,x_1')<d(x_2,x_2')$, by symmetry.}
Let $x_1''$ be the vertex at distance $d(x_1,x_1')$ from $x_2$ along the path
from $x_2$ to $x_1$.  Similarly, let $x_2''$ be the vertex at distance
$d(x_2,x_2')$ from $x_1$ along the path from $x_1$ to $x_2$.  The order of the
six vertices along $P$ is $(x_1,x_1',x_2',x_2'',x_1'',x_2)$ or
$(x_1,x_1',x_2'',x_2',x_1'',x_2)$, depending on whether
$d(x_2,x_2')>d(x_1,x_2)/2$.

If $f(x_1')+f(x_1'')$ or $f(x_2')+f(x_2'')$ exceeds $\ell$, then
Lemma~\ref{equidist} completes the proof.  Hence we may assume
$f(x_1')+f(x_1'')\leq \ell$ and
$f(x_2')+f(x_2'')\leq \ell$.  On the other hand, we have proved
$f(x_1')+f(x_2')\ge \ell+1$.
         
We know the locations of $x_1,x_1',x_1'',x_2,x_2',x_2''$ along $P$.
Since $f(x_1')+f(x_2')\ge\ell+1$, omitting the offshoots at the four
vertices $x_1',x_1'',x_2',x_2''$ leaves a long subtree that fits in a card.
Hence we see a largest long csc $C_1$ in which the vertices at these specified
distances along the $r$-path have degree $2$; this csc shows us all offshoots
at the other vertices.

Since $f(x_1')+f(x_1'')\le \ell$ and $r\le n-2\ell$, a largest long subtree
having offshoots from the $r$-path only at the vertices having the distances of
$x_1'$ and $x_1''$ from the ends of the path fits in a card and shows us the
offshoots at $x_1'$ and $x_1''$, but not which is at which vertex.  Similarly,
we obtain the offshoots at $x_2'$ and $x_2''$ but do not know which set is 
attached to which vertex.

If $d(x_2,x_2')>d(x_2,x_1)/2$, then the order along $P$ is
$(x_1,x_1',x_2',x_2'',x_1'',x_2)$.  In this case the offshoots at $x_2''$ and
$x_1''$ appear in $U_2$, and we know them because we know all of $U_2$ and
which of its major pieces is the inner one.  This distinguishes which set of
offshoots in each pair $(x_1',x_1'')$ and $(x_2',x_2'')$ is attached to which
vertex, completing the reconstruction of $T$.

Hence we may assume $d(x_2,x_2')\le d(x_2,x_1)/2$, and the six vertices occur
in the order $(x_1,x_1',x_2'',x_2',x_1'',x_2)$ along $P$, including the
possibility $x_2''=x_2'$.  Again we can use $U_2$ to see the offshoots at
$x_1''$ and assign offshoots to the pair $(x_1',x_1'')$, but we still must
assign the offshoots at $x_2'$ and $x_2''$.  If $x_2'=x_2''$, then there is
only one set of offshoots and no need to distinguish them, so we may assume
$d(x_2,x_2')\le[d(x_2,x_1)-1]/2\le(\ell+4)/2$.

Some offshoot at $x_2'$ contains a path to an inner extremal vertex of $U_2$.
Hence $f(x_2')\ge k-1-d(x_2,x_2')\ge k-3-\ell/2$.  Since $k\ge\ell+4$ (using
$n\ge6\ell+11$), we have $f(x_2')\ge \ell/2+1$.
Since $f(x_2')+f(x_2'')\le \ell$, we conclude $f(x_2')>f(x_2'')$, which allows
us to assign the offshoots to $x_2'$ and $x_2''$ correctly, completing the
reconstruction of $T$.
\end{proof}

\section{Small Diameter}

In this section we consider trees with small diameter, in particular
with $r\le n-3\ell-1$.  First we consider those having sparse cards and then
those not having sparse cards.  When $r\le n-3\ell-1$, any connected card has
at least $2\ell+1$ vertices outside any path.

We note first a case that we have already handled.

\begin{corollary}\label{s3spider}
If $n\ge6\ell+5$ and $r\le n-\ell$ and $T$ contains a sparse card
that is a $3$-legged spider, then $T$ is $\ell$-reconstructible.
\end{corollary}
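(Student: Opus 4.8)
The plan is to recognize the hypotheses from the deck, use Lemma~\ref{spider} to cap the number of spi-centers, and then split on the diameter, reducing the large-diameter case to the results of Sections~3--5 and treating the small-diameter case directly. For the setup: since $T$ has a sparse card, Definition~\ref{long} forces $r<n-\ell$, so $r$ is simply the greatest number of vertices in a path appearing in a card and is visible, and we also see whether some card is a sparse $3$-legged spider. Fix such a card $C$ with primary vertex $z$; its $r$-path $P$ is the union of the two longest legs of $C$, and its third (shortest) leg, rooted at $z$ and edge-disjoint from $P$, has length $a=n-\ell-r$. By Lemma~\ref{spider} (valid since $n\ge4\ell+1$ and $r<n-\ell$), the only possible spi-center of $T$ is $z$; in particular $T$ has at most one spi-center.

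Suppose first $r\ge n-3\ell$. If $\Sp{\ell+1}\esub T$, then by Lemma~\ref{spider} the tree has exactly one spi-center and, by hypothesis, a sparse card, so Lemma~\ref{onelctr} applies. If instead $\Sp{\ell+1}\nosub T$, then $T$ has no spi-center and Theorem~\ref{nospi} applies. The one piece of bookkeeping here is the gap between the hypothesis $n\ge6\ell+5$ and the thresholds $6\ell+6$ and $6\ell+11$ of those two results; for the finitely many intermediate values of $n$ one reruns the relevant argument, noting that its length estimates use essentially only $r\ge n-3\ell$ together with $k\ge\ell+1$, both of which persist.

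Now suppose $r\le n-3\ell-1$. Then $a=n-\ell-r\ge2\ell+1$, and since $a$ is the length of a shortest leg of $C$, every leg of $C$ has length at least $2\ell+1\ge\ell+1$; hence $C$, and so $T$, contains $\Sp{\ell+1}$ with branch vertex $z$, which makes $z$ the unique spi-center of $T$ and identifies the structure around $z$ once $C$ is known. (For $\ell\le2$ the statement is classical by the results of Kelly and Giles, so assume $\ell\ge3$; then $r\ge4\ell+3\ge3\ell+6$, Lemma~\ref{largek} gives $k\ge\ell+1$, and Corollary~\ref{spi} lets us count spi-centers and confirm there is exactly one.) Since no vertex strictly between $z$ and its mirror on $P$ is a spi-center, Lemma~\ref{central} places $z$ on every $r$-path, so $C$ exhibits $P$, the length-$a$ offshoot at $z$, and all of $T$ except at most $\ell$ further vertices lying in offshoots from $P$. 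To recover these I would argue exactly as in the proof of Lemma~\ref{onelctr}: a largest long csc in which $z$ and its mirror have degree~$2$ fits in a card (it omits an offshoot at $z$ of length at least $\ell+1$) and shows every offshoot from $P$ except at those two vertices; suitable largest long cscs containing $\Sp{\ell+1}$, whose branch vertex is forced to be $z$, then recover the two components of $T-z$ meeting $P$; the maximal $(\ell+1)$-vine centered at $z$ pins down the short offshoots at $z$; and an Exclusion Argument (Lemma~\ref{count}) on what remains, read off in nonincreasing size, completes the reconstruction. This small-diameter branch is the only genuine work, and its one delicate point, again as in Lemma~\ref{onelctr}, is symmetry of $C$: when the two $P$-legs have equal length one cannot a priori tell on which side of $z$ an offshoot lies, and the fix is to distinguish the sides with a slightly larger csc, affordable because $n-r\ge3\ell+1$.
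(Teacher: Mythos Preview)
The paper's proof is much shorter and takes a different route. Its key observation, which you do not use, is that a $3$-legged spider whose longest path has $r$ vertices has at most $(3r-1)/2$ vertices; since the card has $n-\ell$ vertices this forces $(3r-1)/2\ge n-\ell$, whence $r\ge(10\ell+11)/3$, and the paper then asserts $r\ge3\ell+6$. With $r$ this large, Lemma~\ref{spider} gives at most one spi-center, the paper asserts there is exactly one, and Lemma~\ref{onelctr} is invoked directly --- no case split on $r$, and Theorem~\ref{nospi} is never touched.

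Your split into $r\ge n-3\ell$ versus $r\le n-3\ell-1$, with a separate direct reconstruction in the latter case, is therefore more elaborate than what the paper does. What your approach buys is a cleaner match to the literal hypotheses of the lemmas you cite (the paper invokes Lemma~\ref{onelctr} without verifying its stated hypothesis $r\ge n-3\ell$). What it costs is the need to handle the no-spi-center subcase via Theorem~\ref{nospi}, and here your bookkeeping remark has a real gap: you claim one can rerun that argument for $6\ell+5\le n\le6\ell+10$ using only $k\ge\ell+1$, but Section~5 genuinely relies on $k\ge\ell+4$ (for instance in deriving $f(x_2')\ge k-3-\ell/2\ge\ell/2+1$ at the end of the proof of Theorem~\ref{nospi}). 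That subcase does occur --- take a spider card $S_{1,1,r-2}$ with $r$ near $n-\ell$, where the short legs are too short for $z$ to be a spi-center and the extra $\ell$ vertices of $T$ cannot create one --- so this is a genuine hole in your proposal, not mere bookkeeping.
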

\begin{proof}
A $3$-legged spider has at most $(3r-1)/2$ vertices.  Such a tree as a card
requires $(3r-1)/2\ge n-\ell\ge 5\ell+5$, so $r\ge(10\ell+11)/3$.  In
particular, $r\ge 3\ell+6$.  By Lemma~\ref{spider}, $T$ has exactly one
spi-center.  Now Lemma~\ref{onelctr} applies and $T$ is $\ell$-reconstructible.
\end{proof}

\vspace{-1pc}

Henceforth in this section we may assume that $T$ has no sparse card that
is a $3$-legged spider.
Since all longest paths in a tree have the same center, the central vertices of 
all long cards are the same.  An {\it optimal} sparse card is a sparse card
such that, among all sparse cards, the primary vertex is closest to the center.



\begin{lemma}\label{optcard}
Suppose $n\ge6\ell+\c$ and $r\le n-3\ell-1$.
Let $C$ with primary vertex $z$ be an optimal sparse card, having an
$r$-path $\la\VEC v1r\ra$ with $z=v_j$ and $j\le(r+1)/2$.
Every $r$-path in $T$ contains the path from $z$ to the center of $T$,
and all optimal sparse cards have the same primary vertex $z$ in $T$.
\end{lemma}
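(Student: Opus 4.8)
The plan is to read a ``budget'' off the optimal sparse card $C$ and then, if either conclusion fails, to manufacture a sparse card whose primary vertex is strictly closer to the center of $T$ than $z$, contradicting optimality of $C$. First I would record the budget. Since $C$ is a long card, its $r$-path $P=\langle v_1,\dots,v_r\rangle$ lies wholly in $C$, and by hypothesis $z$ is the only branch vertex of $C$ on $P$; hence every internal vertex $w\ne z$ of $P$ has degree $2$ in $C$ (its two $P$-neighbors survive), so all offshoots from $P$ at $w$ are deleted in passing from $T$ to $C$, while the endpoints $v_1,v_r$ carry no offshoots at all. Thus the set $B$ of vertices lying in offshoots from $P$ at vertices other than $z$ satisfies $|B|\le\ell$, and since $V(T)$ is the disjoint union of $V(P)$, the offshoots from $P$ at $z$, and $B$, the offshoots from $P$ at $z$ contain $n-r-|B|\ge n-r-\ell\ge 2\ell+1$ vertices (here $r\le n-3\ell-1$ is used). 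Also $z$ has degree $\ge 3$ in $C$, hence in $T$, so $z$ is a branch vertex of $T$, and $j\ge 2$.

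Next I would prove that every $r$-path $P'$ in $T$ contains the path from $z$ to the center. Using the elementary fact that all longest paths of a tree have a common center, $P'$ contains that center, so a failure of the conclusion means $P'$ misses $z$ itself. Let $Q=\langle v_c,\dots,v_d\rangle$ be the maximal common subpath of $P$ and $P'$ containing the center; since $P'$ avoids $v_j=z$ we get $j<c$, so $v_1,\dots,v_{c-1}$ all lie off $P'$ and $v_c,v_d\ne z$. The two pieces of $P'$ leaving $P$ at $v_c$ and at $v_d$ lie inside single offshoots from $P$ at $v_c$ and $v_d$ respectively, from which one checks that the component $\Omega$ of $T-V(P')$ meeting $v_{c-1}$ is exactly $\{v_1,\dots,v_{c-1}\}$ together with all offshoots from $P$ at $v_1,\dots,v_{c-1}$; in particular $\Omega$ absorbs all $\ge 2\ell+1$ offshoot-vertices at $z$, so $|\Omega|>2\ell$. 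Comparing $|\Omega|$ with $n-r=(\text{number of offshoot-vertices at }z)+|B|$ yields $|(T-V(P'))\setminus\Omega|\le|B|\le\ell$. Deleting $(T-V(P'))\setminus\Omega$ and then trimming enough leaves of $\Omega$ to reach $n-\ell$ vertices (possible since $|\Omega|>2\ell$, which also keeps $v_c$ a branch vertex) produces a connected card $C'$ in which $P'$ is an $r$-path whose only branch vertex is $v_c$ --- a sparse card with primary vertex $v_c$. Since $v_c$ lies on $P$ strictly between $z$ and the center, it is strictly closer to the center than $z$, contradicting optimality of $C$; this argument works for any optimal sparse card.

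Finally I would deduce the statement on primary vertices. Let $\widetilde C$ be another optimal sparse card, with primary vertex $\widetilde z$ and a witnessing $r$-path $\widetilde P$; optimality forces $\widetilde z$ to be at the same distance from the center as $z$. Applying the previous part to $C$ shows $\widetilde P$ contains $z$, and applying it to $\widetilde C$ shows $P$ contains $\widetilde z$; but the only vertices of $P$ at that distance from the center are $z=v_j$ and $v_{r+1-j}$, so $\widetilde z\in\{v_j,v_{r+1-j}\}$. If $\widetilde z=v_{r+1-j}\ne z$, then $z$ is a branch vertex of $T$ lying on $\widetilde P$ yet not serving as a branch vertex of $\widetilde C$, so $z$ has degree $2$ in $\widetilde C$ and all $\ge 2\ell+1>\ell$ offshoot-vertices at $z$ in $T$ would have to be deleted to form $\widetilde C$ --- impossible. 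Hence $\widetilde z=z$.

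I expect the second paragraph to be the main obstacle: the decisive observation is that as soon as $P'$ peels off $P$ before reaching $z$, all of the heavy material (the $\ge 2\ell+1$ offshoot-vertices at $z$ and the prefix $v_1,\dots,v_{c-1}$) collapses into the single offshoot $\Omega$ of $P'$ at $v_c$, leaving a remainder of at most $\ell$ vertices to strip off; once that is seen, checking that $C'$ is a bona fide card and that $v_c$ genuinely lies closer to the center is routine.
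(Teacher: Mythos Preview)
Your overall strategy matches the paper's: read off the budget $|B|\le\ell$ (so at least $2\ell+1$ vertices hang off $P$ at $z$), show that any $r$-path missing $z$ lets you manufacture a sparse card with primary vertex strictly closer to the center, and then use a counting argument to pin down $\widetilde z$. The first two paragraphs are correct; your $v_c$ is exactly the paper's vertex $w$, and your construction of $C'$ is a more explicit version of the paper's.

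The third paragraph has a real gap. From $\deg_{\widetilde C}(z)=2$ you conclude that ``all $\ge 2\ell+1$ offshoot-vertices at $z$ would have to be deleted,'' but those $2\ell+1$ vertices are the offshoots from $P$ at $z$, whereas degree $2$ in $\widetilde C$ only kills the offshoots from $\widetilde P$ at $z$. These need not agree: $\widetilde P$ must run from $z$ toward $\widetilde z$ through $v_{j+1}$, but on the other side it may run its remaining $j-1$ steps into one of the offshoots $O$ from $P$ at $z$ (any such $O$ of length $j-1$ ending at a leaf works) rather than through $v_{j-1}$. Then part of $O$ survives in $\widetilde C$ and your deletion count fails.

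The repair is to count at $\widetilde z$ instead, or equivalently (as the paper does) to count offshoots from the $z,\widetilde z$-path rather than from $P$. By your own budget applied to $\widetilde C$, the offshoots from $\widetilde P$ at $\widetilde z$ in $T$ total at least $2\ell+1$. But $\widetilde z=v_{r+1-j}\ne z$, so the offshoots from $P$ at $\widetilde z$ lie in $B$; together with the component of $T-\widetilde z$ through $v_{r+2-j}$ (which has $j-1$ vertices of $P$ plus some of $B$), and accounting for the at least $j-1$ vertices of $B$ that $\widetilde P$ itself uses beyond $\widetilde z$ if it enters an offshoot there, the offshoots from $\widetilde P$ at $\widetilde z$ can contain at most $\ell$ vertices in either case --- the desired contradiction. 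The paper packages the same idea by comparing how many vertices of the components of $T-z$ not containing $v_{j+1}$ are visible in $C$ (at least $(j-1)+(n-\ell-r)$) versus in $\widetilde C$ (at most $j-1$, the short tail of $\widetilde P$ past $z$), forcing $n-\ell-r\le\ell$ and contradicting $r\le n-3\ell-1$.
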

\begin{proof}
Since all longest paths in a tree contain the center, when $j=\FL{(r+1)/2}$
there is nothing to prove, so we may assume $j<\FL{(r+1)/2}$.
Let $P=\la \VEC v1r\ra$.

Let $y$ be the central vertex of $T$ closest to $z$.  Let $T'$ be the component
of $T-z$ containing $y$.  Let $P'$ be another $r$-path in $T$; like every
$r$-path, $P'$ contains $y$.  If $P'$ does not contain the path from $y$ to $z$,
then $P'$ leaves the path from $y$ to $z$ at some vertex $w$ before $z$.
In the direction away from $z$, since $P'$ is a longest path, it extends
$\FL{r/2}$ vertices beyond $y$ in $T'$, just as $P$ does.  We now form a sparse
card $C'$ with $r$-path $P'$ and primary vertex $w$.  We keep $n-\ell-r$
vertices from the component of $C-w$ containing $z$.  The number of vertices in
that component is greater than $n-\ell-r$, since it contains $z$ and the
offshoots from $P$ at $z$ in $C$, which total $n-\ell-r$ vertices.  Hence $C'$
exists.  Since $w$ is closer to the center of $T$ than $z$ is, $C'$ contradicts
the optimality of $C$.


Suppose that distinct vertices $z$ and $z'$ are primary vertices in optimal
sparse cards $C$ and $C'$, respectively.  Since $z$ and $z'$ lie on every
$r$-path, the path $R$ joining them is in both $C$ and $C'$.  If $z$ or $z'$
lies outside the central $r-2j+2$ vertices of the designated $r$-path in the
other card, then $T$ has a path with more than $r$ vertices.  Otherwise,
the offshoots from $R$ at the primary vertex in $C$ are offshoots from $R$ at a
non-primary vertex in $C'$, and vice versa.  Since $C$ and $C'$ are cards, this
yields $n-\ell-r+j-1\le \ell+j-1$, which contradicts $r\le n-3\ell-1$.
\end{proof}

\vspace{-1pc}

\begin{lemma}\label{sparse3}
Suppose $n\ge6\ell+\c$ and $r\le n-3\ell-1$.  If $T$ has a sparse card of degree
$3$, then some optimal sparse card has degree $3$.  If no sparse
card has degree $3$, then every sparse card is optimal.
\end{lemma}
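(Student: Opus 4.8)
The plan is to reduce both statements to a clean structural description of which sparse cards are available at a prescribed vertex. Throughout, let $c$ be the center (vertex or edge) of $T$, which is well defined since all longest paths of $T$ share a center. By Lemma~\ref{optcard}, all optimal sparse cards have one common primary vertex $z$ in $T$; moreover $z$ is the unique vertex among primary vertices of sparse cards at minimum distance from $c$, and every $r$-path of $T$ contains the path from $z$ to $c$.

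The key preliminary step is the following claim: for a branch vertex $v$ of $T$ lying on an $r$-path $P$, the tree $T$ has a sparse card of degree $d$ with primary vertex $v$ and designated $r$-path $P$ if and only if the $d-2$ largest offshoots from $P$ at $v$ together contain at least $n-\ell-r$ vertices. I would prove this directly: a long card arises by deleting $\ell$ of the $n-r$ vertices off $P$ while leaving a connected graph containing $P$ (here I first note that an endpoint of $P$ has no offshoot, so every off-$P$ vertex lies in an offshoot at a branch vertex of $T$ on $P$); for $v$ to be the unique branch vertex of the card on $P$, every offshoot at every other branch vertex of $T$ on $P$ must be deleted entirely, since deleting only part of such an offshoot either keeps that vertex a branch vertex of the card or disconnects the card; and to make the primary vertex have degree exactly $d$ one keeps the roots of exactly $d-2$ offshoots at $v$. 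Since $n-r=(n-\ell-r)+\ell$, a deletion of exactly $\ell$ vertices achieving this is possible precisely when the $d-2$ retained offshoots have at least $n-\ell-r$ vertices in total, one then pruning them down to the $n-\ell-r$ off-$P$ vertices of the card while keeping all $d-2$ roots. Two consequences I would record: (i) if $T$ has a sparse card of degree $d_0$ with primary vertex $v$, then it has one of every degree between $d_0$ and $\deg_T(v)$ with primary vertex $v$; and (ii) $T$ has a degree-$3$ sparse card with primary vertex $v$ if and only if some single offshoot from some $r$-path at $v$ has at least $n-\ell-r$ vertices. Recall that $n-\ell-r\ge 2\ell+1$ under the current hypotheses.

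For the first assertion, suppose $C_0$ is a degree-$3$ sparse card with primary vertex $z_0$ and $r$-path $P_0$. If $z_0=z$, then $C_0$ is itself an optimal sparse card of degree $3$ and we are done. Otherwise $d(z_0,c)>d(z,c)$ by the uniqueness in Lemma~\ref{optcard}, and $z\in V(P_0)$ since every $r$-path contains $z$. By consequence (ii), $z_0$ has an offshoot $O$ from $P_0$ with at least $n-\ell-r\ge 2\ell+1$ vertices, so at most $\ell$ off-$P_0$ vertices lie outside $O$; in particular the offshoots from $P_0$ at $z$ total at most $\ell$ vertices, so $z$ has no large offshoot along $P_0$ itself. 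The plan is to build a different $r$-path $P$ through $z$ along which the component of $T-z$ containing $z_0$ (and hence $O$) becomes an offshoot at $z$: concretely, read $P_0$ from the endpoint on the side of $z$ away from $z_0$, follow $P_0$ through $z$ and $z_0$, and then turn into a longest root-path of $O$; the fact that $P_0$ is a longest path controls the length of this walk, and one checks, using $d(z_0,c)>d(z,c)$ and, in a degenerate boundary case, the bound $n\ge6\ell+\c$, that it has exactly $r$ vertices. Along this $P$ the vertex $z$ has an offshoot containing $O$, hence of size at least $n-\ell-r$, so consequence (ii) yields a degree-$3$ sparse card with primary vertex $z$, which is optimal. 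I expect this rerouting step to be the main obstacle, as it needs a careful case analysis according to whether $z$ and $z_0$ lie on the same side of $c$ and whether the relevant arm of $P_0$ through $z_0$ is long enough to be continued into $O$.

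For the second assertion I would argue by contradiction: assume $T$ has no degree-$3$ sparse card, yet some sparse card $C'$ has primary vertex $z'\ne z$. Then $d(z',c)>d(z,c)$, and $C'$ has degree at least $4$, so by consequence (ii) no single offshoot at $z'$ along any $r$-path through $z'$ reaches $n-\ell-r$ vertices, while by the structural claim the offshoots from the $r$-path $P'$ of $C'$ at $z'$ total at least $n-\ell-r\ge 2\ell+1$ vertices and split among at least two pieces; symmetrically, the optimal sparse card exhibits a comparable amount of offshoot mass at $z$. The contradiction should come by rerouting the bounded-length offshoot mass at $z'$ together with the segment of $P'$ from $z'$ toward $c$, producing either a degree-$3$ sparse card (forbidden in this part) or a sparse card whose primary vertex is strictly closer to $c$ than $z$ (contradicting Lemma~\ref{optcard}); the inequality $n\ge6\ell+\c$ is what guarantees that the rerouted configuration has enough vertices and the correct length. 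Once this is established, every sparse card has primary vertex $z$ and hence is optimal, which is the second statement of the lemma.
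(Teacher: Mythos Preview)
Your structural characterization of sparse cards (the ``key preliminary step'') is correct and potentially useful, but the rerouting argument you propose for the first assertion does not work as stated. You describe a path that follows $P_0$ through both $z$ and $z_0$ and then turns into $O$; along such a path, the two neighbors of $z$ are exactly its two $P_0$-neighbors, so the offshoots from the new path at $z$ coincide with the offshoots from $P_0$ at $z$, which you yourself bounded by at most $\ell$. Thus the component containing $z_0$ is \emph{not} an offshoot at $z$ along this path, contrary to your stated plan. Moreover, the walk you describe has exactly $r$ vertices only when $O$ has length $j_0-1$, which is not guaranteed. The case where $z$ and $z_0$ lie on opposite sides of the center, which you flag as needing separate treatment, remains entirely open; and the contradiction in the second assertion is only gestured at.

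The paper takes a much shorter route that sidesteps all of this. Instead of aiming directly at the optimal primary vertex, it proves a single descent step: if $C$ is \emph{any} non-optimal sparse card with primary vertex $v$ and $r$-path $P$, then there is a degree-$3$ sparse card whose primary vertex is strictly closer to the center than $v$. The reason is that any $r$-path $P'$ of a sparse card with closer primary vertex must diverge from $P$ at some vertex $w$ strictly between the center and $v$ (if $P'$ contained the whole $v$--center segment, the $\ge 2\ell+1$ vertices in offshoots at $v$ leave at most $\ell$ off-$P'$ vertices available at any vertex closer to the center, so no closer primary vertex is possible). At such a $w$, the construction already carried out in the proof of Lemma~\ref{optcard} produces a sparse card with primary vertex $w$, and since $w$ has degree $2$ in $C$, that card has degree $3$. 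Both conclusions follow at once: iterate the step for the first, and read off the contrapositive for the second.
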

\begin{proof}
Let $C$ be any sparse card with primary vertex $z$ on the $r$-path $P$.  Let
$P'$ be an $r$-path in another sparse card.  Since $r\le n-3\ell-1$, in any
sparse card the offshoots from the $r$-path at the primary vertex have at least
$2\ell+1$ vertices, and the card omits only $\ell$ vertices.

If $P'$ contains the path from $z$ to the central vertex $y$, then there are at
most $\ell$ vertices available for offshoots from $P'$ at any vertex of
$P'$ closer to $y$ than $z$, so such a card could not have a primary vertex
closer to $y$.  If $P'$ departs from $P$ at some vertex $w$ before $z$ in
traveling from $y$, then the sparse card $C'$ formed in the proof of 
Lemma~\ref{optcard} has primary vertex $w$.  That card has degree $3$,
since $w$ has degree $2$ in $C$.

That is, if $C$ is not an optimal sparse card, then we obtain a sparse
card of degree $3$ whose primary vertex is closer to the center.
This implies both conclusions claimed.
\end{proof}

\vspace{-1pc}

\begin{definition}\label{longdef}
{\it Peripheral vertices} are endpoints of $r$-paths in $T$.
When $T$ has a sparse card, let $z$ denote the unique vertex of $T$ that by
Lemma~\ref{optcard} is the primary vertex in every optimal sparse card.
The {\it primary value} of $T$ is the index $j$ with $j\le(r+1)/2$
such that $z$ is $v_j$ on an $r$-path $P$ indexed as $\la\VEC v1r\ra$.

For a given optimal sparse card
$C$, let $W$ denote the union of the offshoots from $P$ at $z$ in $C$, with
$W^*$ being the union of the components of $T-z$ containing the offshoots in
$W$.  Let $T_1$ and $T_2$ be the components of $T-z$ containing $v_{j-1}$ and
$v_{j+1}$, respectively, where $j$ is the primary value.  Although
we see $W$ in $C$, from $C$ we do not know any of $W^*$, $T_1$, or $T_2$.

When $T$ has a sparse card and primary value $j$, a {\it long-legged card} is a
connected card $C'$ having a leg of length more than $r-j$.  When
$r\le n-3\ell-1$, a long-legged card has a branch vertex, since otherwise the
card is a path, requiring $r\ge n-\ell$.  See Figure~\ref{ccpfig}.
\end{definition}

\vspace{-1pc}

\begin{figure}[h]
\begin{center}
\includegraphics[scale=0.5]{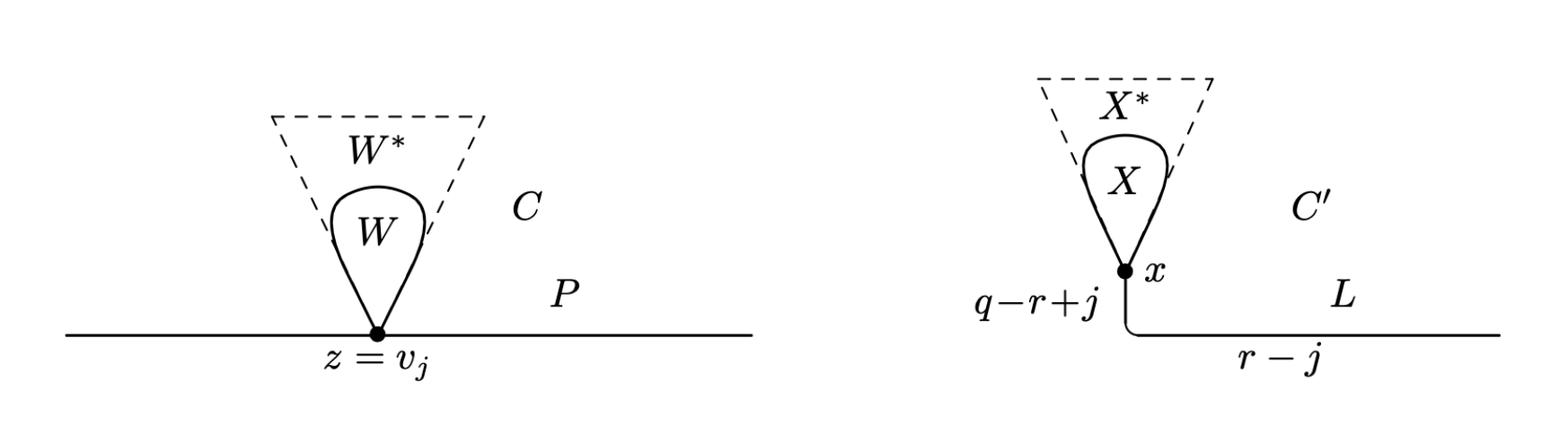}
\caption{Sparse card $C$ and long-legged card $C'$\label{ccpfig}}
\end{center}
\end{figure}

\begin{lemma}\label{longleg}
Suppose $n\ge6\ell+\c$ and $r\le n-3\ell-1$.  If $T$ has a sparse card of degree
$3$, and $T$ has a long-legged card, then $T$ is $\ell$-reconstructible.
\end{lemma}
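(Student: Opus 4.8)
The plan is to combine an optimal sparse card of degree $3$ with a long-legged card to locate the primary vertex $z$ inside a long-legged card and then peel off the tree piece by piece using exclusion arguments. First I would set up notation: by Lemma~\ref{sparse3}, since $T$ has a sparse card of degree $3$, some \emph{optimal} sparse card $C$ has degree $3$; let $z$ be its primary vertex with primary value $j$, so $z=v_j$ on an $r$-path $P=\la\VEC v1r\ra$ and $j\le(r+1)/2$. By Lemma~\ref{optcard}, every $r$-path contains the path from $z$ to the center of $T$, so all $r$-paths agree on $\VEC vj{r+1-j}$. Because $C$ has degree $3$, exactly one offshoot $W$ (the union of offshoots from $P$ at $z$ in $C$) leaves $P$ at $z$ in $C$; since $r\le n-3\ell-1$, $W$ has at least $2\ell+1$ vertices, so $W^*$ (the component of $T-z$ in the $W$-direction) has length more than $r-j$ — in particular $W^*$ contains a long leg, and this is why a long-legged card exists at all in this subcase, though we are also handed one by hypothesis.

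Next I would argue that in any long-legged card $C'$ the branch vertex at the bottom of a longest leg must in fact be (a copy of) $z$. A long-legged card has a leg of length more than $r-j$ attached to a branch vertex $z'$; removing that leg from $C'$ leaves a subtree that still contains the path from one peripheral vertex to the center, and pairing the long leg against the rest of $C'$ along $P$ would create a path longer than $r$ unless $z'$ sits at distance exactly $j-1$ from the near end, i.e. $z'$ is $v_j$ or $v_{r+1-j}$; by Lemma~\ref{optcard} that vertex is $z$. So in $C'$ we see (a copy of) the long offshoot $W^*$ in full, or at least a large rooted subtree of it, plus whatever of $T_1,T_2$ fits. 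I would then choose $C'$ to be a \emph{largest} long-legged card (equivalently, one showing a longest leg, and subject to that the most vertices); its maximality forces it to contain $W^*$ in full together with all offshoots from $P$ at vertices other than $\{v_j,v_{r+1-j}\}$, because omitting $2\ell+1$ vertices of $W^*$ would already leave room, so a maximal such card cannot be truncating $W^*$. The one ambiguity is whether the long leg is rooted at $v_j$ or at $v_{r+1-j}$ and whether $C'$ is symmetric under reversing the $r$-path; I would handle the asymmetric case directly (the card distinguishes the two ends) and the symmetric case by noting that then it does not matter which end we call $z$.

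Having fixed $z$ and recovered $W^*$ and all offshoots except at $\{v_j,v_{r+1-j}\}$, the remaining task is to determine the offshoots at the \emph{other} central vertex $v_{r+1-j}$ (call their union $W'$, inside the component $T_2$ say, with $T_1$ the $v_{j-1}$-side). Since we now know the rest of $T$, we know $\C{V(W')}$. I would finish with an Exclusion Argument (Lemma~\ref{count}) in the spirit of the Claim in Lemma~\ref{onelctr}: consider the family of subtrees consisting of an $r$-path with a prescribed offshoot profile — degree $2$ at the position of $v_j$, the known offshoots elsewhere, and $\C{V(W')}$ vertices of offshoots at the position of $v_{r+1-j}$. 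Since $r\le n-3\ell-1$ and $W^*$ contributes $\ge 2\ell+1$ vertices that such a subtree omits, these subtrees fit in cards and are visible in the deck. Among them, those arising from $T$ by instead deleting part of $W^*$ (or a short offshoot) rather than $W'$ are recognizable because we already know $W^*$ and the short offshoots; excluding those, a largest remaining member exhibits $W'$ in position, completing the reconstruction. The main obstacle I anticipate is the bookkeeping when $W'$ is short enough to be confusable with a leg of the same length coming from $T_1$ or from $W^*$, and when $j=(r+1)/2$ so that $v_j=v_{r+1-j}$; in the latter case $C'$ already exhibits all offshoots at the unique central vertex and there is nothing left to do, and in the former case one counts peripheral vertices on each side (which are determined since the number of $r$-paths is an invariant of the deck) to break the symmetry, exactly as in the closing paragraphs of Lemmas~\ref{onelctr} and~\ref{spidone}.
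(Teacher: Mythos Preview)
Your proposal rests on a structural claim that is false: the branch vertex at the end of the long leg in a long-legged card is \emph{not} $z$. The offshoot $W^*$ from $P$ at $z=v_j$ has length at most $j-1\le(r-1)/2\le r-j$, not more than $r-j$; so a leg of length $q>r-j$ cannot lie inside $W^*$ with its branch vertex at $z$, and in fact no path of length exceeding $r-j$ starts at $z$ at all. What actually happens (this is the paper's Step~1) is that the long leg $L$ runs from a peripheral vertex in $T_2$, passes \emph{through} $z$ (so $z$ has degree $2$ in $C'$), and continues into $W^*$, terminating at a branch vertex $x$ strictly inside $W^*$; the subtree $X=C'-V(L-x)$ beyond the leg sits entirely in $W^*$, and the fact that $z$ has degree $2$ in $C'$ forces $j-1\le\ell$. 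Consequently your picture of $C'$ as ``an $r$-path with the long offshoot $W^*$ attached at position $j$'' is inverted: a long-legged card does not contain an $r$-path at all (its diameter is at most $r-1$), and what it exhibits is a subtree of $W^*$ hanging off $x$, not $W^*$ hanging off $z$.

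Because of this, the reconstruction cannot proceed by reading off $W^*$ from $C'$ and then filling in offshoots at $v_{r+1-j}$. The paper instead works around the vertex $x$: it first shows (Step~2) that in any csc with a leg of length $\ge q$ and enough off-path weight the part beyond the leg must land in the fixed subtree $X^*\subseteq W^*$; it then (Step~3) grows a largest csc with a prescribed small piece at $x$ to recover all of $T$ outside $X^*$, including $T_1$, $T_2$, and the rest of $W^*$; finally (Step~4) it recovers the offshoots at $x$ by an exclusion argument, with a case split on $d_{C'}(x)\in\{2,3,\ge4\}$. The case $d=2$ in particular requires delicate balancing of offshoot sizes and the bound $j-1\le\ell$. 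None of this machinery is present in your outline, and the argument you sketch for locating $z'$ on an $r$-path inside $C'$ cannot be salvaged, since $C'$ has no $r$-path.
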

\begin{proof}
By Lemma~\ref{sparse3}, $T$ has an optimal sparse card $C$ of degree $3$.
Relative to $C$, we use notation $z,j,P,T_1,T_2,W,W^*$ as in
Definition~\ref{longdef}.
Since $C$ has degree $3$, $W$ is a single offshoot from $P$.  Since $C$ is a
card and $r<n-3\ell$, in $W$ there are at least $2\ell+1$ vertices.
Let $q$ be the maximum length of the legs in long-legged cards, so $q>r-j$.
Let $\cC'$ be the family of long-legged cards having a leg of length $q$.
For a fixed card $C'\in\cC'$, let $L$ be the leg of length $q$, ending at
branch vertex $x$, and let $X=C'-V(L-x)$, as shown in Figure~\ref{ccpfig}.
Let $X^*$ be the component of $T-V(L-x)$ containing $x$.

\medskip
\noindent
{\bf Step 1:} {\it The primary vertex $z$ in $C$ is on $L$ in $C'$, at distance
$r-j$ from the leaf of $L$, and $X\esub W^*$.  Also, $x$
and $X^*$ are independent of the choice of $C'$ in $\cC'$, and $j-1\le\ell$.}

We first prove $z\in V(L)$.  Suppose not.  Let $z'$ be the vertex of
$C'$ closest to $z$ in $T$, and let $R$ be the path from $z$ to $z'$.
Since no path with length more than $r-j$ starts at $z$, we cannot have
$z'\in V(X)$ (and hence also $z\notin V(X)$), and for the same reason $z'$
lies on $L$ within distance $r-j$ of the leaf in $L$.  Let $L'$ be the portion
of $L$ from $z'$ to the leaf end of $L$.

Since $\C{V(W)}\ge2\ell+1$ and $C'$ omits only $\ell$ vertices, $W$ intersects
$C'$.  Since $T$ has no cycles and $z$ has a neighbor in $W$, the neighbor of
$z$ on $R$ must be the neighbor of $z$ in $W$.  Hence $P$ lies completely
outside $C'$, since otherwise $T$ has paths from $z$ to $C'$ through $P$ and
through $W$.  Now $L'\cup R\cup\la \VEC vjr\ra$ is a path.  If $L'$ has length
at least $j$, then this path is longer than $P$.  If $L'$ has length less than
$j$, then replacing $L'$ with $R\cup\la\VEC vjr\ra$ and deleting some vertices
of $X$ yields a long-legged card having a longer leg than $C'$.  Both
possibilities are forbidden, so in fact $z=z'\in V(L-x)$.

We have already observed that $L'$, ending at $z$, has length at most $r-j$.
Since $z$ has degree $2$ in $C'$, the card $C'$ has vertices in two components
of $T-z$, and $X$ is contained in one of them.  Since $C'$ is a card, $X$ has
at least $2\ell+1$ vertices outside any path, but the only component of $T-z$
having more than $\ell$ vertices outside $P$ is $W^*$.  Hence $X\esub W^*$, and
the neighbor of $z$ on the path to $x$ is not in $P$.  If $L'$ has length less
than $r-j$, we can now replace $L'$ in $C'$ with a path of length $r-j$ from
$z$ in $P$ and delete vertices from $X$ to obtain a long-legged card with leg
longer than $C'$, which contradicts the choice of $C'$.  Hence $L'$ has length
exactly $r-j$.

Suppose that there are choices for $C'$ with two different branch vertices
playing the role of $x$ at the end of the leg of length $q$.  Both have 
distance $q-r+j$ from $z$, so neither of these vertices is on the path from $z$
to the other.  Hence the paths to them from $z$ diverge.  Both cards now
contain at least $2\ell+1$ vertices of $W^*$ outside a longest path that do not
appear in the other, which prevents the cards from having $n-\ell$ vertices.
Hence there is only one choice for $x$.  Since $x$ is independent of the choice
of $C'$ in $\cC'$, and $z$ has fixed distance $r-j$ from the end of $L$, the
subtree $X^*$ also is independent of the choice of $C'$.  Furthermore,
since $X\esub W^*$, also $X^*\esub W^*$.  This and $z$ having degree $2$
in $C'$ imply that the card $C'$ omits at least $j-1$ vertices of $P$,
so $j-1\le \ell$.

\medskip
\noindent
{\bf Step 2:} {\it
Let $\hC$ be a csc of $T$ having a leg $\hL$ of length at least $q$, let $\hx$
be the branch vertex of $\hC$ at the end of $\hL$, and let $\hX=\hC-V(\hL-\hx)$.
If $\hC$ has at least $\ell-j+2$ vertices in offshoots from its longest path,
then $\hX\esub X^*$.}

Let $\hq$ be the length of $\hL$.  Every longest path in $\hC$ contains $\hL$,
since $\hq\ge r-j$.  Let $\hx'$ be the vertex of $C'$ at which the path
from $\hx$ to $V(C')$ reaches $C'$; it may be in $X$ or in $L-x$.

First suppose $\hx'\in V(X)$.  In this case that the path $\hR$ from $\hx$ to
$x$ lies entirely in $X^*$ and uses no edge of $L$.  If $x\in\hX$, then both
$\hx$ and $x$ lie in $\hX$, so also $\hR$ lies within $\hX$ and uses no edge of
$\hL$.  Thus $\hR$ meets $L$ at $x$ and $\hL$ at $\hx$, and the union
$L\cup\hR\cup\hL$ is a path longer than $P$.  On the other hand, if
$x\notin\hX$, then $\hX\cap C'$ is contained in the component of $C'-x$ that
contains $\hx'$.  Now $\hX$ lies in the component of $T-x$ containing $\hx'$,
which is $X^*$, as desired.

The remainder of this step consists of showing that the other case
$\hx'\notin V(X)$ cannot occur.  Suppose $\hx'\notin V(X)$, which implies
$\hx'\in V(L-x)$.  We first prove $\hx'\in V(\hX)$.  Since $\hx\in V(\hX)$ and
since $\hx'$ is where $\hR$ reaches $C'$, the assumption $\hx'\notin V(\hX)$
puts all of $\hX$ outside $C'$, including the $\ell+2-j$ vertices of $\hC$
that have been assumed to lie in offshoots from a longest path in $\hC$.
Also, since $z\in V(L-x)$, the card $C'$ lacks at least $j-1$ vertices of $P$
from $T_1$ or $T_2$.  We now have at least $\ell+1$ vertices missing from the
card $C'$, a contradiction.
Thus $\hx'\in V(\hX)$.

We next prove $\hx\in V(C')$, which is equivalent to $\hx=\hx'$.  Suppose
$\hx\ne\hx'$.  Since $\hx',\hx\in\hX$, the path $\hR'$ joining $\hx'$ and $\hx$
lies entirely in $\hX$.  Since $\hL$ contains no vertex of $\hX$ other than
$\hx$, the assumption $\hx\ne\hx'$ implies that $\hL$ shares no vertices with
$L$.  Let $L''$ be the part of $L$ from $\hx'$ to the leaf in $C'$.  Since
$\hq\ge q$, replacing $L''$ with $\hR\cup\hL$ (and deleting some of $V(X)$)
yields a card with a longer leg than $C'$.  Hence $\hx=\hx'$, which yields
$\hx\in V(L)$.

Now we consider how $\hL$ departs from $\hx$, in three cases.
(1) If $\hL$ does not depart $\hx$ along $L$, then either $L''\cup\hL$ is a
path longer than $P$, or replacing $L''$ with $\hL$ contradicts the choice of
$C'$ (this uses $q\ge r-j$).
(2) If $\hL$ departs $\hx$ along $L''$, then replacing $L''$ with $\hL$ yields
a card with a longer leg than $C'$, since $\hq\geq q$.
(3) We may therefore assume that $\hL$ departs $\hx$ along $L$ toward $x$.
Since $L''\cup \hL$ cannot be longer than $P$, vertex $\hx$ must be farther
from $X$ than $z$.  In this case, the $\ell-j+2$ vertices of $\hC$ (in $\hX$)
in offshoots from its longest path are omitted from $C'$, along with at least
$j-1$ vertices from $P$ in $T_1$ or $T_2$.  This contradicts
that $C'$ omits exactly $\ell$ vertices.  The contradiction arose under
the assumption $\hx'\notin V(X)$, which means that only the earlier case
$\hx'\in V(X)$ occurs, where we proved $\hX\esub X^*$.

\medskip
\noindent
{\bf Step 3:} {\it Reconstruction of $T-V(X^*-x)$.}
Recall that $\cC'$ is the family of long-legged cards having a leg of length
$q$.  For any $C'\in\cC'$, we have $X\esub W^*$ by Step 1, and hence
$X^*\esub W^*$.  Also by Step 1, the branch vertex $x$ at the end of the leg is
the same in all such $C'$.  We have also observed that every such card $C'$
omits at least $j-1$ vertices from $P$, so $j-1\le \ell$.  Choose $C'\in\cC'$
to maximize the length of the longest offshoot from the leg $L$ at $x$; let
$t'$ be this maximum length.  We also know that $C'$ has at least $2\ell+1$
vertices outside its longest path.  Therefore, in $C'$ we can find a subtree
$U$ with a leg $L$ of length $q$ ending at branch vertex $u$ such that $U$
contains a path of length $t'$ from $u$ outside $L$ plus exactly $\ell+1$ other
vertices in the offshoots from $L$ at $u$.  Any such subtree $U$ is a particular
choice of $\hC$ in Step 2, so $u\in X^*$.  Furthermore, since $U$ has the
maximum possible length beyond $u$, we must have $u=x$, and the offshoots in
$U$ are contained in $X^*$ and $W^*$.

Now let $C_1$ be a largest subtree in $T$ that contains $U$ but attaches no
additional vertices of $X^*$ to any vertex of $U$.  By Step 2, any $C_1$ grown
from $U$ in this way must have $u=x$, and its $\ell+1$ vertices in offshoots
outside the specified path extending the leg by length $t'$ are contained in
$X^*$ and $W^*$.  Therefore, $C_1$ omits at least $\ell$ vertices from the set
of at least $2\ell+1$ such vertices in $X$.  Thus $C_1$ fits in a card.
Therefore in $C_1$ we see all the offshoots from $L-x$ in full,
giving us all of $T$ outside $X^*$.

\medskip
\noindent
{\bf Step 4:} {\it Reconstruction of $X^*$.}
It remains to determine the offshoots from $x$ that comprise $X^*$.
Among the cards in $\cC'$, which all have the same vertex $x$ at the end of the
long leg, choose $C'$ to minimize $d_{C'}(x)$, and among those choices choose
$C'$ to minimize the size of a smallest offshoot $Y$ in $C'$ from $L$ at $x$.
If some offshoot in $T$ at $x$ other than $Y$ appears only partially in $C'$,
then we can alter $C'$ to add a vertex from that offshoot and delete a vertex
from $Y$.  If this eliminates $Y$, then either the degree of the branch vertex
at the end of the leg decreases or we obtain a long-legged card with a longer
leg.  Hence in $C'$ we see in full all offshoots from $L$ in $T$ at $x$ that
have any vertices in $C'$, except possibly the one containing $Y$.
We must find the remaining offshoots from $x$ in $T$.  

Fixing this $C'$ and hence $X$, let $d$ be the degree of $x$ in $X$, and let
$\VEC Q1d$ be the offshoots from $x$ in $X$, in nonincreasing order of size
(here $Q_d=Y$).  Note $d\ge2$, since $x$ is a branch vertex in $C'$.
As observed above, the offshoots $\VEC Q1{d-1}$ from $x$ in $C'$ are offshoots
from $x$ in $T$, seen in full in $C'$.

Since $C'$ is a card, $\VEC Q1d$ together have at least $2\ell+1$ vertices
outside any path, so they have at least $2\ell+2$ vertices in total.  Let
$\cC_1$ be the family of subtrees having a leg of length $q$ and $d-1$
offshoots at the end of the leg, with $d-2$ of them being $\VEC Q1{d-2}$.
Every member of $\cC_1$ fits in a card, since otherwise there would be a choice
for $C'$ with $x$ having degree less than $d$ in $X$.

\smallskip
{\bf Case 1:} {\it $d\ge4$.}
Since $\VEC Q1{d-2}$ are the largest offshoots in $X$ and include at least
half of the offshoots, they together have at least $\ell+1$ vertices, and since
the longest path from $x$ in $X$ has length at most $j-2$, these offshoots have
at least $\ell-j+3$ vertices outside a longest path in $C'$.
The union of these offshoots with the leg satisfies the conditions for $\hC$ in
Step 2, in every subtree in $\cC_1$ the branch vertex at the end of the leg is
in $X$, and it must equal $x$ because moving farther into $X$ would not allow
having an offshoot as large as $Q_1$.

We have noted that every member of $\cC_1$ fits in a card.  A largest member
of $\cC_1$ shows us $Q_{d-1}$ or another offshoot of the same size as its
smallest offshoot.  We know the multiplicity of each such csc, since we know
$T_1$ and $T_2$ and hence the number of peripheral vertices in each.  Thus we
also know all the members of $\cC_1$ in which the smallest offshoot is obtained
by deleting vertices of $Q_{d-1}$ (or another offshoot of the same size).  After
excluding those members, a largest remaining member of $\cC_1$ shows us the
next largest offshoot at $x$.  Continuing the exclusion argument allows us to
find all the offshoots at $x$ (there may be more than $d$).

\smallskip
{\bf Case 2:} {\it $d=3$.}
In $C'$ we see $Q_1$ and $Q_2$; these are offshoots in $T$ seen in full in
$C'$.  Since $Q_1\cup Q_2\cup Q_3$ has at least $2\ell+2$ vertices and $Q_1$ is
no smaller than $Q_2$, together $Q_1$ and $Q_3$ have at least $\ell+1$
vertices.  By the minimality of $Y$, the extension $Q^*_3$ of $Q_3$ in $T$ is
no bigger than $Q_2$.  Recall that every member of $\cC_1$ fits in a card.
Since we know $Q_2$ and the number of peripheral vertices in $T_2$
(and in $T_1$ if $j=(r+1)/2$), we can exclude from $\cC_1$ all the members
whose second offshoot arises by deleting vertices of $Q_2$ (with multiplicity).
A largest remaining member shows us the full $Q^*_3$.  Continuing the exclusion
argument allows us to find the remaining offshoots at $x$.

\smallskip
{\bf Case 3:} {\it $d=2$.}
We know $Q_1$ as a full offshoot in $T$, but we do not see in full the
next-largest offshoot from $x$ in $T$ (unless it has the same size as $Q_1$ and
we have several choices for $C'$).  Let $Q_2^*$ be this offshoot in $T$.
A card $C_2\in\cC'$ having two offshoots
$Y_1$ and $Y_2$ as equal in size as possible shows us $Q_2^*$ as $Y_2$ if the
sizes of the two offshoots differ by more than $1$, after which we can use
exclusion to find any smaller offshoots.

Hence we may assume that the sizes of $Y_1$ and $Y_2$ in $C_2$ differ by at
most $1$.  In particular, $\C{V(Y_2)}=\FL{(n-\ell-q-1)/2}$.  With $q\ge r-j+1$
and $r\le n-3\ell-1$, we have $n-q\ge n-r+j-1\ge 3\ell+j$.  Hence
$$\C{V(Y_2)}=\FL{(n-\ell-q-1)/2}\ge\FL{(2\ell+j-1)/2}>\ell,$$
where the last inequality uses $j\ge2$.
Offshoots from $L$ at $x$ in $T$ that do not contain $Y_1$ or $Y_2$ are 
completely omitted from $C_2$.  Hence in total they have at most $\ell$
vertices and are smaller than $Y_2$.

Because $X^*$ is an offshoot from a vertex in an offshoot from $z$, it
has length at most $j-2$.  Since in Step 1 we showed $j-1\le \ell$,
the length of $X^*$ is less than $\ell$.  Since $Y_1$ and $Y_2$ each have more
than $\ell$ vertices and $X^*$ has length less than $\ell$, we can choose $C_2$
so that $Y_1$ and $Y_2$ each have the largest possible length, and it will be
the length of the offshoots from $x$ containing them in $T$.  We determined
this maximum length $t'$ from $C'$ in Step 3.  We see in $C_2$ how many members
of $\{Y_1,Y_2\}$ have length $t'$.  Since we know $Q_1$, we know whether it has
length $t'$.  By knowing how many members of $\{Y_1,Y_2\}$ have length $t'$,
since they have the same length as the offshoots $Q_1$ and $Q_2^*$ in $T$ that
contain them, we know also whether $Q_2^*$ has length $t'$.

If $Q_2^*$ has length $t'$, then let $\cC_3$ be the family of subtrees of $T$
having a leg of length $q$ at the end of which is a single offshoot that
has length $t'$ and has at least $\C{Y_2}$ vertices.  In every member of
$\cC_3$, the vertex at distance $q$ from the leaf of the leg is $x$.  If also
$Q_1$ has length $t'$, then in some members of $\cC_3$ the offshoot arises by
deleting vertices from $Q_1$.  Since we know $Q_1$ and the number of peripheral
vertices outside $Q_1$, we can exclude any such
members, and a largest remaining member of $\cC_3$ shows us $Q_2^*$.

If $Q_2^*$ has length less than $t'$, then let $\cC'_3$ be the family of 
subtrees of $T$ having a leg of length $q$ at the end of which are two 
offshoots: one being a path of length $t'$ and one having $\C{V(Y_2)}$
vertices.  Because $\C{V(Y_2)}>\ell$, the branch vertex must be $x$.
Because we know $Q_1$, the number of paths of length $q+t'$, and the number
of peripheral vertices in $T_2$ (and $T_1$ if $j=(r+1)/2$), we can exclude the
members of $\cC'_3$ in which the offshoot with $\C{V(Y_2)}$ vertices comes from
$Q_1$.
A largest remaining member of $\cC'_3$ shows us $Q_2^*$.

Now that we know $Q_1$ and $Q_2^*$, we obtain the remaining offshoots at 
$x$ to reconstruct $T$.  If $\C{V(Q_2)}\ge \ell$, then let $\cC_4$ be the
family of subtrees of $T$ having a leg of length $q$ at the end of which
are two offshoots, one being $Q_1$.  The presence of $Q_1$ fixes the branch
vertex as $x$.  Since we know $Q_2^*$, and all the smaller offshoots at $x$
together have at most $\ell$ vertices, successively excluding the members of
$\cC_4$ in which the second offshoot comes from a known offshoot at $x$
allows us to find all the offshoots at $x$.

If $\C{V(Q_2)}<\ell$, then there may be multiple offshoots at $x$ that are
bigger then the $Q_2$ we see in $C'$.  In this case let $\cC'_4$ be the family
of subtrees of $T$ having a leg of length $q$ at the end of which are two
offshoots: one showing exactly $\ell+1$ vertices from $Q_1$ including a
path of length $q_1$, where $q_1$ is the length of $Q_1$.  This offshoot
can only be from $Q_1$, and the branch vertex is fixed at $x$.  Having
$\C{V(Q_2)}$ or fewer vertices in the second offshoot still fits in a card.
Since we know $Q_2^*$, we can therefore employ an exclusion argument using
$\cC'_4$ to find the remaining offshoots at $x$.
\end{proof}

We maintain the definitions and notation from Definition~\ref{longdef}.
By Corollary~\ref{s3spider}, we can reconstruct $T$ if some sparse card is a
$3$-legged spider.  Hence when $\Sp{(r-1)/2}$ is contained in $T$, we may
assume that it fits in a card, and from the deck we can recognize its presence.

\begin{lemma}\label{C1fixz}
Let $T$ be a tree with $n\ge 6\ell+\c$ and $r\le n-3\ell-1$ that has a sparse
card, with optimal sparse cards having primary vertex $z$ and primary value $j$,
and let $t$ be the maximum length of offshoots from $P$ in an optimal sparse
card.

(a) $T$ has a long subtree $C_0$ with $r$-path $P'$ having vertices $\VEC u1r$
such that offshoots from $P'$ at $u_j$ in $C_0$ together have at least $\ell+1$
vertices and offshoots from $P'$ in $C_0$ at other vertices have at most $\ell$
vertices.  Furthermore, any such subtree $C_0$ satisfies $u_j=z$.

(b) If a subtree $C_0'$ with the properties of $C_0$ has exactly 
$\ell+1$ vertices in offshoots from $P'$ at $u_j$, then $C_0'$ fits in a card.
Furthermore, a largest such $C_0'$ shows in full the component of $T-z$
containing $u_{j+1}$.  If $t<j-1$, then it also shows the component containing
$u_{j-1}$.

(c) If $T$ has a sparse card with degree $3$, then $T$ has a subtree $C_1$ as
in (a) such that also the first branch vertex on $P'$ is $u_j$ and one of the
offshoots from $P'$ at $u_j$ has exactly $\ell+1$ vertices (others have at most
$\ell$ vertices.) Furthermore, if $j<(r+1)/2$ or $\Sp{(r-1)/2}\nosub T$, then
the component of $T-z$ containing $u_{j+1}$ is seen in full in a largest such
$C_1$.
\end{lemma}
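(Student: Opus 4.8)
The plan is to treat the three parts in order, using throughout the two conclusions of Lemma~\ref{optcard}: the primary vertex $z$ lies on every $r$-path of $T$, and all optimal sparse cards have primary vertex $z$. Since $r\le n-3\ell-1$, a sparse card omits only $\ell$ vertices, so its primary vertex carries at least $n-\ell-r\ge2\ell+1$ vertices in offshoots from the $r$-path; together with Lemma~\ref{optcard} this forces \emph{every} sparse card to have primary vertex $z$, and forces $z$ to have at least $2\ell+1$ offshoot vertices in $T$. For part~(a), an optimal sparse card $C$ with $r$-path $P$ and primary vertex $z=v_j$ is itself a subtree of the stated form: it contains $P$, the offshoots at $z$ total $n-\ell-r\ge\ell+1$ vertices, and no other vertex of $P$ has an offshoot in $C$. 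For the ``furthermore'', let $C_0$ be any subtree with $r$-path $P'=\la\VEC u1r\ra$ whose offshoots at $u_j$ total at least $\ell+1$ vertices while the offshoots at the remaining vertices of $P'$ total at most $\ell$. Since $z\in V(P')$ and $z$ sits at distances $j-1$ and $r-j$ from the two ends of any $r$-path, $z\in\{u_j,u_{r+1-j}\}$; if $u_j\ne z$ then $z=u_{r+1-j}$ with $j<(r+1)/2$, and I would adapt the rerouting argument from the proof of Lemma~\ref{optcard}, using the $\ge\ell+1$ offshoot vertices at $u_j$ together with the $\ge2\ell+1$ offshoot vertices at $z$ to build a sparse card whose primary vertex is at least as close to the center of $T$ as $z$ is yet is not $z$, contradicting Lemma~\ref{optcard}. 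Hence $u_j=z$.

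For part~(b), writing $j$ for the primary value, if $C_0'$ has exactly $\ell+1$ offshoot vertices at $u_j=z$ and at most $\ell$ offshoot vertices in total at the other vertices of $P'$, then $\C{V(C_0')}\le r+(\ell+1)+\ell\le n-\ell$, so $C_0'$ is a connected subcard and appears in a card. For the displayed-component claim, note that since $j\le(r+1)/2$ the side of $z$ on any $r$-path that contains $u_{j+1}$ is the side containing the center of $P$; hence the component $T_2$ of $T-z$ containing $u_{j+1}$ is the same for every $r$-path, and has some fixed number $w_2$ of vertices off the $r$-path. Taking for $P'$ the $r$-path of an optimal sparse card, the offshoots at $z$ number at least $n-\ell-r$, so from the partition $n=1+\C{V(W^*)}+\C{V(T_1)}+\C{V(T_2)}$ one gets $w_1+w_2\le\ell$ and in particular $w_2\le\ell$, where $T_1$ and $w_1$ are defined analogously on the other side. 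Consequently any member of the family that omits a vertex of $T_2$ can be modified at no cost in size (trading offshoots of $T_1$ for offshoots of $T_2$ if needed, then adjoining the rest of $T_2$) so as to contain all of $T_2$ without violating the constraints, so a largest member of the family shows $T_2$ in full. When $t<j-1$, every $r$-path reaches the near end of $P$ through a component of $T-z$ of length $\ge j-1$ rather than an offshoot at $z$, which makes $w_1$ small by the same count and lets a largest member also display $T_1$.

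For part~(c), by Lemma~\ref{sparse3} we may take $C$ to be an optimal sparse card of degree~$3$, so its single offshoot $W$ at $z$ is one rooted tree with $\C{V(W)}\ge2\ell+1$. Trimming $W$ to an $(\ell+1)$-vertex rooted subtree and attaching it to $P$ at $z$ yields a subtree $C_1$ in which the first branch vertex along the $r$-path is $z=u_j$, there is one offshoot of exactly $\ell+1$ vertices at $u_j$, and $\C{V(C_1)}\le r+\ell+1\le n-\ell$, so $C_1$ appears in a card; this gives the existence statement. For the ``furthermore'', the bound $w_2\le\ell$ from part~(b) again lets us enlarge $C_1$ to contain all of $T_2$ while keeping the first branch vertex at $u_j$ and the distinguished offshoot at $u_j$ at exactly $\ell+1$ vertices (here, since $u_1,\dots,u_{j-1}$ carry no offshoots in members of the family, the only ``other'' offshoots are those on the $T_2$-side, so no trade-off is needed); hence a largest member shows $T_2$ in full. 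When $j<(r+1)/2$ the two sides of $z$ on the $r$-path have different lengths, which identifies the component containing $u_{j+1}$ unambiguously; when $j=(r+1)/2$ the vertex $z$ is the center of $P$ and the two sides have the same length $j-1$, so we invoke $\Sp{(r-1)/2}\nosub T$ to conclude that at most two components of $T-z$ reach distance $j-1$, which removes the ambiguity.

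The step I expect to be the main obstacle is the uniqueness assertion $u_j=z$ in part~(a): one must adapt the rerouting construction of Lemma~\ref{optcard} to a subtree that is only guaranteed $\ell+1$ (rather than $n-\ell-r$) offshoot vertices at its heavy branch vertex, which is delicate. The other place needing care is verifying that a \emph{largest} subtree in each family in (b) and (c) really does use an $r$-path along which $T_2$ (and, when $t<j-1$, also $T_1$) carries few enough offshoot vertices to be displayed in full, and handling the symmetric case $j=(r+1)/2$ via $\Sp{(r-1)/2}\nosub T$.
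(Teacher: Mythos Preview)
Your existence arguments and your ``fits in a card'' count in part~(b) --- namely $|V(C_0')|\le r+(\ell+1)+\ell\le n-\ell$ --- are fine; indeed the direct vertex count is cleaner than the paper's route through $W^*$.

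The genuine gap is in the uniqueness step of part~(a). You correctly reduce to the case $z=u_{r+1-j}$ with $j<(r+1)/2$, but your plan to derive a contradiction by ``adapting the rerouting argument of Lemma~\ref{optcard}'' to build a sparse card with primary vertex $u_j$ does not go through. A sparse card needs $n-\ell-r\ge 2\ell+1$ offshoot vertices at its primary vertex, whereas the hypothesis on $C_0$ only guarantees $\ge\ell+1$ at $u_j$; moreover, the $\ge 2\ell+1$ offshoot vertices you invoke at $z$ live in $W^*$, which lies in a different component of $T-z$ from $u_j$ and so cannot be re-attached as offshoots at $u_j$ along any $r$-path. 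You even flag this as the main obstacle, but the proposed tool is the wrong one.

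The paper instead gets the contradiction by a direct size count, without constructing any sparse card. With $z=u_{r+1-j}$, the entire arm $u_1,\ldots,u_{r-j}$ of $P'$, together with the offshoots $W'$ at $u_j$ (which are disjoint from $P'$ and do not cross $z$), all lie in $T_2$; this forces $|V(T_2)|\ge (r-j)+(\ell+1)$. But from the optimal sparse card $C$ one has $|V(T_2)|\le (r-j)+\ell$, a contradiction. (Equivalently: if $P'\ne P$ reversed, then $P'$ diverges from $P$ at some $w\ne z$ beyond the center, and then $W'$ sits inside an offshoot from $P$ at $w$, which has at most $\ell$ vertices --- again impossible.) This is the missing idea you need in place of the rerouting.

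For parts~(b) and~(c), your exchange arguments are on the right track, but note that your claim ``the component of $T-z$ containing $u_{j+1}$ is the same for every $r$-path'' uses $j<(r+1)/2$; when $j=(r+1)/2$ one must argue separately (this is exactly where the hypothesis $\Sp{(r-1)/2}\nosub T$ enters in~(c), and where the paper's proof of~(b) tacitly restricts attention).
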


\begin{proof}
Let $W$ be the union of the offshoots from $P$ in an optimal sparse card $C$.
If $T$ has a sparse card of degree $3$, then we may assume that $C$ has degree
$3$ and $W$ is a single offshoot, by Lemma~\ref{sparse3}.  As in
Definition~\ref{longdef}, we have $z=v_j$ with $P=\la\VEC v1r\ra$.

Note first that subtrees with the properties required for $C_0$ and $C_1$
exist.  Simply include $P$ and exactly $\ell+1$ vertices from $W$ (offshoots
from $P$ at $\VEC v{j+1}r$ may be included but are not needed).  Here $z$
serves as $u_j$ and is the first branch vertex on $P$, which serves as $P'$.

\smallskip
{\it Proof of (a).}
Consider such a subtree $C_0$, and let $W'$ be the subgraph of $C_0$ induced
by the vertices in offshoot(s) from $P'$ at $u_j$.
If $u_j\notin V(P)$, then to avoid having a path longer than $P$, the
path from $u_j$ to $V(P)$ must start along $u_ju_{j+1}$ and reach $P$ at a
vertex other than $z$.  This puts $W'$ into an offshoot from $P$ not at $z$.
Such offshoots have at most $\ell$ vertices, but $W'$ has more than $\ell$
vertices; we conclude $u_j\in V(P)$.  If $u_j\in V(P)-\{z,v_{r+1-j}\}$, then
we can choose paths from $P$ and $P'$ ending at $u_j$ whose union is a path
longer than $P$; that is, when $u_j\in\{\VEC v{j+1}{r-j}\}$ we replace
$\la \VEC u1j\ra$ in $P'$ with part of $P$ that is longer, and when
$u_j\notin\{\VEC vj{r+1-j}\}$ we replace the shorter portion of $P$ with part
of $P'$ that is longer.  Hence $u_j\in\{z,v_{r+1-j}\}$.  If $u_j=v_{r+1-j}$
with $j\ne(r+1)/2$, then $W'$ plus at least $r-j$ vertices of $P'$ lie in
$T_2$, contradicting that $T_2$ has at most $\ell+r-j$ vertices.  Hence $u_j=z$.

\smallskip
{\it Proof of (b).}
By part (a), $u_j=z$ in $C_0'$.  Since $t<j-1$ and $u_j=z$, the path $P'$
cannot enter $W$.  Since $T$ has at most $\ell$ vertices outside $C\cup W^*$,
the offshoots $W'$ from $P'$ at $u_j$ in $C_0'$ must lie in $W^*$.  Since
$r\le n-3\ell-1$, the offshoot $W^*$ has at least $2\ell+1$ vertices, so
$C_0'$ omits at least $\ell$ vertices from $W^*$ and fits in a card.  Outside
$W'$, a largest such $C_0'$ shows us everything in $T$ except $W^*$ and other
offshoots at $z$.

\smallskip
{\it Proof of (c).}
Recall that we use $T_2$ to denote the component of $T-z$ containing $v_{j+1}$.
Consider $C_1$, in which $u_j=z$ by part (a).  Let $W'$ denote the offshoot
from $P'$ at $z$ in $C_1$ that has $\ell+1$ vertices.  In the case $j<(r+1)/2$,
any path of length $r-j$ extending from $z$ lies in $T_2$, so
$\VEC u{j+1}r\in V(T_2)$.  Because $u_j$ is the first branch vertex on $P'$,
the subset of $V(C_1)$ consisting of $\VEC u1{j-1}$ and $V(W')$ contains
exactly $j+\ell$ vertices from $P'\cup W'$.  Since no path from $z$ outside
$T_2$ has length more than $j-1$, and the only offshoot from $P$ at $z$ in
$T$ with more than $\ell$ vertices is $W^*$, we conclude that $C_1$ contains at
most $j+\ell$ vertices from $T_1\cup W^*$.  However,
$\C{V(W^*)}\ge \C{V(W)}\ge 2\ell+1$, so $T_1\cup W^*$ has at least $j+2\ell$
vertices.  Hence $C_1$ omits at least $\ell$ vertices and fits in a card, we
see it as a csc, and a largest candidate for $C_1$ shows us all of $T_2$.

Similarly, if $j=(r+1)/2$ and $\Sp{(r-1)/2}\nosub T$, then  $W^*$ does not have
a path of length $(r-1)/2$ from $z$, and so $P'$ must be contained in
$T_1\cup T_2$.  Now $W^*$ has at least $2\ell+1$ vertices and at most $\ell+1$
of them appear in $C_1$, so again a largest such csc shows us all of $T_2$,
where we have indexed $T_1$ and $T_2$ so that $\C{V(T_2)}\ge\C{V(T_1)}$.
If they have the same size, then there are two such largest cscs, and we see
one of $T_1$ and $T_2$ in each of them.
\end{proof}

When $r$ is odd, $T$ has a unique central vertex.  In this case, we call the
subtrees obtained by deleting the central vertex the {\it pieces} of $T$.
We previously used this term in~\cite{KNWZ3}.

\begin{lemma}\label{nolong}
Suppose $n\ge6\ell+\c$ and $r\le n-3\ell-1$.  If $T$ has a sparse card of degree
$3$, and $T$ has no long-legged card, then $T$ is $\ell$-reconstructible.
\end{lemma}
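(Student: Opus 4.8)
The plan is to reconstruct $T$ explicitly from $\cD$, building on the machinery of Lemma~\ref{C1fixz}.

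\emph{Setup and recognition.} Since $r\le n-3\ell-1<n-\ell$, the value $r$ is visible as the maximum path length in a card, and seeing every card we recognize that $T$ has a sparse card, a sparse card of degree $3$, and no long-legged card; by Lemmas~\ref{optcard} and~\ref{sparse3} fix an optimal sparse card $C$ of degree $3$ with primary vertex $z$, primary value $j$, and $r$-path $P=\la\VEC v1r\ra$, $z=v_j$. Let $W$ be the single offshoot from $P$ at $z$ in $C$ and $W^*$ the component of $T-z$ containing it. Since $r\le n-3\ell-1$ we have $\C{V(W^*)}\ge\C{V(W)}=n-\ell-r\ge2\ell+1$; since $P$ is longest, $W^*$ has length at most $j-1$; and since all offshoots from $P$ at $v_1,\dots,v_{j-1}$ are omitted from $C$, the component $T_1$ of $T-z$ containing $v_{j-1}$ has at most $j-1+\ell$ vertices. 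Finally, if $\Sp{(r-1)/2}\esub T$, then by the remark preceding Lemma~\ref{C1fixz} we may assume it appears in a card, whence Corollary~\ref{s3spider} reconstructs $T$; so we assume $\Sp{(r-1)/2}\nosub T$.

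\emph{The side toward the centre.} By Lemma~\ref{C1fixz}(c) the deck yields a largest long subtree $C_1$ with $r$-path $\la\VEC u1r\ra$ on which $u_j=z$ is the first branch vertex and some offshoot at $u_j$ has exactly $\ell+1$ vertices; since $j<(r+1)/2$ or $\Sp{(r-1)/2}\nosub T$, this $C_1$ exhibits in full the component $T_2$ of $T-z$ containing $u_{j+1}$ (the larger piece when $j=(r+1)/2$). Thus we know $z$, $j$, and $T_2$, hence the number $n-1-\C{V(T_2)}$ of vertices available for $T_1$ together with the offshoots from $P$ at $z$; moreover, since $r<n-\ell$ we know the number of $r$-paths in $T$, and dividing by the number of peripheral vertices visible in $T_2$ we know how many vertices lying at distance $j-1$ from $z$ belong to components of $T-z$ other than $T_2$.

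\emph{Peeling off $T_1$ and the offshoots at $z$.} Let $t\le j-1$ be the maximum length of an offshoot from $P$ in an optimal sparse card. If $t<j-1$, then by Lemma~\ref{C1fixz}(b) a largest valid candidate also shows $T_1$ in full, so we know all of $T$ except the offshoots at $z$; starting from the known subtree $W$ (which is $W^*$ with at most $\ell$ vertices deleted), the Exclusion Argument (Lemma~\ref{count}) applied at $z$ recovers the complete list of $z$-offshoots, finishing the reconstruction. If $t=j-1$, then $W^*$ has length exactly $j-1$, which forces $j<(r+1)/2$ (otherwise $T_1$, $T_2$, and $W^*$ all reach distance $(r-1)/2$ from $z$, giving $\Sp{(r-1)/2}\esub T$). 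Now $T_1$ and $W^*$ have the same length $j-1$, $r$-paths may bend from $z$ into $W^*$ instead of into $T_1$, and a long subtree meant to expose $T_1$ may instead expose $W^*$. To separate them I would use long subtrees whose $j$th vertex has degree $2$: since $\C{V(W^*)}\ge2\ell+1$, any such subtree omitting at least $\ell$ vertices of $W^*$ fits in a card, so using the known portion $W$ of $W^*$ and the known count of vertices at distance $j-1$ from $z$, an exclusion argument recovers $T_1$ — comparing $\C{V(T_1)}$ with $\C{V(W^*)}$ settles most subcases, and when these coincide one compares the two offshoots level by level as in Lemma~\ref{isom}, also using $\C{V(T_1)}\le j-1+\ell$. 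Once $T_1$ is known, the Exclusion Argument at $z$ again delivers $W^*$ and any remaining $z$-offshoots.

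The step I expect to be the main obstacle is the case $t=j-1$: disentangling the short left arm $T_1$ from the bushy offshoot $W^*$ of the same length, and within it the subcase $\C{V(T_1)}=\C{V(W^*)}$ and the possible presence of extra offshoots at $z$. This is precisely where ``no long-legged card'' does the work: it forces every leg of $T$ to have length at most $r-j$, so $W^*$ is bushy and cannot conceal a long pendant path, which together with $\C{V(W^*)}\ge2\ell+1$ guarantees that the auxiliary subtrees used in the exclusion arguments actually occur as cards.
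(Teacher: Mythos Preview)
Your outline has a real error and two substantial gaps.

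\textbf{The $\Sp{(r-1)/2}$ case is not disposed of.}  You write that if $\Sp{(r-1)/2}\esub T$ then ``it appears in a card, whence Corollary~\ref{s3spider} reconstructs $T$''.  But Corollary~\ref{s3spider} requires a sparse card that \emph{is} a $3$-legged spider, not merely a card that \emph{contains} one.  The remark before Lemma~\ref{C1fixz} only says that either the spider fills a whole card (and then Corollary~\ref{s3spider} applies) or it fits properly inside a card, in which case we can recognise its presence but must still reconstruct $T$.  The paper's Case~2 ($t=j-1=(r-1)/2$) is devoted entirely to this situation and is not short: it needs the card $\hC$ with centre of degree~$2$, a split into the subcases ``$\hC$ is not a card'' versus ``$\hC$ is a card'', and in the latter subcase a size estimate $r\le 2\ell+1$ before an auxiliary subtree $C_2$ can be shown to fit in a card.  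Your argument skips this case entirely.

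\textbf{The Exclusion Argument at $z$ is not set up.}  In your $t<j-1$ paragraph you say that knowing $T_1$, $T_2$, and the partial offshoot $W$, ``the Exclusion Argument (Lemma~\ref{count}) applied at $z$ recovers the complete list of $z$-offshoots''.  Lemma~\ref{count} needs two ingredients: the \emph{largest} $z$-offshoots known exactly, and the multiset $\cal M$ of all rooted subtrees arising inside individual $z$-offshoots.  You have neither: $W$ is not $W^*$ (and $W^*$ may well be larger than $T_1$), and you have not explained how to read $\cal M$ off the deck.  The paper does not use Lemma~\ref{count} here at all; instead it uses the absence of a long-legged card to get the concrete bound $\C{V(W^*)}-\C{V(W)}<j-1$, then looks at cards in a family $\cC_2$ (an $(r-i+1)$-vertex path with one offshoot of the right size at $u_j$), proves directly that $u_j=z$ in every such card, and finally counts copies of the spider $S_{j-i,\,j-1,\,r-j}$ to compute how many vertices of $W^*$ lie at distance $j-i$ from $z$, which is exactly what is needed to exclude the bad members of $\cC_2$.

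\textbf{The $t=j-1$ case is only a sketch.}  You acknowledge this yourself.  The paper splits it further by comparing $t$ with $\ell$: when $t\ge\ell+1$ (Case~3) one can use the family $\cC_3$ of long subtrees with degree~$2$ at positions $j$ and $r+1-j$, and the size bound $j-1>\ell$ forces any such subtree to miss all of $T_1$ or all of $W^*$, so it fits in a card; when $t\le\ell$ (Case~4) the absence of a long-legged card is used \emph{directly} to conclude that a largest long subtree with a leg of length exceeding $r-j$ fits in a card and therefore exhibits $W^*$ in full.  Your remark that ``no long-legged card \dots\ forces every leg of $T$ to have length at most $r-j$'' captures the spirit of Case~4 but not the actual mechanism, and you have no analogue of Case~3.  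The appeal to Lemma~\ref{isom} (``compare level by level'') is not relevant here: that lemma concerns a very specific configuration of two $(k-1)$-evines in the large-diameter regime and does not transfer to distinguishing $T_1$ from $W^*$.
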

\begin{proof}
With notation as in Definition~\ref{longdef}, by Lemma~\ref{sparse3} we
can choose an optimal sparse card $C$ of degree $3$ to maximize the length
of $W$, which we call $t$.  Note that $t\le j-1$, since $P$ is a longest path
in $T$.  By Corollary~\ref{s3spider}, we may assume that $C$ is not a spider.
Hence $W$ is not a path.  This implies that $t$ is also the length of the
offshoot $W^*$ in $T$, since otherwise vertices from a longer path in $W^*$
could be added while deleting leaves of $W$ not on that path.  Since
$r\le n-3\ell-1$, there are at least $2\ell+1$ vertices in $W$.  We consider
several cases in terms of $t$, $j$, and $\ell$.  We know these values, so we
can recognize the cases.

\medskip
{\bf Case 1:} {\it $t<j-1$.}
Since the offshoot $W$ in $C$ has $n-\ell-r$ vertices, we can find $C_0'$ as in
Lemma~\ref{C1fixz}(b) in which one offshoot from $u_j$ has exactly $\ell+1$
vertices.  By Lemma~\ref{C1fixz}(b), a largest such $C_0'$ shows us $T_2$ and
$T_1$.  In fact, since the $\ell+1$ vertices in the offshoot from $u_j$ omit
at least $\ell$ vertices from $W$, we can also allow $C_0'$ to have other
offshoots at $z$ and make the same argument.  Now a largest such $C_0'$ shows
us everything in $T$ except $W^*$.

Since we now know all of $T$ except $W^*$, we know $\C{V(W^*)}$ and also the
common length of $W$ and $W^*$, which we are calling $t$.  If $T$ has a
long-legged card, then we see it and can apply Lemma~\ref{longleg} to
reconstruct $T$.  Hence we may assume that $T$ has no long-legged card.
Now $\C{V(W^*)}-\C{V(W)}<j-1$; otherwise, replacing
$\VEC u1{j-1}$ with vertices of $W^*-V(W)$ would produce a long-legged card.

Let $i=1+\C{V(W^*)}-\C{V(W)}$; thus $i\le j-1$.  Let $\cC_2$ be the family of
cards consisting of a path $P''$ with vertices $\VEC uir$ and an offshoot
$W''$ from $P''$ at $u_j$ that has $\C{V(W^*)}$ vertices and length $t$.  It
suffices to find a member of $\cC_2$ in which $u_j=z$ and $W''$ is $W^*$.

We prove first that $u_j=z$ for any card in $\cC_2$ (the argument is like
that in Lemma~\ref{C1fixz}(a)).  If $u_j\notin V(P)$, then the path from $u_j$
to $V(P)$ starts with $u_ju_{j+1}$ and reaches $P$ in $T_2$, since $T$
has no path longer than $P$.  That puts $W''$ into an offshoot from $T_2$,
contradicting that there are at most $\ell$ vertices in offshoots from $T_2$.
Hence $u_j\in V(P)$.  If $u_j\in V(P)-\{z\}$, then since $W''$ is too big to be
in an offshoot from $P$, the path $P$ must enter $W''$.  Since $t<j-1$ and
$T_2$ has at most $\ell$ vertices in offshoots from $P$, the part of $P$ in
$W''$ must come from $T_1$.  Since any vertex of $W''$ starts a path of length
more than $r-j$, we have $z\notin W''$.  Hence $z$ has degree $2$ in the card
and $W''\esub T_1$.  Since $T_1$ has at most $j-1+\ell$ vertices and the card
has only $\la\VEC u{j+1}r\ra$ from $T_2$, the card has fewer than $r+\ell$
vertices, a contradiction.

Now consider the relationship between $j-i$ and $t$.  If $j-i>t$, then the path
$\la \VEC uij\ra$ in a card in $\cC_2$ cannot lie in $W^*$, since $u_j=z$ and
$W^*$ has length only $t$.  Hence this path must come from $T_1$, and any
card in $\cC_2$ shows us $W^*$ to complete the reconstruction of $T$.

Hence we may assume $j-i\le t$.  We have proved $u_j=z$ for every card in
$\cC_2$.  Some cards in $\cC_2$ may take $\la \VEC ui{j-1}\ra$ from $W^*$ and
$W''$ from $T_1$.  After excluding these, the remaining cards in $\cC_2$ take
$\la \VEC ui{j-1}\ra$ from $T_1$ and show us $W^*$ as $W''$.

Knowing $T_1$, we know all subtrees of $T_1$ having $\C{W^*}$ vertices, with
multiplicity.  Knowing $T_2$, we know the number of peripheral vertices in
$T_2$.  Hence we will know all the cards in $\cC_2$ to be excluded if we can
determine the number of vertices in $W^*$ at distance $j-i$ from $z$.

Since $t<j-1$ and $T_1$ has length $j-1$, a copy of the spider
$S_{j-i,j-1,r-j}$ whose branch vertex is $z$ must have its leaves in
$W^*$, $T_1$, and $T_2$, respectively (where we are free to make that choice
for each copy of $S_{j-i,j-1,r-j}$ when $j=(r+1)/2$).  Hence by counting
these spiders we can compute the number of vertices in $W^*$ at distance
$j-i$ from $z$ if we can exclude from the count of these spiders all instances
where the branch vertex is not $z$.

Let $S$ be a copy of $S_{j-i,j-1,r-j}$ in $T$ with $r$-path $P'$ and branch
vertex $z'$ other than $z$.  Having $z'$ in $T_1$ or $W^*$ would create a path
longer than $P$, as would having $z'$ in $T_2$ at a distance from $z$ other
than $r-2j+1$.  This implies $z'=z$ if $j=(r+1)/2$, so we may assume
$j<(r+1)/2$.  Now the leaf of the long leg of $S$ must be a peripheral
vertex in $T_1$, since $t<j-1$.  Since we know both $T_1$ and $T_2$, and
$S$ is contained in their union with $z$, we can count all such $S$.
As noted earlier, from the remaining copies of $S_{j-i,j-1,r-j}$ we can count
the vertices in $W^*$ at distance $j-i$ from $z$, which allows us to exclude
the bad members of $\cC_2$ and complete the reconstruction.

\medskip
{\bf Case 2:} {\it $t=j-1=(r-1)/2$.}
This case requires $\Sp{(r-1)/2}\esub T$.
All $r$-paths are longest paths in $T$ and have the same center, which is
the primary vertex $z$ in $C$.  Among largest long cscs in which the center $z$
has degree $2$, choose $\hC$ to be one having a largest piece (a component of
$\hC-z$).  Since we are given that $T$ has no long-legged card, this largest
piece of $\hC$ is a piece of $T$ seen in full (if it does not appear in full
and $\hC$ is not a card, then we can grow that piece; if $\hC$ is a card, then
the other piece in $\hC$ has a leaf available to shift to it).  If the two
pieces of $\hC$ have the same size, then both are seen in full.

Since $W$ has $n-r-\ell$ vertices, and we can make a candidate for $\hC$ by
appending $z$ and a path of length $(r-1)/2$ to $W$, the large piece in $\hC$
has at least $n-r-\ell$ vertices.  Now we can make an optimal sparse card by
reducing the large piece in $\hC$ to $n-r-\ell$ vertices and attaching what
remains to the center of an $r$-path, since $T_1$, $T_2$, and $W^*$ all contain
paths of length $(r-1)/2$ from $z$.  Therefore, we may assume that $C$
was chosen so that the largest piece in $\hC$ is in fact $W^*$.

\smallskip
{\bf Subcase 2a:} {\it $\hC$ has fewer than $n-\ell$ vertices.}
Here both pieces of $\hC$ are largest pieces of length $(r-1)/2$ in $T$, seen
in full; we may assume that $C$ was chosen so that that $W^*$ is one of them
and $T_2$ is the other.  Hence we now know $W^*$ and $T_2$.  We also know how
many vertices lie outside $W^*\cup P$; it is at most $\ell$.  These vertices
are distributed among $T_1$, possibly other pieces of length $(r-1)/2$, and
possibly shorter offshots at $z$.

For each isomophism class of pieces of length $(r-1)/2$ having size
$\C{V(W^*)}$, if there are $k$ such pieces in $T$ then there are $\CH k2$
choices for $\hC$ in which this isomorphism class provides both pieces.
Hence we know the multiplicities of all the largest pieces of length $(r-1)/2$.
Let $\cC$ be the family of all long cscs with degree $2$ at $z$ in which
one of the two pieces is $W^*$ and the other is smaller.  Because we know the
largest pieces with multiplicity, we can exclude all members of $\cC$ in which
the smaller piece is obtained by deleting vertices from one of the largest
pieces.  A largest remaining member $\cC$ shows us a smaller piece of length
$(r-1)/2$, if it exists.  Continuing the exclusion argument allows us to find
all pieces of length $(r-1)/2$.

Once we know all the pieces of length $(r-1)/2$, we know the numbers of
peripheral vertices in each, and we know the number of vertices remaining
in the shorter pieces; let this be $s$.  Since the vertices in the shorter
pieces are absent from the card $C$, we have $s\le \ell$.  Let $\cC'$ be
the family of subtrees consisting of an $r$-path plus at most $s$ vertices in a
single offshoot from the center; every such subtree is smaller than $C$ and
fits in a card.  After excluding all members of $\cC'$ in which the offshoot
arises by deleting vertices from a piece of length $(r-1)/2$, a largest
remaining member shows us one of the shorter offshoots at $z$.  Continuing
the exclusion argument on $\cC'$ shows us all the remaining offshoots.

\smallskip
{\bf Subcase 2b:} {\it $\hC$ is a card.}  Since $\Sp{(r-1)/2}\esub T$, with
center at $z$, the card $\hC$ omits at least $(r-1)/2$ vertices.  Hence
$\ell\ge(r-1)/2$, so $r\le 2\ell+1$.  Recall that $\C{V(W)}\ge 2\ell+1$.

With $(r-1)/2\le \ell-1$ and $\C{V(W)}\ge 2\ell+1$, in $W$ there are at least
$\ell+1$ vertices outside a given path of length $(r-1)/2$ from $z$.  Let $C_2$
be a long subtree that is largest among those that contain $\Sp{(r-1)/2}$
and contain offshoots with exactly $\ell+1$ vertices from one of the legs
of $\Sp{(r-1)/2}$ (the other legs and center may also have offshoots).  Since
$C$ is a card, we know that every component of $T-z$ other than $W^*$ has at
most $(r-1)/2+\ell$ vertices, while $W^*$ has more than $(r-1)/2+\ell$.  Hence
in $C_2$ the leg of the spider having $\ell+1$ vertices in offshoots must come
from $W^*$.

The subtree $C_2$ contains everything in $T$ that is outside $W^*$.
That includes $P$ and everything outside $C\cup W^*$.  Since $C$ is a card,
there are at most $\ell$ vertices outside $C\cup W^*$.  Including $P$ with
$r$ vertices, outside $W^*$ there are at most $3\ell+1$ vertices in $C_2$.
Also $C_2$ has exactly $(r-1)/2+\ell+1$ vertices from $W^*$, and this
contribution is at most $2\ell+1$.  Hence $C_2$ has at most $5\ell+2$
vertices, fits in a card (since $n-\ell\ge 5\ell+10$), and shows us all of $T$
outside $W^*$.  We know $W^*$ earlier from $\hC$, so we have reconstructed $T$.

\medskip
{\bf Case 3:} {\it $t=j-1<(r-1)/2$ and $t\ge \ell+1$.} From the card $C$ we
know that offshoots from $z$ other than $T_1$, $T_2$, and $W^*$ together have
at most $\ell$ vertices.  Consider the subtree $C_1$ obtained in
Lemma~\ref{C1fixz}(c); we have $u_j=z$, and $C_1$ fits in a card, and $C_1$
shows all of $T_2$.  We see all of $T$ in full in $C_1$ except $W^*$ and a
smallest offshoot from $z$ of length $j-1$, which we may assume is $T_1$.  In
$C_1$ we have $W'$ and the path $\la\VEC u1{j-1}\ra$ as offshoots from $u_j$;
one is contained in $W^*$ and the other in $T_1$, but we do not know which is
where.

Let $\cC_3$ be the family of long subtrees of $T$ whose vertices in positions
$j$ and $r+1-j$ on an $r$-path $P''$ have degree $2$.  Let $C_3$ be a largest
member of $\cC_3$.  Any member of $\cC_3$ has $z$ in position $j$ or $r+1-j$
along $P''$, since $C$ prevents offshoots from $P$ having more than $\ell$
vertices (other than $W^*$), and here $j-1\ge\ell+1$ forces such an offshoot if
$z$ is elsewhere.  Since $z$ has degree $2$ in $C_3$ and starts a path of
length $r-j$ in $C_3$, the subtree $C_3$ must be missing $T_1$ or $W^*$.
Since $j-1>\ell$, this means that $C_3$ (and any member of $\cC_3$) fits in a
card.

Since we know $T_2$, we know the subtrees of $T_2$ that can arise when a vertex
at distance $r+1-2j$ from $z$ is restricted to degree $2$.  The part of $C_3$
outside such a subtree is a copy of $T_1$ or $W^*$, whichever is larger.  If
they have the same size, then we get both from two candidates for $C_3$.
Otherwise, we use an exclusion argument to find the other.  Having obtained the
larger one, say $X$, we know the number $n'$ of vertices in the smaller one,
say $Y$.  Knowing $X$, we know all the rooted subtrees of $X$ with $n'$
vertices.  Excluding from $\cC_3$ the members that arise using such a subtree
from $X$ on one end and a largest contribution from $T_2$ in the middle and
other end, a member of $\cC_3$ of the right size that remains shows us $Y$.

\medskip
{\bf Case 4:} {\it $t=j-1<(r-1)/2$ and $t\le\ell$.}
Let $C'$ be a largest long subtree of $T$ among those having a leg with length
more than $r-j$.  Since by assumption $T$ has no long-legged card, $C'$ fits in
a card.  If $C'$ does not enter $W^*$, then it has at most $r+\ell$ vertices,
since only $\ell$ vertices are outside $C$.  Using $W^*$ yields a choice for
$C'$ with at least $r-j+1+2\ell+1$ vertices; such $C'$ is bigger since
$j-1\le\ell$.  Hence the csc $C'$ shows us $W^*$ in full.

In a largest subtree $C_1$ as obtained in Lemma~\ref{C1fixz}(c), we have $u_j=z$
and see all of $T_2$.  As in Case 3, in $C_1$ we see all offshoots from $z$
other than a smallest offshoot of length $j-1$, which we may take to be $T_1$.
Again the offshoot $W'$ in $C_1$ may lie in $W^*$ or in $T_1$.

Having obtained $W^*$ from $C'$, we now obtain $T_1$ by an exclusion argument.
Since $C$ omits exactly $\ell$ vertices and $j-1\le \ell$, at most $2\ell$
vertices are in $T_1$, while $W^*$ has at least $2\ell+1$.  Knowing $T_2$ and
$W^*$
(and extra offshoots at $z$), we know $\C{V(T_1)}$; call it $n'$.  We
also know the numbers of vertices in $W^*$ and $T_2$ that are farthest from
$z$ and the number of $r$-vertex paths in $T$, so we can compute the number of
vertices in $T_1$ that are farthest from $z$.

Now consider cscs with $\C{V(C')}-(\C{V(W^*)}-n')$ vertices having an
$r$-vertex path and a leg with length more than $r-j$.  Exclude all those in
which the portion outside the leg comes from the known subtrees $W^*$ or $T_2$.
Such a csc that remains shows us $T_1$, completing the reconstruction of $T$.
\end{proof}

\begin{lemma}\label{nospar3}
If $n\ge6\ell+\c$ and $r\le n-3\ell-1$, and $T$ has a sparse card but
is not $\ell$-reconstructible, then $T$ has no sparse card of degree $3$,
every sparse card is optimal, and $T$ has no long-legged card.
\end{lemma}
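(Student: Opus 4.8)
The plan is to read off the first two conclusions directly from the lemmas already proved in this section and to devote the real work to the third. First suppose, for contradiction, that $T$ has a sparse card of degree $3$. If $T$ also has a long-legged card, then Lemma~\ref{longleg} makes $T$ $\ell$-reconstructible; if $T$ has no long-legged card, then Lemma~\ref{nolong} makes $T$ $\ell$-reconstructible. Either way this contradicts the hypothesis, so $T$ has no sparse card of degree $3$. Then the second sentence of Lemma~\ref{sparse3} immediately gives that every sparse card of $T$ is optimal. It remains to show that $T$ has no long-legged card.

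So assume $T$ has a long-legged card; the goal is to show $T$ is $\ell$-reconstructible, which is the desired contradiction. By the first two conclusions we may invoke Definition~\ref{longdef} with an optimal sparse card $C$ of degree at least $4$: all optimal sparse cards share a primary vertex $z$ and primary value $j$, every $r$-path contains the path from $z$ to the center of $T$, and in $C$ the union $W$ of the offshoots from the $r$-path $P$ at $z$ has at least $2\ell+1$ vertices, though now $W$ consists of two or more offshoots. I would then try to run the four-step scheme of Lemma~\ref{longleg} for a long-legged card $C'$ with long leg $L$ (length $q>r-j$, ending at branch vertex $x$) and $X=C'-V(L-x)$: Step~1, locate $z$ on $L$ at distance $r-j$ from the leaf of $L$ and show $X$ lies inside the union $W^*$ of the components of $T-z$ that meet $W$ (indeed inside one such component); Step~2, show that any csc with a sufficiently long leg and enough offshoot-mass at its far branch vertex has that far end confined to $X^*$; Step~3, recover $T-V(X^*-x)$; and Step~4, recover $X^*$ by exclusion arguments exactly as in Lemma~\ref{longleg}.

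The main obstacle is the degree-$\ge 4$ analogue of Step~1 of Lemma~\ref{longleg}: there the conclusion $z\in V(L)$ used that $W$ is a single offshoot (``the neighbor of $z$ on $R$ must be the neighbor of $z$ in $W$''), which fails when $z$ has several neighbors in $W$. I would argue instead: since $\C{V(W)}\ge 2\ell+1$ and $C'$ omits only $\ell$ vertices, $W$ meets $C'$; if $z\notin V(L)$, let $R$ be the path from $z$ to the nearest vertex of $C'$ and consider the first edge of $R$ at $z$. It cannot run along $P$ toward $v_{j-1}$ or $v_{j+1}$, for then $C'$ would be reached from $z$ both through $P$ and through the offshoot of $W$ that meets $C'$, forcing a cycle or a path longer than $P$; and if it runs into an offshoot lying in $W^*$, then $C'$ is reached within $W^*$, and since no path of length exceeding $r-j$ starts at $z$ while $L$ has length $q>r-j$, comparing $L$ with the $z$-to-$C'$ route through $W^*$ together with the long side of $P$ yields either a path longer than $P$ or a long-legged card with a strictly longer leg, both impossible. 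Hence $z\in V(L)$, and the positional argument ($L$ ending $r-j$ past $z$, $X\esub W^*$) then proceeds as before. Once $z$ is pinned to $L$, Steps~2--4 should carry over with only bookkeeping changes to accommodate the several offshoots of $W^*$ at $z$ and the several offshoots at $x$, giving the reconstruction of $T$ and hence the contradiction. (An alternative route I would also try for the third conclusion is to show that a long-legged card, spliced into a longest path as in the proofs of Lemmas~\ref{optcard} and~\ref{sparse3}, produces a sparse card of degree $3$, directly contradicting the first conclusion; but the splicing seems less automatic here, so I expect the reconstruction route above to be the one to push through.)
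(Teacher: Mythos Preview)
Your treatment of the first two conclusions matches the paper exactly: Lemmas~\ref{longleg} and~\ref{nolong} force no sparse card of degree $3$, and then Lemma~\ref{sparse3} makes every sparse card optimal.

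For the third conclusion the paper does \emph{not} rerun the four-step machinery of Lemma~\ref{longleg}; it takes precisely the ``alternative route'' you mention at the end and then set aside. The splicing is in fact short. With $C$, $P$, $z$, $j$, $T_1$, $T_2$, $W^*$ as in Definition~\ref{longdef}, the paper argues: a long-legged card $C'$ has at least $2\ell+1$ vertices off $P$ (since $r\le n-3\ell-1$), while $T_1\cup T_2$ contributes at most $\ell$ such vertices, so $C'$ contains at least $\ell+1$ vertices from offshoots at $z$. Because the long leg $L$ has length exceeding $r-j$ and no path of that length starts at $z$, either $z\notin V(C')$ or $z$ lies on $L$ with degree $2$ in $C'$; in either case those $\ell+1$ vertices sit in a \emph{single} offshoot at $z$. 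Letting $z'$ be the vertex of $C'$ nearest $z$ (necessarily on $L$, by the same path-length bound), one deletes the portion of $L$ from $z'$ to its leaf and adjoins the $z',z$-path together with all of $P$. This produces a subtree of $T$ with more than $n-\ell$ vertices in which $z$ has degree exactly $3$ and is the only branch vertex on $P$; trimming leaves yields a sparse card of degree $3$, contradicting the first conclusion.

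So your dismissal of the alternative was premature: it is the cleaner argument and avoids re-verifying Steps~2--4 of Lemma~\ref{longleg} in the degree-$\ge 4$ setting, which you left as ``bookkeeping''. Your main route is plausible (your patch for Step~1 is essentially right, and indeed the observation that $C'$ cannot lie wholly inside $W^*$ because $\C{V(W^*)}\le n-r<n-\ell$ makes it even simpler), but it is a substantial detour compared with the three-line splice the paper uses.
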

\begin{proof}
All claims except the last one follow from Lemmas~\ref{optcard}, \ref{longleg},
and~\ref{nolong}.  Hence we assume these claims and prove that $T$ has no
long-legged card.  Let $C$ be a sparse card, and define notation as in
Definition~\ref{longdef}.

Since $r\le n-3\ell-1$, a long-legged card $C'$ has at least $2\ell+1$
vertices outside a longest path in $T$, such as the $r$-path $P$ in $C$.
However, $T_1\cup T_2$ has at most $\ell$ vertices outside $P$.  Hence $C'$
must have at least $\ell+1$ vertices in offshoots from $P$ at $z$.  Since the
length of a leg $L$ in $C'$ exceeds $r-j$, either $z$ lies on $L$ or is not in
$C'$, so all such vertices in $C$ lie in only one offshoot from $P$ at $z$.

Let $z'$ be the vertex of $C'$ closest to $z$; since no path of length more
than $r-j$ starts at $z$, vertex $z'$ must lie on the leg in $C'$.
Now deleting the portion of $L$ from $z'$ to the leaf and adding the path
from $z'$ to $z$ and all of $P$ yields a tree with more vertices than $C'$
in which $z$ has degree $3$.  Iteratively deleting leaves from $C'$ in this
trees yields a sparse card in which the primary vertex $z$ has degree $3$,
which we have observed does not exist.
\end{proof}

A {\it paddle} is a long card having a leg
of length at least $\CL{r/2}$.  In particular, when $r$ is odd, a paddle
is a long card in which the center has degree $2$ and one piece is a path.
The next lemma will be used in each of the two final lemmas.

\begin{lemma}\label{nopaddle}
If $r$ is odd, and $T$ contains no paddle and no sparse card of degree $3$,
then $T$ is $\ell$-reconstructible.
\end{lemma}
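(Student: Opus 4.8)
The plan is to exploit that $r$ is odd, so $T$ has a unique center $z_c$, which is recognizable from the deck: it is the midpoint of every $r$-path, and every long csc contains $z_c$ with that same meaning. Reconstructing $T$ then amounts to determining the multiset of \emph{pieces} of $T$ (the components of $T-z_c$), since they are simply reattached at $z_c$. Call a piece \emph{big} if its length as an offshoot equals $\lfloor r/2\rfloor$; there are at least two big pieces, and every other (\emph{small}) piece has length at most $\lfloor r/2\rfloor-1$. The two hypotheses enter as size bounds. Absence of a paddle means one cannot delete $\ell$ vertices so as to leave an $r$-path with $z_c$ of degree $2$ and one side a bare path; keeping the main path of one big piece, $z_c$, and as much of a second big piece as a card allows, this forces every big piece to have at most $n-\ell-\lfloor r/2\rfloor-2$ vertices. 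Absence of a sparse card of degree $3$ shows, by keeping a clean $r$-path through two big pieces and hanging a fragment of a single other piece at $z_c$, that every piece of $T$ has at most $n-\ell-r-1$ vertices.

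First I would reconstruct the big pieces. If $A$ is a big piece, then $A$ together with $z_c$ and a path of $\lfloor r/2\rfloor$ vertices (a main path of a second big piece) is a long csc with at most $n-\ell-1$ vertices in which $z_c$ has degree $2$ and one piece is a path; by the paddle bound such a subgraph with $n-\ell$ vertices would be a paddle and hence does not occur, so all of these subgraphs are genuine cscs visible in the deck. Running the Exclusion Argument (Lemma~\ref{count}) on the family of long cscs in which $z_c$ has degree $2$ and one piece is a path of $\lfloor r/2\rfloor$ vertices, reading off the other rooted piece each time, recovers the multiset of big pieces of $T$, hence the numbers of their extremal vertices, hence the number of peripheral vertices of $T$ and the number of $r$-paths in $T$.

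Next I would reconstruct the remaining pieces. By the sparse-card bound, every (small) piece $D$ fits, together with a clean $r$-path through two of the now-known big pieces with $D$ hung at $z_c$, into a long csc of at most $n-\ell-1$ vertices, whose isomorphism type depends only on the rooted offshoot $D$ at $z_c$. Counting the copies of such a csc in $T$, subtracting the contributions in which the offshoot at $z_c$ actually lies inside a big piece (computable, since all big pieces and the numbers of $r$-paths using each are known) and dividing by the number of $r$-paths gives, for each rooted tree $X$, the number of occurrences of $X$ as a rooted subtree among the small pieces of $T$; feeding this multiset into the Exclusion Argument (its largest elements are genuine small pieces) recovers the multiset of small pieces, and attaching all pieces to $z_c$ finishes. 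When $T$ has a sparse card (necessarily of degree at least $4$, since degree $3$ is excluded) the same is carried out in the notation of Definition~\ref{longdef}; one first locates the primary vertex $z$ relative to $z_c$ using the paddle bound — for $r$ near $n-3\ell-1$ this forces $z=z_c$ — which makes the anchoring cscs available.

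The main obstacle is the third paragraph, i.e.\ the ``core'' pieces at $z_c$: unlike the ends of $r$-paths, which the no-paddle bound keeps small enough to read off individually, the small pieces carry no comparable a priori cap relative to $n-\ell$, so they must be peeled off one at a time by exclusion, and the bookkeeping in the subgraph counts is delicate precisely because an $r$-path must be anchored by \emph{two distinct} big pieces, so the relevant counts are not clean products and must be corrected using the $r$-path counts exactly as in the proofs of Lemmas~\ref{nolong} and~\ref{nospar3}. A secondary nuisance is the degenerate range of small $r$, where little of $T$ lies in big pieces: there one may need to anchor with a single big-piece main path plus a second, possibly small, length-$\lfloor r/2\rfloor$ offshoot, and verify that these variants still fit in cards.
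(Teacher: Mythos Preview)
Your overall plan—reconstruct the big (length $(r-1)/2$) pieces at the center, then the small ones by exclusion—is exactly the paper's strategy, and your Step~2 is essentially correct once the big pieces are known. The gap is in Step~1.

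You propose to recover the big pieces by applying Lemma~\ref{count} to the family of long cscs with center of degree~$2$ and one piece a bare path. But the multiplicity with which a rooted tree $Y$ appears as the non-path piece in that family is not the quantity Lemma~\ref{count} needs: for each big piece $A$ and each rooted copy of $Y$ in $A$, the csc is counted once for every peripheral vertex in the \emph{other} big pieces, so the count is $\sum_A m_A(Y)(P-p_A)$, where $P$ is the total number of peripheral vertices and $p_A$ the number in $A$. This is a weighted version of the multiset $\mathcal{M}$, with weights that vary across pieces and depend on $P$, which you yourself note is only obtained \emph{after} the big pieces are known. In particular you cannot read off the multiplicity of the largest big piece, which is exactly the seed the Exclusion Argument requires.

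The paper sidesteps this by working instead with long cscs having center of degree~$2$ and \emph{both} pieces of length $(r-1)/2$ (not one forced to be a path). Fixing one piece to be a known largest big piece $X$ with multiplicity $k$, the count of ``$X+z_c+Y$'' is $k(M(Y)-m_X(Y))$, from which $M(Y)$—the honest multiset entry—is recoverable. The price is that ``$X+z_c+X$'' may itself have $n-\ell$ vertices, so the paper must split into the case where the largest such csc is not a card (and the exclusion runs cleanly) versus the case where it is a card (handled by a separate balancing argument using two pieces of exactly $\ell+1$ vertices). Your path-anchored family has the pleasant feature that the no-paddle hypothesis guarantees everything fits strictly inside a card, but you pay for it with the weighting problem above; to make your Step~1 go through you would need either to determine $P$ in advance by some other route, or to switch to the paper's two-arbitrary-pieces family and absorb the card/non-card case split.
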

\begin{proof}
Let $y$ be the unique center of $T$.
All long subtrees have $y$ as the unique center.  Among largest long
cscs with $y$ having degree $2$, choose $\hC$ to maximize the number of 
vertices in the larger piece.  Let $W_1$ and $W_2$ be the pieces,
with $\C{V(W_1)}\ge\C{V(W_2)}$.  By the hypothesis, $W_1$ is a component
of $T-y$, seen in full in $\hC$ (if it does not appear in full and
$\hC$ is not a card, then we can grow that piece; if $\hC$ is a card, then
since $T$ has no paddle the piece other than $W_1$ in $\hC$ has at least two
leaves, and one can be deleted to allow $W_1$ to grow).

If $\hC$ is not a card, then $W_1$ and $W_2$ are two largest offshoots of
length $(r-1)/2$ from $y$ in $T$.  If there are $k$ copies of a largest
offshoot of length $(r-1)/2$, then we know this because there are $\CH k2$
choices for the card $\hC$ having it as both offshoots.  Thus we find all the 
largest offshoots of length $(r-1)/2$, with multiplicity.  By considering cscs
that fix the largest offshoot of length $(r-1)/2$ as one of these, we can then
find the other offshoots of length $(r-1)/2$ from $y$ by an exclusion argument.

To find the shorter offshoots, consider subtrees consisting of an $r$-path
plus one offshoot from the central vertex $y$.  Since $T$ has no sparse card
of degree $3$, every such subtree fits in a card.  Hence we know the largest
such offshoots, whether they have length $(r-1)/2$ or not.  We can therefore
find all the offshoots by an exclusion argument.

Hence we may assume that $\hC$ is a card.
Among long cards with degree $2$ at $y$, choose $\hC'$ to maximize the number
of vertices in the smaller of the two pieces.  Let $W'_1$ and $W'_2$ be the
two pieces, with $\C{V(W'_1)}\ge\C{V(W'_2)}$.

If $\C{V(W'_2)}\le\C{V(W'_1)}-2$, then $W'_2$ is the second largest offshoot
of length $(r-1)/2$ from $y$ in $T$, seen in full.  To find the other
components of $T-y$, consider the family $\hat\cC$ of long cscs having center
of degree $2$, where both pieces have length $(r-1)/2$, and one of the pieces
is $W'_1$.  Since we know $W'_2$, an exclusion argument finds all the offshoots
from $y$ in $T$ having length $(r-1)/2$.  For the shorter offshoots, we
consider the family $\hat\cC'$ of long cscs consisting of one $r$-path and one
offshoot from the path at its center.  Since $T$ has no sparse card of degree
$3$, we see all such offshoots in cscs; starting with the largest (and those of
length $(r-1)/2$), an exclusion argument applies.

Hence we may assume $\C{V(W'_2)}\ge\C{V(W'_1)}-1$, so
$\C{V(W'_2)}\ge(n-\ell-2)/2\ge\FR32(\ell+1)$, where the last inequality
only needs $n\ge4\ell+5$.  Since $\C{V(W'_2)}\ge\ell+1$, we may let $\hC_1$ be
a largest long subtree having two offshoots of length $(r-1)/2$ from the center
$y$ that each have $\ell+1$ vertices (``largest'' indicates that the number of
vertices in the other offshoots is maximized).  Note that $W'_2$ may not be a
component of $T-y$ in full, but we do know $W_1$ from $\hC$.  Let $W^*_2$ be a
largest offshoot of length $(r-1)/2$ from $y$ other than (one copy of) $W_1$.
Since $\hC$ is a card, only two components of $T-y$ contain offshoots from $y$
with at least $\ell+1$ vertices, so one of the pieces of $\hC_1$ with $\ell+1$
vertices lies in $W_1$ and the other in $W^*_2$.

Since each of these
two pieces in $\hC_1$ omits at least $(\ell+1)/2$ vertices of the component of
$T-y$ containing it, $\hC_1$ fits in a card and shows us all of $T-W_1-W^*_2$.
Since we also know $W_1$, this tells us the number of vertices in $W^*_2$ and
the number of peripheral vertices in each offshoot from $y$ other than $W^*_2$.
Since the $(n-\ell)$-deck determines the $k$-deck when $k<n-\ell$, we also know
the total number of $r$-paths in $T$.  From these quantities, we can compute
the number of peripheral vertices in $W^*_2$.

Now let $\cC^*$ be the family of all long cscs with central vertex of degree
$2$ such that one offshoot from the center is a path and the other has
$\C{V(W^*_2)}$ vertices.  Since $T$ has no paddle, the requested number of
vertices is less than $n-\ell$, so $\cC^*$ is nonempty.  In each member of
$\cC^*$, the offshoot with $\C{V(W^*_2)}$ vertices either is contained in
$W_1$ or is $W^*_2$.  Since we know $W_1$ and know the number of peripheral
vertices in each offshoot at $y$, we know how many times each subgraph of $W_1$
with $\C{V(W^*_2)}$ vertices appears in a member of $\cC^*$.  By excluding
them, we find $W^*$ and thus have reconstructed $T$.
\end{proof}

\begin{lemma}\label{sparseno3}
Suppose $n\ge6\ell+\c$ and $r\le n-3\ell-1$.  If $T$ has a sparse card,
then $T$ is $\ell$-reconstructible.
\end{lemma}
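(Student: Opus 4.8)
The plan is to argue by contradiction. Suppose $T$ has a sparse card but is not $\ell$-reconstructible. By Lemma~\ref{nospar3}, $T$ then has no sparse card of degree $3$, every sparse card of $T$ is optimal, and $T$ has no long-legged card; moreover, if $r$ is odd then $T$ contains a paddle, since otherwise Lemma~\ref{nopaddle} would make $T$ $\ell$-reconstructible. With the notation of Definition~\ref{longdef}, let $z$ be the common primary vertex of all sparse cards, with primary value $j$, let $P=\la\VEC v1r\ra$ with $z=v_j$, and let $T_1,T_2$ be the components of $T-z$ containing $v_{j-1},v_{j+1}$. Since no sparse card has degree $3$, the vertex $z$ has degree at least $4$ in $T$, so $T-z$ has at least two components besides $T_1,T_2$; let $W^*$ denote their union together with $z$, rooted at $z$. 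Recall that $T_1$ has length $j-1$, that $T_2$ has length $r-j$, and that every offshoot of $W^*$ at $z$ has length at most $j-1$; write $t$ for the largest such length.

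First I would record two reductions. (i) Always $j<(r+1)/2$: otherwise $r$ is odd and $z$ is the centre of $T$, so a paddle, having a leg of length $\lceil r/2\rceil=(r+1)/2>(r-1)/2=r-j$, would be a long-legged card, which $T$ lacks, and then Lemma~\ref{nopaddle} would contradict non-reconstructibility. Hence $r-j\ge j$, so the spine of $T_2$ is strictly longer than $T_1$ and than every offshoot of $W^*$ at $z$. (ii) If $t=j-1$, then $T$ contains $S_{j-1,j-1,r-j}$ centred at $z$ --- with a leg of length $j-1$ in an offshoot of $W^*$ realising the value $t$, one of length $j-1$ running along $P$ toward $v_1$, and one of length $r-j$ running along $P$ toward $v_r$ --- and this spider contains an $r$-path with a single branch vertex, so it is the underlying tree of a sparse card of degree $3$ unless it has more than $n-\ell$ vertices. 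Thus $t=j-1$ forces $r+j-1>n-\ell$, and with $r\le n-3\ell-1$ this gives $j\ge2\ell+3$, so $j-1>\ell$.

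Next I would reconstruct $T_1$ and $T_2$. When $t<j-1$, Lemma~\ref{C1fixz}(a),(b) applies directly: a largest connected subcard having an $r$-path with exactly $\ell+1$ vertices in offshoots at its $j$th vertex, and at most $\ell$ vertices in offshoots elsewhere, has that $j$th vertex equal to $z$, fits in a card, and shows both $T_1$ and $T_2$ in full. When $t=j-1$, Lemma~\ref{C1fixz}(b) still yields $T_2$, while $T_1$ appears as the appropriate end of a largest connected subcard having an $r$-path whose $j$th and $(r+1-j)$th vertices both have degree $2$: on every $r$-path $z$ occupies position $j$ or $r+1-j$, so this subcard omits all of $W^*$ except $z$ --- more than $\ell$ vertices by (ii) --- hence fits in a card, and it displays every offshoot from the path other than those at $z$, namely the non-spine parts of $T_1$ and $T_2$. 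In either case we now know $T_1$, $T_2$, the value $r$, hence $|V(W^*)|$, and the numbers of vertices of $T_1$ at distance $j-1$ from $z$ and of $T_2$ at distance $r-j$ from $z$.

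Finally I would recover the offshoots at $z$ comprising $W^*$. Because $r-j>j-1\ge t$, a copy of $S_{d,j-1,r-j}$ in $T$ whose branch vertex is $z$ must put its length-$(r-j)$ leg in $T_2$, its length-$(j-1)$ leg in $T_1$ (when $t<j-1$; when $t=j-1$ a length-$(j-1)$ leg may instead lie in an offshoot of $W^*$), and its length-$d$ leg in $W^*$, while copies with a different branch vertex are confined to $T_1\cup\{z\}\cup T_2$ and are countable from the known subtrees $T_1,T_2$; since $T$ has no long-legged card these spiders fit in cards, so counting them for $d\le t$ (together with copies of $\Sp{j-1}$ when $t=j-1$) determines the number of vertices of $W^*$ at each distance from $z$. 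With that data I would run an exclusion argument (Lemma~\ref{count}): from connected subcards built out of the known spines of $T_1$ and $T_2$ --- shortened as needed, which is permissible because the vertex count of the sparse card bounds any one offshoot at $z$ by $n-r$ vertices --- together with $z$ and a single rooted subtree attached at $z$, one peels off the offshoots at $z$ in decreasing size, the extremal-vertex counts revealing which offshoot a given subtree lies in and when it is complete; the largest offshoot, which may be too big to accompany a full $r$-path, is read off from the sparse card itself and, when $r$ is odd, from a paddle. This determines $W^*$ and hence $T$, contradicting non-reconstructibility. I expect this last step to be the main obstacle: making the exclusion argument for $W^*$ valid when an offshoot at $z$ is large, and, when $t=j-1$, when $T_1$ is confusable with an equally long offshoot of $W^*$; this forces one to combine the bound $j\ge2\ell+3$, the absence of long-legged cards and paddles, and the exact vertex count of the sparse card.
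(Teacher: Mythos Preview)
Your outline mirrors the paper's (invoke Lemmas~\ref{nospar3} and~\ref{nopaddle}, then split on $t<j-1$ versus $t=j-1$), and reduction (i) is essentially the paper's handling of $t=(r-1)/2$.  The case $t=j-1$, however, has a genuine gap.  First, reduction (ii) is not valid: the spider $S_{j-1,j-1,r-j}$ with $r+j-1\le n-\ell$ vertices is a csc but not a card, and extending it to a sparse card of degree~$3$ would require one single offshoot of $W^*$ to contain at least $n-\ell-r$ vertices, which need not hold (for instance $W^*$ may consist of many short paths of length $j-1$, none individually large); so $j-1>\ell$ does not follow.  More seriously, your claim that a largest csc with degree~$2$ at positions $j$ and $r+1-j$ ``omits all of $W^*$ except $z$'' and hence displays $T_1$ is false when $t=j-1$: with $z$ forced to sit at position $j$ with degree~$2$, the portion $\la u_1,\dots,u_{j-1}\ra$ can lie in $T_1$ \emph{or} in any length-$(j-1)$ offshoot of $W^*$, and a largest such csc need not select $T_1$.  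This same ambiguity contaminates your spider counts, since the $(j-1)$-leg of $S_{d,j-1,r-j}$ centred at $z$ can go into $T_1$ or into any such offshoot; adding copies of $\Sp{j-1}$ does not disentangle this, as those legs may land in $T_1$, $T_2$, or the offshoots indifferently.

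The paper sidesteps this by never isolating $T_1$ when $t=j-1$.  It takes $\cC_3$ to be the family of long subtrees with exactly one offshoot of length $j-1$ at position $j$ on the $r$-path; these all fit in cards because $T$ has no sparse card of degree~$3$ (this, not the absence of long-legged cards, is the correct reason such subtrees are cscs).  After excluding members whose position-$j$ vertex lies in the known $T_2$, the remaining members have branch vertex $z$, and an iterated exclusion over $\cC_3$ recovers \emph{all} length-$(j-1)$ offshoots from $z$, including $T_1$, at once.  A second exclusion over one-offshoot long subtrees then recovers the shorter offshoots.  For $t<j-1$ the paper likewise runs a direct exclusion over long subtrees with one offshoot at position $j$, rather than your spider-counting detour.
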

\begin{proof}
As noted in Lemma~\ref{nospar3}, we may assume that $T$ has no sparse
card of degree 3 and every sparse card is optimal, with the same primary vertex
$z$ and primary value $j$.  Since all sparse cards are visible in the deck, we
recognize such $T$.

Among all sparse cards, choose $C$ and its $r$-path $P$ to maximize the length
$t$ of a longest offshoot from $P$ at the primary vertex, and within that
maximize the number of vertices in a largest offshoot of length $t$.  We use
notation relative to $C$ as in Definition~\ref{longdef}.  Note that $t\le j-1$.
Since we see all sparse cards, we know $t$.

Suppose first that $t=(r-1)/2$, which requires $j=(r+1)/2$.
By Lemma~\ref{nospar3}, $T$ has no long-legged card, which in the case
$j=(r+1)/2$ is the same as saying that $T$ has no paddle.
Hence Lemma~\ref{nopaddle} applies to show that $T$ is $\ell$-reconstructible.

Hence we may assume $t<(r-1)/2$.
Consider a largest subtree obtained as $C_0'$ in Lemma~\ref{C1fixz}(b), so
$C_0'$ fits in a card, with $u_j=z$, and $C_0'$ shows us the subtree $T_2$ in
full as the component of $C_1-z$ containing $u_{j+1}$ (we see both $T_1$ and
$T_2$ in two such subtrees if $j=(r+1)/2$ and they have the same size).

\medskip
{\bf Case 1:} {\it $t<j-1$}.
In this case $C_0'$ also shows us $T_1$, by Lemma~\ref{C1fixz}(b).
We now know all of $T$ except some of the offshoots from $P$ at $z$.  We know
the total number $s$ of vertices in offshoots from $P$ at $z$, and we know
their maximum length, $t$.

Let $\cC_2$ be the family of subtrees consisting of an $r$-path 
$\la \VEC u1r\ra$ and one offshoot at $u_j$.  Since $T$ has no sparse card of
degree $3$, all members of $\cC_2$ have fewer than $n-\ell$ vertices.  Since we
know $T_1$ and $T_2$, we know the number of peripheral vertices in those
subtrees, and there are no other peripheral vertices in $T$ (since $t<j-1$).
Since we know $T_2$, we also know all the members of $\cC_2$ in which the
offshoot at $u_j$ arises from $T_2$.  In the remaining members, $u_j=z$ and the
$r$-path comes from $T_1\cup T_2$.  Hence largest remaining members of $\cC_2$
show us offshoots from $P$ at $z$ in $T$, seen in full.  Excluding subtrees
from such offshoots with each smaller size, the exclusion argument now yields
all the offshoots from $P$ at $z$ in $T$, which completes the reconstruction of
$T$.

\medskip
{\bf Case 2:} {\it $t=j-1$ with $j<(r+1)/2$.}
We have noted that $C_0'$ satisfies $u_j=z$ and shows us $T_2$.
The path $\la \VEC u1{j-1}\ra$ in $C_0'$ may come from $T_1$ or from $W$.

First we will use an exclusion argument to find all offshoots from $z$ in $T$
with length $j-1$.  Let $\cC_3$ be the family of long subtrees that have
exactly one offshoot of length $j-1$ from the vertex $u_j$ in position $j$ on
a specified $r$-path (and no other offshoot at $u_j$).  Since $T$ has no sparse
card of degree $3$, all members of $\cC_3$ fit in cards, and we see them.
Since we know $T_2$ and the number of $r$-paths in $T$, and since by
$j<(r+1)/2$ every $r$-path has exactly one endpoint in $T_2$, we know
the number of peripheral vertices not in $T_2$.  We can exclude (with
multiplicity) the members of $\cC_3$ in which $u_j$ and the offshoot come from
$T_2$ (in particular, when $u_j=v_{r+1-j}$).  In the remaining members of
$\cC_3$, we have $u_j=z$.  Largest remaining members show us largest offshoots
from $z$ having length $j-1$.  Let $W'$ be such an offshoot.

Knowing $T_2$, the largest such offshoots (with multiplicity), and the number
of peripheral vertices, we know the number of peripheral vertices outside $W'$.
Therefore, we know the members of $\cC_3$ in which the offshoot from the
$r$-path comes from $W'$.  After excluding all members generated from
largest offshoots such as $W'$ in this way, a largest remaining member of
$\cC_3$ shows us another offshoot of length $j-1$.  Continuing the exclusion
argument shows us all offshoots of length $j-1$ from $z$ in $T$, including
$T_1$.

Finally, let $\cC_4$ be the family of long subtrees having exactly one offshoot
from the vertex in position $j$ on the specified $r$-path, without regard to
the length of the offshoot.  Since $T$ has no sparse card of degree $3$,
each member of $\cC_4$ fits in a card, and we see it.  Since we know all the
peripheral vertices in offshoots at $z$, we can exclude the members of $\cC_4$
in which the offshoot is contained in $T_2$ or in any offshoot at $z$ having
length $j-1$.  A largest remaining member shows us a largest offshoot at $z$
among those with length less than $j-1$.  Continuing the exclusion argument
shows us the remaining offshoots at $z$.
\end{proof}

\begin{lemma}\label{nosparse}
Suppose $n\ge6\ell+\c$ and $r\le n-3\ell-1$.
If $T$ has no sparse card, then $T$ is $\ell$-reconstructible.
\end{lemma}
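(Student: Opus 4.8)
Since $\cD$ shows every connected card, we recognize that $T$ has no sparse card, so the hypothesis is visible in the deck. The basic structural consequence is that for every $r$-path $P$ in $T$ and every vertex $v$ on $P$, the offshoots from $P$ at $v$ together contain at most $n-r-\ell-1$ vertices; otherwise, trimming those offshoots to exactly $n-\ell-r$ vertices while keeping all of $P$ would produce a long card in which $v$ is the unique branch vertex, that is, a sparse card. Since the offshoot sizes at the vertices of $P$ sum to $n-r\ge3\ell+1$ while each is at most $n-r-\ell-1$, the offshoots from $P$ are spread over at least two interior vertices of $P$; moreover, any vertex $v$ of $P$ with at least $\ell+1$ offshoot-vertices can be ``isolated'', in the sense that deleting every offshoot at $v$ leaves a long subtree with fewer than $n-\ell$ vertices, hence a csc visible in $\cD$. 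Also $r\le n-3\ell-1<n-\ell$, so $T$ is not a path.

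The reconstruction is organized by the parity of $r$, with the center of $T$ --- a vertex when $r$ is odd, an edge when $r$ is even --- playing the anchoring role that the primary vertex played in Lemmas~\ref{nolong} and~\ref{sparseno3}. The center is determined by $\cD$: since $r\le n-\ell$ we see all $r$-paths in cards, and the center of any long card coincides with the center of $T$. When $r$ is odd and $T$ has no paddle, Lemma~\ref{nopaddle} finishes at once, as ``no sparse card'' entails ``no sparse card of degree $3$''. In the remaining cases ($r$ odd with a paddle, and $r$ even), the plan is to pin down the components of $T$ at the center as follows. Let $z$ be a vertex of $P$, strictly between the center and an end of $P$, carrying at least $\ell+1$ offshoot-vertices, and let $C_1$ be a largest long csc in which the vertex at the same distance from the center as $z$ along an $r$-path has degree $2$; then $C_1$ omits all offshoots at $z$, hence fits in a card, and $C_1$ reveals all of $T$ except those offshoots and possibly the offshoots at one further vertex symmetric to $z$. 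The offshoots still missing are recovered by exclusion arguments anchored at the center, together with the fact that the number of $r$-paths, and hence the number of peripheral vertices in each piece, is known from the deck --- exactly as in the proofs of Lemmas~\ref{nolong} and~\ref{sparseno3}.

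Concretely, when $r$ is odd and a paddle is present, the paddle forces the offshoots on one side of the center $y$ to total at most $\ell$; a card omitting that light side then shows the heavy side in full, after which exclusion anchored at $y$ recovers the light side. When $r$ is even, with central edge $yy'$ and halves $T_y,T_{y'}$, the splitting point is $yy'$: a card restricting a suitable vertex of the heavier half to degree $2$ reveals the other half together with the light portion of the heavy half, and exclusion anchored at $y$ and $y'$ completes the reconstruction. Cards that are symmetric under reversal of the $r$-path cause no difficulty, since either orientation of the recovered data yields the same tree; the delicate subcases are the asymmetric ones in which two offshoot-clusters of equal or incomparable size must be assigned to two vertices that are equidistant, or nearly so, from the center.

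That last point is what I expect to be the main obstacle: the case in which the offshoots of $T$ from $P$ are carried by exactly two interior vertices $z_1$ and $z_2$, each with at least $\ell+1$ vertices, whose distances from the center are close --- possibly equal --- and whose clusters have the same size or are isomorphism-incomparable, so that no single card tells them apart. Resolving this calls for a counting argument in the style of Lemma~\ref{equidist}: counting copies of suitable three-legged spiders whose branch vertex is forced to be $z_1$ or $z_2$, in order to extract, say, the number of vertices of one cluster at a prescribed distance from the center, and then using the already-reconstructed part of $T$ to break the symmetry. Verifying that $n\ge6\ell+\c$ and $r\le n-3\ell-1$ leave enough room for all the auxiliary cscs to fit inside cards is the routine bookkeeping that accompanies each case.
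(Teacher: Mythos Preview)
Your proposal is a sketch rather than a proof, and the plan it outlines has a genuine strategic gap. You anchor the reconstruction at the center of $T$, which is what creates the ``main obstacle'' you identify (two offshoot-clusters at vertices equidistant from the center, of equal or incomparable size). Your suggested resolution via spider-counting in the style of Lemma~\ref{equidist} is only a hope, not an argument. There is also a concrete error: when $r$ is odd and a paddle is present, you claim that ``a card omitting that light side then shows the heavy side in full,'' but if the light side carries fewer than $\ell$ offshoot vertices, the subtree consisting of $P$ together with all heavy-side offshoots has more than $n-\ell$ vertices and does not fit in a card.

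The paper avoids your obstacle entirely by \emph{not} anchoring at the center. When a paddle $C^*$ exists (regardless of parity), it takes $z$ to be the branch vertex at the end of the long leg of $C^*$, so $z=v_j$ with $j<(r+1)/2$. This vertex is inherently off-center, so no symmetric partner interferes. A maximality argument shows that $C^*$ already displays $Q$, the union of offshoots at $z$, in full. The rest of $T$ splits as $T_1$ (the short component of $T-z$) and $T_2$ (the long one). To find $T_2$, one looks at a largest long csc $C_2$ that has a leg $\langle u_1,\dots,u_j\rangle$ and carries $\ell+1$ prescribed vertices of $Q$ at $u_j$ (Case~1: $|V(Q)|\ge\ell+1$), or else a largest long csc $C_3$ with $u_j$ of degree $2$ and $\ell+1$ offshoot-vertices along $\langle u_1,\dots,u_{j-1}\rangle$ (Case~2: $|V(Q)|\le\ell$, which forces $|V(T_1)|\ge j+\ell$); in either case one checks $u_j=z$ and the csc fits in a card and shows $T_2$. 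Finally $T_1$ is recovered by an exclusion argument on long subtrees with a leg of length at least $r-j+1$, which all fit in cards by the maximality of the leg in $C^*$. When no paddle exists, the $r$-odd case is Lemma~\ref{nopaddle}, and for $r$ even a largest long subtree with a leg of length $r/2$ fits in a card and shows one branch at the central edge, after which exclusion gives the other.
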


\begin{proof}
From the deck we recognize that $T$ has no sparse card.
Recall that a paddle is a long card having a leg of length at least $r/2$.

Suppose first that $T$ has no paddle.  If $r$ is odd, then Lemma~\ref{nopaddle}
applies and $T$ is $\ell$-reconstructible.  If $r$ is even, then let $C'$ be a
largest long subtree among those having a leg of length at least $r/2$.  Since
$T$ has no paddle, $C'$ fits in a card and shows in full one branch of $T$
obtained by deleting the central edge; call it $T'$.  If there are distinct
choices for $C'$, then the two branches have the same size and we see them.
Otherwise, knowing $T'$ and the total number of $r$-paths gives the number of
peripheral vertices outside $T'$, which with the number of copies of $C'$ tells
us whether the two branches are isomorphic.  If the branches are not
isomorphic, then they do not have the same size, and we know the number $n'$ of
vertices in the smaller branch.  Among the long cscs with one branch being a
path and the other having $n'$ vertices, we can exclude those whose non-path
branch comes from $T'$.  A largest remaining such csc shows the other branch,
completing the reconstruction.

Hence we may assume that $T$ has a paddle.  Among paddles whose long leg has
maximum length, choose $C^*$ to maximize the length of a longest offshoot
from the $r$-path $P$ at the branch vertex $z$ at the end of the leg.  Within
this, choose $C^*$ to maximize the total number of
vertices in the offshoots from $P$ at $z$.  With $P=\la\VEC v1r\ra$, define $j$
by $z=v_j$ with $j<(r+1)/2$ (the long leg implies $j\ne(r+1)/2$).
Although we have no sparse card, we still use ``$z$'' and ``$j$'' here, because
the items so designated in this argument play roles analogous to those played
by the items with those names earlier in this section.

Let $Q$ be the union of the offshoots in $T$ from $P$ at $z$.  Let $T_1$ and
$T_2$ be the components of $T-z$ containing $v_{j-1}$ and $v_{j+1}$,
respectively.  Since $T_2$ contributes only a path to $C^*$ and $T$ has no
sparse card, $T_1$ cannot contribute only a path $C^*$.  Hence $C^*$ shows us
$Q$ in full, since otherwise we could delete a vertex of $T_1$ from $C^*$ and
enlarge another offshoot.  Note that $T_1$ is a smallest offshoot from 
$T_2$ at $z$ among those with maximum length.

If we can determine $T_2$, then finding $T_1$ will complete the reconstruction.
To see that, let $\cC_1$ be the family of long subtrees having a leg of length
at least $r-j+1$.  By the choice of $C^*$ as a card with longest leg, every
member of $\cC_1$ has fewer than $n-\ell$ vertices, so we know all these
subtrees.  Knowing $T_2$ and $Q$, we can compute the number of peripheral
vertices in $T_1$ (whether or not $Q$ has length $j-1$).  We can then exclude
from $\cC_4$ all members where the portion obtained by deleting the leg comes
from $T_2$ or $Q$.  A largest remaining member of $\cC_4$ shows us $T_1$.
Hence it suffices to find $T_2$.

\smallskip
{\bf Case 1:} {\it $Q$ has in total at least $\ell+1$ vertices.}
Let $C_2$ be a largest subtree containing an $r$-vertex path $P'$ with vertices
$\VEC u1r$ such that $\la\VEC u1j\ra$ is a leg of $C_2$ and the offshoots from
$P'$ at $u_j$ in $C_2$ form a specified rooted subgraph of $Q$ with exactly
$\ell+1$ vertices.  There is such a subtree with $u_j=z$.  Since $C^*$ is a
card, there are at most $\ell$ vertices in offshoots from any path of length
$r-j$ in $T_2$.  Hence the $\ell+1$ vertices from $Q$ guarantee that $u_j$ does
not lie in $T_2$.  Also $u_j$ cannot lie in $T_1\cup Q$, since $C_2$ has two
paths of length at least $j-1$ from $u_j$.  Hence $u_j$ must equal $z$.

Since $u_j=z$ and there is only one offshoot from $z$ with length $r-j$,
the path $\la \VEC u{j+1}r\ra$ in $C_2$ must lie in $T_2$.
In $C_2$, there are exactly $j+\ell$ vertices from $T_1\cup Q$.
From $C^*$, we know that $T_2$ has at most $r-j+\ell$ vertices.
Hence $C_2$ has at most $r+2\ell$ vertices and fits in a card.
We see it as a csc, and its long offshoot from $u_j$ is $T_2$.

\smallskip
{\bf Case 2:} {\it $Q$ has in total at most $\ell$ vertices.}
As remarked earlier, we see $Q$ in full in $C^*$.
Altogether there are at least $3\ell+1$ vertices outside $P$ in $T$.
Since $C^*$ is a card, there are at most $\ell$ vertices outside $P$ in $T_2$,
and $Q$ has at most $\ell$ vertices.  Hence $\C{V(T_1)}\ge j+\ell$.

Let $C_3$ be a largest subtree having an $r$-vertex path $P'$ with vertices
$\la \VEC u1r\ra$ such that $u_j$ has degree $2$ and the offshoots from 
$\{\VEC u1{j-1}\}$ have a total of $\ell+1$ vertices.  Since
$\C{V(T_1)}\ge j+\ell$, there exists such a subtree with $u_j=z$.
Since $T_2$ has at most $\ell$ vertices outside $P$, the offshoots from
$\{\VEC u1{j-1}\}$ guarantee that $u_j$ is not in $T_2$.  Now its position
along $P'$ also prevents $u_j$ from being in $T_1$ or $Q$ (since otherwise
there is too long a path).  Hence $u_j=z$.  Now $C_3$ has at most $r+2\ell+1$
vertices, so it fits in a card, and we see a largest such subtree.  It shows us
$T_2$ in full.
\end{proof}

\end{document}